\newtheorem{theorem}{Theorem}[section]
\newtheorem{corollary}[theorem]{Corollary}
\newtheorem{lemma}[theorem]{Lemma}
\newtheorem{proposition}[theorem]{Proposition}
\theoremstyle{definition}
\newtheorem{definition}{Definition}[section]
\newtheorem{remark}[definition]{Remark}
\numberwithin{equation}{section}
\newcommand{\B}{\mathbb B}
\newcommand{\R}{\mathbb R}
\newcommand{\K}{\mathcal K}
\newcommand{\U}{\mathcal U}
\newcommand{\V}{\mathcal V}
\newcommand{\A}{\mathbb A}
\newcommand{\Z}{\mathcal Z}
\newcommand{\T}{\mathcal T}
\renewcommand{\u}{\mathbf u}
\newcommand{\z}{\mathbf z}
\newcommand{\x}{\mathbf x}
\newcommand{\y}{\mathbf y}
\newcommand{\e}{\mathbf e}
\newcommand{\0}{\mathbf 0}
\renewcommand{\P}{\mathcal P}
\newcommand{\tr}{\operatorname{tr}}
\newcommand{\rank}{\operatorname{rank}}
\newcommand{\dist}{\operatorname{dist}}
\newcommand{\supp}{\operatorname{supp}}
\newcommand{\dv}{\operatorname{div}}
\renewcommand{\mod}{\operatorname{mod}}
\newcommand{\rad}{\operatorname{rad}}
\newcommand{\imply}{\Longrightarrow}
\title[Irregular Diffusions and  Polyconvex Gradient Flows]{Irregular Diffusions and Loss of Regularity \\in Polyconvex Gradient Flows}
\date{January 2, 2026}
 \author{Bin Guo}
\address{School of Mathematics, Jilin University, Changchun, Jilin Province 130012, China}
   \email{\tt bguo@jlu.edu.cn}
\author{Seonghak Kim}
\address{Department of Mathematics\\
Kyungpook National University\\
Daegu 41566, Republic of Korea}
\email{shkim17@knu.ac.kr   {\rm (Corresponding Author)}}
 \author{Baisheng Yan}
 \address{Department of Mathematics\\ Michigan State University\\ East Lansing, MI 48824, USA}
   \email{yanb@msu.edu}
\keywords{Irregular diffusion equations, $\T_N$-configurations, Condition $O_N$,  convex integration,  nowhere $C^1$  Lipschitz weak solutions,  polyconvex gradient flows}
\subjclass{Primary 35K40, 35K51, 35D30. Secondary  35F50, 49A20}
\begin{document}

\begin{abstract}

We investigate diffusion-type partial differential equations that are irregular in the sense that they admit weak solutions which are nowhere smooth, even for prescribed smooth data.  By reformulating these equations as first-order partial differential relations and adapting the method of convex integration, we develop a construction scheme based on new geometric structures, referred to as $\mathcal{T}_N$-configurations, together with a simplified structural hypothesis on the diffusion functions, termed Condition $O_N$.  Under this condition, we show that the associated initial and boundary value problems with certain smooth initial-boundary data admit infinitely many Lipschitz weak solutions that are nowhere $C^1$.  We further analyze specific $\mathcal{T}_N$-configurations and establish nondegeneracy conditions that are essential for verifying Condition $O_N$. As an application, we construct examples of strongly polyconvex energy functionals whose gradient flows generate irregular diffusion equations, thereby revealing a failure of regularity and uniqueness even within the class of polyconvex gradient flows.

 \end{abstract}

\maketitle


\section{Introduction}

Let  $m, n\ge 1$ be integers,  let $\Omega\subset \R^n$ be a bounded  domain, and let $T>0$ be  fixed.  We study  diffusion-type partial differential equations of the form
\begin{equation}\label{DE}
 \partial_t u(x,t)  =\dv \sigma(Du(x,t)) \quad  \text{in $\, \Omega_T=\Omega \times (0,T),$}
\end{equation}
where   $\sigma\colon \R^{m\times n}\to \R^{m\times n}$ is a given function representing the diffusion flux, and \[
u=u(x,t)=(u^1,\dots ,u^m)\colon\Omega_T\to \R^m
\]
is the unknown function,  with time derivative $\partial_t u(x,t)=(\partial_t u^1,\dots , \partial_t u^m)\in\R^m$ and spatial gradient matrix  $Du(x,t)=\big(\frac{\partial u^i}{\partial {x_j}}\big) \in \R^{m\times n}.$

Here and throughout, $\R^{m\times n}$  denotes   the Euclidean space of real $m\times n$  matrices,  equipped  with the standard  inner product $\langle A,B\rangle=\tr(A^TB)$ and associated  norm $|A|=\langle A,A\rangle^{1/2}.$

Note that when $m=1$, equation  \eqref{DE}  represents a single equation, whereas when $m\ge 2,$ it becomes a system of $m$  equations.
In general, a function
\[
u\in L^1_{loc}(0,T; W^{1,1}_{loc}(\Omega;\R^m))
\]
 is said to be  a (very) {\emph{weak solution}} of  \eqref{DE}  provided that $\sigma(Du) \in L^1_{loc}(\Omega_T;\R^{m\times n})$  and
\begin{equation}\label{weak-sol}
 \int_{\Omega_T} \big ( u \cdot \partial_t  \varphi  - \langle  \sigma(D u), D\varphi \rangle \big )\, dxdt  =0\quad  \forall\, \varphi\in  C^\infty_0(\Omega_T;\R^m).
\end{equation}

For many partial differential equations, {\em standard} weak solutions are typically required to satisfy additional structural conditions and, in practice, exhibit higher regularity than these conditions alone would suggest; such equations are referred to as {\em regular.} In contrast, an equation is called {\em irregular} if its standard weak solutions fail to display any improved regularity.

This paper is primarily concerned  with the phenomenon of such irregularity for equation \eqref{DE}. We do not dwell on the precise definition of standard weak solutions of \eqref{DE} (see, e.g.,  \cite{BDM13,DiB93, LSU, Ln}). Instead, we focus on {\em Lipschitz} weak solutions
\[
u\in W^{1,\infty}(\Omega_T;\R^m),
\]
  which  qualify as  standard weak solutions    in any reasonable sense.

In the study of equation \eqref{DE}, in addition to standard smoothness and growth conditions on the function $\sigma$, certain structural hypotheses on $\sigma$ also play a crucial role,  especially for systems with $m,n \ge 2$. For example,  $\sigma$ is called strongly  \emph{rank-one monotone} if
\begin{equation}\label{r-mono}
 \langle \sigma(A+p\otimes a) -\sigma(A), \, p\otimes a\rangle \ge \nu |p|^2|a|^2
  \end{equation}
 for all $A\in \R^{m\times n},\, p\in\R^m$ and $a\in \R^n,$ where   $\nu>0$ is a constant.  In this case,  \eqref{DE}  is usually called   (strongly) {\em parabolic.}
 Moreover,  $\sigma$ is  called strongly {\em quasimonotone} if
 \begin{equation}\label{q-mono}
\int_\Omega \langle \sigma(A+D\phi), \, D\phi  \rangle \, dx  \ge \nu \int_\Omega |D\phi|^2\,dx
  \end{equation}
 for all $A \in \R^{m\times n}$ and $\phi\in  C^\infty_0(\Omega;\R^m);$ see   \cite{BDM13,Ha95,Zh86}.
It is  standard in the calculus of variations that  \eqref{q-mono} implies condition \eqref{r-mono}.
Furthermore,  \eqref{q-mono} is automatically satisfied  if $\sigma$ is (fully) strongly \emph{monotone}, in the sense that
 \begin{equation}\label{f-mono}
 \langle \sigma(A+B) -\sigma(A), \, B\rangle \ge \nu |B|^2 \quad \forall\,  A,\,B \in \R^{m\times n}.
  \end{equation}
When $m = 1$ or $n = 1$, the three conditions \eqref{r-mono}–\eqref{f-mono} are equivalent.

  Under full monotonicity \eqref{f-mono}, equation \eqref{DE} can be analyzed using standard parabolic theory; see \cite{Bar,Br,DiB93,LSU,Ln}. Under quasimonotonicity \eqref{q-mono} and suitable growth assumptions, the results in \cite{BDM13} show that every standard (and hence every Lipschitz) weak solution $u$ of \eqref{DE} enjoys improved \emph{partial regularity}. Specifically,
 $Du \in C^{\alpha,\alpha/2}_{\mathrm{loc}}(U;\R^{m \times n})$ for some $\alpha \in (0,1)$ and some open set $U \subset \Omega_T$ of full measure (i.e., $|\Omega_T \setminus U| = 0$). In this sense, the equation is regarded as regular.

Nowhere $C^1$  Lipschitz weak solutions were constructed in \cite{MRS05, MSv03, Sz04} for certain elliptic and parabolic systems in two dimensions ($n=2$) using the convex integration method \cite{Gr86}. This method has since been adapted to construct highly irregular solutions to scalar equations of the form \eqref{DE} with certain forward-backward diffusion functions
$\sigma$; see \cite{KY15, KY17, KY18, Zh06}. It has also been extended to systems of the form \eqref{DE} in which
$\sigma$  satisfies stronger parabolicity conditions; see \cite{Ya20, Ya22}.

The convex integration method has also achieved remarkable success across a wide range of important PDEs and applications. These include, among others, the incompressible Euler and Navier–Stokes equations \cite{BV19, DLSz09, DLSz17, Is18}, porous medium equations \cite{CFG11}, the Monge-Amp\`ere equations \cite{LP17}, active scalar equations \cite{Sh11}, and, most recently, the construction of wild weak solutions for elliptic equations, including the
$p$-Laplace equation \cite{CT22, Jo23}.

In this paper, we adapt  the convex integration scheme  developed in \cite{KY15,Ya20,Zh06} to  study  equation \eqref{DE} as a first-order  partial differential relation
\begin{equation}\label{pdr1}
u= \dv v\quad{\mbox{and}}\quad  (Du,\partial_t v)\in \K   \;\; \mbox{a.e. in $\Omega_T$}
\end{equation}
for  $(u,v)\colon\Omega_T\to\R^m\times   \R^{m\times n},$ where
$
 \K=\{(A,\sigma(A)):A\in \R^{m\times n}\}
$
  is the graph of the diffusion function $\sigma.$

The crux of  convex integration  for \eqref{pdr1} is  to construct a sequence of   uniformly bounded open sets $\{\U_\nu\}_{\nu=1}^\infty$ in $\R^{m\times n}\times \R^{m\times n}$ that converges  to the graph set $\K$ in a suitable sense, together with a   corresponding sequence of Lipschitz functions $\{(u_\nu,v_\nu)\}_{\nu=1}^\infty$ satisfying
\begin{equation}\label{pdr2}
u_\nu= \dv v_\nu\quad{\mbox{and}}\quad  (Du_\nu,\partial_t v_\nu)\in \U_\nu \;\; \mbox{a.e. in $\Omega_T$}
\end{equation}
such that the sequence $\{u_\nu\}_{\nu=1}^\infty$ is Cauchy   in $W^{1,1}(\Omega_T;\R^m).$
The limit function $u$ then yields  a Lipschitz weak solution of equation (\ref{DE}). Moreover, if the essential oscillation of $\{Du_\nu\}_{\nu=1}^\infty$ at each point in $\Omega_T$ is uniformly bounded below by a positive constant, the resulting solution $u$ is nowhere $C^1$ in $\Omega_T$.

We now elucidate the construction scheme for the sets $\{\U_\nu\}$ and the sequence $\{(u_\nu,v_\nu)\}.$
Suppose  that we are given  a function  $(u,v)\colon\Omega_T\to\R^m\times   \R^{m\times n}$  satisfying  $u=\dv v$ and $(Du,\partial_t v)\in \U$ almost everywhere in $\Omega_T,$ where $\U\subset \R^{m\times n}\times \R^{m\times n}$ is  a bounded  open set.
  To construct a new bounded open set $\tilde \U$  and a new function  $(\tilde u, \tilde v)$ satisfying
\begin{equation}\label{pert-00}
\tilde u=\dv \tilde v\quad \mbox{and} \quad (D\tilde u,\partial_t \tilde v)\in \tilde\U \;\;\mbox{a.e. in $\Omega_T,$}
 \end{equation}
with  $\tilde \U$ lying closer to the graph set $\K$ than $\U$ does (in a suitable sense), we introduce a perturbation of the form
\[
\tilde u =u +\varphi\quad \mbox{and} \quad \tilde v =v +\psi+g,
\]
where  $(\varphi,\psi,g)\in C^\infty_0(\Omega_T; \R^m\times \R^{m\times n}\times \R^{m\times n})$ is chosen to satisfy
\begin{equation}\label{pert-0}
 \dv \psi =0 \quad \mbox{and} \quad  \dv g=\varphi.
 \end{equation}
This guarantees $\tilde u=\dv \tilde v.$ To ensure
\[
(D\tilde u,\partial_t\tilde v)=(Du+D\varphi, \partial_t v +\partial_t\psi+\partial_t g)\in \tilde \U,
\]
the perturbations  $(D\varphi,\partial_t\psi)$ must be carefully designed to satisfy  \eqref{pert-0}, while $\|\partial_t g\|_{L^\infty}$ is kept sufficiently small.

Some  building blocks  for constructing  the set  $\tilde \U$ and the functions  $(\varphi,\psi,g)$  were previously developed by working with the full space-time gradients  of $(\varphi,\psi)$ and the associated so-called {\em $\tau_N$-configurations},  together with  a suitable anti-divergence operator that produces the {\em corrector} $g$ by solving  $\dv g=\varphi;$ see \cite{Ya20,Ya22,Ya23}. However, the heavy reliance on full-gradient structures in those works makes the resulting construction rather intricate.

In this paper, we investigate  new and simpler  building blocks motivated by perturbations  $(\varphi,\psi,g)$ of the form
\begin{equation}\label{block}
 \varphi= (a\cdot Dh)p,\;\;
  \psi=(a\cdot Dh)B-(BDh )\otimes a,\;\; \mbox{and}\;\;
  g=h p\otimes a,
\end{equation}
where $p\in \R^m,$ $a\in\R^n\setminus\{0\},$  $B\in\R^{m\times n}$ with $Ba=0$, and $h \in C^\infty_0(\Omega_T).$
  We focus on the case  of  {\em  localized plane-waves}  in which
\[
h= \zeta (x,t)f( a\cdot x+st),
\]
where $\zeta\in C^\infty_0(\Omega_T)$ is mostly  constant on its support and $f\in C^\infty(\R)$ is  highly-oscillatory  and periodic, with $f''$ mostly equal to, while remaining between,  two fixed constants. Under these choices,  the quantity   $(D\varphi,\partial_t\psi)$ will lie  in the neighborhood of a line-segment in the direction  $\gamma=(p\otimes a, B)$ (see Lemma \ref{lem-0}).   Using these directions $\gamma=(p\otimes a,B),$ we introduce the  {\em $\T_N$-configurations}  (see Definition \ref{def-tau-N}),  which are considerably simpler than  the  $\tau_N$-configurations studied in \cite{Ya20,Ya22,Ya23}.

Based on  these simpler  $\T_N$-configurations, we develop a  more efficient construction scheme  for the differential relation \eqref{pdr1}. We also reformulate  a  crucial structural hypothesis on the function $\sigma,$ termed  {\em Condition $O_N$} (see Definition \ref{O-N}), which substantially simplifies the earlier Condition $(OC)_N$ introduced in  \cite{Ya22,Ya23}.

The first main result of this paper concerns  the  initial and boundary  value problem (IBVP)
\begin{equation}\label{ibvp-0}
\begin{cases} \partial_t u=\dv \sigma(Du)  & \mbox{in $\Omega_T,$}\\
u=\bar u & \mbox{on $\partial'\Omega_T,$}
\end{cases}
\end{equation}
associated with equation  \eqref{DE},
where $\bar u\in W^{1,1}(\Omega_T;\R^m)$ is a given function representing the {\em initial-boundary datum}, and
\[
\partial'\Omega_T =(\Omega \times\{0\}) \cup [ \partial\Omega \times [0,T)]\subset \partial \Omega_T
\]
is the \emph{parabolic boundary} of $\Omega_T.$
 By a \emph{weak solution} to the IBVP  \eqref{ibvp-0}, we mean a  function  $u\in W^{1,1}(\Omega_T;\R^m)$ that is a weak solution  of equation \eqref{DE} and satisfies the initial-boundary condition $u=\bar u$ on $\partial'\Omega_T$ in the sense of traces in  $W^{1,1}(\Omega_T;\R^m).$

 In Theorem \ref{mainthm1}, we show that if $\sigma$ satisfies  Condition $O_N$, then the IBVP (\ref{ibvp-0})  (indeed,   even the   \emph{full Dirichlet problem} with $u=\bar u$ prescribed on the entire $\partial \Omega_T$) admits  infinitely many Lipschitz  weak solutions that are nowhere $C^1$, for certain  smooth  initial-boundary  data $\bar u.$ This demonstrates  the intrinsic irregularity of such  equations.

The second main result concerns  a special class of equations of the form \eqref{DE} with $m,n\ge 2,$ which arise as the {\em $L^2$-gradient flows} of   energy functionals of the form:
\begin{equation}\label{EN}
I(u)=\int_\Omega F(Du(x))\,dx,
\end{equation}
where $F\colon\R^{m\times n}\to\R$ is a $C^1$  function.
In this setting, the $L^2$-gradient flow of $I(u)$ is precisely equation \eqref{DE},  with  diffusion flux  $\sigma=DF.$

For such  gradient flows,  condition (\ref{f-mono}) is equivalent to the uniform strong {\em convexity} of   $F$, while condition (\ref{r-mono}) is equivalent to the uniform strong {\em  rank-one convexity} (or {\em Legendre--Hadamard condition}) of  $F.$
Moreover, between \eqref{r-mono} and \eqref{q-mono} lies another   important structural hypothesis, namely  the uniform strong {\em Morrey quasiconvexity} of   $F$, which can be expressed as
\begin{equation}\label{q-conv}
\int_\Omega [F(A+D\phi) -F(A)]\,dx \ge \frac{\nu}{2}\int_\Omega |D\phi|^2 dx\quad \forall\, A\in \R^{m\times n}, \;  \phi\in C^\infty_0(\Omega;\R^m).
\end{equation}
This condition is known to hold, in particular, if $F$ is  {\em strongly polyconvex} in the sense that
\begin{equation}\label{poly}
F(A)=\frac{\nu}{2}|A|^2 +G(J(A)),
\end{equation}
where $G(J)$ is a $C^1$  convex function of $J\in\R^q$ and  $J(A)\in\R^q$ denotes a fixed arrangement of  $q$ subdeterminants of $A\in \R^{m\times n}.$
 We refer the reader to \cite{AF84, Ba77, Da08, Ev86, Mo52} for further results and applications of these concepts in the calculus of variations and nonlinear elasticity.

Gradient flows associated with convex functionals can be analyzed using standard analytical techniques; see, for example, \cite{BDDMS20, KY25}. In contrast, the study of gradient flows for general {\em nonconvex} functionals,  particularly with respect to existence, uniqueness  and regularity,  remains considerably more challenging; see \cite{AGS05} for some general theory. The work \cite{ESG05} addresses a special polyconvex gradient flow in the class of diffeomorphism solutions.

In this paper, we show that for all $m,n\ge 2,$ there exist smooth, strongly polyconvex functions $F$ on $\R^{m\times n}$ such that  the initial and boundary value problem  for the gradient flow of  the energy functional $I(u)$  in \eqref{EN}  admits   both a smooth solution and   infinitely many Lipschitz  weak solutions that are nowhere $C^1$ for certain smooth initial-boundary data; see Corollary \ref{mainthm3}. This  significantly strengthens the results of \cite{MRS05},   where  nowhere $C^1$ Lipschitz  weak solutions $u$ with  zero initial-boundary condition were constructed for a parabolic system
\[
\partial_t u-\dv DF(Du) =f
\]
in the case $m=n=2,$  with a smooth strongly quasiconvex  $F\colon\R^{2\times 2}\to\R$  and a small H\"older forcing $f$ with $\|f\|_{C^\alpha}<\eta$ for given $\alpha,\eta \in (0,1).$  In contrast, our result applies to all $m, n\ge 2$, with a smooth strongly polyconvex   $F$ and the zero forcing $f\equiv 0$; this demonstrates a fundamental failure of both regularity and uniqueness, even in the setting of polyconvex gradient flows.

We conclude   by outlining the structure of the paper. The main results are stated in Section \ref{s-2}, with the necessary notations introduced in the subsequent sections. In Section \ref{s-3}, we present the definitions of $\T_N$-configurations and Condition $O_N$, along  with several key results concerning convex integration. Section \ref{s-4} is devoted to the proof of Theorem \ref{mainthm1}, which builds on the main stage theorem for convex integration (Theorem \ref{thm1}). In Section \ref{s-5}, we analyze  certain specific $\T_N$-configurations and establish the nondegeneracy conditions  essential for verifying Condition $O_N$; although this section is long, it  consists primarily of elementary setup and computations.  Finally, in Section \ref{s-6}, we construct smooth, strongly polyconvex functions $F$ on $\mathbb{R}^{2 \times n}$ for which the flux function $\sigma = DF$ satisfies Condition $O_5$, thereby completing the proof of Theorem \ref{mainthm2}.


\section{Statement  of Main Results}\label{s-2}

We state the main results below and refer the reader  to Section~\ref{s-3} for the relevant definitions and notations. The proofs of   Theorems~\ref{mainthm1} and \ref{mainthm2} are presented  in Sections~\ref{s-4} and \ref{s-6}, respectively.

\begin{theorem}\label{mainthm1} Let $\sigma\colon\R^{m\times n}\to \R^{m\times n}$ be locally Lipschitz and satisfy  Condition $O_N$  for some  $N\ge2,$ with $\Sigma(1)$ denoting the corresponding open set  as  in Definition \ref{O-N}.

Assume that $(\bar u, \bar v)\in C^1(\bar \Omega_T;\R^m\times\R^{m\times n})$ satisfies
\begin{equation}\label{subs}
\bar u=\dv \bar v \quad \mbox{and}\quad   (D\bar u,\partial_t \bar v)\in \Sigma(1) \;\;\mbox{on  $\bar \Omega_T.$}
\end{equation}
Then for any $\delta\in (0,1),$  the \emph{Dirichlet problem}
\begin{equation}\label{ibvp-5}
\begin{cases}  \partial_t u=\dv \sigma(Du )   & \mbox{in $\Omega_T,$}\\
 u  |_{\partial\Omega_T}=\bar u &
\end{cases}
\end{equation}  admits  a  Lipschitz weak solution  $u$ with  $\|u-\bar u\|_{L^\infty}+\|\partial_t u-\partial_t \bar u\|_{L^\infty} <\delta$  such that $Du$ is nowhere essentially continuous in  $\Omega_T.$

 In particular,  problem \eqref{ibvp-5}  possesses {\em infinitely many}  Lipschitz weak solutions  $u$ with $Du$  being nowhere essentially continuous  in  $\Omega_T.$

\end{theorem}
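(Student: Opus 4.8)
The plan is to deduce Theorem \ref{mainthm1} from a "main stage theorem" for convex integration, which I expect is stated later as Theorem \ref{thm1} and which should say, roughly: given a bounded open set $\U$ approximating $\K$ in the sense encoded by Condition $O_N$, and given a Lipschitz subsolution $(u,v)$ with $(Du,\partial_t v)\in\U$ a.e., one can perturb it, keeping the boundary values and $u=\dv v$, to obtain a new subsolution $(\tilde u,\tilde v)$ with $(D\tilde u,\partial_t\tilde v)$ lying in a smaller set $\tilde\U$, while controlling $\|\tilde u-u\|_{L^\infty}$ and $\|\partial_t\tilde v-\partial_t v\|_{L^\infty}$ by a prescribed small amount, and — crucially — forcing a definite amount of essential oscillation of $D\tilde u$ on a prescribed large-measure subset. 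The building blocks \eqref{block} with localized plane waves, together with the $\T_N$-configurations and the nondegeneracy built into Condition $O_N$, are exactly what make such a stage possible: each elementary perturbation pushes $(D\varphi,\partial_t\psi)$ near a segment in a direction $\gamma=(p\otimes a,B)$, and the $\T_N$-structure lets one chain $N$ such segments to move a point of $\U$ toward $\K$ while adding controlled gradient oscillation.

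The iteration I would run is the standard Baire-category / successive-perturbation scheme. Starting from $(\bar u,\bar v)$, which by \eqref{subs} satisfies $\bar u=\dv\bar v$ and $(D\bar u,\partial_t\bar v)\in\Sigma(1)$ on $\bar\Omega_T$, set $(u_0,v_0)=(\bar u,\bar v)$ and $\U_0=\Sigma(1)$. Inductively apply the stage theorem to produce $(u_{\nu+1},v_{\nu+1})$ agreeing with $(u_\nu,v_\nu)$ on $\partial\Omega_T$, satisfying $u_{\nu+1}=\dv v_{\nu+1}$ and $(Du_{\nu+1},\partial_t v_{\nu+1})\in\U_{\nu+1}$, with $\|u_{\nu+1}-u_\nu\|_{L^\infty}+\|\partial_t v_{\nu+1}-\partial_t v_\nu\|_{L^\infty}\le 2^{-\nu}\varepsilon_\nu$ for a rapidly decaying sequence $\varepsilon_\nu$ chosen so that the total is below $\delta$, and with $\U_{\nu+1}$ shrinking toward $\K$ (e.g. $\Sigma(2^{-\nu-1})$-type sets). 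One arranges that $\{u_\nu\}$ is Cauchy in $W^{1,1}(\Omega_T;\R^m)$ — here the control on $\partial_t$ differences plus a uniform Lipschitz bound and an $L^1$-interpolation argument on $Du_\nu$ (using weak-$*$ compactness of gradients and the fact that the target sets shrink) does the job — so that $u_\nu\to u$ with $u$ a Lipschitz weak solution of \eqref{DE} by \eqref{weak-sol}; the trace $u=\bar u$ on all of $\partial\Omega_T$ persists since every perturbation is compactly supported in $\Omega_T$. The oscillation estimates, accumulated over a suitable exhaustion of $\Omega_T$ by small parabolic cylinders and summed via Borel–Cantelli-type bookkeeping, give $\essosc_{Q}Du\ge c>0$ for every cylinder $Q$, i.e. $Du$ is nowhere essentially continuous. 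Finally, infinitely many such solutions arise because at the very first stage one has a free choice (direction, amplitude, location of the plane-wave perturbation), so one can produce a continuum of distinct $(u_1,v_1)$ and continue the iteration from each; alternatively, a Baire-category statement on the complete metric space of subsolutions shows the residual set of "wild" solutions is uncountable.

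The main obstacle is the stage theorem itself, and within it the verification that Condition $O_N$ supplies, around every point of $\U=\Sigma(t)$, a $\T_N$-configuration whose segments simultaneously (i) stay inside (a slightly enlarged) $\U$, (ii) have barycenter equal to the given point so that the perturbations can be glued with controlled $L^\infty$ size and zero mean effect on the constraint $u=\dv v$, (iii) move toward $\K$, and (iv) are nondegenerate enough — $Ba\ne0$, $p\ne0$, segments of definite length — to guarantee the uniform lower oscillation bound. Making (i)–(iv) compatible with the rigidity $\dv\psi=0$, $\dv g=\varphi$, and with the requirement that the corrector $g$ have small $\partial_t g$, is the technical heart; it is precisely what the nondegeneracy analysis of specific $\T_N$-configurations in Section \ref{s-5} is designed to handle. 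Everything after the stage theorem — the Cauchy-in-$W^{1,1}$ passage to the limit, trace preservation, the nowhere-$C^1$ conclusion from accumulated oscillation, and the multiplicity — is routine bookkeeping along the lines of \cite{KY15,Ya20,Zh06}.
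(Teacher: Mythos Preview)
Your overall architecture is right: the theorem is indeed derived from a stage theorem (Theorem \ref{thm1}) by iterating perturbations compactly supported in $\Omega_T$, keeping $u_\nu=\dv v_\nu$, pushing $(Du_\nu,\partial_t v_\nu)$ through a shrinking family of open sets $\Sigma^{r_\nu}(\lambda_\nu)$ with $\lambda_\nu\uparrow 1$, and passing to a limit. The nowhere-$C^1$ conclusion does come from forcing, on every small cube, positive-measure landing of $Du_\nu$ in each of the $N$ pairwise disjoint sets $\xi_k^1(\bar\B_{r_0})$; property (P1)(ii) of Condition $O_N$ supplies the uniform separation $d_0>0$.

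The genuine gap is your mechanism for making $\{u_\nu\}$ Cauchy in $W^{1,1}$. You write that ``a uniform Lipschitz bound and an $L^1$-interpolation argument on $Du_\nu$ (using weak-$*$ compactness of gradients and the fact that the target sets shrink) does the job.'' It does not. Uniform $L^\infty$ bounds give only weak-$*$ compactness of $Du_\nu$, and weak-$*$ convergence is useless for passing to the limit in $\int\langle\sigma(Du_\nu),D\varphi\rangle$ with nonlinear $\sigma$; moreover the target sets $\Sigma^{r_\nu}(\lambda_\nu)$ do not shrink to a point but to a nontrivial subset of the graph $\K$, so ``shrinking'' alone gives no strong convergence of $Du_\nu$. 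Convex integration typically manufactures precisely the kind of highly oscillatory, uniformly Lipschitz sequences for which weak and strong gradient convergence differ.

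What the paper actually does is build a quantitative $L^1$ estimate \emph{into} the stage theorem. Property (g) of Theorem \ref{thm1} asserts
\[
\|D\tilde u-Du\|_{L^1(G)}\le C_0\big[\,|F_0|+(\epsilon+(\mu-\lambda))|G|\,\big],\qquad F_0=\{(Du,\partial_t v)\notin S^r(\lambda)\},
\]
and this is not a soft compactness statement but a hard pointwise bookkeeping: on the ``good'' set $G_k=\{(Du,\partial_t v)\in S_k^r(\lambda)\}$ the stage is engineered (via Theorem \ref{lem1}(c), specifically the $\lambda'/\mu$ term) so that the new gradient lands near $\zeta_k(\mu,\rho)$, which differs from the old value $\zeta_k(\lambda,\rho)$ by $O(\mu-\lambda)$; on the complementary ``bad'' set $F_0$ one uses only the uniform bound, but property (e) guarantees $|F_0|$ is small at the \emph{previous} stage. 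Summing $\mu_\nu-\lambda_\nu=2^{-\nu}(1-\lambda_1)$ and $\epsilon_\nu=\delta/3^\nu$ then gives the Cauchy property. This ``memory'' feature --- that points already in leaf $S_k^r(\lambda)$ stay near leaf $S_k^s(\mu)$ after the stage, captured in the second line of property (f) --- is the non-routine ingredient you are missing, and it has to be designed into the stage theorem rather than recovered afterward by interpolation.

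A minor correction: the quantity controlled in $L^\infty$ at each stage is $\|\partial_t\tilde u-\partial_t u\|_{L^\infty}$, not $\|\partial_t\tilde v-\partial_t v\|_{L^\infty}$; indeed $\partial_t\psi$ is order one by construction, so the latter is not small.
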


\begin{remark}\label{rk-1}
From the definition of  Condition $O_N$  in Definition \ref{O-N}, clearly,  $(O,O)=\pi_1(0)\in S^{r_0}_1(0)\subset \Sigma(1);$ thus $(\bar u,\bar v)\equiv (0,0)$ satisfies  condition (\ref{subs}).

More generally, if $(A,B)\in \Sigma(1),$ with $A=(a_{ij})$ and $B=(b_{ij}),$ then the function $(\bar u,\bar v)$ satisfies  condition (\ref{subs}), where  $\bar u=(\bar u^1,\dots ,\bar u^m)\colon \Omega_T\to \R^m$ and $\bar v=(\bar v_{ij})\colon \Omega_T\to\R^{m\times n}$ are defined by
\begin{equation}\label{IB-0}
\bar u^i(x,t) =u^i_0(x)=\sum_{k=1}^n a_{ik}x_k\; \mbox{ and }\;  \bar v_{ij}(x,t)=\frac12 a_{ij}x_j^2 +t b_{ij} \quad (i=1,\dots ,m;\, j=1,\dots ,n).
\end{equation}
 With such  time-independent data $\bar u,$   problem (\ref{ibvp-5})  clearly admits the {\em stationary}   solution $u\equiv \bar u.$ However,  Theorem \ref{mainthm1} shows that the problem  also possesses  infinitely many {\em non-stationary} Lipschitz  weak solutions that are  nowhere $C^1$. These time-dependent solutions are of greater interest for the diffusion problem.
\end{remark}

\begin{theorem}\label{mainthm2}
 Let $n\ge 2.$  Then there exist   strongly polyconvex functions $F\colon \R^{2\times n}\to\R$ of the form
 \begin{equation}\label{poly-00}
  F(A)=\frac{\nu}{2} |A|^2 + G(A, \delta(A)) \quad \forall\,A=(a_{ij}) \in \R^{2\times n},
  \end{equation}
  where  $\nu>0,$  $G\colon \R^{2\times n}\times \R\to\R$  is  smooth  and  convex, and $\delta(A)= a_{11}a_{22}- a_{12} a_{21},$   such that  the function $\sigma=DF$ satisfies {\em  Condition $O_5$} and that the growth condition
\[
|D^kF(A)|\le C_k (|A|+1) \quad (A\in \R^{2\times n},\; k=1,2,\dots)
\]
holds for some constants $C_k>0.$
\end{theorem}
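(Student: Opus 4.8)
The plan is to construct $F$ explicitly by choosing the convex function $G$ so that $\sigma = DF$ interpolates a prescribed $\T_5$-configuration in the sense required by Condition $O_5$, with the quadratic term $\frac{\nu}{2}|A|^2$ built in from the outset to guarantee strong polyconvexity and linear growth of $D^kF$. Because the subdeterminant $\delta(A) = a_{11}a_{22} - a_{12}a_{21}$ is an affine function of $A$ along any rank-one line (more precisely, $t \mapsto \delta(A + t\, p\otimes a)$ is affine when $p\otimes a$ has rank one), the Hessian $D^2 G$ contributes nothing along rank-one directions; hence strong rank-one convexity of $F$ is automatic from the $\frac{\nu}{2}|A|^2$ term, and this is exactly the flexibility that allows a \emph{nonconvex but polyconvex} $F$ whose gradient flow is irregular. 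First I would recall from Section~\ref{s-5} the specific $\T_5$-configuration and the associated nondegeneracy conditions that were verified there; the directions $\gamma_i = (p_i\otimes a_i, B_i)$ and the points $A_i$, $\sigma_i = \sigma(A_i)$ of that configuration are the data I must match.

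Second, I would set up the matching problem. Condition $O_N$ (Definition~\ref{O-N}) requires, roughly, that on a neighborhood of a $\T_N$-configuration the graph $\K = \{(A,\sigma(A))\}$ admits a family of rank-one segments connecting consecutive states, together with the open set $\Sigma(1)$ lying in the appropriate "interior". Since $\sigma = DF$, matching the prescribed $\sigma_i = \sigma(A_i)$ means matching the gradient of $F$ at finitely many points and controlling $DF$ along the connecting segments. The natural device is to write $G(A,\delta) = \frac{\nu}{2}$-free part as a sum $G = G_1(\delta) + G_2(A)$ or, more flexibly, to take $G$ piecewise-quadratic-in-$\delta$ smoothed by mollification: on each relevant range of the scalar variable $\delta$ one prescribes $G''$ to take one of two fixed positive values (mirroring the "$f''$ mostly equal to two fixed constants" heuristic in the localized plane-wave block \eqref{block}), interpolated smoothly. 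Convexity of $G$ in $(A,\delta)$ is then immediate provided one also adds a small convex quadratic in $A$; linear growth of $D^k F$ follows because $G$ is eventually affine in $\delta$ outside a bounded set and $\delta(A)$ is quadratic, giving $|D^k G(A,\delta(A))| \le C_k(|A|+1)$ after the chain rule, while the $\frac{\nu}{2}|A|^2$ term contributes only to $k \le 2$.

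Third, I would carry out the verification of Condition $O_5$ for this $\sigma = DF$. This reduces to: (i) checking that the $\T_5$-configuration points $(A_i,\sigma_i)$ really lie on the graph, i.e., $\sigma_i = DF(A_i)$, which is a finite system of linear equations for the free parameters of $G$; (ii) checking the nondegeneracy/openness conditions established in Section~\ref{s-5} are stable under the chosen interpolation, so that a genuine open set $\Sigma(1)$ and the sets $S^{r_0}_1(0)$ etc.\ exist; and (iii) confirming the rank-one connecting segments stay inside the region where $\sigma$ has the prescribed structure — here the affineness of $\delta$ along rank-one lines is again what makes the segment computations tractable, since along such a segment $F$ restricts to a one-dimensional function whose second derivative is $\nu|p|^2|a|^2 + (\text{bounded})$, fully explicit. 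Dimension-lifting from $\R^{2\times 2}$ to $\R^{2\times n}$ for $n > 2$ is handled by letting $F$ depend only on the first $2\times 2$ block (plus $\frac{\nu}{2}$ times the norm of the remaining columns), so the $n=2$ construction embeds directly and growth/convexity are preserved.

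The main obstacle I anticipate is (iii) combined with the simultaneous demands of convexity of $G$ and exact matching of the $\T_5$ data: the $\T_5$-configuration prescribes $\sigma$ at five points with five connecting rank-one segments, and one must choose $G$ convex whose gradient threads all of these \emph{while} the segments remain in the "good" region — an over-determined-looking interpolation that only succeeds because the nondegeneracy conditions of Section~\ref{s-5} were tailored to leave enough room. Concretely, the hard estimate is showing that the oscillation of $DF$ forced by the two-value structure of $G''$ is consistent with the segment-ordering and the inclusion $\Sigma(1) \subset$ (interior of the laminate hull) required by Definition~\ref{O-N}; this is where I would spend the bulk of the effort, likely by first solving the matching problem with a piecewise-affine-in-$\delta$ derivative $G'$ and then mollifying, checking that the mollification perturbs all open conditions by an arbitrarily small amount. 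Everything else — strong polyconvexity, the growth bounds, and the reduction to the $n=2$ case — is then routine bookkeeping.
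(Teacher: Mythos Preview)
Your outline captures the broad shape, but two steps are misidentified or absent. Building $G$: matching $DF(A_i)=\sigma_i$ with $G$ convex is not ``a finite system of linear equations for the free parameters of $G$''; the real content is the system of strict inequalities \eqref{strict-ineq} in auxiliary scalars $c_i,d_i$, verified by an explicit numerical choice in Lemma~\ref{lem-N5}, after which $G$ is the mollification of a maximum of affine functions (Lemma~\ref{construct-conv}). Your proposal that the second $\delta$-derivative of $G$ take two fixed values conflates this with the plane-wave profile $f''$ in \eqref{block}, which belongs to the convex-integration construction of \emph{solutions} and plays no role in constructing $F$.

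The more serious gap is the missing perturbation step. The $F_0=\frac{\epsilon}{2}|A|^2+G(\tilde A)$ so obtained has specific Hessians $D^2F_0(\zeta_i^1(Z_0))=\epsilon I+d_iH_0$, and there is no reason these satisfy the nondegeneracy conditions \eqref{NC-1}--\eqref{NC-2}; yet these are precisely what make the Implicit Function Theorem produce the family $\rho\mapsto\xi_i(\rho)$ required by Definition~\ref{O-N} and what force the sets $S^r_i(\lambda)$ to be open in (P2). The paper resolves this by proving the nondegenerate set $\P_0$ is dense (Theorem~\ref{nondeg-thm}), picking $(H_1^0,\ldots,H_5^0,Z_0)\in\P_0$ nearby, and adding compactly supported convex bumps $V_{\tilde H_j,s_1}(A-\zeta_j^1(Z_0))$ to $F_0$ so that $D^2F(\zeta_j^1(Z_0))=H_j^0$ exactly while keeping $\tilde G$ convex (Proposition~\ref{prop-def-F}). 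Your ``stability under mollification'' does not supply this: the problem is not that mollification destroys an open condition, but that the unperturbed Hessians may already lie on the zero set of the polynomials $\mathcal J_i,\mathcal N_i$.
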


\begin{corollary}\label{mainthm3}
For all $m,n\ge 2$, there exist {smooth and strongly polyconvex}  functions $F$ on $\R^{m\times n}$  satisfying the growth condition
\begin{equation}\label{growth-00}
|D^kF(A)|\le C_k (|A|+1) \quad (A\in \R^{m\times n},\; k=1,2,\dots)
  \end{equation}
for some constants $C_k>0,$ such that the Dirichlet problem
\begin{equation}\label{ibvp}
\begin{cases}  \partial_t  u=\dv DF(D u )   & \mbox{in $\Omega_T,$} \\
 u  |_{\partial \Omega_T}=\bar u, &
 \end{cases}
\end{equation}
 with  certain smooth stationary weak solutions  $\bar u=u_0(x)$ of equation \eqref{DE},  possesses  infinitely many  Lipschitz  weak solutions that are nowhere $C^1$  in $\Omega_T.$
 \end{corollary}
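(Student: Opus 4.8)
The plan is to deduce Corollary~\ref{mainthm3} from Theorems~\ref{mainthm2} and \ref{mainthm1}: the former supplies, for every $n\ge2$, a smooth strongly polyconvex $F$ on $\R^{2\times n}$ whose flux $\sigma=DF$ satisfies Condition $O_5$ and the growth bound \eqref{growth-00}, while the latter converts Condition $O_N$ (any $N\ge2$) into the desired nowhere-$C^1$ Lipschitz weak solutions. I will first settle $m=2$ directly and then reduce $m\ge3$ to it by adjoining a purely quadratic block in the extra rows, which keeps the gradient flow block-diagonal.

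\emph{Case $m=2$.} Let $F\colon\R^{2\times n}\to\R$ be the function from Theorem~\ref{mainthm2}, so that $F$ is smooth and strongly polyconvex, $\sigma=DF$ satisfies Condition $O_5$, and \eqref{growth-00} holds. Since $5\ge2$, Theorem~\ref{mainthm1} applies. By Remark~\ref{rk-1}, fix any $(A,B)\in\Sigma(1)$ and let $w_0(x)=Ax$ together with the associated $\bar v$ from \eqref{IB-0}; then $(w_0,\bar v)\in C^1(\bar\Omega_T;\R^2\times\R^{2\times n})$ satisfies \eqref{subs}, and $w_0$ is a smooth stationary weak solution of $\partial_t u=\dv DF(Du)$ since $DF(Dw_0)\equiv DF(A)$ is constant while $\partial_t w_0\equiv0$. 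Applying Theorem~\ref{mainthm1} (with $\delta=\tfrac12$, say) yields infinitely many Lipschitz weak solutions $u$ of the Dirichlet problem \eqref{ibvp} with $\bar u=w_0$ and $Du$ nowhere essentially continuous in $\Omega_T$, hence nowhere $C^1$; the smooth solution $u\equiv w_0$ sits alongside these, exhibiting the failure of uniqueness.

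\emph{Case $m\ge3$.} Decompose $A\in\R^{m\times n}$ into its first two rows $A'\in\R^{2\times n}$ and its remaining rows $A''\in\R^{(m-2)\times n}$, and define
\[
\tilde F(A)=F(A')+\tfrac{\nu}{2}|A''|^2=\tfrac{\nu}{2}|A|^2+G(A',\delta(A')),
\]
with $\nu,G,\delta$ as in \eqref{poly-00}. Then $\tilde F$ is smooth and satisfies \eqref{growth-00} (with adjusted constants), and it is strongly polyconvex because $G(A',\delta(A'))$ depends convexly on the entries of $A'$, which are $1\times1$ minors of $A$, and on the $2\times2$ minor $\delta(A')$ of $A$. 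Writing $u=(u',u'')$ with $u'=(u^1,u^2)$, the flow $\partial_t u=\dv D\tilde F(Du)$ decouples into $\partial_t u'=\dv DF(Du')$ — exactly the $2\times n$ system of the previous case — together with the heat equations $\partial_t u^i=\nu\Delta u^i$, $i=3,\dots,m$. Choosing $u'$ to be any of the infinitely many nowhere-$C^1$ Lipschitz weak solutions built above, with boundary datum $w_0$, and setting $u''\equiv0$, the pair $u=(u',0)$ is a Lipschitz weak solution of the full system whose spatial gradient is nowhere essentially continuous, with $u|_{\partial\Omega_T}=(w_0,0)=:u_0$, a smooth stationary weak solution. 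Letting $u'$ vary over its infinitely many choices produces infinitely many such $u$.

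Given the two main theorems, the remaining work is essentially bookkeeping; the two points needing care are that $\tilde F$ stays strongly polyconvex after the extension — which reduces to the observation that a sum of strongly polyconvex functions sharing the quadratic part $\tfrac{\nu}{2}|\cdot|^2$ is again of the form \eqref{poly} — and that gluing the wild weak solution $u'$ of the $2\times n$ subsystem with the trivial solution $u''\equiv0$ of the decoupled heat equations really gives a weak solution of the full system in the sense of \eqref{weak-sol}, which one checks by testing componentwise against $\varphi=(\varphi',\varphi'')$ and invoking the block-diagonal structure of $D\tilde F$.
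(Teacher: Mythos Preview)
Your proposal is correct and follows essentially the same approach as the paper: apply Theorem~\ref{mainthm2} together with Theorem~\ref{mainthm1} (via Remark~\ref{rk-1}) to settle $m=2$, and for $m\ge3$ extend $F$ by a quadratic block $\tfrac{\nu}{2}|A''|^2$ in the remaining $m-2$ rows, so that the resulting gradient flow decouples and the pair $u=(u',0)$ inherits the nowhere-$C^1$ behavior from the $2\times n$ component. The paper's proof is organized identically, with only notational differences.
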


\begin{proof}
We first treat the case $m=2.$ Let  $\tilde F=F\colon \R^{2\times n}\to\R$ be a smooth and strongly polyconvex  function as in (\ref{poly-00}) of Theorem \ref{mainthm2}. Then $\sigma=D\tilde F$  satisfies Condition $O_5;$ accordingly, let $\Sigma(1)$ be  defined by the function $\sigma=D\tilde F$ as in Definition \ref{O-N}.

Fix any $(A,B)\in \Sigma(1)$ and define $(\bar u,\bar v)$ as  in \eqref{IB-0}.  Let  $\tilde u_*(x,t)=\bar u(x,t)=\tilde u_0(x)\in\R^2.$
Then, by Theorem \ref{mainthm1} and Remark \ref{rk-1},  the  Dirichlet  problem
\begin{equation}\label{DP}
\begin{cases}  \partial_t  \tilde u=\dv D\tilde F(D \tilde u )   \;\; \mbox{ \rm in $\Omega_T,$} \\
\tilde u  |_{\partial \Omega_T}=\tilde u_*
\end{cases}
\end{equation}
admits  infinitely many  Lipschitz weak solutions $\tilde{u}:\Omega_T\to \R^2$ that are nowhere $C^1$  in $\Omega_T.$
Thus,  the result holds when $m=2.$

Now assume $m\ge 3.$ Define $F\colon \R^{m\times n}\to\R$ by
\[
F(A)=\frac{\nu}{2}|A_2|^2 +\tilde F(A_1) \quad \forall\,A=\begin{bmatrix}A_1\\A_2\end{bmatrix},\;\; A_1\in \R^{2\times n},\;\; A_2\in \R^{(m-2)\times n},
\]
 and define $\bar u \colon \Omega_T\to\R^2\times \R^{m-2}=\R^m$  by
\begin{equation}\label{DP-2}
\bar u(x,t)=(\tilde u_*(x,t), \,0)=(\tilde u_0(x),\, 0):=u_0(x) \in \R^m.
\end{equation}
Then $\bar u$ is a smooth stationary weak solution  of equation \eqref{DE}. Moreover,
\[
DF(A)= \begin{bmatrix} D\tilde F(A_1)\\ \nu A_2 \end{bmatrix}\quad\text{and}\quad F(A)=\frac{\nu}{2}|A|^2 +G(A_1,\delta(A_1)).
\]
Hence  $F$ is of the form (\ref{poly}) and satisfies the growth condition \eqref{growth-00}.

 Let  $ u=(\tilde u,\, 0)\colon \Omega_T\to\R^2\times \R^{m-2}=\R^m,$ where $\tilde{u}:\Omega_T\to \R^2$ is a Lipschitz weak solution to problem \eqref{DP} that is nowhere $C^1$  in $\Omega_T.$ Then
\[
 \partial_t u=(\partial_t\tilde u, \,  0),   \quad Du=\begin{bmatrix} D \tilde u\\O\end{bmatrix}, \quad\mbox{and}\quad  DF(Du)=\begin{bmatrix} D\tilde F(D\tilde u)\\O\end{bmatrix}.
\]
Therefore,  $u$ is a Lipschitz weak solution  to problem (\ref{ibvp}) with initial–boundary datum $\bar u$ given by \eqref{DP-2}, and it is nowhere $C^1$  in $\Omega_T.$
 \end{proof}


\section{Convex Integration,  $\T_N$-configurations and  Condition $O_N$}\label{s-3}

We begin by introducing some notations. Let $q\ge1$ be an integer. We denote the Lebesgue measure of a measurable set $G\subset \R^q$  by $|G|=\mathcal L_q(G).$

Given a set {$S\subset\R^q$}  and a number $\epsilon>0$,  we denote by $S_\epsilon$   the $\epsilon$-neighborhood of $S;$ i.e.,
\[
S_\epsilon=\{\xi \in \R^q : \dist(\xi,S)<\epsilon\}
=\bigcup_{\eta\in S}\{\xi \in \R^q : |\eta-\xi|<\epsilon\}.
\]
Given $\xi,\eta\in \R^q$,  we denote  by $[\xi,\eta]=\{\lambda \xi+(1-\lambda)\eta: \lambda\in [0,1]\}$  the closed line segment {in $\R^q$} connecting $\xi$ and $\eta.$

Throughout this section,   we always assume that $m,n\ge 1$ and $N\ge2$ unless otherwise specified.

\subsection{Convex Integration Lemma} The following result provides the initial  building blocks  for  convex integration of the differential relation (\ref{pdr1}).

\begin{lemma}\label{lem-0} Let   $\gamma=(p\otimes a, B)\in  \R^{m\times n}\times \R^{m\times n},$ where
\[
p\in\R^m,\; \; a\in\R^n\setminus \{0\}, \; \;  B\in \R^{m\times n}  \quad\mbox{and}\quad  Ba=0.
\]
 If $0\le\lambda\le 1,$  then  for each  bounded open  set $G\subset \R^{n+1}$ and $0<\epsilon<1,$   there exists a  function  $(\varphi,  \psi,g) \in C^\infty_0(G;  \R^m\times  \R^{m\times n}\times  \R^{m\times n})$  with the following properties:
\begin{itemize}
\item[(a)]  $\dv\psi =0$, $\dv g =\varphi$ and $(D\varphi,\partial_t\psi)\in [-\lambda\gamma, (1-\lambda)\gamma]_\epsilon$  in $G;$
\item[(b)]  $\|\varphi\|_{L^\infty(G)} + \|\partial_t\varphi\|_{L^\infty(G)}+ \|\partial_tg\|_{L^\infty(G)} <\epsilon;$
\item[(c)] there exist two disjoint open  sets $G', G''\subset\subset G$ such that
\[
\begin{cases}
|G'| \ge (1-\epsilon) \lambda |G|, & (D\varphi,\partial_t\psi)=(1-\lambda) \gamma  \;\;  \text{\rm in $G'$;}  \\
|G''| \ge (1-\epsilon) (1-\lambda) |G|, & (D\varphi,\partial_t\psi)=-\lambda \gamma \; \;   \mbox{\rm in $G''$.}
\end{cases}
\]
\end{itemize}
\end{lemma}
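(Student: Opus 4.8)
The plan is to build the desired function $(\varphi,\psi,g)$ from the single scalar profile $h=\zeta(x,t)f(a\cdot x+st)$ described in the introduction, following the ansatz \eqref{block}. Concretely, I would first construct a suitable highly-oscillatory $1$-periodic $f\in C^\infty(\R)$ whose second derivative $f''$ takes the two values $1-\lambda$ and $-\lambda$ on disjoint sub-intervals of large combined measure (a smooth approximation of the periodic sawtooth/zigzag), so that $f''$ stays in $[-\lambda,1-\lambda]$ everywhere, and such that $\|f\|_{L^\infty}+\|f'\|_{L^\infty}$ can be made arbitrarily small by increasing the frequency (replace $f$ by $k^{-2}f(k\,\cdot)$; this scales $f''$ by $1$, $f'$ by $k^{-1}$, $f$ by $k^{-2}$). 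Then I would fix a cutoff $\zeta\in C^\infty_0(G)$ with $0\le\zeta\le1$ and $\zeta\equiv1$ on an open set $G_0\subset\subset G$ with $|G\setminus G_0|<\epsilon' |G|$, set $h=\zeta(x,t)\,k^{-2}f(k(a\cdot x+st))$ with $s$ to be chosen, and define
\[
\varphi=(a\cdot Dh)p,\qquad \psi=(a\cdot Dh)B-(BDh)\otimes a,\qquad g=h\,p\otimes a.
\]

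Next I would verify the algebraic identities. Since $g=h\,p\otimes a$, we have $\dv g = (a\cdot Dh)\,p = \varphi$ directly, and this holds for any scalar $h$, so (b)'s constraint $\dv g=\varphi$ and the structural requirement $\tilde u=\dv\tilde v$ are automatic. For $\dv\psi=0$: writing $\psi=(a\cdot Dh)B-(BDh)\otimes a$ and using $Ba=0$, a short computation with $\operatorname{div}$ acting row-wise shows the divergence of the first term is $(a\cdot D)(a\cdot Dh)\,$ contracted into $Ba$-type expressions and the divergence of $(BDh)\otimes a$ cancels it — this is the standard "Tartar" wave identity and I would carry it out carefully, but it is purely algebraic and uses only $Ba=0$. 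For (a)'s inclusion, I would compute $D\varphi$ and $\partial_t\psi$. On the region where $\zeta\equiv1$, $h=k^{-2}f(k(a\cdot x+st))$, so $a\cdot Dh=k^{-1}f'(k(\cdot))\,|a|^2$ — wait, more precisely $Dh = k^{-1}f'(k(a\cdot x+st))\,a$, hence $a\cdot Dh = k^{-1}|a|^2 f'$, and then $D\varphi = \big(D(a\cdot Dh)\big)\otimes p$-type term equals $|a|^2 f''(k(a\cdot x+st))\,(p\otimes a)$ plus lower-order ($k^{-1}$) terms from differentiating $f'$ in the remaining directions; similarly $\partial_t\psi$ produces $s\,|a|^{?}f''\,B$-type leading term. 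The point is to choose the wave speed $s$ so that the leading coefficients of $f''$ in $D\varphi$ and in $\partial_t\psi$ match and equal (a common multiple of) $\gamma=(p\otimes a,B)$; concretely one wants $\partial_t\psi = (\text{same scalar})\cdot B$ as $D\varphi=(\text{scalar})\cdot p\otimes a$, which pins down $s$ in terms of $a$ (this is exactly the role of the localized-plane-wave structure). After rescaling so the common scalar is $f''\in[-\lambda,1-\lambda]$, we get $(D\varphi,\partial_t\psi)=f''(k(a\cdot x+st))\,\gamma\in[-\lambda\gamma,(1-\lambda)\gamma]$ on $G_0$, and off $G_0$ the terms involving $D\zeta,\partial_t\zeta$ are bounded by $C(\zeta)\,(\|f'\|_{L^\infty}+\|f\|_{L^\infty})\le C k^{-1}$, hence $<\epsilon$ once $k$ is large; this gives the $\epsilon$-neighborhood inclusion on all of $G$.

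For (b), the bounds $\|\varphi\|_{L^\infty}+\|\partial_t\varphi\|_{L^\infty}+\|\partial_t g\|_{L^\infty}<\epsilon$ follow from the same scaling: $\varphi$, $\partial_t\varphi$, $\partial_t g$ are all controlled by $\|f\|_{L^\infty}+\|f'\|_{L^\infty}\le Ck^{-1}$ times constants depending on $\zeta,p,a,B,\lambda$ but not on $k$, so choose $k$ large. For (c), let $E^+,E^-\subset[0,1)$ be the (open, up to null sets) sets where $f''=1-\lambda$ and $f''=-\lambda$ respectively, with $|E^+|\ge(1-\epsilon')$ — wait, $|E^+|$ should be close to $\lambda$ and $|E^-|$ close to $1-\lambda$: indeed a periodic $C^1$ function with $\int_0^1 f''=0$ (automatic for periodic $f'$) and $f''\in\{1-\lambda,-\lambda\}$ a.e. must have $|E^+|=\lambda$, $|E^-|=1-\lambda$; smoothing costs an arbitrarily small amount. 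Pulling $E^\pm$ back under $x\mapsto k(a\cdot x+st)\bmod 1$ and intersecting with $G_0$, equidistribution of the linear flow (for generic $a$, or just by the explicit periodic structure and $|G\setminus G_0|$ small) shows the preimages $G',G''$ have measure at least $(1-\epsilon)\lambda|G|$ and $(1-\epsilon)(1-\lambda)|G|$ for $k$ large; on these sets $\zeta\equiv1$ and $f''$ is constant, so $(D\varphi,\partial_t\psi)$ equals exactly $(1-\lambda)\gamma$ resp. $-\lambda\gamma$. Shrinking slightly makes $G',G''$ open and compactly contained in $G$, and disjoint since $E^+,E^-$ are.

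The main obstacle I anticipate is twofold: (i) getting the wave-speed $s$ and the precise scalar normalization right so that $D\varphi$ and $\partial_t\psi$ have exactly the same $f''$-coefficient and it equals $\gamma$ up to a single common scalar — this requires bookkeeping the tensor identities $D\big((a\cdot Dh)p\big)$ and $\partial_t\big((a\cdot Dh)B-(BDh)\otimes a\big)$ and exploiting $Ba=0$ to kill the unwanted $(BDh)\otimes a$ contribution's time-derivative against the $(a\cdot Dh)B$ term's; and (ii) the measure estimate in (c), where I must control how well the oscillation $f(k(a\cdot x+st))$ equidistributes over the open set $G_0$ — cleanest is to note that on small coordinate cubes aligned so that $a\cdot x+st$ is (affinely) a coordinate, Fubini plus one-dimensional periodicity gives the count exactly, and a Vitali-type covering of $G_0$ by such cubes up to measure $\epsilon'|G|$ finishes it. Everything else is routine once $f$ is fixed.
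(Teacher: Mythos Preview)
Your overall architecture is the same as the paper's, and the construction of the periodic profile $f$, the cutoff $\zeta$, the equidistribution argument for (c), and the algebraic identity $\dv\psi=0$ are all fine. But part (b) has a genuine gap.

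You claim that $\partial_t\varphi$ is controlled by $\|f\|_{L^\infty}+\|f'\|_{L^\infty}=O(k^{-1})$. Compute it: on $\{\zeta\equiv1\}$, with $h=k^{-2}f(k(a\cdot x+st))$ one has $a\cdot Dh=k^{-1}|a|^2 f'$, so $\varphi=(a\cdot Dh)p=k^{-1}|a|^2 f' p$, and hence
\[
\partial_t\varphi = k^{-1}|a|^2\, f''\cdot ks\cdot p = s\,|a|^2 f''\,p,
\]
which is $O(1)$ in $k$ (since $f''$ is). Meanwhile, with your raw ansatz $\psi=(a\cdot Dh)B-(BDh)\otimes a$, the leading term of $\partial_t\psi$ is $s\,|a|^2 f''\,B$, while $D\varphi$ has leading term $|a|^2 f''\,p\otimes a$. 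Matching $(D\varphi,\partial_t\psi)$ to a scalar multiple of $\gamma=(p\otimes a,B)$ therefore forces $s=1$, and then $\partial_t\varphi$ is not small. You cannot simultaneously ``pin down $s$'' to match the coefficients and make $s$ small to control $\partial_t\varphi$.

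The paper resolves this by inserting a factor $1/s$ into $\psi$ (and harmless factors $|a|^{-2}$ throughout):
\[
\psi=\frac{1}{s}|a|^{-2}\big[(a\cdot Dh)B-(BDh)\otimes a\big].
\]
This does not disturb $\dv\psi=0$. Now the $s$ coming from $\partial_t$ of the plane wave cancels the prefactor $1/s$, so $\partial_t\psi=f''\zeta\,B+O(\delta)$ \emph{independently of $s$}, and $s$ becomes a free small parameter chosen so that $s\|\zeta f'' p\|_{L^\infty}<\epsilon/2$, which makes $\partial_t\varphi$ small. A minor side remark: $\dv\psi=0$ does not in fact use $Ba=0$; that hypothesis is needed instead to kill the would-be leading term of $(BDh)\otimes a$ in $\partial_t\psi$ (since $BDh=\delta\zeta f' Ba+\text{lower order}$).
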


\begin{proof}   If $\lambda=0$ or $\lambda=1$, the result follows by taking  $(\varphi,  \psi,g)\equiv 0$ and choosing  $G'=\emptyset$ or $G''=\emptyset,$ respectively.
Hence, in what follows, we assume $0<\lambda<1.$

Let $\zeta\in C^\infty_0(G)$ be such that
\[
\mbox{$0\le \zeta\le 1$ in $G,$  \;  $\zeta=1$ in some open set $\tilde G\subset\subset G$ \,with    $|\tilde G|\ge (1- \epsilon)^{1/3}|G|,$ }
\]
and let $I_1$ and $I_2$ be  two  disjoint closed intervals   in $(0,1)$ such that
\[
  |I_1| = (1-\epsilon)^{1/3} \lambda \quad\mbox{and}\quad
 |I_2| = (1-\epsilon)^{1/3} (1-\lambda).
\]
Select a function $q\in C^\infty_0(0,1)$ such that $-\lambda \le q \le 1-\lambda$ on $ (0,1)$ and
\[
  \{\tau\in (0,1):q(\tau)=1-\lambda\}=I_1,\quad
 \{\tau\in (0,1):q(\tau)=-\lambda\}=I_2,\quad
    \int_0^1 q(\tau)d\tau =0.
\]
Extend  $q$ to all of  $\R$ as a 1-periodic function; then $q\in C^\infty(\R).$
Define
  \[
  f(\tau)=\int_0^\tau\int_0^t q(s)\,ds\,dt-\tau \int_0^1\int_0^t q(s)\,ds\,dt\quad\forall\, \tau\in\R.
  \]
 Then $f\in C^\infty(\R)$ is  1-periodic and satisfies $f''(\tau)=q(\tau)$ for all $\tau\in\R;$ consequently,
 \[
   \{\tau\in (0,1):  f''(\tau)=1-\lambda\}=I_1 \quad\mbox{and}\quad  \{\tau \in (0,1):   f''(\tau)=-\lambda\}=I_2.
 \]

Let $s, \delta>0$ be numbers to be  chosen later, and let
\[
h =\delta^2 \zeta(x,t)f\Big(\frac{a\cdot x +st}{\delta}\Big ).
\]
  Then $
 Dh=\delta^2 fD\zeta + \delta  \zeta f' a.$ Define
 \[
\begin{cases} \varphi= |a|^{-2}(a\cdot Dh)p,\\
   \psi= \frac{1}{s}|a|^{-2}[ (a\cdot Dh)B-(BDh)\otimes a],\\
   g=|a|^{-2}hp\otimes a,
   \end{cases}
       \]
It is easily verified that
\[
 \begin{cases}
 \dv\psi=0,\;\; \dv g=\varphi,\\
 \varphi=\delta^2|a|^{-2}  (D\zeta \cdot a) fp +\delta  \zeta f' p,\\
 \psi=\frac{1}{s}[\delta^2|a|^{-2}   (D\zeta \cdot a)f B +\delta  \zeta f' B -\delta^2 f |a|^{-2}   (BD\zeta)\otimes a],\\
 g=\delta^2|a|^{-2} \zeta fp\otimes a,
 \end{cases}
\]
where, in the formula for $\psi$, we have used $Ba=0.$
Hence
\[
 \begin{split} D\varphi   & =   \delta^2|a|^{-2} fp\otimes D (D\zeta \cdot a) +\delta |a|^{-2}  (D\zeta \cdot a) f' p\otimes a \\
 &\quad  +\delta   f' p\otimes D\zeta + \zeta f''p\otimes a,\\
 \partial_t\varphi  & = \delta^2|a|^{-2}  (D\zeta \cdot a)_t fp+\delta |a|^{-2}  (D\zeta \cdot a) f' sp +\delta  \zeta_t f' p+ \zeta f'' s p,\\
 \partial_t g  & =    \delta^2|a|^{-2} \zeta_t  fp\otimes a+ \delta |a|^{-2} \zeta f' sp\otimes a,\\
\partial_t    \psi  &  =    \frac{\delta^2}{s} |a|^{-2}   (D\zeta \cdot a)_t f B+\delta |a|^{-2}   (D\zeta \cdot a)f'  B +\frac{\delta}{s} \zeta_t f' B+ \zeta f''  B  \\
    &\quad
  -\frac{\delta^2}{s} f |a|^{-2}   ((BD\zeta)\otimes a)_t -\delta  f'  |a|^{-2}   (BD\zeta)\otimes a.
 \end{split}
\]

We now select  $s\in (0,1)$ so that
$
s\|\zeta f''  p\|_{L^\infty(G)} <\epsilon/2.
$
Then
 \[
\begin{cases} \varphi =O(\delta),\; \partial_t\varphi  =\zeta f'' s p +O(\delta),\;
\partial_t g =O(\delta),\\
   D\varphi =f''\zeta p\otimes a +O(\delta),\;
\partial_t\psi  =f''\zeta B +O(\delta),
\end{cases}
  \]
 where $O(\delta)$'s denote the terms  such that $|O(\delta)|\le C\delta$ for  a uniform constant $C$ independent of $\delta\in (0,1).$ Thus
\[
(D\varphi,\partial_t\psi ) =f''\zeta (p\otimes a,B) +O(\delta) =f''\zeta \gamma +O(\delta).
\]
Since $f''\zeta \gamma \in [-\lambda\gamma,(1-\lambda)\gamma]$ in $G$, it follows that, for all sufficiently small $\delta>0,$
\[
 \begin{cases}
 \|\varphi\|_{L^\infty(G)} + \| \partial_t\varphi \|_{L^\infty(G)}+ \| \partial_t g \|_{L^\infty(G)}<\epsilon,\\
 (D\varphi,\partial_t\psi) \in [-\lambda\gamma,(1-\lambda)\gamma]_\epsilon \;\;\mbox{in $G.$}\end{cases}
 \]

Next, let $I_k^o$ be the interior of $I_k$ for $k=1,2.$ Let $\chi_k=\chi_{\tilde I_k}$  be the characteristic function  of the open set $
\tilde I_k=\bigcup\{j+ I_k^o: j\in \{0,\pm 1,\pm 2,\dots\}\}
$
on $\R,$  and define
  \[
  \tilde G_k^\delta= \Big \{y\in \tilde G: \,  \frac{\alpha\cdot y}{\delta}\in \tilde I_k \Big\}= \Big \{y\in \tilde G: \, \chi_k ( \frac{\alpha\cdot y}{\delta}) =1 \Big\},
  \]
where  $y=(x,t)$ and $\alpha=(a,s).$ Then $\tilde G_k^\delta$ is open for  $k=1,2$ and $\delta>0.$
Moreover,
\[
 (D\varphi,\partial_t\psi)|_{\tilde G_1^\delta} =f''\gamma  =(1-\lambda)\gamma \quad\text{and}\quad
  (D\varphi,\partial_t\psi)|_{\tilde G_2^\delta} =f''\gamma =-\lambda \gamma.
  \]
Let $\{\alpha_i\}_{i=1}^{n+1}$, with $\alpha_1= \frac{\alpha}{|\alpha|}$, be an orthonormal basis of $\R^{n+1}.$ Then, by the change of variables  $ y=\sum_{i=1}^{n+1} z_i\alpha_i$  and Fubini's  theorem, we  have that, with $\delta'=\delta/|\alpha|,$ for each $k=1,2,$
 \[
\begin{split}  |\tilde G_k^\delta|&=\int_{\tilde G}  \chi_k( \frac{\alpha\cdot y}{\delta})\,dy=\int_{\R^{n+1}} \chi_{\tilde G}(y) \chi_k( \frac{\alpha_1\cdot y}{\delta'})\,dy\\
&  =\int_{\R^{n+1}} \chi_{\tilde G'}(z) \chi_k( \frac{z_1}{\delta'})\,dz   =\int_{\R} \Big ( \int_{\R^n}\chi_{G'(z_1)}(z') \chi_k( \frac{z_1}{\delta'})\,dz'\Big) \,dz_1\\
& =\int_{\R} |G'(z_1)|\chi_k(\frac{z_1}{\delta'})\,dz_1,\end{split}
  \]
where $\tilde G'=\{z\in\R^{n+1} : y=\sum_{i=1}^{n+1} z_i\alpha_i\in \tilde G\}$ and  $ G'(z_1)= \{z' \in\R^n: (z_1,z')\in  \tilde G'  \}.$

As each $\chi_k$ is 1-periodic, it follows that  sequence $\{\chi_k(m\tau)\}_{m=1}^\infty$ weak* converges  in $L^\infty(\R)$ to the  constant  $C_k=\int_0^1 \chi_k(\tau)\,d\tau=|I_k|$.
So, taking $\delta=\delta_m=\frac{|\alpha|}{m}$, and letting $m\to\infty,$  we have that
\[
  |\tilde G_k^{\delta_m}| =\int_{\R} |G'(z_1)| \, \chi_k(m  z_1)\,dz_1 \to |I_k|  \int_\R |G'(z_1)|\,dz_1=|I_k||\tilde G'|=|I_k||\tilde G|\quad (k=1,2).
  \]
Since $  |I_1|=(1-\epsilon)^{1/3} \lambda, \, |I_2|=(1-\epsilon)^{1/3} (1-\lambda)$ and $|\tilde G|\ge (1-\epsilon)^{1/3}|G|,$ it follows that
 \[
  |I_1||\tilde G| >(1-\epsilon)  \lambda |G| \quad\text{and}\quad
 |I_2| |\tilde G| >(1-\epsilon) (1-\lambda)|G|.
 \]
Therefore,  we choose  a sufficiently small $\delta=\delta_m >0$ to  complete the proof,  by taking $G'=\tilde G_1^{\delta}$ and $G''=\tilde G_2^{\delta}.$
\end{proof}

\subsection{$\T_N$-configurations}

In view of  Lemma \ref{lem-0}, we define
\begin{equation}\label{set-G}
\Gamma=\big\{(p\otimes a, B):   p\in\R^m,  \;  a\in\R^n\setminus \{0\},\; B\in \R^{m\times n} \;\; \mbox{and  $Ba=0$}\big \}
\end{equation}
to be the   {\em wave cone} associated with the  differential relation (\ref{pdr1}).  Since $p\otimes a=|a|p\otimes \frac{a}{|a|},$ it is readily seen that $\Gamma$ is a closed cone.

When $n=2$, the set $\Gamma$ is equivalent to the  cone consisting of the zero matrix and all  rank-one  matrices   in $\R^{2m\times 2}.$ Indeed, if $a\in\R^2\setminus \{0\}$ and $ B\in \R^{m\times 2}$ satisfy $Ba=0,$ then  $B=q\otimes a^\perp$ for some $q\in\R^m,$ where $a^\perp=(-a_2,a_1)$ if $a=(a_1,a_2)\in\R^2.$ Hence,
\[
(p\otimes a, B)=(p\otimes a, q\otimes a^\perp)
\]
 can be identified with  the  matrix $ (p,q)\otimes a$ in $\R^{2m\times 2},$ where $p,q\in\R^m$ and $ a\in\R^2\setminus \{0\}.$

 \smallskip

The following definition generalizes the $T_N$-configurations studied in \cite{KMS03, MRS05, MSv03, Sz04, Ta93} and also  substantially simplifies the $\tau_N$-configurations introduced  in \cite{Ya20,Ya22,Ya23}.

\begin{definition}[$\T_N$-configuration] \label{def-tau-N}
Let $\xi_1,\dots,\xi_N\in\R^{m\times n}\times \R^{m\times n}.$

We say that the ordered $N$-tuple $(\xi_1,\dots ,\xi_N)$ is a  {\em  $\T_N$-configuration} in $\R^{m\times n}\times \R^{m\times n}$ provided that   there exist $\rho\in \R^{m\times n}\times \R^{m\times n}$, $\gamma_1,\dots,\gamma_N\in \Gamma$ and $\kappa_1,\dots,\kappa_N>1$ such that
\begin{equation}\label{sum-0}
 \gamma_1+\gamma_2+\dots+\gamma_N=0
\end{equation}
and
\begin{equation}\label{t-N}
\xi_1=\rho+\kappa_1\gamma_1 \quad\text{and}\quad  \xi_i=\rho+\gamma_1+\dots+\gamma_{i-1}+\kappa_i \gamma_i \quad  (2\le i\le N).
\end{equation}
In this case, define  $\pi_1=\rho$ and $ \pi_i=\rho+\gamma_1+\dots  +\gamma_{i-1}$ ($2\le i\le   N$), and set
\[
\T(\xi_1,\dots,\xi_N)=\bigcup_{j=1}^N [\xi_j,\pi_j] \quad \text{(\emph{See Figure \ref{fig1}}).}
\]
Moreover,  if $\mathcal E\subset \R^{m\times n}\times \R^{m\times n}$ and  $\xi_i\in \mathcal E$ for all $1\le i\le N,$   we say that $(\xi_1,\dots,\xi_N)$ is {\em supported on}  $\mathcal E.$
\end{definition}

\begin{figure}[ht]
\begin{center}
\begin{tikzpicture}[scale =1]

 \draw[thick] (-5,-2)--(1,-1);
  \draw(-5,-2) node[below]{$\xi_1$};
   \draw(1,-1) node[right]{$\pi_1=\pi_{N+1}=\rho$};
 \draw(-1,-3) node[below]{$\xi_N$};
 \draw[thick] (1,-1)--(3,1);
  \draw[thick] (-1,-3)--(1,-1);
  \draw[thick] (3,1)--(1,3);
  \draw[thick] (3,1)--(4,0);
      \draw[thick] (1,3)--(-2,4);
     \draw[thick] (1,3)--(4,2);
     \draw(4,0) node[right]{$\xi_{N-1}$};
     \draw(3,1) node[right]{$\pi_N$};
      \draw[thick] (-2,4)--(-4,3);
   \draw[thick] (-4,3)--(-5, 1);
     \draw[thick] (-5,1) --(-4.5,-0.5);
       \draw[thick] (-4.5,-0.5) --(-2,-1.5);
        \draw(-5,1) node[left]{$\pi_4$};
         \draw(-2,-1.5) node[below]{$\pi_2$};
   \draw[thick] (-4.5,-0.5) --(-7,0.5);
   \draw(-4.5,-0.5) node[below]{$\pi_3$};
   \draw[thick] (-5,1)--(-5.5,2.5);
    \draw(-5.5,2.5) node[left]{$\xi_3$};
   \draw(-7,0.5) node[below]{$\xi_2$};
   \draw[thick] (-4,3)--(-3,5);
   \draw[thick] (-2,4)--(0,5);
         \draw[fill] (-5,-2) circle (0.07);
          \draw[fill] (-1,-3) circle (0.07);
         \draw[fill] (4,0) circle (0.07);
         \draw[fill] (-7,0.5) circle (0.07);
         \draw[fill] (-5.5,2.5) circle (0.07);
            \draw[fill] (-3,5) circle (0.07);
              \draw[fill] (0,5) circle (0.07);
                \draw[fill] (4,2) circle (0.07);
\end{tikzpicture}
\end{center}
\caption{Illustration of a $\T_N$-configuration $(\xi_1,\xi_2,\dots ,\xi_N)$ determined by $\pi_1=\pi_{N+1}=\rho$ and $\gamma_i=\pi_{i+1}-\pi_i\in \Gamma$ $(1\le i\le N).$
The set $\T(\xi_1,\xi_2,\dots ,\xi_N)$ is the union of all the closed line segments shown.}
\label{fig1}
\end{figure}
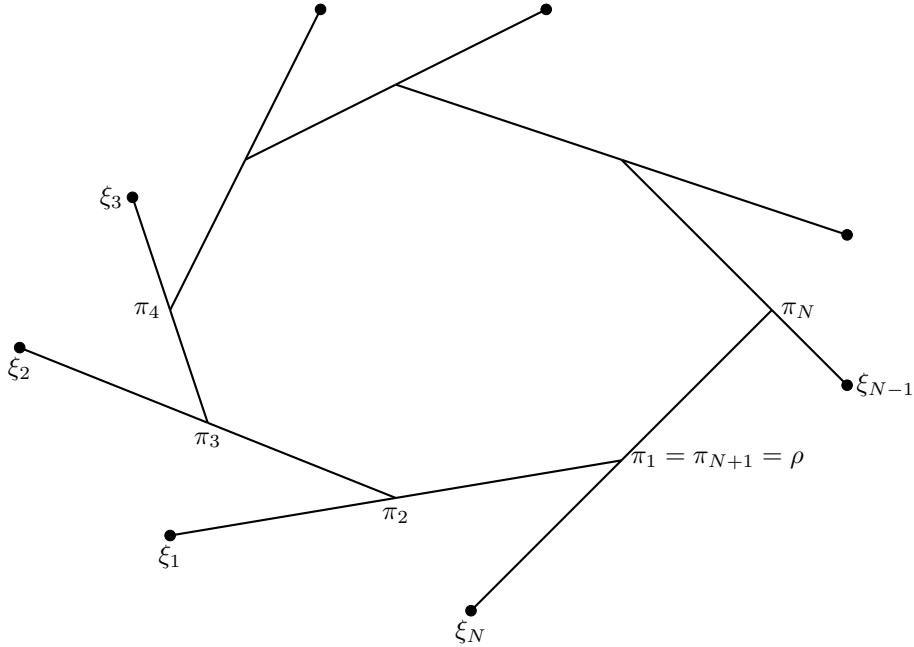

 \begin{remark}\label{rk-3}
Let $(\xi_1,\dots,\xi_N)$ be a $\T_N$-configuration  determined by
$(\rho,\,  \gamma_1,\dots,\gamma_N,  \kappa_1,\dots, \kappa_N)$  as  above.

(i) Define $\chi_i=1/\kappa_i\in (0,1)$ for $1\le i\le N,$  and  set $\pi_{N+1}=\pi_1=\rho.$ Then
\begin{equation}\label{t-N-2}
 \pi_{i+1}=  \chi_i \xi_i+(1-\chi_i)\pi_i
 \quad \forall\, 1\le i\le N.
 \end{equation}

(ii)  Assume $\chi_i<\lambda_i \le 1$ for  $1\le i\le N,$ and define $\xi_i'=\lambda_i\xi_i+(1-\lambda_i)\pi_i.$ Then  $(\xi'_1,\dots,\xi'_N)$ is  a $\T_N$-configuration,  determined  by $(\rho,\,  \gamma_1,\dots,\gamma_N, \lambda_1\kappa_1,\dots,\lambda_N\kappa_N).$ Moreover,
\[
\T(\xi_1',\dots,\xi_N')\subseteq \T(\xi_1,\dots,\xi_N).
\]

(iii) For each $1\le i\le N$,  consider the cyclically shifted  $N$-tuple
 \[
 (\xi_i, \,  \xi_{i+1},\,  \dots, \, \xi_{i+N-1}),
 \]
{\em where the indices are taken modulo  $N.$}  This is  also a $\T_N$-configuration, now determined  by
\[
( \pi_i,\; \gamma_i,\dots,\gamma_{i+N-1}, \kappa_i, \dots,\kappa_{i+N-1}),
\]
 with  indices  taken modulo $N.$
 In particular,  $ \T(\xi_1,\dots,\xi_N)=\T(\xi_i,\dots,\xi_{i+N-1})$ for all $1\le i\le N.$

   \end{remark}

The following elementary lemma  clarifies  some of the computations below based on (\ref{t-N-2});  its proof  follows directly by a simple cycle of iteration and is therefore  omitted.

\begin{lemma}\label{lem0} Let   $P_k$, $X_k$ and  $0<t_k<1$ be periodic on integers $k$ modulo  $N$ and satisfy the relation
\[
 P_{k+1}=t_kX_k+(1-t_k)P_k\quad  \forall\;k \; \mod N.
\]
 Then  $P_i=\sum_{j=1}^N \nu_{i}^{j}X_{j},$ where the coefficients $\nu_i^j,$  with $ i,j\; \mod N,$  are given  by
\begin{equation}\label{co-eff-0}
 \begin{split}    \nu_{i}^{i-1} & =\frac{t_{i-1}}{1-(1-t_1)\cdots(1-t_N)},\\
  \nu_{i}^{j} &  =\frac{(1-t_{i-1})(1-t_{i-2})\cdots (1-t_{j+1})t_{j}}{1-(1-t_1)\cdots(1-t_N)}\quad  (i-N\le j\le i-2). \end{split}
  \end{equation}

  Note that $\sum_{j=1}^N \nu_{i}^{j}=1$ for all $i\;\mod N.$
\end{lemma}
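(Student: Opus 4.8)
The plan is to prove Lemma \ref{lem0} by a direct iteration of the recurrence, starting from a fixed index and cycling once around all $N$ residues modulo $N$. The recurrence $P_{k+1}=t_kX_k+(1-t_k)P_k$ expresses $P_{k+1}$ as a convex combination of $X_k$ and $P_k$; substituting repeatedly, $P_i$ becomes a linear combination of $X_{i-1},X_{i-2},\dots,X_{i-N}$ together with a ``leftover'' term $(1-t_{i-1})(1-t_{i-2})\cdots(1-t_{i-N})P_i$, using periodicity to identify $X_{i-N}$ with $X_i$ and $P_{i-N}$ with $P_i$. Since each $t_k\in(0,1)$, the product $(1-t_1)\cdots(1-t_N)$ lies strictly between $0$ and $1$, so the coefficient of $P_i$ on the right side is strictly less than $1$, and we may solve for $P_i$, dividing by $1-(1-t_1)\cdots(1-t_N)$. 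This yields precisely the coefficients $\nu_i^j$ in \eqref{co-eff-0}.

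In more detail, I would first unfold one step: $P_i=t_{i-1}X_{i-1}+(1-t_{i-1})P_{i-1}$. Then unfold $P_{i-1}=t_{i-2}X_{i-2}+(1-t_{i-2})P_{i-2}$ and substitute, and continue for $N$ steps in total, at which point the running product of $(1-t_k)$ factors multiplies $P_{i-N}=P_i$. The emerging pattern is that the coefficient of $X_j$ (for $j$ ranging over $i-1,i-2,\dots,i-N$, read modulo $N$) is $(1-t_{i-1})(1-t_{i-2})\cdots(1-t_{j+1})\,t_j$, with the empty product convention giving coefficient $t_{i-1}$ for $X_{i-1}$; the residual term is $(1-t_{i-1})\cdots(1-t_{i-N})P_i=(1-t_1)\cdots(1-t_N)P_i$ by periodicity. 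Collecting the $P_i$ terms on the left and dividing through by $1-(1-t_1)\cdots(1-t_N)$ gives $P_i=\sum_{j}\nu_i^j X_j$ with the stated $\nu_i^j$. One should note that the index range $i-N\le j\le i-2$ in \eqref{co-eff-0} together with the separately listed case $j=i-1$ exhausts all $N$ residues modulo $N$, which matches the $N$ terms produced by the unfolding.

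Finally, the claim $\sum_{j=1}^N\nu_i^j=1$ follows either by induction on $N$ or, more transparently, by observing that the numerators telescope: setting $s_k=1-t_k$, the sum of numerators is $t_{i-1}+s_{i-1}t_{i-2}+s_{i-1}s_{i-2}t_{i-3}+\cdots+s_{i-1}\cdots s_{i-N+1}t_{i-N}$, and since $t_\ell=1-s_\ell$ each term $s_{i-1}\cdots s_{\ell+1}t_\ell=s_{i-1}\cdots s_{\ell+1}-s_{i-1}\cdots s_{\ell+1}s_\ell$ is a difference of consecutive partial products, so the whole sum collapses to $1-s_{i-1}s_{i-2}\cdots s_{i-N}=1-(1-t_1)\cdots(1-t_N)$; dividing by the same denominator gives $1$. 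Alternatively, this normalization is immediate from the fact that if all $X_k$ equal a common constant $c$, then $P_k\equiv c$ solves the recurrence, forcing the coefficients to sum to one.

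There is no real obstacle here; the lemma is purely algebraic and the only mild care needed is bookkeeping of indices modulo $N$ and checking that the denominator $1-(1-t_1)\cdots(1-t_N)$ is nonzero, which is guaranteed by $t_k\in(0,1)$. For this reason the paper rightly omits the proof, and a reader can reconstruct it from the one-line hint ``a simple cycle of iteration.'' In writing it out I would keep the unfolding explicit for two or three steps, then state the general pattern and the termination after $N$ steps, and close with the telescoping identity for the normalization.
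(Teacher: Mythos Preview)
Your proposal is correct and follows exactly the approach the paper indicates (``a simple cycle of iteration''): unfold the recurrence $N$ times modulo $N$, collect the resulting $(1-t_1)\cdots(1-t_N)P_i$ term on the left, and divide through. Your telescoping verification of $\sum_j\nu_i^j=1$ is a nice addition but not strictly required.
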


The following result provides  the building blocks for convex integration from a $\T_N$-configuration.

\begin{theorem} \label{convex-block-thm} Let   $(\xi_1,\xi_2,\dots  ,\xi_N)$ be a $\T_N$-configuration and
$
\eta= \lambda \xi_i +(1-\lambda) \pi_i$ for some $1\le i\le N$ and $0\le \lambda\le 1.$  Then $ \eta=\sum_{j=1}^N  \nu_j \xi_j,$ where
\[
\nu_i= \lambda +  (1-\lambda) \nu_{i}^i, \quad  \nu_j=(1-\lambda)  \nu_{i}^j\quad  \forall\,j\ne i,
\]
and $\{\nu_{i}^j\}_{j=1}^N$ are given by  formula \eqref{co-eff-0}  with $t_k=\chi_k$ $(1\le k\le N)$.

Moreover, for each bounded open set $G\subset \R^{n+1}$  and   $0<\delta<1,$  there exists  a  function  $(\varphi,  \psi,g) \in C^\infty_0(G; \R^m\times  \R^{m\times n}\times  \R^{m\times n})$   satisfying the following properties:
\begin{itemize}
\item[(a)] $\dv\psi=0$, $\dv g =\varphi$ and $\eta+(D\varphi,  \partial_t \psi)\in  [\T(\xi_1,\dots,\xi_N)]_\delta$ in $G;$
\item[(b)]  $\|\varphi\|_{L^\infty(G)}+\|\partial_t \varphi\|_{L^\infty(G)}+\|\partial_t g\|_{L^\infty(G)}<\delta;$
\item[(c)] there exist pairwise disjoint open  sets $G_1,\dots  ,G_N\subset\subset G$ such that
\[
\mbox{$\eta+ (D\varphi,  \partial_t \psi)=\xi_j$ in $G_j$ \, and \,
$|G_j|\ge (1-\delta) \nu_j |G|$}  \quad \forall\, 1\le j\le N;
\]
in particular, $|\cup_{j=1}^N G_j|=\sum_{j=1}^N  |G_j|\ge(1-\delta)|G|.$
\end{itemize}
 \end{theorem}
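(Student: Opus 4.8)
\textbf{Proof plan for Theorem \ref{convex-block-thm}.}

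\emph{Algebraic part.} The plan is to first establish the representation $\eta=\sum_{j=1}^N\nu_j\xi_j$ by direct computation. By Remark \ref{rk-3}(i), the points $\pi_1,\dots,\pi_N$ together with $\pi_{N+1}=\pi_1$ satisfy $\pi_{i+1}=\chi_i\xi_i+(1-\chi_i)\pi_i$, so Lemma \ref{lem0} with $P_k=\pi_k$, $X_k=\xi_k$, $t_k=\chi_k$ gives $\pi_i=\sum_{j=1}^N\nu_i^j\xi_j$ with the coefficients in \eqref{co-eff-0}, and $\sum_j\nu_i^j=1$. Then
\[
\eta=\lambda\xi_i+(1-\lambda)\pi_i=\lambda\xi_i+(1-\lambda)\sum_{j=1}^N\nu_i^j\xi_j=\bigl(\lambda+(1-\lambda)\nu_i^i\bigr)\xi_i+\sum_{j\ne i}(1-\lambda)\nu_i^j\xi_j,
\]
which is exactly the claimed formula; since $\sum_j\nu_j=\lambda+(1-\lambda)\sum_j\nu_i^j=\lambda+(1-\lambda)=1$ and each $\nu_j\ge 0$, the $\nu_j$ form a probability vector. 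This part is routine bookkeeping.

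\emph{Construction part.} The core of the proof is an iterated application of Lemma \ref{lem-0} along the ``staircase'' structure of the $\T_N$-configuration. The point $\eta$ lies on the segment $[\xi_i,\pi_i]$, and $\pi_i$ in turn is the convex combination $\pi_i=\chi_{i-1}\xi_{i-1}+(1-\chi_{i-1})\pi_{i-1}$, so $\pi_i$ lies on $[\xi_{i-1},\pi_{i-1}]$, and so on cyclically. The plan is to split $G$ in stages: first use Lemma \ref{lem-0} with $\gamma=\gamma_i$, $\lambda=1-\lambda$ (writing $\eta=\pi_{i+1}+((1-\lambda)-1)\gamma_i\cdot\text{(scaling)}$; more precisely $\eta+(D\varphi,\partial_t\psi)$ should reach $\xi_i$ on a set of relative measure close to $\nu_i$ and reach $\pi_{i+1}$ on the complement). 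Concretely, on $[\pi_i,\xi_i]$ one writes $\eta$ as a point of the segment $[-\lambda'\gamma_i,(1-\lambda')\gamma_i]_{\text{translated}}$ for an appropriate $\lambda'$ and invokes Lemma \ref{lem-0}(c) to produce a subset $G_i$ where the perturbed value equals $\xi_i$ with $|G_i|\gtrsim\nu_i|G|$, and a complementary subset where the perturbed value equals $\pi_{i+1}$; on this complementary set, relabel and repeat the construction for the segment $[\pi_{i+1},\xi_{i+1}]$, and so on cyclically through all $N$ segments, each time peeling off a piece landing on $\xi_j$. Because the cyclic relation $\pi_{i+N}=\pi_i$ closes up, after $N$ peeling steps we must continue iterating on the ever-shrinking ``not yet resolved'' region. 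To handle this I would iterate the whole cycle countably many times, each time with a geometrically shrinking error budget $\delta_1>\delta_2>\cdots$ summing to less than $\delta$; the measure of the unresolved region shrinks geometrically (by a factor bounded away from $1$ coming from $\min_k\chi_k>0$ wait --- actually from the measure estimates in Lemma \ref{lem-0}(c)), so the total measure captured by the $G_j$'s reaches $(1-\delta)|G|$. At each step the perturbations are supported in disjoint open subcubes, so they add up to a single $(\varphi,\psi,g)\in C^\infty_0(G)$; the constraints $\dv\psi=0$, $\dv g=\varphi$ are preserved under such disjoint superposition, and the $L^\infty$ bounds in (b) add up to at most $\sum_k\delta_k<\delta$. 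Property (a), namely $\eta+(D\varphi,\partial_t\psi)\in[\T(\xi_1,\dots,\xi_N)]_\delta$, follows because at every stage and on every subcube the value $\eta+(D\varphi,\partial_t\psi)$ lies within $\delta_k$ of the relevant segment $[\pi_j,\xi_j]\subset\T(\xi_1,\dots,\xi_N)$, by Lemma \ref{lem-0}(a).

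\emph{Main obstacle.} The principal difficulty is organizing the infinite iteration so that the three requirements --- the disjointness and measure lower bounds $|G_j|\ge(1-\delta)\nu_j|G|$ for the \emph{final} sets, the global smallness of $(\varphi,\psi,g)$ in $L^\infty$, and the $\delta$-neighborhood confinement --- hold \emph{simultaneously} with matching bookkeeping of the $\nu_j$'s. The subtlety is that the fraction captured on $\xi_j$ at the first pass is only $\chi_j$ of the current region (roughly), not $\nu_j$ of $|G|$; the geometric series over successive cyclic passes is precisely what resummands to $\nu_j$ --- matching Lemma \ref{lem0}'s coefficient formula. One must choose the error budgets $\delta_k$ small enough (relative to $\delta$, $N$, and $\min_j\nu_j$) at the outset so that after summing the geometric tails the aggregate defect in each $|G_j|$ stays below $\delta\nu_j|G|$; this is a standard but careful convex-integration stage argument. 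A clean alternative, which I would actually adopt to avoid the infinite iteration, is to apply Lemma \ref{lem-0} only $N-1$ times in a single finite cascade: at stage $j$ split the current region into a piece on $\xi_j$ of the \emph{exact} target fraction and a piece on $\pi_{j+1}$; after $N-1$ stages the last leftover region is already on $[\pi_N,\xi_N]$ and one more application with $\lambda=1$ finishes it. Tracking how the relative fractions compose gives exactly the $\nu_j$ from the algebraic part, and each stage contributes at most $\delta/N$ to the error, so (b) and (a) follow immediately. This finite version sidesteps the convergence issue entirely, so I expect it to be the cleanest route; the only thing to verify with care is that the composed fractions reproduce \eqref{co-eff-0}, which is exactly the content of Lemma \ref{lem0}.
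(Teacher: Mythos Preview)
Your algebraic part is correct and matches the paper exactly. Your overall strategy for the construction—iterating Lemma~\ref{lem-0} along the staircase of segments $[\pi_j,\xi_j]$—is also the paper's approach. But your preferred ``clean alternative'' of a single finite cascade of $N-1$ (or $N$) steps does \emph{not} work, and this is the essential point you are missing.

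The reason is that in Lemma~\ref{lem-0} the split ratio is \emph{not} a free parameter: it is dictated by where the current point sits on the segment. Concretely, from $\pi_1=\chi_N\xi_N+(1-\chi_N)\pi_N$ the only way to land on both $\xi_N$ and $\pi_N$ (the two points of $\T(\xi_1,\dots,\xi_N)$ on that line) is to put mass $\chi_N$ on $\xi_N$ and mass $1-\chi_N$ on $\pi_N$; any other choice sends the second endpoint off the $\T$-set and destroys property~(a). Proceeding backward through $\pi_N,\pi_{N-1},\dots,\pi_2$, after $N$ such steps you are back at $\pi_1$ with residual mass $\tau=(1-\chi_1)\cdots(1-\chi_N)>0$, not zero. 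The cycle never closes in finitely many steps; the coefficients $\nu_i^j$ in \eqref{co-eff-0} arise precisely as the geometric sum over infinitely many passes (the factor $1/(1-\tau)$). So you cannot ``peel off the exact target fraction $\nu_j$'' in one pass.

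The paper handles this by running the cycle a \emph{large but finite} number $\ell$ of times with a single fixed tolerance $\epsilon$, choosing first $\ell$ so large that $1-\tau^\ell\ge(1-\delta)^{1/2}$, and then $\epsilon$ so small that $(1+N\ell)\epsilon<\delta$ and $(1-\epsilon)^{1+\ell N}\ge(1-\delta)^{1/2}$. The leftover region after $\ell$ cycles has relative measure $\le\tau^\ell$, small enough to be absorbed into the $\delta$-defect. Your first option (a genuinely infinite iteration with summable $\delta_k$) is closer in spirit, but note it would require the infinite sum of nested $C^\infty_0$ perturbations to remain $C^\infty_0$; since the oscillation frequency in Lemma~\ref{lem-0} grows as the tolerance shrinks, higher derivatives are not uniformly controlled, and at points in the (measure-zero but nonempty) infinite intersection the sum need not be smooth. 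The finite-$\ell$ trick sidesteps this entirely.
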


\begin{proof}
  If $\lambda=1$, then $ \eta=\xi_i=  \sum_{j=1}^N  \nu_j \xi_j,$ where $\nu_i=1$ and $\nu_j=0$ ($j\ne i$). Moreover, (a)-(c) hold with $(\varphi,  \psi,g)\equiv 0,$ where  $G_j=\emptyset$ for all $j\ne i$ in (c).

Now  assume $0\le \lambda<1.$
 Without loss of generality,  we assume $i=1;$ namely, $\eta= \lambda \xi_1 +(1-\lambda) \pi_1.$

 Since $\pi_{k+1}=\chi_k\xi_k+(1-\chi_k)\pi_k$ for all $k \,\mod N$, by Lemma \ref{lem0},  we have  $\pi_1= \sum_{j=1}^N  \nu_{1}^j \xi_j$ and  thus
\[
 \eta= ( \lambda +  (1-\lambda) \nu_{1}^1  )\xi_1+ \sum_{j=2}^N  (1-\lambda)  \nu_{1}^j \xi_j =\sum_{j=1}^N \nu_j \xi_j,
 \]
where $\nu_1^j$ is defined by (\ref{co-eff-0}) with $t_k=\chi_k.$
Note that, with  $\tau=(1-\chi_1)\dots(1-\chi_N),$
\begin{equation}\label{co-eff-1}
\chi_N=\nu_1^{N}(1-\tau), \;\;  \chi_{N-j+1}(1-\chi_{N-j+2})\cdots   (1-\chi_N)  =\nu_{1}^{N-j+1}(1-\tau)\;\; (2\le j\le N).
\end{equation}

Let $G\subset \R^{n+1}$ be a bounded open set  and   $0<\delta<1$.  Assume that $\epsilon \in (0,\delta)$ is a number to be chosen later.

  Since $\gamma= \xi_1-\pi_1  = \kappa_1\gamma_1\in\Gamma,$ we have that $\eta+(1-\lambda) \gamma=\xi_1$ and $\eta-\lambda \gamma=\pi_1.$
Thus, applying Lemma \ref{lem-0} with  $0\le \lambda<1$,  $\gamma \in\Gamma$,  $G\subset \R^{n+1}$ and $\epsilon \in (0,\delta)$,  we obtain  a smooth function  $(\varphi_0,\psi_0,g_0)$ compactly supported in $G_0=G$ and  satisfying (a)--(c) of the lemma.  Hence
\begin{equation}\label{start-0}
\begin{cases}
\mbox{$\dv\psi_0=0$, $\dv g_0=\varphi_0$,  $
\eta+(D\varphi_0,\partial_t\psi_0) \in [\xi_1,\pi_1]_{\epsilon}$ in $G_0;$}\\
\|\varphi_0\|_{L^\infty(G_0)}+\|\partial_t \varphi_0\|_{L^\infty(G_0)}+\|\partial_t g_0\|_{L^\infty(G_0)}<\epsilon;\\
 \eta+(D\varphi_0,\partial_t\psi_0) =\eta+(1-\lambda) \gamma=\xi_1\; \mbox{in $G'_0;$}\\
 \eta+(D\varphi_0,\partial_t\psi_0) =\eta-\lambda \gamma=\pi_1\; \mbox{in $G_0'',$}
\end{cases}
\end{equation}
 where $G_0',G_0''\subset\subset G_0$  are pairwise disjoint open sets with
\begin{equation}\label{start-01}
  |G_0'|\ge (1-\epsilon) \lambda |G_0| \quad\mbox{and}\quad   |G_0''|\ge (1-\epsilon)(1-\lambda) |G_0|.
\end{equation}

 Let  $\tilde G_1=G_{11}=G_{0}''$; then $\eta+(D\varphi_0,\partial_t\psi_0) =\pi_1=\pi_{N+1}={\chi_N}\xi_N+(1-{\chi_N})\pi_N.$ Thus, applying Lemma \ref{lem-0}    with $\lambda= \chi_N\in (0,1)$, $\gamma=\xi_N-\pi_N=\kappa_N\gamma_N\in\Gamma$,   $G=G_{11}$ and $\epsilon\in(0,\delta)$,  we obtain a smooth  function $(\varphi_{11},\psi_{11},g_{11})$ compactly supported in $ G_{11}$ and satisfying (a)--(c) of the lemma. Hence,
  $\pi_1+(D\varphi_{11},\partial_t\psi_{11})  \in [\xi_N,\pi_N]_{\epsilon}$ in $ G_{11}$,
  \[
 \mbox{$\pi_1+(D\varphi_{11},\partial_t\psi_{11})  =\xi_N$ in $ G_{11}'$ \; and \; $\pi_1+(D\varphi_{11},\partial_t\psi_{11})=\pi_N$ in $ G_{11}'',$}
  \]
   where  $G_{11}',G_{11}''\subset\subset G_{11}$ are disjoint  open sets  with
\[
| G_{11}'|\ge (1-\epsilon) {\chi_N} | G_{11}| \quad \mbox{and}\quad | G_{11}''|\ge (1-\epsilon){(1-\chi_N)} | G_{11}|.
\]

Let $ G_{12}= G_{11}''$ in which $\pi_1+(D\varphi_{11},\partial_t\psi_{11})=\pi_N={\chi_{N-1}}\xi_{N-1}+(1-{\chi_{N-1}})\pi_{N-1}.$ Thus,  repeating the above procedure, we obtain  a smooth  function  $(\varphi_{12},\psi_{12},g_{12})$ compactly supported in $G_{12}$ such that
\[
\begin{cases} \pi_N+(D\varphi_{12},\partial_t\psi_{12}) \in [\xi_{N-1},\pi_{N-1}]_{\epsilon} \;\; \text{ in $G_{12}$,}\\
\pi_N+(D\varphi_{12},\partial_t\psi_{12}) =\xi_{N-1} \;\; \text{in $G_{12}'$, and }
\pi_N+(D\varphi_{12},\partial_t\psi_{12}) =\pi_{N-1} \;\; \text{in $G_{12}'',$}\end{cases}
 \]
  where $G_{12}',G_{12}''\subset\subset G_{12}$ are disjoint open sets  with
\[
\begin{split} & |G_{12}'|  \ge (1-\epsilon) {\chi_{N-1}} | G_{12}| \ge (1-\epsilon)^2  {\chi_{N-1}} (1-\chi_N) | G_{11}|,
\\
&| G_{12}''|\ge (1-\epsilon)(1- \chi_{N-1}) | G_{12}|\ge (1-\epsilon)^2  (1-\chi_{N-1})(1-  \chi_N ) | G_{11}|.\end{split}
\]

Continuing  this procedure $N$ times, we  obtain smooth functions  $(\varphi_{1j},\psi_{1j},g_{1j})$  compactly supported in $ G_{1j}= G_{1(j-1)}''$ ($1\le j\le N$), where $G_{10}''=G_0''$,  such that
\begin{equation}\label{scheme-1j}
\begin{cases}
\dv\psi_{1j}=0,\; \dv g_{1j}=\varphi_{1j},  \\
\mbox{$\pi_{N-j+2}+(D\varphi_{1j},\partial_t\psi_{1j}) \in [\xi_{N-j+1},\pi_{N-j+1}]_{\epsilon}$ in $G_{1j};$}\\
\| \varphi_{1j}\|_{L^\infty( G_{1j})}+\|\partial_t   \varphi_{1j}\|_{L^\infty( G_{1j})}+\|\partial_t  g_{1j}\|_{L^\infty( G_{1j})}<\epsilon;\\
\mbox{$\pi_{N-j+2}+(D\varphi_{1j},\partial_t\psi_{1j}) =\xi_{N-j+1}$ in $G_{1j}'$;}\\
\mbox{$\pi_{N-j+2}+(D\varphi_{1j},\partial_t\psi_{1j})=\pi_{N-j+1}$ in $G_{1j}'' $,}
 \end{cases}
\end{equation}
where  $G_{1j}',G_{1j}''\subset\subset G_{1j}$ are disjoint  open sets  with
\begin{equation}\label{sm-0}
\begin{split} & |G_{1j}'|  \ge  (1-\epsilon)^{j}  \chi_{N-j+1}(1-\chi_{N-j+2})\cdots   (1-\chi_N)  | G_{11}|,
\\
&| G_{1j}''|\ge  (1-\epsilon)^{j} (1-\chi_{N-j+1})\cdots   (1-\chi_N)  | G_{11}|.
\end{split}
\end{equation}
We then define  a function  $(\varphi_1,\psi_1,g_1)\in C^\infty_0( \R^{n+1}; \R^m\times  \R^{m\times n}\times  \R^{m\times n})$  by
\[
 \varphi_1=\varphi_{11}+\dots  +\varphi_{1N},\; \;  \psi_1=\psi_{11}+\dots  +\psi_{1N},\;\;\mbox{and}\;\; g_1=g_{11}+\dots  +g_{1N}.
\]
Then   $(\varphi_1, \psi_1, g_1)$ is  compactly supported in $\tilde G_1$ and satisfies
\begin{equation}\label{scheme-0}
\begin{cases}
 \mbox{$\dv \psi_1=0$, $\dv  g_1= \varphi_1$,  $\pi_1+(D \varphi_1,\partial_t \psi_1)\in [\T(\xi_1,\dots  ,\xi_N)]_{\epsilon}$ in $\tilde G_1;$} \\
 \mbox{$\| \varphi_1\|_{L^\infty(\tilde G_1)}+\|\partial_t \varphi_1\|_{L^\infty(\tilde G_1)}+\|\partial_t  g_1\|_{L^\infty(\tilde G_1)}<N\epsilon;$}\\
\mbox{$\pi_1+(D \varphi_1,\partial_t \psi_1) = \xi_{N-j+1}$ in $G_{1j}' \quad\forall\, 1\le j\le N;$}\\
 \mbox{$(D \varphi_1,\partial_t \psi_1) =0$ in $G_{1N}'',$}
 \end{cases}
\end{equation}
where, by (\ref{co-eff-1}) and (\ref{sm-0}),  $G_{11}',\dots  , G_{1N}',G_{1N}''\subset\subset\tilde G_1$ are  pairwise disjoint open and \begin{equation}\label{meas-G-j}
\begin{cases} | G'_{1j}|\ge (1-\epsilon)^j  (1-\tau)\nu_{1}^{N-j+1}  |\tilde G_1| \quad  (1\le j\le N), \\
 | G_{1N}''|\ge  (1-\epsilon)^{N} \tau |\tilde G_1|.
 \end{cases}
\end{equation}

We now iterate  the  scheme (\ref{scheme-0})   inductively to obtain  open sets
\[
\text{$\tilde G_{k}=  G_{ (k-1)N}''$, \; and \,  $G'_{k1},\dots  ,G'_{ kN}, G_{kN}''\subset\subset\tilde G_{k}$ disjoint,}
\]
 and smooth  functions  $(\varphi_{k}, \psi_{k}, g_k)$ compactly supported in  $\tilde G_{k},$ for all $k=1,2,\dots,$ such that
 \begin{equation}\label{scheme-1}
\begin{cases}
\mbox{$\dv \psi_{k} =0$, $\dv g_k= \varphi_{k}$, $\pi_1+(D \varphi_{k},\partial_t \psi_{k})\in [\T(\xi_1,\dots  ,\xi_N)]_{\epsilon}$ in $\tilde G_{k};$}\\
\| \varphi_k\|_{L^\infty(\tilde G_{k})}+\|\partial_t  \varphi_k\|_{L^\infty(\tilde G_{k})}+\|\partial_t  g_k\|_{L^\infty(\tilde G_{k})}<N\epsilon;\\
\mbox{$\pi_1+(D \varphi_{k},\partial_t \psi_{k})= \xi_{N-j+1}$ in $G_{kj}'$ and}  \\
 |G'_{kj}|\ge (1-\epsilon)^j  (1-\tau)\nu_{1}^{N-j+1}  | \tilde G_{k}|
\quad \forall j=1,\dots  ,N;\\
 \mbox{$(D \varphi_{k},\partial_t \psi_{k})=0$ in $G_{kN}''$ and  $|G_{kN}''|\ge (1-\epsilon)^{N} \tau | \tilde G_k|.$}
\end{cases}
\end{equation}

Note that for $k=1,2,\dots ,$
\[
 |\tilde G_k|  =|G_{(k-1)N}''|\ge (1-\epsilon )^{N} \tau |\tilde G_{k-1}|\ge\dots  \ge  (1-\epsilon)^{(k-1)N} \tau^{k-1} |\tilde G_1|;
\]
 hence,    for all $k=1,2,\dots  $ and $1\le j\le N,$
\begin{equation}\label{meas-G}
 |G'_{kj}|   \ge (1-\epsilon)^j  (1-\tau)\nu_{1}^{N-j+1}  | \tilde G_{k}|
   \ge (1-\epsilon)^{j+(k-1)N} (1-\tau)\nu_{1}^{N-j+1}   \tau^{k-1}  |\tilde G_1|.
      \end{equation}

 Let  $\ell\ge 2$  be an integer to be  chosen later.  Define
\[
(\tilde\varphi,\tilde\psi,\tilde g ) =\sum_{k=1}^\ell ( \varphi_{k}, \psi_{k}, g_k) \quad\mbox{and}\quad V_j = \cup_{k=1}^\ell G'_{ kj}  \quad \forall\,1\le j\le N.
\]
Then $(\tilde\varphi,\tilde\psi,\tilde g)$ is smooth and compactly supported in $\tilde G_1$ and satisfies that
\[
\begin{cases}
\mbox{$\dv\tilde\psi=0,\; \dv\tilde g =\tilde\varphi$,  $\pi_1+(D\tilde \varphi,\partial_t\tilde \psi)\in [\T(\xi_1,\dots  ,\xi_N)]_{\epsilon}$ in $\tilde G_{1}$;}
\\
\|\tilde\varphi \|_{L^\infty(\tilde G_{1})}+\|\partial_t \tilde \varphi \|_{L^\infty(\tilde G_1)}+\|\partial_t \tilde g \|_{L^\infty(\tilde G_1)}<N \ell \epsilon;  \\
  \mbox{$\pi_1 +  (D \tilde  \varphi, \partial_t\tilde \psi )= \xi_{N-j+1}$ in $V_j$ for each $1\le j\le N,$}
    \end{cases}
  \]
where, by (\ref{start-01}) and (\ref{meas-G}), for each $ 1\le j\le N,$
\begin{equation}\label{scheme-3}
\begin{split}  |V_j|    &= \sum_{k=1}^\ell  | G'_{ kj} |   \ge \sum_{k=1}^\ell (1-\epsilon)^{j+(k-1)N} (1-\tau)\nu_{1}^{N-j+1}   \tau^{k-1}  |\tilde G_1|
 \\
 & \ge (1-\epsilon)^{ \ell N}  (1-\tau)\nu_{1}^{N-j+1}  \Big (\sum_{k=1}^\ell   \tau^{k-1} \Big)  |\tilde G_1|   \\
 &= (1-\epsilon)^{ \ell N} (1-\tau^{\ell})\nu_{1}^{N-j+1} |\tilde G_1| \\ &\ge  (1-\epsilon)^{1+ \ell N} (1-\tau^{\ell})\nu_{1}^{N-j+1} (1-\lambda) | G|.
  \end{split}
\end{equation}

We now select  a  sufficiently large $\ell\ge 2$ such that
\begin{equation}\label{lg-1}
1-\tau^{\ell }\ge (1-\delta)^{1/2},
\end{equation}
and  a sufficiently small $ \epsilon \in (0,\delta)$ such   that
\begin{equation}\label{sm-1}
(1+N\ell)\epsilon <\delta \quad\mbox{and} \quad  (1-\epsilon)^{1+ \ell N} \ge (1-\delta)^{1/2}.
\end{equation}

Finally, we define
\[
\begin{cases} (\varphi,\psi,g)=(\tilde \varphi,\tilde \psi,\tilde g)+(\varphi_0,\psi_0,g_0),\\
G_1=G'_0 \cup V_N, \;\;\; G_j=V_{N-j+1} \quad  (2\le j\le N).\end{cases}
\]
Then $(\varphi,\psi, g)$ is smooth and compactly supported in $G$ and
\[
\begin{cases}
 \dv\psi=0,\; \dv g=\varphi,\\
 \eta+(D \varphi,\partial_t \psi)\in [\T(\xi_1,\dots  ,\xi_N)]_{\epsilon} \subset [\T(\xi_1,\dots  ,\xi_N)]_{\delta}\;\mbox{ in $G;$}\\
\| \varphi \|_{L^\infty(G)}+\|\partial_t  \varphi \|_{L^\infty(G)}+\|\partial_t  g \|_{L^\infty(G)}<(1+N\ell)\epsilon<\delta;\\
 \mbox{$\eta+ (D \varphi ,\partial_t \psi )  =\xi_j$ in $G_j$ for each $1\le j\le N.$}
  \end{cases}
\]

Moreover, from   (\ref{start-01}), (\ref{scheme-3}), (\ref{lg-1}) and (\ref{sm-1}), it follows that
\[
\begin{split} |G_1| =|G_0'|+|V_N| & \ge (1-\epsilon) \lambda |G|+(1-\epsilon)^{1+\ell N} (1-\tau^{\ell})\nu_{1}^{1} (1-\lambda) |G|\\
&\ge (1-\delta) ( \lambda +(1-\lambda)\nu_1^1)|G| =(1-\delta) \nu_1|G|;
\\
 |G_j| = |V_{N-j+1}| &\ge  (1-\epsilon)^{1+\ell N} (1-\tau^{\ell})\nu_{1}^{j} (1-\lambda) |G|\\
&\ge (1-\delta)  (1-\lambda)\nu_1^j |G| =(1-\delta) \nu_j |G|  \quad \forall\,2\le j\le N.
\end{split}
\]

Therefore, assertions (a)--(c) of the theorem follows.  This completes the proof.
\end{proof}


The following result is interesting in its own right, as it shows that  $\T_N$-configurations are incompatible with the quasimonotonicity condition.

\begin{proposition}  Let  $\sigma\colon \R^{m\times n}\to \R^{m\times n}$ be  continuous with graph
$\K.$

If there exists a $\T_N$-configuration $(\xi_1,\dots  ,\xi_N)$  supported on  $\K$  such that
\[
\T(\xi_1,\dots,\xi_N) \setminus \K\ne \emptyset,
\]
   then $\sigma$ cannot satisfy  the quasimonotonicity condition \eqref{q-mono} for any $\nu>0.$
\end{proposition}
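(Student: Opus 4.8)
The plan is to argue by contradiction, showing that quasimonotonicity forces all of $\T(\xi_1,\dots,\xi_N)$ to lie on the graph $\K$, contrary to hypothesis. Suppose $\sigma$ satisfies \eqref{q-mono} for some $\nu>0$, and pick $\eta\in\T(\xi_1,\dots,\xi_N)\setminus\K$. Writing $\eta=\lambda\xi_i+(1-\lambda)\pi_i$ with $1\le i\le N$ and $\lambda\in[0,1]$, the inclusion $\xi_i\in\K$ together with $\eta\notin\K$ forces $\lambda<1$, so Theorem \ref{convex-block-thm} expresses $\eta=\sum_{j=1}^N\nu_j\xi_j$ with all $\nu_j>0$ and $\sum_j\nu_j=1$. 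I will write $\xi_j=(A_j,C_j)$ (so $C_j=\sigma(A_j)$) and $\eta=(\bar A,\bar C)$, where $\bar A=\sum_j\nu_jA_j$, $\bar C=\sum_j\nu_jC_j$, and $\bar C\ne\sigma(\bar A)$ because $\eta\notin\K$. Set $Q:=\sum_j\nu_j|A_j-\bar A|^2$. The first observation is that $Q>0$: if $Q=0$ then $A_j=\bar A$ for every $j$, whence $\bar C=\sum_j\nu_j\sigma(A_j)=\sigma(\bar A)$, contradicting $\eta\notin\K$.

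Next I would exploit the quadratic form $\Lambda(Z_1,Z_2):=\langle Z_1,Z_2\rangle$ on $\R^{m\times n}\times\R^{m\times n}$, which vanishes on the wave cone: for $\gamma=(p\otimes a,B)\in\Gamma$ one has $\Lambda(\gamma)=\langle p\otimes a,B\rangle=\langle p,Ba\rangle=0$. The goal here is the null-Lagrangian identity $\sum_j\nu_j\langle A_j-\bar A,\,C_j\rangle=0$. The cleanest route is to feed the perturbation $(\varphi,\psi,g)$ supplied by Theorem \ref{convex-block-thm} (on any bounded open $G\subset\R^{n+1}$, with parameter $\delta\in(0,1)$) into this structure: since $\dv\psi=0$ and $\varphi$ is compactly supported, integration by parts gives $\int_G\langle D\varphi,\partial_t\psi\rangle\,dx\,dt=-\int_G\langle\varphi,\partial_t(\dv\psi)\rangle\,dx\,dt=0$; the integrand equals $\langle A_j-\bar A,\,C_j-\bar C\rangle$ on each $G_j$ (where $(D\varphi,\partial_t\psi)=\xi_j-\eta$) and stays uniformly bounded on the leftover set of measure $\le\delta|G|$, so letting $\delta\to0$ gives $\sum_j\nu_j\langle A_j-\bar A,\,C_j-\bar C\rangle=0$; the $\bar C$-term then drops because $\sum_j\nu_j(A_j-\bar A)=0$. (Alternatively, one may obtain the identity purely algebraically from $\Lambda|_\Gamma\equiv0$ by telescoping the lamination relations $\pi_{j+1}=\chi_j\xi_j+(1-\chi_j)\pi_j$ through Lemma \ref{lem0}, using $\Lambda(\xi_j-\pi_j)=\kappa_j^2\Lambda(\gamma_j)=0$.) Writing $L:=\sum_j\nu_j\langle C_j,\,A_j-\bar A\rangle$, this says $L=0$.

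The third step is to apply quasimonotonicity on time slices. Fix a ball $B\subset\subset\Omega$ and set $G:=B\times(0,1)$; for each $\delta\in(0,1)$, Theorem \ref{convex-block-thm} produces $(\varphi,\psi,g)\in C^\infty_0(G;\R^m\times\R^{m\times n}\times\R^{m\times n})$ with $\eta+(D\varphi,\partial_t\psi)=\xi_j$ on pairwise disjoint open $G_j\subset\subset G$, $|G_j|\ge(1-\delta)\nu_j|G|$, and $(D\varphi,\partial_t\psi)\in[\T(\xi_1,\dots,\xi_N)]_\delta$ throughout $G$. For each $t\in(0,1)$ we have $\varphi(\cdot,t)\in C^\infty_0(B;\R^m)$, so \eqref{q-mono} (with $A=\bar A$, extending $\varphi(\cdot,t)$ by zero to $\Omega$) gives $\int_B\langle\sigma(\bar A+D_x\varphi),D_x\varphi\rangle\,dx\ge\nu\int_B|D_x\varphi|^2\,dx$; integrating in $t$,
\[
\int_G\langle\sigma(\bar A+D_x\varphi),\,D_x\varphi\rangle\,dx\,dt\ge\nu\int_G|D_x\varphi|^2\,dx\,dt.
\]
On $G_j$ one has $D_x\varphi=A_j-\bar A$ and $\sigma(\bar A+D_x\varphi)=C_j$, while on $G\setminus\bigcup_jG_j$ both quantities are bounded uniformly in $\delta$ (the set $[\T(\xi_1,\dots,\xi_N)]_1$ is bounded and $\sigma$ is continuous), so passing to the limit $\delta\to0$ yields $L\ge\nu Q$. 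Together with $L=0$ and $Q>0$ this gives $0\ge\nu Q>0$, the desired contradiction; hence $\sigma$ can satisfy \eqref{q-mono} for no $\nu>0$.

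I expect the one point genuinely needing care to be the passage from the space-time objects of Theorem \ref{convex-block-thm} to the spatial test condition \eqref{q-mono} — resolved by slicing in $t$, applying \eqref{q-mono} on the fixed ball $B\subset\subset\Omega$ slicewise, and integrating back in $t$. Everything else — the measure and $L^\infty$ bounds that render the leftover regions negligible in the limit, and the null-Lagrangian computation — is routine bookkeeping.
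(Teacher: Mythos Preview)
Your proof is correct and shares the same skeleton as the paper's: both apply Theorem~\ref{convex-block-thm} on a space--time domain, integrate the quasimonotonicity inequality \eqref{q-mono} over time slices, and exploit the null-Lagrangian identity $\int_G\langle D\varphi,\partial_t\psi\rangle=0$ coming from $\dv\psi=0$. The difference lies in the endgame. You pass directly to the limit $\delta\to 0$ in the averaged inequality, extracting $L\ge\nu Q$ alongside the separately established $L=0$ and $Q>0$ for an immediate contradiction. The paper instead uses the same ingredients to show $\int_{\Omega_T}|D\varphi_\delta|^2\,dy\to 0$, then argues via continuity of $\sigma$ and Jensen's inequality (together with $\int_{\Omega_T}\partial_t\psi_\delta\,dy=0$) that $\sigma(\bar A)=\bar C$, contradicting $\eta\notin\K$. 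Your route is slightly more direct, bypassing the secondary dominated-convergence/Jensen step; the paper's argument is marginally more robust in that it never needs the precise limit $|G_j|/|G|\to\nu_j$ (which you implicitly use---it follows from the lower bounds $|G_j|\ge(1-\delta)\nu_j|G|$ and disjointness, but is worth stating), only that the residual set has vanishing measure.
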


  \begin{proof} Suppose on the contrary that $\sigma$ satisfies condition (\ref{q-mono}) for some $\nu>0.$

  Let $\eta=(A,B)\in \T(\xi_1,\dots,\xi_N)\setminus \K.$
For  each $\delta\in (0,1)$, let $(\varphi_\delta,  \psi_\delta,g_\delta)$ be the function  determined in Theorem \ref{convex-block-thm} with $G=\Omega_T$.
Let $V=\cup_{i=1}^N G_i$ and write $y=(x,t).$
Since  $\sigma (A+D\varphi_\delta)=B+\partial_t \psi_\delta$ on $V$, condition (\ref{q-mono}) implies  that
  \[
\begin{split} \nu\int_{\Omega_T} |D\varphi_\delta|^2dy
&\le   \int_{\Omega_T} \langle \sigma (A+D\varphi_\delta ), D\varphi_\delta \rangle dy\\
&  =   \int_{V} \langle \sigma (A+D\varphi_\delta ), D\varphi_\delta \rangle dy    +   \int_{\Omega_T\setminus V} \langle \sigma (A+D\varphi_\delta ), D\varphi_\delta \rangle dy
\\
&=   \int_{V} \langle B+\partial_t \psi_\delta, D\varphi_\delta \rangle dy    +   \int_{\Omega_T\setminus V} \langle \sigma (A+D\varphi_\delta ), D\varphi_\delta \rangle dy\\
& =   \int_{\Omega_T} \langle B+\partial_t \psi_\delta, D\varphi_\delta \rangle dy    +   \int_{\Omega_T\setminus V} \langle \sigma (A+D\varphi_\delta )-(B+\partial_t\psi_\delta ), D\varphi_\delta \rangle dy\\
& = \int_{\Omega_T\setminus V} \langle \sigma (A+D\varphi_\delta)-(B+\partial_t\psi_\delta), D\varphi_\delta\rangle dy
   \le M |\Omega_T\setminus V|\le C \delta,\end{split}
  \]
 where we have used that $\dv \psi_\delta=0$ on $\Omega_T,$  so that
\[
 \int_{\Omega_T} \langle B+\partial_t \psi_\delta, D\varphi_\delta \rangle dy = \int_{\Omega_T} \langle  \partial_t \psi_\delta, D\varphi_\delta \rangle dy=  \int_{\Omega_T}  \dv \psi_\delta \cdot \partial_t \varphi_\delta\, dy=0,
 \]
 and also that the family $\{[\T(\xi_1,\dots,\xi_N)]_\delta\}_{0<\delta<1}$ is uniformly bounded. Hence,
\[
\lim_{\delta\to 0^+} \int_{\Omega_T} |D\varphi_\delta(y)|^2dy=0,
\]
 which implies
 \[
\lim_{\delta\to 0^+}  \int_{\Omega_T}  |\sigma (A)-(B+\partial_t\psi_\delta)|\, dy=\lim_{\delta\to 0^+}  \int_{\Omega_T}  |\sigma (A+D\varphi_\delta)-(B+\partial_t\psi_\delta)|\, dy=0.
 \]

  On the other hand, since $\int_{\Omega_T}  \partial_t\psi_\delta \,dy=0$,  Jensen's inequality gives
\[
|\sigma (A)- B |=\frac{1}{|\Omega_T|} \int_{\Omega_T}  |\sigma (A)- B |\, dy \le \frac{1}{|\Omega_T|}  \int_{\Omega_T}  |\sigma (A)-(B+\partial_t\psi_\delta)|\, dy,
\]
so that $|\sigma (A)- B |\to 0$ as $\delta\to 0^+.$ Thus  $\sigma(A)=B,$  which contradicts the choice $\eta=(A,B) \notin \mathcal K.$
\end{proof}


\subsection{Condition  $O_N$}

For each $\rho\in\R^{m\times n}\times \R^{m\times n}$ and $r>0$, we write $\rho=(\rho^1,\rho^2),$ where $\rho^1,\rho^2\in \R^{m\times n},$ and denote by  $\B_r(\rho)$ and  $\bar \B_r(\rho)$ the open and  closed  balls in  $\R^{m\times n}\times \R^{m\times n}$  {with center $\rho$ and radius  $r,$}   respectively. In particular, we write $\B_r=\B_r(0)$ and  $\bar\B_r=\bar \B_r(0)$ for all $r>0.$

\begin{definition}\label{O-N}
Let  $\sigma\colon \R^{m\times n}\to \R^{m\times n}$ be  a continuous  function with  graph  $\K.$

\smallskip

(I) We say that $\sigma$ satisfies  \emph{Condition $O_N$}  provided that there exist  numbers  $r_0>0$  and $0<\delta_1<\delta_2<1$  and continuous functions
\begin{equation}\label{fun-x-g}
  (\kappa_i,\gamma_i)    \colon \bar \B_{r_0}   \to [1/\delta_2, 1/\delta_1] \times \Gamma\quad  (1\le i\le N)
\end{equation}
with
\begin{equation}\label{satisfy-g}
\gamma_1(\rho) +\dots+\gamma_N(\rho) =0\quad \forall\, \rho\in \bar \B_{r_0}
\end{equation}
such that letting
\[
\begin{cases}
\pi_1(\rho)=\rho, \;\;
\pi_i(\rho) =\rho+\gamma_1(\rho)+\dots  +\gamma_{i-1}(\rho) \;\;  (2\le i\le N), \\
\xi_i (\rho) =   \pi_i(\rho) +\kappa_i(\rho) \gamma_i(\rho)  \;\;  (1\le i\le N)\quad\forall\, \rho\in \bar \B_{r_0},
\end{cases}
\]
the following properties hold:

\begin{itemize}
\item[(P1)]
\begin{enumerate}
\item[(i)] $\xi_i(\rho)\in \K \quad\forall\, \rho\in\bar \B_{r_0}\;\; (1\le i\le N);$
\item[(ii)] $\xi_i^1(\bar \B_{r_0})\cap \xi^1_j(\bar \B_{r_0})=\pi_i^1(\bar \B_{r_0})\cap \pi^1_j(\bar \B_{r_0})=\emptyset\;\; (1\le i\ne j\le N).$
\end{enumerate}

\vspace{1ex}

\item[(P2)]    For $1\le i\le N$, $0<r\le r_0$ and $0\le \lambda\le 1,$   define
\[
 S^r_i(\lambda) =\{\lambda \xi_i(\rho) +(1-\lambda)\pi_i(\rho): \rho\in \B_{r} \}.
 \]
 Then there exists  $\delta_0\in (  \delta_2,1)$ such that   for each $0<r\le r_0$ and  $\lambda\in  \{0\}\cup [\delta_0,1),$  the sets
$\{S^r_i(\lambda)\}_{i=1}^N$  are open and pairwise disjoint.
\end{itemize}

\vspace{1ex}

(II) Assuming that $\sigma$ satisfies Condition $O_N,$ we introduce several additional sets.

For each  $1\le i\le N,$ $0<r\le r_0$ and $0\le \lambda\le 1,$   define
\begin{equation}\label{new-set-L}
L^r_i(\lambda)=\bigcup_{\substack{(\alpha,\beta)\in S^r_i(\lambda)\times S^r_i(0) \\ \alpha-\beta\in \Gamma}} [\alpha,\beta],
\end{equation}
where $\Gamma$ is the closed wave cone defined by \eqref{set-G}. Also, for each $0<r\le r_0$ and $0\le \lambda\le 1,$ define
\[
 S^r(\lambda)=\bigcup_{i=1}^N S^r_i(\lambda) \quad\text{and} \quad L^r(\lambda)=\bigcup_{i=1}^N L^r_i(\lambda).
\]
Finally,  for each $0<r\le r_0$ and $\delta_0\le \lambda<1,$ define
\[
 \Sigma^r(\lambda)  = \bigcup_{\delta_0\le \lambda' \le \lambda} L^r(\lambda'),
 \]
and take
 \[
 \Sigma(1)=\bigcup_{\delta_0\le \lambda <1}  \Sigma^{r_0}(\lambda).
\]
\end{definition}

 \begin{remark}\label{remk33}    (i)
 Condition $O_N$  substantially simplifies the Condition $(OC)_N$ in \cite{Ya22,Ya23}. In particular, property   (P2)  now requires only  the openness of   $\{S^r_i(\lambda)\}_{i=1}^N$ for $0<r\le r_0$ and   $\lambda\in  \{0\}\cup [\delta_0,1).$ In addition, the sets  $\Sigma^r(\lambda)$   introduced here,  built on  the new sets $L_i^r(\lambda)$ in \eqref{new-set-L} (motivated by   \cite{MRS05}),
differ significantly  from the sets $\Sigma(\lambda)$ in \cite[Definition 2.2]{Ya23}. Moreover, the sets $\Sigma^r(\lambda)$   are automatically  open for all $0<r\le r_0$ and $\delta_0\le \lambda<1$ (see Proposition \ref{prop-cp} below).

 \smallskip

(ii) We record  below several useful observations based on Definition \ref{O-N}.  Let $\sigma\colon \R^{m\times n}\to \R^{m\times n}$ satisfy Condition $O_N$ as above.
\begin{enumerate}
 \item[(a)]  For each $1\le i\le N,$ $0\le\lambda\le 1$ and $\rho\in\bar\B_{r_0},$ let
\begin{equation}\label{chi-i}
 \chi_i(\rho)= {1}/{\kappa_i(\rho)}\in[\delta_1,\delta_2] \quad\text{and}\quad \zeta_i(\lambda,\rho) =\lambda \xi_i(\rho) +(1-\lambda)\pi_i(\rho).
\end{equation}
Then $\pi_{i+1}(\rho)=\chi_i(\rho)\xi_i(\rho)+(1-\chi_i(\rho))\pi_i(\rho).$
Moreover, if $0<r\le r_0$ and $\rho\in \B_r,$  then $\zeta_i(\lambda,\rho)\in S^r_i(\lambda),$ $\pi_i(\rho)\in S^r_i(0),$ and $\zeta_i(\lambda,\rho)-\pi_i(\rho)=\lambda\kappa_i(\rho)\gamma_i(\rho) \in \Gamma;$ hence,
\[
[\zeta_i(\lambda,\rho),\, \pi_i(\rho)]  \subset L^r_i(\lambda).
\]

\item[(b)]  Let $\delta_2<\lambda\le 1,$ $0<r\le r_0$ and $\rho\in \B_{r}.$  Then the points $X_j=\zeta_j(\lambda,\rho)\in S^r_j(\lambda)$ ($1\le j\le N$) satisfy that
\begin{equation}\label{eq-0}
 \pi_i(\rho) =\sum_{j=1}^N  \nu_i^j(\lambda,\rho) X_j \quad \forall \, 1\le i\le N,
\end{equation}
where  the coefficients   $\nu_i^j(\lambda,\rho)=\nu_i^j$ $(1\le i, j\le N)$ are given by formula (\ref{co-eff-0}) with $t_k= {\chi_k(\rho)}/{\lambda}.$  Since ${\delta_1}/{\lambda}\le t_k\le   {\delta_2}/{\lambda}<1$ for each $1\le k\le N,$   it follows that  for each $1\le i\le N,$
\begin{equation}\label{eq-1}
\sum_{j=1}^N  \nu_i^j(\lambda,\rho) =1, \quad\mbox{and}\quad  \nu_i^j(\lambda,\rho)  \ge  (\lambda-\delta_2)^{N-1}\delta_1 \quad \forall\, 1\le  j \le N.
\end{equation}
Furthermore,  $(X_1,\dots,X_N)$ is a $\T_N$ configuration supported on $S^r(\lambda)$ with
\[
\T(X_1,\dots,X_N)\subset L^r(\lambda).
\]
 \end{enumerate}

\end{remark}

The following result establishes additional significant consequences of Condition $O_N$.

\begin{proposition}\label{prop-cp} Let $\sigma$ satisfy Condition $O_N$ as in Definition \ref{O-N}.
\begin{itemize}
\item[(i)]    If $\delta_0\le \lambda<1$ and $0< r\le r_0,$  then the following sets are all open:
\[
 L^r_i(\lambda) \;    (1\le i\le N),\; \; L^r(\lambda),\;\;  \Sigma^r(\lambda) \;\; \text{and} \; \; \Sigma(1).
 \]

 \item[(ii)]  If  $1\le k\le N$ and $0< r<s \le r_0,$ then
\[
\overline{S_k^r(\lambda)}\subset S_k^s(\lambda) \quad\forall\, 0 \le\lambda\le 1, \quad\text{and}\quad \overline{\Sigma^r(\lambda)}\subset \Sigma^s(\lambda) \quad\forall\, \delta_0\le \lambda<1.
\]
\item[(iii)] If $\Delta\subset \Sigma(1)$ is a compact set, then $\Delta\subset \Sigma^{r}(\lambda)$ for some $r\in (0, r_0)$ and $\lambda\in [\delta_0,1).$
\end{itemize}
\end{proposition}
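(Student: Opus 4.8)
The plan is to establish the three assertions in order, the recurring ingredients being that $\Gamma$ is a closed cone and that, by property (P2), the sets $S^r_i(0)$ and $S^r_i(\lambda)$ for $\delta_0\le\lambda<1$ are open. For \emph{(i)}, the only real task is the openness of each $L^r_i(\lambda)$; once that is available, $L^r(\lambda)=\bigcup_{i=1}^N L^r_i(\lambda)$ is open as a finite union, $\Sigma^r(\lambda)=\bigcup_{\delta_0\le\lambda'\le\lambda}L^r(\lambda')$ is open as a union of open sets (each such $\lambda'$ lies in $[\delta_0,1)$), and $\Sigma(1)=\bigcup_{\delta_0\le\lambda<1}\Sigma^{r_0}(\lambda)$ is open for the same reason. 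To prove $L^r_i(\lambda)$ open I would fix $z_0\in L^r_i(\lambda)$, write $z_0=t\alpha+(1-t)\beta$ with $t\in[0,1]$, $\alpha\in S^r_i(\lambda)$, $\beta\in S^r_i(0)$ and $\alpha-\beta\in\Gamma$ as in \eqref{new-set-L}, choose $\epsilon_0>0$ with $\B_{\epsilon_0}(\alpha)\subset S^r_i(\lambda)$ and $\B_{\epsilon_0}(\beta)\subset S^r_i(0)$, and note that for every $v$ with $|v|<\epsilon_0$ the translated pair $(\alpha+v,\beta+v)$ still satisfies $(\alpha+v)-(\beta+v)=\alpha-\beta\in\Gamma$; hence $[\alpha+v,\beta+v]\subset L^r_i(\lambda)$, so $z_0+v\in L^r_i(\lambda)$, and therefore $\B_{\epsilon_0}(z_0)\subset L^r_i(\lambda)$.

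For \emph{(ii)}, the inclusion $\overline{S^r_k(\lambda)}\subset S^s_k(\lambda)$ is a continuity statement: the map $\zeta_k(\lambda,\cdot)$ sends the compact set $\bar\B_r\subset\B_s$ onto the compact, hence closed, set $\zeta_k(\lambda,\bar\B_r)\subset\zeta_k(\lambda,\B_s)=S^s_k(\lambda)$, which contains $S^r_k(\lambda)$. For $\overline{\Sigma^r(\lambda)}\subset\Sigma^s(\lambda)$ I would trap $\Sigma^r(\lambda)$ inside a compact set: let $\widehat\Sigma^r(\lambda)$ be built exactly like $\Sigma^r(\lambda)$ but with the \emph{closed} ball $\bar\B_r$ in place of $\B_r$ in the parametrization of the $S^r_i$'s; then $\widehat\Sigma^r(\lambda)$ is the image of $\{(\lambda',i,\rho_1,\rho_2,t):\delta_0\le\lambda'\le\lambda,\ 1\le i\le N,\ \rho_1,\rho_2\in\bar\B_r,\ t\in[0,1],\ \zeta_i(\lambda',\rho_1)-\pi_i(\rho_2)\in\Gamma\}$ under the continuous map $(\lambda',i,\rho_1,\rho_2,t)\mapsto t\,\zeta_i(\lambda',\rho_1)+(1-t)\pi_i(\rho_2)$; this domain is a closed (here the closedness of $\Gamma$ enters) subset of a compact box, hence compact, so $\widehat\Sigma^r(\lambda)$ is compact and $\overline{\Sigma^r(\lambda)}\subset\widehat\Sigma^r(\lambda)$. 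Finally $\widehat\Sigma^r(\lambda)\subset\Sigma^s(\lambda)$, because $\bar\B_r\subset\B_s$ forces each defining segment $[\alpha,\beta]$ to lie in $L^s_i(\lambda')$ for some $i$ and some $\lambda'\in[\delta_0,\lambda]$.

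For \emph{(iii)} I would invoke compactness twice. First, $\Sigma(1)=\bigcup_{\delta_0\le\lambda<1}\Sigma^{r_0}(\lambda)$ is an increasing union of open sets, so a compact $\Delta\subset\Sigma(1)$ is covered by finitely many of them and hence lies in a single $\Sigma^{r_0}(\lambda_0)$ with $\lambda_0\in[\delta_0,1)$. Second, one checks that $\Sigma^{r_0}(\lambda_0)=\bigcup_{0<r<r_0}\Sigma^r(\lambda_0)$ and that this too is an increasing union of open sets: any point of $\Sigma^{r_0}(\lambda_0)$ lies on a segment whose parameters $\rho_1,\rho_2$ belong to the \emph{open} ball $\B_{r_0}$, hence to $\B_r$ for every $r$ with $\max(|\rho_1|,|\rho_2|)<r<r_0$. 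A second compactness argument then yields $\Delta\subset\Sigma^{r^*}(\lambda_0)$ for some $r^*\in(0,r_0)$, completing the proof.

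The delicate point is the openness of $L^r_i(\lambda)$ in (i): the constraint $\alpha-\beta\in\Gamma$ cuts out a set with empty interior, so $\alpha$ and $\beta$ cannot be perturbed independently; the key is that this constraint is invariant under simultaneous translation of $\alpha$ and $\beta$, which permits rigidly translating the whole segment $[\alpha,\beta]$ and thereby sweeping out an entire ball around $z_0$ using only the openness of the $S$-sets supplied by (P2). Everything else reduces to soft continuity and compactness arguments resting on the closedness of $\Gamma$.
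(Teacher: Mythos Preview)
Your proof is correct and follows essentially the same approach as the paper: the key translation argument for the openness of $L^r_i(\lambda)$ in (i) is identical, and your treatments of (ii) and (iii) differ only cosmetically (you package the limiting argument in (ii) via a compact-image construction rather than the paper's sequential extraction, and you split the compactness argument in (iii) into two monotone covers rather than one joint cover). These are stylistic variations of the same ideas, not genuinely different routes.
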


\begin{proof}  1.  Let $1\le i\le N$ and  $A\in L_i^r(\lambda).$ Then $A=q\alpha+(1-q)\beta$ for some $q\in [0,1]$, $\alpha\in S^r_i(\lambda)$ and $\beta\in S^r_i(0)$ with $\alpha-\beta\in \Gamma.$
By property (P2) of Condition $O_N$, both $S^r_i(\lambda)$ and $S^r_i(0)$ are open sets; hence, there exists $\epsilon>0$ such that  $\B_\epsilon(\alpha)\subset S^r_i(\lambda)$ and $\B_\epsilon(\beta)\subset S^r_i(0).$  For any $C\in \B_\epsilon(A),$ let $D=C-A.$  Then
\[
C=A+D=q(\alpha+D)+(1-q)(\beta+D)\in [\alpha+D,\,\beta+D].
\]
Since $\alpha+D\in \B_\epsilon(\alpha)\subset S^r_i(\lambda)$, $ \beta+D \in \B_\epsilon(\beta)\subset S^r_i(0)$ and $(\alpha+D)-(\beta+D)=\alpha-\beta\in \Gamma,$ it follows from \eqref{new-set-L} that
\[
[\alpha+D,\,\beta+D]\subset L^r_i(\lambda).
\]
 Therefore, $\B_\epsilon(A)\subset L^r_i(\lambda),$  proving that $L^r_i(\lambda)$ is open.
From this, the openness of the sets $L^r(\lambda)$, $\Sigma^r(\lambda)$ and $\Sigma(1)$  follows immediately.

2. The first inclusion in (ii) is immediate from the definitions. To prove the second one, let  $\delta_0\le \lambda<1,$  $0< r<s \le r_0,$ and $\bar X\in \overline{\Sigma^r(\lambda)}.$
Choose a sequence $\{X_k\}_{k=1}^\infty$ in $\Sigma^r(\lambda)$ such that $X_k\to \bar X.$ Then $X_k\in L^r_{j_k}(\lambda_k)$ for some $1\le j_k\le N$ and $\delta_0\le\lambda_k\le \lambda;$ hence,
\[
X_k=q_k \alpha_k +(1-q_k)\beta_k
\]
for some  $q_k\in [0,1],$ $\alpha_k\in S^r_{j_k}(\lambda_k)$ and $\beta_k\in S^r_{j_k}(0)$ with $\alpha_k-\beta_k\in \Gamma.$  Assume
\[
\alpha_k=\zeta_{j_k}(\lambda_k, \rho_k) \quad\text{and} \quad \beta_k=\pi_{j_k}(\rho'_k) \quad  (k=1,2,\dots),
\]
where $\rho_k, \rho'_k\in \B_r.$  Let  $k'=\{k_i\}\to\infty$   be a subsequence such that
\[
 j_{k'}\equiv  j\in \{1,\dots,N\}, \;\; \lambda_k\to \lambda'\in [\delta_0,\lambda],\;\; q_{k'}\to q\in [0,1],\;\; \rho_{k'}\to \rho\in \bar\B_r,\;\; \rho'_{k'}\to \rho'\in \bar\B_r .
 \]
Hence, $\alpha_{k'}\to \alpha=\zeta_{j}(\lambda', \rho) \in S^s_j(\lambda')$ and $\beta_{k'}\to \beta=\pi_{j}(\rho') \in S^s_j(0);$ moreover, since $\Gamma$ is closed, $\alpha-\beta\in \Gamma.$
Therefore,
 \[
 \bar X=q\alpha+(1-q)\beta \in [\alpha,\beta] \subset L^s_j(\lambda')\subset  \Sigma^s(\lambda),
 \]
which proves that $\overline{\Sigma^r(\lambda)}\subset \Sigma^s(\lambda).$

3. Let $\Delta\subset \Sigma(1)$ be compact. Since
\[
\Sigma(1)= \bigcup_{\substack{0<r<r_0 \\ \delta_0\le \lambda<1 }} \Sigma^r(\lambda),
\]
the collection  $\{\Sigma^r(\lambda): \, r\in (0, r_0),\; \lambda\in [\delta_0, 1)\}$ forms an open cover of  $\Delta.$ Hence, there exist  $k\ge 1$ and parameters $r_j\in (0, r_0)$,  $\lambda_j \in [\delta_0, 1)$ for $1\le j\le k$   such that
\[
\Delta\subset  \cup_{j=1}^k   \Sigma^{r_j}(\lambda_j) \subset  \Sigma^{r}(\lambda),
\]
where
\[
r=\max_{1\le j\le k}  r_j \in (0,r_0)\quad\mbox{and}\quad \lambda=\max_{1\le j\le k}  \lambda_j \in [\delta_0, 1),
\]
as desired in  (iii).
\end{proof}


   \begin{figure}[ht]
\begin{center}
\begin{tikzpicture}[scale =1]
\draw[thick,dashed] (-5,-2)--(1,-1);
\draw[thick] (-5,-2.2)--(1.1,-0.9);
   \draw[](-5.2,-2) node[below]{$X'_1$};
      \draw[](-4.6,-2.1) node[below]{$X_1$};
    \draw[](0.2,5.9) node[above]{$X'_3$};
        \draw[](0.5,6.6) node[left]{$X_3$};
              \draw[](1,-3.2) node[right]{$X'_5$};
       \draw[](0.4,-3.4) node[right]{$X_5$};
   \draw(3.9,1.6) node[right]{$X_4$};
     \draw[](3.8,2.1) node[right]{$X'_4$};
    \draw[](-6.3,2.25) node[above]{$X'_2$};
        \draw[](-6.8,2.3) node[below]{$X_2$};
             \draw[thick,dashed] (1,-1)--(1,3);
      \draw[thick] (1.1,-0.9)--(0.9,3.1);
  \draw[thick,dashed] (1,-1)--(1,-3);
  \draw[thick] (1.1,-0.9)--(1.205,-3);
  \draw[thick,dashed] (-2,-1.5)--(-5,1);
   \draw[thick,dashed] (-6.5,2.25)--(-5,1);
   \draw[thick,dashed] (-5,1)--(-2,4);
    \draw[ thick] (-5,1.2)--(-2.1,4.1);
     \draw[thick] (-1.95,-1.55)--(-6.3,1.2+2.75*1.3/3.05);
     \draw[thick,dashed] (0,6)--(-2,4);
     \draw[ thick] (-0.1,6.1)--(-2.1,4.1);
    \draw[thick,dashed] (-2,4)--(1,3);
      \draw[ thick] (-2.1,4.1)--(0.9,3.1);
       \draw[thick,dashed] (1,3)--(4,2);
         \draw[thick] (0.9,3.1)--(3.9,2.1);
       \draw(-1.1,5.4) node[left]{$\zeta_3(\lambda',\rho)$};
     \draw[thick] (-2.2,4.3)--(4,1.7);
       \draw[thick] (-2.2,4.3)--(-5.5,1.1);
         \draw[thick] (-2.2,4.3)--(0,4.3+6.4/3);
        \draw[thick ] (-2,-1.8)--(-5.5,1.1);
          \draw[thick ] (-6.5,-1.8+26.1/7)--(-5.5,1.1);
        \draw[thick ] (-2,-1.8)--(0.8,-1.4);
               \draw[thick ] (-2,-1.8)--(-4.8,-2.2);
                \draw[thick ] (0.9,3)--(0.8,-1.4);
                 \draw[thick ] (0.76,-3.16)--(0.8,-1.4);
   \draw[fill] (-5.5,1.1)  circle (0.05);
            \draw(-5.7,1.1) node[below]{$\pi_3(\rho)$};
                    \draw(-4.9,1.2) node[right]{$\pi_3(\rho')$};
            \draw[fill]  (-5,1.2) circle (0.05);
              \draw[ultra thick] (-1.1,5.35)--(-5,1.2);
                \draw[fill]  (-1.1, 5.35) circle (0.05);
         \draw(-3,3.4) node[left]{$Y$};
           \draw[fill]  (-3.05,3.28) circle (0.05);
\end{tikzpicture}
\end{center}
\caption{An illustration for Theorem \ref{lem1} (shown here for the case  $N=5$). \\
The point $Y$ lies on the (ultra-thick) solid line segment $[\zeta_3(\lambda',\rho), \, \pi_3(\rho')].$ \\
The  sets $\T(X_1,\dots,X_N)$ and $\T(X'_1,\dots,X'_N)$ are depicted by the solid  line segments, respectively.
The union of dashed line segments represents  the reference set  $\T(X^0_1,\dots,X^0_N),$ where $X^0_j=\zeta_j(\mu, 0)$ for $1\le j\le N$  (not shown). \\
 The points $X^0_j,$ $ X_j$ and $X'_j$ lie in the open set $S^r_j(\mu)$ for each $1\le j \le N,$  and all  line segments shown lie in the open set   $\Sigma^r(\mu).$ However, only the points $ X_j$ and $X'_j$ ($1\le j\le N$) and the solid line segments  are used in the proof.}
\label{fig2}
\end{figure}


The following theorem provides a key step of construction for convex integration.

\begin{theorem}\label{lem1} Let $\sigma$ satisfy Condition $O_N.$ Let $\delta_0\le \lambda \le \mu<1,$ $0<r\le r_0,$ and  $Y\in \Sigma^r(\lambda).$

Assume that
\[
Y =q\, \zeta_i(\lambda', \rho)+(1-q)\, \pi_i(\rho'),
\]
where $q\in [0,1],$ $1\le i\le N,$ $\delta_0 \le \lambda' \le \lambda,$ and $\rho, \rho' \in \B_r$ with
$
\zeta_i(\lambda', \rho)- \pi_i(\rho')\in\Gamma.
$
Let
\[
 X_j=\zeta_j(\mu, \rho)\quad\mbox{and}\quad X'_j=\zeta_j(\mu, \rho') \quad\forall\, 1\le j\le N.
\]
Then  for any bounded open set $G\subset \R^{n+1}$  and  $0<\tau<1,$ there exists  a function $(\varphi, \psi,g)\in C_0^\infty (G;\R^m \times \R^{m\times n}\times \R^{m\times n})$   with  the following properties:
\begin{itemize}
\item[(a)] $\dv\psi=0,\; \dv g=\varphi$ and $Y+(D\varphi,  \partial_t \psi)\in \Sigma^r(\mu)$ in $G;$
\item[(b)]  $\|\varphi\|_{L^\infty(G)}+\|\partial_t \varphi\|_{L^\infty(G)}+\|\partial_t g\|_{L^\infty(G)}<\tau;$
\item[(c)]   there exist pairwise disjoint open  sets $G'_1,\dots  ,G'_N,  G''_1,\dots, G''_N\subset\subset G$ such that
\begin{equation}\label{eq-d}
\begin{cases}
 Y+(D\varphi,  \partial_t\psi)=\chi_{G'_j} X_j+\chi_{G''_j} X_j'\in S_j^r(\mu) \;\; \text{in $G_j=G'_j\cup G''_j$} & \forall\, 1\le j\le N,\\[2mm]
|G_i|\ge  (1-\tau) \big [(1-q) \nu_i^i(\mu,\rho') +q  \big (\frac{\lambda'}{\mu} +(1-\frac{\lambda'}{\mu})  \nu_i^i(\mu,\rho)\big )\big  ] |G|,\\[1mm]
|G_j| \ge (1-\tau)  \big [(1-q) \nu_i^j(\mu,\rho') +q  (1-\frac{\lambda'}{\mu})  \nu_i^j(\mu,\rho) \big ] |G| &\forall\, j\ne i;
\end{cases}
\end{equation}
in particular, $\sum_{j=1}^N  |G_j|\ge (1-\tau)|G|.$
\end{itemize}
 \end{theorem}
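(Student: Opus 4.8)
The plan is to decompose $Y$ as a convex combination of the target points $X_1,\dots,X_N,X_1',\dots,X_N'$, and then apply Lemma \ref{lem-0} iteratively along two $\T_N$-configurations glued together at $Y$. First I would split $Y = q\,\zeta_i(\lambda',\rho) + (1-q)\,\pi_i(\rho')$ and treat the two summands separately. By Remark \ref{remk33}(ii)(b) (with parameter $\mu$), the point $\pi_i(\rho')$ equals $\sum_{j=1}^N \nu_i^j(\mu,\rho')\,X_j'$, since $(X_1',\dots,X_N')$ is a $\T_N$-configuration with vertices $\zeta_j(\mu,\rho')$; similarly $\pi_i(\rho)=\sum_j \nu_i^j(\mu,\rho)X_j$. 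For the term $\zeta_i(\lambda',\rho)$, I would write it as a convex combination of $\xi_i(\rho)=\lim_{\lambda\to1}\zeta_i(\cdot)$-type vertex data and $\pi_i(\rho)$: precisely, since $\zeta_i(\lambda',\rho)=\lambda'\xi_i(\rho)+(1-\lambda')\pi_i(\rho)$ and $X_i=\zeta_i(\mu,\rho)=\mu\xi_i(\rho)+(1-\mu)\pi_i(\rho)$, one solves for $\xi_i(\rho)$ and obtains $\zeta_i(\lambda',\rho)=\tfrac{\lambda'}{\mu}X_i+(1-\tfrac{\lambda'}{\mu})\pi_i(\rho)$, valid because $\delta_0\le\lambda'\le\mu<1$ forces $0<\lambda'/\mu\le 1$. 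Substituting the expansion of $\pi_i(\rho)$ then gives $\zeta_i(\lambda',\rho)=\big(\tfrac{\lambda'}{\mu}+(1-\tfrac{\lambda'}{\mu})\nu_i^i(\mu,\rho)\big)X_i+\sum_{j\ne i}(1-\tfrac{\lambda'}{\mu})\nu_i^j(\mu,\rho)X_j$. Combining, $Y$ is an explicit convex combination of the $2N$ points with exactly the coefficients appearing in \eqref{eq-d}; positivity of all coefficients follows from \eqref{eq-1} and $q\in[0,1]$.

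Next I would realize this geometrically. The point $Y$ lies on the segment $[\zeta_i(\lambda',\rho),\pi_i(\rho')]$, whose direction $\zeta_i(\lambda',\rho)-\pi_i(\rho')$ is in $\Gamma$ by hypothesis. So I first apply Lemma \ref{lem-0} on $G$ with this single $\gamma\in\Gamma$ and the splitting parameter $q$ (after rescaling $\gamma$ so the endpoints are $\zeta_i(\lambda',\rho)$ and $\pi_i(\rho')$): this produces a first perturbation $(\varphi_0,\psi_0,g_0)$ with $Y+(D\varphi_0,\partial_t\psi_0)\in[\zeta_i(\lambda',\rho),\pi_i(\rho')]_\epsilon\subset\Sigma^r(\mu)$ (using that this segment lies in $L^r_i(\lambda')\subset\Sigma^r(\lambda)\subset\Sigma^r(\mu)$, and Proposition \ref{prop-cp}(i) for openness), and two disjoint open sets where the value is exactly $\zeta_i(\lambda',\rho)$ (measure $\gtrsim q|G|$) resp. exactly $\pi_i(\rho')$ (measure $\gtrsim(1-q)|G|$). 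On the $\pi_i(\rho')$-region I then run the telescoping construction from the proof of Theorem \ref{convex-block-thm} applied to the $\T_N$-configuration $(X_1',\dots,X_N')$: since $\pi_i(\rho')$ is the $i$-th ``hub'' point of that configuration, iterating Lemma \ref{lem-0} along the edges $\gamma_j(\rho')$ drives the value into $[\T(X_1',\dots,X_N')]_\epsilon\subset\Sigma^r(\mu)$ and concentrates it on the vertices $X_j'$ with relative frequencies $\nu_i^j(\mu,\rho')$. On the $\zeta_i(\lambda',\rho)$-region I run the analogous construction for the $\T_N$-configuration $(X_1,\dots,X_N)$ using Theorem \ref{convex-block-thm} with $\eta=\zeta_i(\lambda',\rho)=\tfrac{\lambda'}{\mu}X_i+(1-\tfrac{\lambda'}{\mu})\pi_i(\rho)$ (i.e. parameter $\lambda=\lambda'/\mu$ there), landing in $[\T(X_1,\dots,X_N)]_\epsilon$ and concentrating on the $X_j$ with frequencies $\tfrac{\lambda'}{\mu}+(1-\tfrac{\lambda'}{\mu})\nu_i^i$ for $j=i$ and $(1-\tfrac{\lambda'}{\mu})\nu_i^j$ for $j\ne i$. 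Summing the three perturbations (which have disjoint supports, so divergence constraints and $L^\infty$ bounds add) and choosing $\epsilon$ and the iteration depth $\ell$ small/large enough so that $N\ell\epsilon<\tau$, all ``$(1-\epsilon)$''-type factors exceed $(1-\tau)^{1/2}$, and $[\cdot]_\epsilon\subset\Sigma^r(\mu)$, yields $(\varphi,\psi,g)$ with the required properties; set $G_j'$ = (portion of the $\zeta_i(\lambda',\rho)$-region concentrating on $X_j$) and $G_j''$ = (portion of the $\pi_i(\rho')$-region concentrating on $X_j'$), which are pairwise disjoint and give the stated measure bounds.

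The main obstacle is bookkeeping rather than a new idea: one must verify that every intermediate segment produced by the nested applications of Lemma \ref{lem-0} actually stays inside $\Sigma^r(\mu)$ — this uses that each such segment is of the form $[\zeta_j(\lambda'',\rho''),\pi_j(\rho'')]$ with direction in $\Gamma$, hence lies in some $L^r_j(\lambda'')\subset\Sigma^r(\mu)$ by Remark \ref{remk33}(ii)(a), together with the openness from Proposition \ref{prop-cp}(i) to absorb the $\epsilon$-neighborhoods. A second point needing care is that the two $\T_N$-configurations share the vertex index $i$ but have generally different vertices $X_i$ versus $X_i'$; the measure estimate \eqref{eq-d} for $G_i$ correctly records contributions from both the $\zeta_i(\lambda',\rho)$-region (via $X_i$) and — no, in fact $G_i = G_i'\cup G_i''$ collects the $X_i$-part from one region and the $X_i'$-part from the other, so one simply adds the two lower bounds, exactly as written. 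The uniform positivity $\nu_i^j(\mu,\rho)\ge(\mu-\delta_2)^{N-1}\delta_1>0$ from \eqref{eq-1} guarantees all these sets are genuinely non-trivial, and the final summation $\sum_j|G_j|\ge(1-\tau)|G|$ follows from $\sum_j\nu_i^j=1$ and $q\cdot 1+(1-q)\cdot 1=1$.
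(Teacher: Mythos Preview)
Your approach is essentially the same as the paper's: first apply Lemma~\ref{lem-0} along the segment $[\zeta_i(\lambda',\rho),\pi_i(\rho')]$ with parameter $q$, then on the resulting $\pi_i(\rho')$-region apply Theorem~\ref{convex-block-thm} to the $\T_N$-configuration $(X_1',\dots,X_N')$ with $\eta=\pi_i(\rho')$, and on the $\zeta_i(\lambda',\rho)$-region apply Theorem~\ref{convex-block-thm} to $(X_1,\dots,X_N)$ with $\eta=\zeta_i(\lambda',\rho)=\tfrac{\lambda'}{\mu}X_i+(1-\tfrac{\lambda'}{\mu})\pi_i(\rho)$. Two small remarks: the three perturbations do not have pairwise disjoint supports (the first one lives on all of $G$), but this is harmless since the second and third are supported in the disjoint sets $G''$ and $G'$, so the sum still satisfies the divergence constraints and the $L^\infty$ bound is at most $2\delta$; and the iteration depth $\ell$ you mention is internal to Theorem~\ref{convex-block-thm} and need not be tracked here---you simply invoke that theorem with its own accuracy parameter $\delta$ and then take $\delta$ small enough at the end.
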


\begin{proof}  1. Since $[\zeta_i(\lambda', \rho),\, \pi_i(\rho')]\subset L^r_i(\lambda')\subset \Sigma^r(\mu)$ and $\Sigma^r(\mu)$ is open, there exists   $\epsilon_1>0$ such that
\[
[\zeta_i(\lambda', \rho),\, \pi_i(\rho')]_{\epsilon_1}\subset \Sigma^r(\mu).
\]
In what follows, let $ 0< \delta<1$ be a number to be determined later.

Apply Lemma \ref{lem-0} with  $\gamma=\zeta_i(\lambda', \rho)- \pi_i(\rho'),$ $\lambda=q$  and $\epsilon=\delta$ to obtain a function $(\varphi_1, \psi_1,g_1)\in C_0^\infty (G;\R^m \times \R^{m\times n}\times \R^{m\times n})$   with  the following properties:
\begin{equation}\label{eq-d-0}
\begin{cases}
\text{$\dv\psi_1=0,$ $\dv g_1=\varphi_1$  and $Y+(D\varphi_1,  \partial_t \psi_1)\in [\zeta_i(\lambda', \rho),\, \pi_i(\rho')]_\delta$ in $G;$}\\[1ex]
\text{$\|\varphi_1\|_{L^\infty(G)}+\|\partial_t \varphi_1\|_{L^\infty(G)}+\|\partial_t g_1\|_{L^\infty(G)}<\delta;$}\\[1ex]
\text{there exist disjoint open  sets $G', G''\subset\subset G$ such that}\\
$$
\begin{cases}
\mbox{$Y+(D\varphi_1,  \partial_t\psi_1)=\zeta_i(\lambda', \rho)$\, in $G'$,} &|G'|\ge  (1-\delta) q\, |G|,\\
\mbox{$Y+(D\varphi_1,  \partial_t\psi_1)=\pi_i(\rho')$\, in $G''$,} & |G''|\ge  (1-\delta)  (1-q) \,|G|.
\end{cases}
$$
\end{cases}
\end{equation}

2. If $G''=\emptyset$ (necessarily, $q=1$),  then we omit this step. Assume $G''\ne \emptyset.$ Note that by \eqref{eq-0},
\[
\pi_i(\rho') = \sum_{j=1}^N  \nu_i^j(\mu,\rho') X'_j.
\]
Since  $(X'_1,\dots ,X'_N)$ is  a $\T_N$-configuration with $\T(X'_1,\dots ,X'_N) \subset  \Sigma^r(\mu),$  by the openness of  $\Sigma^r(\mu),$  there exists $\epsilon_2>0$ such that
\[
  [\T(X'_1,\dots ,X'_N)]_{\epsilon_2} \subset \Sigma^r(\mu).
\]

Apply Theorem \ref{convex-block-thm}  with $(\xi_1,\ldots,\xi_N)=(X'_1,\dots ,X'_N)$ and  $\eta=\pi_i(\rho')$ to obtain  a function $(\varphi_2, \psi_2,g_2)\in C_0^\infty (G'';\R^m \times \R^{m\times n}\times \R^{m\times n})$   with  the following properties:
\begin{equation}\label{eq-d-1}
\begin{cases}
\text{$\dv\psi_2=0,\;  \dv g_2=\varphi_2$ and $\pi_i(\rho')+(D\varphi_2,  \partial_t \psi_2)\in [\T(X'_1,\dots ,X'_N)]_\delta$ in $G'';$}\\[1ex]
\text{$\|\varphi_2\|_{L^\infty(G'')}+\|\partial_t \varphi_2\|_{L^\infty(G'')}+\|\partial_t g_2\|_{L^\infty(G'')}<\delta;$}\\[1ex]
\text{there exist pairwise disjoint open  sets $G''_1,\dots  ,G''_N\subset\subset G''$ such that}\\
$$
\begin{cases}
\mbox{$\pi_i(\rho')+(D\varphi_2,  \partial_t\psi_2)=X'_j$ in $G''_j$}\quad \forall\, 1\le j\le N,\\
|G''_j|\ge  (1-\delta)   \nu^j_i(\mu,\rho') \,|G''|   \quad\forall\, 1\le j\le N.
\end{cases}
$$
\end{cases}
\end{equation}

3. If $G'=\emptyset$  (necessarily, $q=0$),  then we skip this step.  Assume $G'\ne \emptyset$.  Note that by \eqref{eq-0},
\[
\zeta_i(\lambda',\rho) =  \left (\frac{\lambda'}{\mu} +\Big (1-\frac{\lambda'}{\mu}\Big )  \nu^i_i(\mu,\rho)\right )X_i+\sum_{1\le j\le N,\, j\ne i} \Big ( 1-\frac{\lambda'}{\mu}\Big ) \nu_i^j(\mu,\rho) X_j.
\]
   Since  $(X_1,\dots ,X_N)$ is  a $\T_N$-configuration with $\T(X_1,\dots ,X_N) \subset  \Sigma^r(\mu),$  by the openness of  $\Sigma^r(\mu),$  there exists $\epsilon_3>0$ such that
   \[
    [\T(X_1,\dots ,X_N)]_{\epsilon_3} \subset \Sigma^r(\mu).
    \]

Now apply Theorem \ref{convex-block-thm} with $(\xi_1,\ldots,\xi_N)=(X_1,\dots ,X_N)$  and $\eta=\zeta_i(\lambda',\rho)$ to  obtain  a function $(\varphi_3, \psi_3,g_3)\in C_0^\infty (G';\R^m \times \R^{m\times n}\times \R^{m\times n})$   with  the following properties:
\begin{equation}\label{eq-d-2}
\begin{cases}
\text{$\dv\psi_3=0,\;  \dv g_3=\varphi_3$ and $\zeta_i(\lambda',\rho)+(D\varphi_3,  \partial_t \psi_3)\in [\T(X_1,\dots ,X_N)]_\delta$ in $G';$}\\[1ex]
\text{$\|\varphi_3\|_{L^\infty(G')}+\|\partial_t \varphi_3\|_{L^\infty(G')}+\|\partial_t g_3\|_{L^\infty(G')}<\delta;$}\\[1ex]
\text{there exist pairwise disjoint open  sets $G'_1,\dots  ,G'_N\subset\subset G'$ such that}\\
$$
\begin{cases}
\mbox{$\zeta_i(\lambda',\rho)+(D\varphi_3,  \partial_t\psi_3)=X_j$ in $G'_j$}\quad \forall\, 1\le j\le N,\\
|G'_i|\ge  (1-\delta) \big (\frac{\lambda'}{\mu} +(1-\frac{\lambda'}{\mu})  \nu_i^i(\mu,\rho)\big ) |G'|,\\
|G'_j| \ge  (1-\delta)   ( 1-\frac{\lambda'}{\mu})  \nu_i^j(\mu,\rho) \,|G'| \quad\forall\, j\ne i.
\end{cases}
$$
\end{cases}
\end{equation}

4. Finally, let
\[
 (\varphi,\psi,g)= (\varphi_1,\psi_1,g_1)\chi_{G} + (\varphi_2,\psi_2,g_2)\chi_{G''} +(\varphi_3,\psi_3,g_3)\chi_{G'} .
\]
Then  $(\varphi, \psi,g)\in C_0^\infty (G;\R^m \times \R^{m\times n}\times \R^{m\times n})$ and, by \eqref{eq-d-0}-\eqref{eq-d-2}, we have the following properties:
\[
  \begin{cases}  \dv\psi=0,\; \dv g=\varphi\;\;  \text{and}\\
Y+(D\varphi,  \partial_t \psi)  \in [\zeta_i(\lambda', \rho),\, \pi_i(\rho')]_\delta\cup [\T(X'_1,\dots ,X'_N)]_\delta\cup [\T(X_1,\dots ,X_N)]_\delta \quad\text{in $G$;}  \\[1ex]

\|\varphi\|_{L^\infty(G)}+\|\partial_t \varphi\|_{L^\infty(G)}+\|\partial_t g\|_{L^\infty(G)}<2\delta;\\[1ex]
$$
\begin{cases}
 Y+(D\varphi,  \partial_t\psi)=\chi_{G'_j} X_j+\chi_{G''_j} X_j' \in S_j^r(\mu) \;\; \text{in $G_j=G'_j\cup G''_j$} & \forall\, 1\le j\le N,\\
|G_i|\ge  (1-\delta)^2 \big [(1-q) \nu_i^i(\mu,\rho') +q  \big (\frac{\lambda'}{\mu} +(1-\frac{\lambda'}{\mu})  \nu_i^i(\mu,\rho)\big )\big ] |G|,\\
|G_j| \ge (1-\delta)^2 \big [(1-q) \nu_i^j(\mu,\rho') +q  (1-\frac{\lambda'}{\mu})  \nu_i^j(\mu,\rho) \big ] |G| & \forall\, j\ne i.
\end{cases}
$$
\end{cases}
\]

Therefore, all requirements (a)--(c) of the theorem will follow  if $0<\delta<1$ is chosen to satisfy
\[
0<\delta<\min\left \{\epsilon_1, \, \epsilon_2,\, \epsilon_3,\, \frac{\tau}{2},\, 1-\sqrt{1-\tau}\right \}.
\]
\end{proof}


\section{Proof of Theorem \ref{mainthm1}}\label{s-4}

 Let  $G\subset \R^d$ be  a bounded open set and let $f\colon G\to \R^q$ be a measurable function, where $d$ and $q$ are positive integers.
For $x_0\in G$,  the  {\em essential oscillation}  of $f$ at $x_0$ is defined by
\[
\omega_f(x_0)=\inf\big\{\|f(x)-f(y)\|_{L^\infty(U\times U)} : U\subset G, \;  \mbox{$U$  open,} \;  x_0\in  U \big\}.
\]
We say that $f$ is {\em essentially continuous} at $x_0$  if $\omega_f(x_0)=0.$
Clearly, if $f$ is continuous at  $x_0,$ then it is essentially continuous there; however, the converse need not hold.

A {\em cube}   in $\R^{d}$   always means an open cube whose  sides are parallel to the coordinate axes. For $y\in\R^d$ and $l>0$, we denote by $Q=Q_{y,l}$   the  cube centered at  $y$ with side length $2l$ and write  $l=\rad(Q).$

\subsection{Stage Theorem  for Convex Integration}

We now establish the main stage theorem needed for the proof of Theorem \ref{mainthm1} in the framework of convex integration.

 \begin{theorem}\label{thm1} Let  $m,n\ge 1$ and $N\ge 2.$ Assume that  $\sigma\colon\R^{m\times n}\to \R^{m\times n} $ satisfies  Condition $O_N.$   Let $G\subset \R^{n+1}$ be a bounded open set and $(u,v)\in C^1(\bar G;\R^m\times\R^{m\times n})$ be such that
\begin{equation}\label{strict-sub}
u=\dv v\quad\mbox{and}\quad (Du,\partial_t v)\in \Sigma^r(\lambda) \;\; \; \mbox{on $\bar G,$}
\end{equation}
where  $0<r<r_0$ and $\delta_0\le \lambda<1.$

Then for any  $\lambda<\mu<1,$  $r<s<r_0$  and  $0<\epsilon<1,$  there exist a function $(\tilde u,\tilde v)\in (u,v)+C^{\infty}_{0}(G;\R^m\times\R^{m\times n})$ and finitely many disjoint cubes $Q_1,\ldots,Q_M\subset G$ with $|G\setminus\cup_{j=1}^M Q_j|<\epsilon|G|$ satisfying the following:
\begin{itemize}
\item[(a)] $0<\mathrm{rad}(Q_j)<\epsilon\;\;\forall\,1\le j\le M;$
\item[(b)] $\tilde u=\dv \tilde v$ and  $(D\tilde u,  \partial_t \tilde v)\in \Sigma^{s}(\mu) $ on  $\bar G;$
\item[(c)]  $\|\tilde u-u\|_{L^\infty(G)}+ \|\partial_t \tilde u-\partial_t  u\|_{L^\infty(G)}<\epsilon;$
\item[(d)] $|Q_j\cap\{(D\tilde u,\partial_t \tilde v) \in S^{s}(\mu) \}| \ge (1-\epsilon)|Q_j|\;\; \forall\, 1\le j\le M;$
\item[(e)] $|\{(D\tilde u,\partial_t \tilde v) \in S^{s}(\mu) \}| \ge (1-\epsilon)|G|;$
\item[(f)]  $\begin{cases}
  |\{(D\tilde u,\partial_t \tilde v) \in S_k^{s}(\mu) \}| \ge \frac12 (\mu- \lambda) (\mu-\delta_2)^{N-1}\delta_1 |G|,\\
 |\{(D\tilde u,\partial_t \tilde v) \in S_k^{s}(\mu) \}| \ge \frac{\lambda}{\mu} |\{(D u,\partial_t  v) \in  S_k^r(\lambda) \}| \end{cases}\forall\, 1\le k\le N;$
 \item[(g)] $\|D\tilde u-Du\|_{L^1(G)}\le C_0\big[ |F_0|+   (\epsilon+(\mu- \lambda) ) |G|\big ],$ where
 \[
 F_0=\{(D u,\partial_t  v) \notin S^{r}(\lambda)\}
 \] and  $C_0>0$  is a constant depending only on  $\delta_0$ and the diameter of the set $\Sigma(1).$
\end{itemize}
  \end{theorem}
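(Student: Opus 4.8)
\emph{Strategy.} The plan is to freeze the coefficient $(Du,\partial_t v)$ on a fine mesh of small cubes, apply the constant‑coefficient building block of Theorem~\ref{lem1} separately on each cube, patch the resulting compactly supported perturbations together, and then verify (a)--(g) by measure bookkeeping. The only substantive analytic input is Theorem~\ref{lem1} (and, through it, Theorem~\ref{convex-block-thm} and Lemma~\ref{lem-0}); the rest is organizational, driven by the strict nesting in Proposition~\ref{prop-cp}.

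\emph{Step 1: safety margins.} Since $(Du,\partial_t v)$ is continuous on the compact set $\bar G$ with image in the open set $\Sigma^r(\lambda)$, the image $\Delta:=(Du,\partial_t v)(\bar G)$ is a compact subset of $\Sigma^r(\lambda)$. Fix $r_1\in(r,s)$. All the sets $\Sigma^{r_1}(\mu)$, $S_k^{r_1}(\mu)$, $\Sigma^r(\lambda)$, $S_k^r(\lambda)$ are bounded (they lie in $\Sigma(1)$) and, by Proposition~\ref{prop-cp}(ii) together with property (P2), $\overline{\Sigma^{r_1}(\mu)}\subset\Sigma^s(\mu)$, $\overline{S_k^{r_1}(\mu)}\subset S_k^s(\mu)$, and each $\overline{S_k^r(\lambda)}$ lies in the open set $S_k^{r_1}(\lambda)$, with the $\overline{S_k^r(\lambda)}$ ($1\le k\le N$) pairwise disjoint. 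Hence there is $\eta>0$ such that $[\Sigma^{r_1}(\mu)]_{\eta}\subset\Sigma^s(\mu)$, $[S_k^{r_1}(\mu)]_{\eta}\subset S_k^s(\mu)$ and $[\overline{S_k^r(\lambda)}]_{\eta}\subset S_k^{r_1}(\lambda)$ for all $k$, and $\eta<\min_{k\ne k'}\dist(\overline{S_k^r(\lambda)},\overline{S_{k'}^r(\lambda)})$. Write $E_k:=\{(Du,\partial_t v)\in S_k^r(\lambda)\}$, $E:=\bigcup_k E_k=\{(Du,\partial_t v)\in S^r(\lambda)\}$, and $F_0:=G\setminus E=\{(Du,\partial_t v)\notin S^r(\lambda)\}$; so $E$ and the $E_k$ are open and $F_0$ is closed in $G$.

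\emph{Step 2: fine covering and local perturbations.} Decompose $G$ into a countable family of disjoint cubes (up to measure zero) and keep finitely many disjoint cubes $Q_1,\dots,Q_M\subset G$ with $\rad(Q_j)$ so small that (i) the oscillation of $(Du,\partial_t v)$ on each $Q_j$ is $<\eta$, (ii) $\rad(Q_j)<\epsilon$, and (iii) the deficiency $|G\setminus\bigcup_j Q_j|$ is below a prescribed small fraction of $|G|$ (choosing the cubes so the deficiency lies in $F_0$). By the choice of $\eta$, no cube meets two of the $E_k$; set $\mathcal J_k=\{j:\,Q_j\cap E_k\ne\emptyset\}$ (pairwise disjoint) and $\mathcal J_0=\{j:\,Q_j\subset F_0\}$, so $\{1,\dots,M\}=\mathcal J_0\sqcup\mathcal J_1\sqcup\dots\sqcup\mathcal J_N$. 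Put $y_j=$ center of $Q_j$, $Y_j=(Du,\partial_t v)(y_j)\in\Sigma^r(\lambda)\subset\Sigma^{r_1}(\lambda)$. For $j\in\mathcal J_k$: since $Q_j$ meets $E_k$ and has small oscillation, $Y_j\in[\overline{S_k^r(\lambda)}]_\eta\subset S_k^{r_1}(\lambda)$, so $Y_j=\zeta_k(\lambda,\rho_j)$ for some $\rho_j\in\B_{r_1}$, and we apply Theorem~\ref{lem1} on $Q_j$ with radius $r_1$, with $i=k$, $q=1$, $\lambda'=\lambda$, $\rho=\rho'=\rho_j$, and a small $\tau_j$. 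For $j\in\mathcal J_0$: write $Y_j=q_j\zeta_{i_j}(\lambda'_j,\rho_j)+(1-q_j)\pi_{i_j}(\rho'_j)\in L^{r_1}_{i_j}(\lambda'_j)$ with $\rho_j,\rho'_j\in\B_{r_1}$, $\delta_0\le\lambda'_j\le\lambda$, $\zeta_{i_j}(\lambda'_j,\rho_j)-\pi_{i_j}(\rho'_j)\in\Gamma$, and apply Theorem~\ref{lem1} likewise. This yields $(\varphi_j,\psi_j,g_j)\in C^\infty_0(Q_j)$ with $\dv\psi_j=0$, $\dv g_j=\varphi_j$, $\|\varphi_j\|_{L^\infty}+\|\partial_t\varphi_j\|_{L^\infty}+\|\partial_t g_j\|_{L^\infty}<\tau_j$, $Y_j+(D\varphi_j,\partial_t\psi_j)\in\Sigma^{r_1}(\mu)$ on $Q_j$, and pairwise disjoint open $G_{j1},\dots,G_{jN}\subset\subset Q_j$ on which $Y_j+(D\varphi_j,\partial_t\psi_j)\in S_\ell^{r_1}(\mu)$ with the measure bounds \eqref{eq-d}; moreover $\sum_\ell|G_{j\ell}|\ge(1-\tau_j)|Q_j|$. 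Set $(\tilde u,\tilde v)=(u,v)+\sum_{j=1}^M(\varphi_j,\ \psi_j+g_j)$. The cubes being disjoint, $\tilde u-u\in C^\infty_0(G)$, $\tilde u=\dv\tilde v$, and (a) holds; (c) holds once all $\tau_j<\epsilon$.

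\emph{Step 3: verification of (b), (d)--(g).} On $Q_j$ one has $(D\tilde u,\partial_t\tilde v)=\big(Y_j+(D\varphi_j,\partial_t\psi_j)\big)+\big((Du,\partial_t v)-Y_j\big)+(0,\partial_t g_j)$, the last two terms of norm $<\eta+\tau_j$; with $\tau_j$ so small that $\eta+\tau_j<\eta$-margin from Step~1, this puts $(D\tilde u,\partial_t\tilde v)$ in $[\Sigma^{r_1}(\mu)]_\eta\subset\Sigma^s(\mu)$ on $Q_j$ and in $[S_\ell^{r_1}(\mu)]_\eta\subset S_\ell^s(\mu)$ on $G_{j\ell}$, while off $\bigcup_j Q_j$ the pair is unchanged and lies in $\Sigma^r(\lambda)\subset\Sigma^s(\mu)$; this gives (b). Summing $\sum_\ell|G_{j\ell}|\ge(1-\tau_j)|Q_j|$ over $j$ gives (d)--(e). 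For (f): by \eqref{eq-1}, $\nu_i^\ell(\mu,\rho)\ge(\mu-\delta_2)^{N-1}\delta_1$ and, since $\mu-\lambda<1$, each bracket in \eqref{eq-d} is $\ge(\mu-\lambda)(\mu-\delta_2)^{N-1}\delta_1$, so $\sum_j|G_{j\ell}|$ exceeds $(1-\max_j\tau_j)(1-\text{defect})(\mu-\lambda)(\mu-\delta_2)^{N-1}\delta_1|G|$, yielding the first inequality; for the second, restrict to $j\in\mathcal J_k$, where the distinguished representation gives $|G_{jk}|\ge(1-\tau_j)\big(\tfrac\lambda\mu+(1-\tfrac\lambda\mu)\nu_k^k(\mu,\rho_j)\big)|Q_j|\ge\tfrac\lambda\mu|Q_j|$ once $\tau_j$ is small, and since $\bigcup_{j\in\mathcal J_k}Q_j\supset E_k\setminus(G\setminus\bigcup_j Q_j)$ and the deficiency was placed in $F_0$ (so disjoint from $E_k$), we get $\sum_j|G_{jk}|\ge\tfrac\lambda\mu|E_k|$. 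Finally, for (g), $\|D\tilde u-Du\|_{L^1(G)}=\sum_j\int_{Q_j}|D\varphi_j|$: on each $j\in\mathcal J_k$ the cube $G_{jk}$ carries perturbation $|\zeta_k(\mu,\rho_j)-\zeta_k(\lambda,\rho_j)|=(\mu-\lambda)\kappa_k(\rho_j)|\gamma_k(\rho_j)|$, which is $\le(\mu-\lambda)\operatorname{diam}(\Sigma(1))/\delta_0$, while the complement $Q_j\setminus G_{jk}$ has measure $\le(1-(1-\tau_j)\tfrac\lambda\mu)|Q_j|$ and carries perturbation $\le\operatorname{diam}(\Sigma(1))$, so $\int_{Q_j}|D\varphi_j|\le C_0\big(\mu-\lambda+\tau_j\big)|Q_j|$; on each $j\in\mathcal J_0$ one uses only $|(D\varphi_j,\partial_t\psi_j)|\le\operatorname{diam}(\Sigma(1))$, and $\bigcup_{j\in\mathcal J_0}Q_j\subset F_0$, so these cubes contribute $\le\operatorname{diam}(\Sigma(1))\,|F_0|$. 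Summing gives (g) with $C_0$ depending only on $\delta_0$ and $\operatorname{diam}(\Sigma(1))$, provided the $\tau_j$ and the covering deficiency are taken $\lesssim\epsilon$.

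\emph{Main obstacle.} The difficulty is not any single estimate but running (b) and (d)--(g) off a \emph{single} scale of cubes: the cubes must be fine enough that the frozen‑coefficient error plus the corrector error $\partial_t g_j$ fit inside the margin separating $\overline{\Sigma^{r_1}(\mu)}$ from $\partial\Sigma^s(\mu)$, that no cube straddles two components $S_k^r(\lambda)$, and that the cubes meeting $E_k$ (resp. lying in $F_0$) essentially exhaust $E_k$ (resp. $F_0$) in measure — all while the parameters $\tau_j$ and the covering deficiency are fixed \emph{after} the cubes so that the sharp coefficients $\nu_i^j$ from Theorem~\ref{lem1}(c) survive the $(1-\tau_j)$ and $(1-\text{defect})$ losses. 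Making the refined $L^1$ bound (g) and the two lower bounds in (f) hold simultaneously, with a constant $C_0$ free of $N$ and of the mesh, is where the bookkeeping must be carried out most carefully; the strict nesting $\overline{\Sigma^{r}(\lambda)}\subset\Sigma^{s}(\lambda)$ of Proposition~\ref{prop-cp}(ii) is what makes it possible at all.
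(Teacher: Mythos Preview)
Your plan is essentially the paper's: freeze $(Du,\partial_t v)$ on small cubes, apply Theorem~\ref{lem1} with the distinguished representation $q=1,\ \lambda'=\lambda,\ i=k$ on cubes where the value lies in $S_k(\lambda)$, and assemble (a)--(g) from the measure bounds \eqref{eq-d} and \eqref{eq-1}. The only organisational difference is that the paper first singles out open sets $G_k'\subset\subset G_k=\{(Du,\partial_t v)\in S_k^r(\lambda)\}$ with $|\partial G_k'|=0$ and $|G_k\setminus G_k'|\le\epsilon'|G_k|$, and then covers each $G_k'$ (and the residual $G_0'=G\setminus\overline{\cup_k G_k'}$) separately by cubes with centres in $G_k'$; this guarantees that on the relevant cubes the centre value lies \emph{exactly} in $S_k^r(\lambda)$, so no intermediate radius $r_1$ is needed.

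There is one genuine gap in your write-up: you cannot ``choose the cubes so the deficiency lies in $F_0$''. An open set $E_k$ cannot in general be covered by finitely many pairwise-disjoint cubes, so $E_k\cap(G\setminus\bigcup_jQ_j)$ need not be empty, and your chain $\sum_{j\in\mathcal J_k}|G_{jk}|\ge\tfrac\lambda\mu\sum_{j\in\mathcal J_k}|Q_j|\ge\tfrac\lambda\mu|E_k|$ breaks at the second inequality. The fix is exactly what the paper does (and what you already have in hand): keep the full bound $|G_{jk}|\ge(1-\tau_j)\big(\tfrac\lambda\mu+(1-\tfrac\lambda\mu)\nu_k^k(\mu,\rho_j)\big)|Q_j|$ with the strictly positive excess $(1-\tfrac\lambda\mu)\nu_k^k\ge(1-\tfrac\lambda\mu)(\mu-\delta_2)^{N-1}\delta_1>0$, note that $\sum_{j\in\mathcal J_k}|Q_j|\ge|E_k|-|G\setminus\bigcup_jQ_j|$, and choose the common small parameter ($\epsilon'$ in the paper) so that
\[
(1-\epsilon')^3\Big[\tfrac\lambda\mu+(1-\tfrac\lambda\mu)(\mu-\delta_2)^{N-1}\delta_1\Big]\ \ge\ \tfrac\lambda\mu,
\]
which is possible precisely because the bracket strictly exceeds $\tfrac\lambda\mu$. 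With this correction (and the analogous absorption of the covering defect in the first line of (f) and in (g)), your argument goes through and matches the paper's.
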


\begin{proof}
{\em Step 1.} By Proposition \ref{prop-cp}(ii), let $d'>0$ denote the minimum of
\[
\mbox{$\mathrm{dist}(\Sigma^{r}(\mu),\partial\Sigma^{s}(\mu))$ \, and \, $\min\limits_{1\le k\le N}\mathrm{dist}(S^{r}_k(\mu),\partial S_k^{s}(\mu))$.}
\]
Define
\[
\begin{cases} G_k= \{y=(x,t)\in G: (D u(y),\partial_t  v(y)) \in S_k^{r}(\lambda)\} \;\;  (1\le k\le N),\\
 I=\{1\le k\le N:  G_k\ne \emptyset\}. \end{cases}
\]
By (P2) of Condition $O_N$,  the sets $\{G_k\}_{k\in I}$  are nonempty, open  and pairwise disjoint.

For each $k\in I,$ let $G_k'\subset G_k$ be a nonempty open set such that
\begin{equation}\label{set-G'}
  |\partial G_k'|=0\quad \mbox{and} \quad |G_k\setminus G_k'|\le \epsilon'\,|G_k|,
\end{equation}
where  $0<\epsilon'<1$ is  to be determined  later, and define
\[
G'_0= G\setminus \overline{\cup_{k\in I}  G_k'}= G\setminus \cup_{k\in I}  \overline{G_k'}.
\]
Then   the sets $\{G'_k\}_{k\in I\cup \{0\}}$ are  open and pairwise disjoint,  and
\begin{equation}\label{null-G'}
 |G\setminus\cup_{k\in I\cup \{0\}} G_k'|=0.
\end{equation}

\smallskip

{\em Step 2.}  Let $\bar y\in G;$ then $Y=Y_{\bar y} =(Du (\bar y),\partial_t  v(\bar y)) \in \Sigma^{r}(\lambda)$ so that  $Y\in L_i^{r}(\lambda')$ for some (perhaps not unique)  $1\le i=i_{\bar y} \le N$ and $\delta_0\le\lambda'=\lambda'_{\bar y}\le \lambda.$ Thus,
\begin{equation}\label{Y-form}
Y=q\zeta_{i}(\lambda',\rho) +(1-q)\pi_{i}(\rho')
\end{equation}
for some  $0\le q=q_{\bar y} \le 1$ and $\rho=\rho_{\bar y}, \, \rho'=\rho'_{\bar y} \in \B_{r}$  with $\zeta_{i}(\lambda',\rho) -\pi_{i}(\rho')\in \Gamma.$

In particular, if $\bar y\in G_k'$ for some  $k\in I,$  then $Y \in S^{r}_{k}(\lambda)$; thus, $Y=\zeta_k(\lambda,\rho)$ for some $\rho\in\B_r.$  In this case, we set $q_{\bar y}=1,$  $i_{\bar y}=k,$ $\lambda'_{\bar y}=\lambda$ and $ \rho_{\bar y}= \rho'_{\bar y} =\rho.$

Let  $X_j=\zeta_j(\mu,\rho_{\bar y})$ and $X'_j=\zeta_j(\mu,\rho'_{\bar y})$ for all $1\le j\le N.$
We  apply Theorem \ref{lem1}  with $Y=Y_{\bar y}$ and  $\tau=\epsilon'$  on  the cube $Q=Q_{\bar y,l},$  where $0<l<\ell_{\bar y}:=\sup\{l>0 : Q_{\bar y,l}\subset G\},$  to obtain  a function $(\varphi, \psi,g)=(\varphi_{\bar{y},l}, \psi_{\bar{y},l},g_{\bar{y},l})\in C^\infty_0(Q;\R^m\times\R^{m\times n}\times\R^{m\times n})$  such that
\begin{itemize}
\item[(i)] $\dv\psi=0,$ $\dv g=\varphi$ and $Y+(D\varphi,  \partial_t \psi)\in \Sigma^{r}(\mu)$ on $\bar Q;$
\item[(ii)]    $\|\varphi\|_{L^\infty(Q)}+\|\partial_t \varphi\|_{L^\infty(Q)}+\|\partial_t g\|_{L^\infty(Q)} <\epsilon';$
\item[(iii)]  there exist pairwise disjoint open  sets $P'_1,\dots, P'_N,   P''_1,\dots, P''_N\subset\subset Q$ such that
\begin{equation}\label{eq-d2}
\begin{cases}
 Y+(D\varphi,  \partial_t\psi)=\chi_{P'_j} X_j+\chi_{P''_j} X_j' \quad \text{in $P_j=P'_j\cup P''_j$} & \forall\, 1\le j\le N,\\
|P_i|\ge  (1-\epsilon') \big [(1-q) \nu_i^i(\mu,\rho') +q  \big (\frac{\lambda'}{\mu} +(1-\frac{\lambda'}{\mu})  \nu_i^i(\mu,\rho)\big )\big  ] |Q|,\\
|P_j| \ge (1-\epsilon')  \big [(1-q) \nu_i^j(\mu,\rho') +q  (1-\frac{\lambda'}{\mu})  \nu_i^j(\mu,\rho) \big ] |Q| &\forall\, 1\le j\le N.
\end{cases}
\end{equation}
In particular,  $\sum_{j=1}^N |P_j|\ge  (1-\epsilon')|Q|.$
\end{itemize}

By (i) and (iii), we have
\begin{equation}\label{dist-1}
\begin{cases} \dist(Y+(D\varphi,  \partial_t \psi),\partial\Sigma^{s}(\mu))\ge d' &\mbox{on $\bar Q,$}  \\
\dist (Y+(D\varphi,  \partial_t \psi),\partial S_j^{s}(\mu))\ge d' &\mbox{in $P_j$} \quad \forall\, 1\le j\le N.
\end{cases}
\end{equation}
Moreover, since $\delta_0\le\lambda'\le\lambda$ and $1-\frac{q\lambda'}{\mu} \ge \mu-\lambda,$ it follows from (\ref{eq-1}) that
\begin{equation}\label{eq-e}
\begin{split}
|P_i| &\ge  (1-\epsilon') \Big [ \frac{q\lambda'}{\mu} +\Big (1-\frac{q\lambda'}{\mu}\Big )  (\mu-\delta_2)^{N-1}\delta_1 \Big  ] |Q|,   \\
|P_j|
& \ge (1-\epsilon')  (\mu-\lambda)  (\mu-\delta_2)^{N-1}\delta_1 |Q| \qquad \forall\, 1\le j  \le N.
\end{split}
\end{equation}

Define  $\tilde u  =u_{\bar y,l}=  u + \varphi$ and $\tilde  v  =v_{\bar y,l}=  v +  \psi +g $ on $\bar Q=\bar Q_{\bar y,l}.$  Then \begin{equation}\label{new-fun0}
\begin{cases}
(\tilde u,\tilde v) \in  (u,v) +C_0^\infty(Q;\R^m\times\R^{m\times n}),\\
  \mbox{$\tilde u= \dv \tilde  v$  \;on $\bar Q,$}\\
  \|\tilde u-u\|_{L^\infty(Q)}+\|\partial_t \tilde   u -\partial_t  u\|_{L^\infty(Q)} < \epsilon'.
  \end{cases}
\end{equation}

By the uniform continuity of $(Du,\partial_t v)$ on $\bar G,$  select a number $\ell'>0$ such that
\[
 |D u(y)-D u(y')|+|\partial_t  v(y)-\partial_t  v(y')|<\min\Big\{\frac{d'}{4},\epsilon'\Big\} \quad \forall\, y,y'\in \bar G,\;\;
 |y-y'|<\ell'.
 \]
Let $0<l<\min\{\ell_{\bar y},\ell'\}$.  Then for all $y\in \bar Q=\bar Q_{\bar y,l},$
\[
\begin{split} |(D\tilde  u(y), & \partial_t \tilde v(y)) - (Y+(D\varphi(y),  \partial_t\psi(y)) )  | \\
& \le |D u(y)-D u(\bar y)|+|\partial_t  v(y)-\partial_t  v(\bar y)|+|\partial_t  g(y)| < \min\Big\{\frac{d'}{4},\epsilon'\Big\} +\epsilon'.
\end{split}
\]

We now select $\epsilon'>0$ to satisfy
\begin{equation}\label{choice-e'-1}
\epsilon'<\min\Big\{\frac{d'}{4},\frac{\epsilon}{2}\Big\}.
\end{equation}
Then by  (i),  (iii) and (\ref{dist-1}), for all $0<l<\min\{\ell_{\bar y},\ell'\}$ with $Q=Q_{\bar{y},l}$,
\begin{equation}\label{cubes}
 \begin{cases}
 (D u, \partial_t v)\big |_{\bar Q} \in \B_\epsilon(Y), \\
  (D\tilde  u, \partial_t \tilde  v)\big |_{\bar Q} \in \Sigma^{s}(\mu),  \\
  (D\tilde  u, \partial_t \tilde  v)\big |_{P_j} \in  S_j^{s}(\mu)\cap [\B_\epsilon (X_j) \cup \B_\epsilon (X'_j)]   \quad  \forall\, 1 \le j \le N,\\
  | Q\cap \{(D\tilde u,\partial_t\tilde v)\in S^{s}(\mu)\}|\ge |\cup_{j=1}^N P_j|\ge (1-\epsilon')|Q|.
  \end{cases}
\end{equation}

\smallskip

{\em Step 3.}   Let $k\in I_0:=I\cup\{0\}$. Choose a sequence $\{Q^k_\nu\}_{\nu=1}^\infty$ of disjoint cubes in $G_k'$ with $0<\mathrm{rad}(Q^k_\nu)<\min\{\ell',\epsilon\}$ for all $\nu\ge1$ such that
\[
\big|G_k'\setminus\cup_{\nu=1}^\infty Q^k_\nu\big|=0.
\]
Pick a large integer $m_k\ge1$ so that
\begin{equation}\label{choice-Qs}
\big|G_k'\setminus\cup_{\nu=1}^{m_k} Q^k_\nu\big|\le\epsilon'|G_k'|.
\end{equation}
For each  $\nu\ge 1,$ let $\bar y^k_\nu$ denote the center of $Q^k_\nu$ and $l^k_\nu=\mathrm{rad}(Q^k_\nu)$.

Following the constructions in Step 2, we define
\begin{equation}\label{final-fun}
(\tilde  u,\tilde v)  =    \sum_{k\in I_0} \sum_{\nu=1}^{m_k} (u_{\bar{y}^k_\nu,l^k_\nu}, v_{\bar{y}^k_\nu,l^k_\nu}) \chi_{ Q^k_\nu}+ (u,v)\chi_{G\setminus G'}\quad\mbox{in $G$},
\end{equation}
where $G'=\bigcup_{k\in I_0}\bigcup_{\nu=1}^{m_k}  Q^k_\nu.$
Then $(\tilde u,\tilde v) \in  (u,v) +C_0^\infty(G;\R^m\times\R^{m\times n})$ and
\[
|G\setminus G'|=\sum_{k\in I_0}|G_k'\setminus\cup_{\nu=1}^{m_k} Q^k_\nu|\le\epsilon'|G|<\epsilon|G|.
\]

We relabel the cubes $Q^k_\nu$ $(k\in I_0,\,1\le\nu\le m_k)$ as $Q_1,\ldots,Q_M.$ Then requirement (a) is satisfied.
From (\ref{strict-sub}), (\ref{new-fun0}), (\ref{choice-e'-1}), (\ref{cubes}) and (\ref{final-fun}), requirements (b) and (c) also follow.

Moreover, using (\ref{null-G'}), (\ref{choice-e'-1}), (\ref{cubes}) and (\ref{choice-Qs}), we have
\[
\begin{split}
 |\{(D\tilde u,\partial_t\tilde v) &\in S^{s}(\mu)\}|  = \sum_{k\in I_0}|G_k' \cap\{(D\tilde u,\partial_t\tilde v)\in S^{s}(\mu)\}|\\
& = \sum_{k\in I_0}\sum_{\nu=1}^\infty|Q^k_\nu \cap\{(D\tilde u,\partial_t\tilde v)\in S^{s}(\mu)\}|  \ge \sum_{k\in I_0}\sum_{\nu=1}^{m_k}|Q^k_\nu \cap\{(D\tilde u,\partial_t\tilde v)\in S^{s}(\mu)\}|\\
& \ge(1-\epsilon')\sum_{k\in I_0}\sum_{\nu=1}^{m_k}| Q^k_\nu|  =(1-\epsilon')\sum_{k\in I_0}\big(|G_k'|- |G'_k\setminus \cup_{\nu=1}^{m_k} Q^k_\nu|\big)\\
& \ge (1-\epsilon')^2 \sum_{k\in I_0}|G_k'|=(1-\epsilon')^2|G|>(1-\epsilon)|G|.
\end{split}
\]
Hence, requirements (d) and (e) are satisfied.

\smallskip

{\em Step 4.}  We   now verify  requirement  (f).

First,  from the second  of  (\ref{eq-e}) and third of (\ref{cubes}), we have that  for each $1\le j \le N,$
\[
 \begin{split}
  |\{(D\tilde u,\partial_t \tilde v)   \in S_j^{s}(\mu)\}|  & \ge \sum_{k\in I_0}\sum_{\nu=1}^{m_k}|Q^k_\nu \cap\{(D\tilde u,\partial_t\tilde v)\in S_j^{s}(\mu)\}|  \\
  & \ge (1-\epsilon')  (\mu-\lambda)(\mu-\delta_2)^{N-1}\delta_1 \sum_{k\in I_0} \sum_{\nu=1}^{m_k} |Q_\nu^k| \\
  & \ge  (1-\epsilon')^2    (\mu-\lambda)(\mu-\delta_2)^{N-1}\delta_1  |G|  \\ &  \ge  \frac12 (\mu-\lambda)(\mu-\delta_2)^{N-1}\delta_1  |G|, \end{split}
\]
where $\epsilon'\in(0,1)$ is chosen to satisfy, in addition to (\ref{choice-e'-1}),
\begin{equation}\label{choice-e'-2}
 (1-\epsilon')^2\ge 1/2.
\end{equation}
This verifies  the first inequality of (f).

Next, let $1\le k\le N.$ If $k\notin I,$ then $G_k=\{y\in G: (Du(y),v_t(y))\in S^r_k(\lambda)\}=\emptyset;$  thus,  the second inequality of (f)  is automatically satisfied.
Now assume $k\in I;$ then
\[
\bar{y}^k_\nu\in Q^k_\nu\subset G_k'\subset G_k\ne\emptyset\qquad (1\le \nu\le m_k).
\]
Hence,  in  \eqref{Y-form} of  Step 2, we have
$q=1$, $i=k,$   $\lambda'=\lambda$ and $\rho=\rho'\in\B_r.$ From (\ref{set-G'}),  the first of \eqref{eq-e},  the third of (\ref{cubes}),  and (\ref{choice-Qs}), it follows that
 \[
 \begin{split} |\{(D\tilde u,\partial_t \tilde v)\in S_k^{s}(\mu)\}|
 &  \ge   \sum_{j\in I_0}\sum_{\nu=1}^{m_j}  | Q_\nu^j \cap \{(D  \tilde u,\partial_t  \tilde v)\in S_k^{s}(\mu)\}|   \\
 & \ge   \sum_{\nu=1}^{m_k}  | Q_\nu^k \cap \{(D  u_{\bar{y}^k_\nu,l^k_\nu},\partial_t  v_{\bar{y}^k_\nu,l^k_\nu})\in S_k^{s}(\mu)\}|  \\
 &  \ge  (1-\epsilon') \Big [\frac{\lambda}{\mu} +\Big (1-\frac{\lambda}{\mu}\Big )(\mu-\delta_2)^{N-1} \delta_1\Big ] \sum_{\nu=1}^{m_k} |Q_\nu^k|\\
 &   \ge (1-\epsilon')^2 \Big [ \frac{\lambda}{\mu} +\Big (1-\frac{\lambda}{\mu}\Big )(\mu-\delta_2)^{N-1}\delta_1 \Big ] |G_k'|\\
 &  \ge (1-\epsilon')^3 \Big  [ \frac{\lambda}{\mu} +\Big (1-\frac{\lambda}{\mu}\Big )(\mu-\delta_2)^{N-1}\delta_1 \Big ]  |G_k|. \end{split}
 \]
Therefore,  the second inequality of (f) is ensured  if   $\epsilon'\in (0,1)$ is chosen to further satisfy, along with (\ref{choice-e'-1}) and (\ref{choice-e'-2}), that
 \begin{equation}\label{small1}
 (1-\epsilon')^3 \Big [ \frac{\lambda}{\mu} +\Big (1-\frac{\lambda}{\mu}\Big )(\mu-\delta_2)^{N-1}\delta_1 \Big ]\ge \frac{\lambda}{\mu},
 \end{equation}
   which is possible since  $\big (1-\frac{\lambda}{\mu}\big )(\mu-\delta_2)^{N-1}\delta_1>0.$

\smallskip

{\em Step 5.}  Finally, we verify  requirement (g). In the following, we denote by $C$ any constant  depending  on  $\delta_0$ and the diameter of the set $\Sigma(1).$
Note from (\ref{set-G'}), (\ref{choice-e'-1}) and (\ref{choice-Qs})  that
 \begin{equation}\label{f0}
 \begin{split}
  \|D\tilde u -Du\|_{L^1(G)}  = &\,  \|D\tilde u-Du\|_{L^1(F_0)} + \sum_{k\in I}  \|D\tilde u-Du\|_{L^1(G_k\setminus G_k')}
\\
& \, + \sum_{k\in I}  \|D\tilde u-Du\|_{L^1(G_k'\setminus \cup_{\nu=1}^{m_k}Q^k_\nu)} + \sum_{k\in I}\sum_{\nu=1}^{m_k}  \|D\tilde u-Du\|_{L^1(Q^k_\nu)}\\
\le &\, C(|F_0| + \epsilon'|G|)  + \sum_{k\in I}\sum_{\nu=1}^{m_k} \int_{Q^k_\nu} |(D\tilde u,\partial_t \tilde v)-(D u,\partial_t  v)|
 \\
 \le &\, C(|F_0| + \epsilon |G|)  + \sum_{k\in I} \sum_{\nu=1}^{m_k}\int_{ P^k_{\nu}}|(D\tilde u,\partial_t \tilde v) -(D u,\partial_t  v)|
 \\
&  \, + \sum_{k\in I}\sum_{\nu=1}^{m_k} \int_{Q^k_\nu\setminus  P^k_{\nu}}|(D\tilde u,\partial_t \tilde v)-(D u,\partial_t  v)|, \end{split}
\end{equation}
where  $P^k_{\nu}\subset\subset Q_\nu^k=Q_{\bar y^k_\nu,l^k_\nu}$ ($k\in I$,  $1\le \nu\le m_k$) are the sets defined as in Step 2 with
\[
Y= Y^k_\nu= Y_{\bar{y}^k_\nu}= (Du(\bar{y}^k_\nu),\partial_t v(\bar{y}^k_\nu)) \in S_k^r(\lambda),
\]
 $Q=Q^k_\nu,$ and $P_i=P^k_\nu;$ hence,  in \eqref{Y-form}, we have   $q=1$, $i=k$,  $\lambda'=\lambda$ and $\rho=\rho'=\rho_{\bar{y}^k_\nu}\in \B_r.$
By (\ref{small1}), we have
\[
  (1-\epsilon')  \Big [ \frac{\lambda}{\mu} +\Big (1-\frac{\lambda}{\mu}\Big ) (\mu-\delta_2)^{N-1}\delta_1 \Big ]  >  \frac{\lambda}{\mu},
\]
which, by  the first of \eqref{eq-e},  implies that $| P^k_{\nu}|>\frac{\lambda}{\mu}| Q_\nu^k|.$  Hence,
\[
 |Q^k_\nu\setminus P_\nu^k| <\frac{\mu-\lambda}{\mu}|Q^k_\nu|< \frac{\mu-\lambda}{\delta_0}|Q^k_\nu|,
\]
so that
\begin{equation}\label{f2}
  \int_{Q^k_\nu\setminus  P^k_{\nu}}  |(D\tilde u, \partial_t \tilde v) -(D u,\partial_t  v)|  \le C|Q^k_\nu\setminus  P_\nu^k|\le C(\mu-\lambda)|Q^k_\nu|.
\end{equation}
Moreover, let  $X^k_{\nu}=\zeta_k(\mu,\rho) \in S_k^{r}(\mu)$ be defined as in Step 2 corresponding to  $Y=Y^k_\nu$  with $\rho=\rho_{\bar{y}^k_\nu}\in \B_r.$  Then
  $|X_\nu^k-Y^k_\nu|=(\mu-\lambda)|\xi_k(\rho_{\bar{y}^k_\nu})-\pi_k(\rho_{\bar{y}^k_\nu})|\le C(\mu-\lambda);$  thus, by (\ref{cubes}),  we have
\begin{equation}\label{f1}
 \begin{split}
 \int_{ P_\nu^k}  |(D\tilde u,&\partial_t \tilde v)  -(D u,\partial_t  v)| \\
 &  \le  \int_{ P^k_\nu} \big (|(D\tilde u,\partial_t \tilde v)-X_\nu^k|+|X_\nu^k-Y^k_\nu|+|(D u,\partial_t  v)-Y^k_\nu|\big)\\
 & \le C(\epsilon+(\mu-\lambda))|Q^k_\nu|. \end{split}
  \end{equation}
Inserting (\ref{f2}) and (\ref{f1}) into (\ref{f0}) proves  requirement (g).

The proof is now complete.
\end{proof}


\subsection{Proof of Theorem \ref{mainthm1}}

Assume that $\sigma\colon\R^{m\times n}\to \R^{m\times n}$ is  locally Lipschitz  and satisfies Condition $O_N$ for some  $N\ge 2.$
Also, assume that $(\bar u, \bar v)\in C^1(\bar \Omega_T;\R^m\times\R^{m\times n})$ satisfies
\[
\bar u=\dv \bar v \quad{\mbox{and}}\quad (D\bar u,\partial_t \bar v)\in \Sigma(1) \;\;\mbox{ on  $\, \bar \Omega_T.$}
\]
Since  the set $\Delta=(D\bar u,\partial_t \bar v)(\bar\Omega_T)\subset \Sigma(1)$ is compact, by  Proposition \ref{prop-cp}(iii),
there exist $0< r_1 <r_0$ and  $ \delta_0 \le  \lambda_1 <1$ such that
\begin{equation}\label{subs-2}
\Delta=(D u_0,\partial_t v_0)(\bar\Omega_T) \subset  \Sigma^{r_1}(\lambda_1),
\end{equation}
where $(u_0,v_0):=(\bar u,\bar v)$ in $\Omega_T.$

Let $0<\delta<1.$ Define  the sequences $\{\lambda_\nu\}_{\nu=2}^\infty$,  $\{r_\nu\}_{\nu=2}^\infty$  and  $\{\epsilon_\nu\}_{\nu=1}^\infty$   as follows:
\begin{equation}\label{select-1}
 \begin{cases}  \lambda_{\nu+1}= \frac12(1+\lambda_{\nu}),\\
 r_{\nu+1}=\frac12(r_0+r_{\nu}),\\
   \epsilon_{\nu}=\ \delta/3^\nu,    \end{cases}  \quad \forall\, \nu=1,2,\ldots.
\end{equation}
Then $\delta_0\le \lambda_1<  \lambda_{2}<\cdots<1,$ $0<r_1<r_{2}<\cdots < r_0,$
\[
\lim_{\nu\to\infty}\lambda_\nu=1,\quad \mbox{and}    \quad   \lim_{\nu\to\infty}r_\nu =r_0.
\]

\subsubsection{Construction Scheme for \eqref{pdr1}}  We are now in a position to carry out the construction scheme for the differential relation \eqref{pdr1}, as outlined in the Introduction. To this end, we choose the open sets
\[
\mathcal U_\nu =\Sigma^{r_{\nu+1}}(\lambda_{\nu+1}) \quad  (\nu=1,2,\dots)
\]
and will use them to construct  the corresponding functions $(u_\nu,   v_\nu)$  as required in   \eqref{pdr2}.

We begin with \eqref{subs-2}. Applying the  main stage theorem for convex integration, Theorem \ref{thm1}, to  $(u,v)=(\bar u,\bar v)$ on the set $G=\Omega_T$, with
\[
 \lambda=\lambda_1,\; \;  \mu=\lambda_2,\;\;  r=r_1,\;\; s=r_2 \;\;\mbox{and}\;\;   \epsilon=\epsilon_1,
\]
we obtain   a  function $(u_1,v_1)= (\tilde u,\tilde v)\in  (u_0,v_0)+C^{\infty}_0(\Omega_T;\R^m\times \R^{m\times n})$ and finitely many  disjoint cubes $Q^1_1,\ldots,Q^1_{M_1}\subset\Omega_T$ with $|\Omega_T\setminus\cup_{j=1}^{M_1}Q^1_{j}|<\epsilon_1|\Omega_T|$
such that
\begin{equation}\label{u1}
\begin{cases}
0<\rad(Q^1_j)<\epsilon_1\;\;\forall\, 1\le j\le M_1;  \\
\mbox{$u_1=\dv v_1$ and $(D u_1,  \partial_t v_1)\in \Sigma^{r_2}(\lambda_2)$  on $\bar \Omega_T$;}\\
 \|u_1-u_0\|_{L^\infty(\Omega_T)} +\|\partial_t u_1-\partial_t u_0\|_{L^\infty(\Omega_T)}<\epsilon_1;  \\
|Q^1_j\cap \{(D u_1,\partial_t v_1) \in S^{r_2}(\lambda_2) \}| \ge (1-\epsilon_1)|Q^1_j| \quad \forall\, 1\le j\le M_1;\\
|\{(D u_1,\partial_t v_1) \in S^{r_2}(\lambda_2)\}| \ge (1-\epsilon_1)|\Omega_T|; \\
\mbox{$
\begin{cases}
 |\{(D u_1,\partial_t v_1) \in S_k^{r_2}(\lambda_2)\}| \ge \frac12 (\lambda_2-\lambda_1)(\lambda_2-\delta_2)^{N-1}\delta_1|\Omega_T|, \\
|\{(D u_1,\partial_t v_1) \in S_k^{r_2}(\lambda_2) \}| \ge \frac{\lambda_1}{\lambda_2} |\{(D u_0,\partial_t v_0) \in S_k^{r_1}(\lambda_1) \}|\;\; \forall\, 1\le k\le N;
\end{cases} $} \\
\|Du_1 - Du_0\|_{L^1(\Omega_T)}\le C_0\big[|F_0|+(\epsilon_1+(\lambda_2-\lambda_1))|\Omega_T|\big],\\
\mbox{where $F_0=\{(D u_0,\partial_t v_0) \not\in S^{r_1}(\lambda_1) \}$ and $C_0>0$ is the constant in Theorem \ref{thm1}(g).}
\end{cases}
\end{equation}
Here, we assume further that
\begin{equation}\label{u1-0}
C_0\ge\mathrm{diam}(\Sigma(1)).
\end{equation}
Let $Q^1_0=\Omega_T\setminus\cup_{j=1}^{M_1}\bar{Q}^1_j.$ Then $Q^1_0$ is an open subset of $\Omega_T$ with
\begin{equation}\label{u1-1}
\mbox{$|Q^1_0|<\epsilon_1|\Omega_T|$ \; and \; $|\Omega_T\setminus\cup_{j=0}^{M_1}Q^1_j|=0$.}
\end{equation}

Fix any  $0\le i \le M_1.$ We  apply Theorem \ref{thm1} to  $(u,v)=(u_1,v_1)$ on the set $G=Q^1_i$, with
\[
 \lambda=\lambda_2,\; \;  \mu=\lambda_3,\;\; r=r_2,\;\; s=r_3 \;\;\mbox{and} \;\; \epsilon=\epsilon_2,
 \]
to obtain a function $(u_i^2,v_i^2)= (\tilde u,\tilde v)\in (u_1,v_1)+C^{\infty}_0(Q^1_i;\R^m\times \R^{m\times n})$ and finitely many disjoint cubes $Q^2_{i,1},\ldots,Q^2_{i,M^2_i}\subset Q^1_i$ with $|Q^1_i\setminus\cup_{j=1}^{M^2_i}Q^2_{i,j}|<\epsilon_2|Q^1_i|$
such that
\begin{equation}\label{u2}
\begin{cases}
0<\rad(Q^2_{i,j})<\epsilon_2\;\;\forall\,1\le j\le M^2_i;  \\
\mbox{$u^2_i=\dv v^2_i$ and $(D u^2_i,  \partial_t v^2_i)\in \Sigma^{r_3}(\lambda_3)$  on $\bar Q^1_i$;}\\
\|u^2_i-u_1\|_{L^\infty(Q^1_i)} +\|\partial_t u^2_i-\partial_t u_1\|_{L^\infty(Q^1_i)}<\epsilon_2; \\
|Q^2_{i,j}\cap \{(D u^2_i,\partial_t v^2_i) \in S^{r_3}(\lambda_3) \}| \ge (1-\epsilon_2)|Q^2_{i,j}| \quad \forall\, 1\le j\le M^2_i;\\
|Q^1_i\cap\{(D u^2_i,\partial_t v^2_i) \in S^{r_3}(\lambda_3) \}| \ge (1-\epsilon_2)|Q^1_i|; \\
\begin{cases}
 |Q^1_i\cap\{(D u^2_i,\partial_t v^2_i) \in S_k^{r_3}(\lambda_3)\}| \ge \frac{1}{2}(\lambda_3-\lambda_2)(\lambda_3-\delta_2)^{N-1}\delta_1|Q^1_i|, \\
|Q^1_i\cap\{(D u^2_i,\partial_t v^2_i) \in S_k^{r_3}(\lambda_3) \}| \ge \frac{\lambda_2}{\lambda_3} |Q^1_i\cap\{(D u_1,\partial_t v_1) \in S_k^{r_2}(\lambda_2) \}|
\end{cases} (1\le k\le N);\\
\|Du^2_i - D u_1\|_{L^1(Q^1_i)}\le C_0\big[|F^1_i|+(\epsilon_2+(\lambda_3-\lambda_2))|Q^1_i|\big], \\
 \mbox{where $F^1_i=Q^1_i\cap\{(D u_1,\partial_t v_1) \not\in S^{r_2}(\lambda_2) \}.$ }\\
\end{cases}
\end{equation}
Here, by the fourth  of (\ref{u1}), we have  $|F^1_i|<\epsilon_1|Q^1_i|$ if $1\le i\le M_1;$ while if $i=0,$ then,  by (\ref{u1-1}),
\begin{equation}\label{u2-1}
|F^1_0|\le |Q^1_0|<\epsilon_1|\Omega_T|.
\end{equation}
Let $Q^{2}_{i,0}=Q^1_i\setminus\cup_{j=1}^{M^2_i}\bar{Q}^2_{i,j}$; then $Q^2_{i,0}$ is an open subset of $Q^1_i$ with
\[
\mbox{$|Q^{2}_{i,0}|<\epsilon_2|Q^1_i|$ \; and \;  $|Q^1_i\setminus\cup_{j=0}^{M^2_i} Q^2_{i,j}|=0$.}
\]

We  relabel the  open sets  $Q^2_{i,j}$ $(0\le i\le M_1,\,0\le j\le M^2_i)$ as  $Q^2_0,Q^2_1,\ldots,Q^2_{M_2}$ by setting
\[
M_2=\sum_{i=0}^{M_1}(M^2_i+1)-1 \quad\text{and}\quad   Q^2_{\mu}=Q^2_{i,j} \;\;
\Big(\mu=i+j+\sum_{k=1}^i M^2_{k-1}\Big),
\]
for all $0\le i\le M_1$ and $0\le j\le M^2_i;$ here we adopt the convention that $\sum_{k=1}^0 M^2_{k-1}=0.$
Note that all   sets $Q^2_\mu$, except for those with  $\mu =i+\sum_{k=1}^i M^2_{k-1}$ for $0\le i\le M_1$, are  cubes  in $\R^{n+1}$.  Moreover,
\[
\bigg|\Omega_T\setminus\bigcup_{j=0}^{M_2}Q^2_j\bigg|=0, \quad \bigg|Q^1_i\setminus\bigcup_{j=0}^{M^2_i}Q^2_{i+j+\sum_{k=1}^i M^2_{k-1}}\bigg|=0,\quad \big|Q^2_{i+\sum_{k=1}^i M^2_{k-1}}\big|<\epsilon_2|Q^1_i|\;\;(0\le i\le M_1).
\]

Define
\[
(u_2,v_2)=\sum_{i=0}^{M_1}(u^2_i,v^2_i)\chi_{Q^1_i}\in (u_0,v_0)+C^{\infty}_0(\Omega_T;\R^m\times \R^{m\times n}).
\]
Then
\[
\begin{cases}
0<\rad(Q^2_{j})<\epsilon_2\;\;\mbox{$\forall\, 0\le j\le M_2$ with $j\ne i+\sum_{k=1}^i M^2_{k-1}$ $(0\le i\le M_1)$;}  \\
\mbox{$u_2=\dv v_2$ and $(D u_2,  \partial_t v_2)\in \Sigma^{r_3}(\lambda_3)$  on $\bar \Omega_T$;}\\
\|u_2-u_1\|_{L^\infty(\Omega_T)} +\|\partial_t u_2-\partial_t u_1\|_{L^\infty(\Omega_T)}<\epsilon_2; \\
|Q^2_{j}\cap \{(D u_2,\partial_t v_2) \in S^{r_3}(\lambda_3) \}| \ge (1-\epsilon_2)|Q^2_{j}| \\
\quad \forall\, 0\le j\le M_2 \;\;  \mbox{with $j\ne i+\sum_{k=1}^i M^2_{k-1}$ $(0\le i\le M_1)$;}\\
|\{(D u_2,\partial_t v_2) \in S^{r_3}(\lambda_3) \}| \ge (1-\epsilon_2)|\Omega_T|; \\
\begin{cases}
 |Q^1_i\cap\{(D u_2,\partial_t v_2) \in S_k^{r_3}(\lambda_3) \}| \ge \frac{1}{2}(\lambda_3-\lambda_2)(\lambda_3-\delta_2)^{N-1}\delta_1|Q^1_i|, \\
|Q^1_i\cap\{(D u_2,\partial_t v_2) \in S_k^{r_3}(\lambda_3) \}| \ge \frac{\lambda_2}{\lambda_3} |Q^1_i\cap\{(D u_1,\partial_t v_1) \in S_k^{r_2}(\lambda_2) \}|,\\
\mbox{for each $1\le k\le N$ and $0\le i\le M_1$};
\end{cases} \\
\|Du_2 - D u_1\|_{L^1(\Omega_T)}\le  C_0\big[2\epsilon_1+\epsilon_2+(\lambda_3-\lambda_2)\big]|\Omega_T|.
\end{cases}
\]
Here, the last inequality follows from  (\ref{u1-0}), (\ref{u2}) and (\ref{u2-1}) as
\[
\begin{split}
\|Du_2 - D u_1 & \|_{L^1(\Omega_T)}=\sum_{j=0}^{M_1} \|Du_2 - D u_1\|_{L^1(Q^1_j)}= \|Du_2 - D u_1\|_{L^1(Q^1_0)} +\sum_{j=1}^{M_1} \|Du_2 - D u_1\|_{L^1(Q^1_j)}\\
& \le C_0 |Q^1_0|+\sum_{j=1}^{M_1} C_0 \big[|F^1_j|+(\epsilon_2+(\lambda_3-\lambda_2))|Q^1_j|\big] \\
& \le C_0\epsilon_1|\Omega_T|+C_0[ \epsilon_1+\epsilon_2+(\lambda_3-\lambda_2)]  \sum_{j=1}^{M_1}  |Q^1_j|  \le  C_0 \big[2\epsilon_1+\epsilon_2+(\lambda_3-\lambda_2)\big]|\Omega_T|.
\end{split}
\]

Repeating this process indefinitely, we obtain a sequence $\{(u_\nu,v_\nu)\}_{\nu=1}^\infty$ in the space
\[
(u_0,v_0)+C^{\infty}_0(\Omega_T;\R^m\times \R^{m\times n})
\]
and finitely many disjoint open sets $Q^\nu_0,Q^\nu_1,\ldots,Q^\nu_{M_\nu}\subset\Omega_T$ with $|\Omega_T\setminus\cup_{j=0}^{M_\nu}Q^\nu_j|=0$ $(\nu=1,2,\ldots)$ such that
for each $\nu\ge2$,
\[
M_\nu=\sum_{i=0}^{M_{\nu-1}}(M^\nu_i +1)-1
\]
and all  sets $Q^\nu_j$, except for those with  $j =i+\sum_{k=1}^i M^\nu_{k-1}$ for $0\le i\le M_{\nu-1}$, are  cubes in $\R^{n+1}$ and that for each $\nu\ge2$ and $0\le i\le M_{\nu-1},$
\begin{equation}\label{u-nu-0}
\Big |Q^{\nu-1}_i \setminus\bigcup_{j=0}^{M^\nu_i} Q^{\nu}_{i+j+\sum_{k=1}^i M^\nu_{k-1}} \Big |=0 \quad\mbox{and}\quad \Big |Q^{\nu}_{i+\sum_{k=1}^i M^\nu_{k-1}}\Big |<\epsilon_\nu|Q^{\nu-1}_i|.
\end{equation}
Moreover, for each $\nu\ge2$,
\begin{equation}\label{u-nu}
\begin{cases}
 0<\rad(Q^{\nu}_{j})<\epsilon_{\nu}\;\;\mbox{$\forall\, 0\le j\le M_{\nu}$ with $j\ne i+\sum_{k=1}^i M^\nu_{k-1}$ $(0\le i\le M_{\nu-1})$;}  \\
 \mbox{$u_{\nu}=\dv v_{\nu}$ and $(D u_{\nu},  \partial_t v_{\nu})\in \Sigma^{r_{\nu+1}}(\lambda_{\nu+1})$  on $\bar \Omega_T$;}\\
 \|u_{\nu}-u_{\nu-1}\|_{L^\infty(\Omega_T)} +\|\partial_t u_{\nu}-\partial_t u_{\nu-1}\|_{L^\infty(\Omega_T)}<\epsilon_{\nu}; \\
 |Q^{\nu}_{j}\cap \{(D u_{\nu},\partial_t v_{\nu}) \in S^{r_{\nu+1}}(\lambda_{\nu+1}) \}| \ge (1-\epsilon_{\nu})|Q^{\nu}_{j}| \\
 \mbox{\;\;$\forall\, j=0,\ldots,M_{\nu}$ with $j\ne i+\sum_{k=1}^i M^\nu_{k-1}$ $(0\le i\le M_{\nu-1})$;}\\
 |\{(D u_{\nu},\partial_t v_{\nu}) \in S^{r_{\nu+1}}(\lambda_{\nu+1}) \}| \ge (1-\epsilon_{\nu})|\Omega_T|; \\
 \begin{cases}
 |Q^{\nu-1}_i\cap\{(D u_{\nu},\partial_t v_{\nu}) \in S_k^{r_{\nu+1}}(\lambda_{\nu+1}) \}| \ge \frac{1}{2}(\lambda_{\nu+1}-\lambda_{\nu})(\lambda_{\nu+1}-\delta_2)^{N-1}\delta_1|Q^{\nu-1}_i|, \\
|Q^{\nu-1}_i\cap\{(D u_{\nu},\partial_t v_{\nu}) \in S_k^{r_{\nu+1}}(\lambda_{\nu+1}) \}| \ge \frac{\lambda_{\nu}}{\lambda_{\nu+1}} |Q^{\nu-1}_i\cap\{(D u_{\nu-1},\partial_t v_{\nu-1}) \in S_k^{r_{\nu}}(\lambda_{\nu}) \}|,\\
\mbox{for each $1\le k\le N$ and $0\le i\le M_{\nu-1};$}
\end{cases} \\
 \|Du_{\nu} - D u_{\nu-1}\|_{L^1(\Omega_T)}\le  C_0 \big[2\epsilon_{\nu-1}+\epsilon_{\nu}+(\lambda_{\nu+1}-\lambda_{\nu})\big]|\Omega_T|.
\end{cases}
\end{equation}

\begin{lemma}\label{lem512} For  all $p>q\ge 1$, $0\le j\le M_q$ and  $1\le k\le N,$  we have
\[
 |Q^q_j  \cap\{( Du_{p} ,\partial_t v_{p})  \in S_k^{r_{p+1}}(\lambda_{p+1}) \}|  \ge  \frac12\, \frac{\lambda_{q+2}}{\lambda_{p+1}}  (\lambda_{q+2}- \lambda_{q+1})(\lambda_{q+2}-\delta_2)^{N-1}\delta_1\,|Q^q_j|.
\]
\end{lemma}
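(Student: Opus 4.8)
The plan is to fix $q\ge 1$, $0\le j\le M_q$ and $1\le k\le N$ once and for all, and to prove the asserted inequality by induction on $p\ge q+1$. For the base case $p=q+1$ the claimed bound reads
\[
|Q^q_j\cap\{(Du_{q+1},\partial_t v_{q+1})\in S_k^{r_{q+2}}(\lambda_{q+2})\}|\ge \tfrac12(\lambda_{q+2}-\lambda_{q+1})(\lambda_{q+2}-\delta_2)^{N-1}\delta_1\,|Q^q_j|,
\]
because $\lambda_{q+2}/\lambda_{p+1}=1$ when $p=q+1$; this is precisely the first inequality of the bracketed system in \eqref{u-nu}, applied with $\nu=q+1$ (legitimate since $q\ge1$ forces $\nu\ge2$) and with $i=j$. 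So the base case needs nothing beyond quoting \eqref{u-nu}.

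For the inductive step, assume the statement holds for some $p\ge q+1$. First I would record the refinement structure of the construction: iterating \eqref{u-nu-0} from stage $q$ up to stage $p$ shows that $Q^q_j$ is, up to a Lebesgue-null set, the disjoint union of exactly those stage-$p$ sets $Q^p_\ell$ that are contained in $Q^q_j$; write $D\subset\{0,\dots,M_p\}$ for the (finite) index set of these descendants. Since $(u_{p+1},v_{p+1})$ restricted to each $Q^p_\ell$ is, by construction, an element of $(u_p,v_p)+C^\infty_0(Q^p_\ell;\R^m\times\R^{m\times n})$, the second inequality of the bracketed system in \eqref{u-nu} applies on each such $Q^p_\ell$ with $\nu=p+1$ and $i=\ell$, yielding
\[
|Q^p_\ell\cap\{(Du_{p+1},\partial_t v_{p+1})\in S_k^{r_{p+2}}(\lambda_{p+2})\}|\ge \frac{\lambda_{p+1}}{\lambda_{p+2}}\,|Q^p_\ell\cap\{(Du_{p},\partial_t v_{p})\in S_k^{r_{p+1}}(\lambda_{p+1})\}|.
\]
Summing over $\ell\in D$ and using additivity of Lebesgue measure over the mod-null partition $\{Q^p_\ell\}_{\ell\in D}$ of $Q^q_j$ gives the same inequality with every $Q^p_\ell$ replaced by $Q^q_j$. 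Combining this with the inductive hypothesis and simplifying $\frac{\lambda_{p+1}}{\lambda_{p+2}}\cdot\frac{\lambda_{q+2}}{\lambda_{p+1}}=\frac{\lambda_{q+2}}{\lambda_{p+2}}$ produces exactly the claimed bound at level $p+1$, closing the induction.

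I do not expect a serious obstacle here: conceptually the lemma is just the base estimate of \eqref{u-nu} propagated through the telescoping factors $\lambda_\nu/\lambda_{\nu+1}$ furnished by the second estimate of \eqref{u-nu} at each later stage, so the only genuine computation is the one-line telescoping above. The point requiring a little care is the bookkeeping for the refinement tree — verifying from \eqref{u-nu-0} and the relabeling conventions that the stage-$p$ descendants of $Q^q_j$ really do partition $Q^q_j$ up to measure zero, that each of them is a subset of $Q^q_j$, and that they are all indexed within $\{0,\dots,M_p\}$, so that \eqref{u-nu} with $\nu=p+1$ is applicable to each of them.
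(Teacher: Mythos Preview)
Your proposal is correct and follows essentially the same approach as the paper's proof: both rely on the mod-null partition of $Q^q_j$ into stage-$\nu$ cells (from \eqref{u-nu-0}) to sum the second inequality of \eqref{u-nu} at each intermediate level, then invoke the first inequality of \eqref{u-nu} at level $\nu=q+1$. The only cosmetic difference is that the paper writes the argument as a descending telescoping chain from $p$ down to $q+1$, whereas you formulate it as a forward induction on $p$; the content is identical.
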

\begin{proof}
From the construction of  $\{(u_{p},v_{p})\}_{p=1}^\infty$ and  $\{Q^{p}_j\}_{p\ge1,\,0\le j\le M_p}$ above, we  see that  if $p>q\ge 1$ and $0\le j\le M_q$, then
\[
Q^q_j=(\cup_{k\in I} Q^p_k)\cup E
\]
 for some index set $I\subset\{0,1,\ldots, M_p\}$ and  null-set $E.$
Thus,  for all $p>q\ge 1$, $0\le j\le M_q$ and $1\le k\le N,$   by  (\ref{u-nu}), we have
\[
\begin{split} |Q^q_j  \cap &\{( Du_{p} ,\partial_t v_{p})  \in S_k^{r_{p+1}}(\lambda_{p+1}) \}| \\
&  \ge \frac{\lambda_p}{\lambda_{p+1}}\, |Q^q_j\cap\{(Du_{p-1},\partial_t  v_{p-1}) \in S_k^{r_{p}}(\lambda_p) \}|\\
& \ge \frac{\lambda_p}{\lambda_{p+1}} \frac{\lambda_{p-1}}{\lambda_{p}}\,|Q^q_j\cap\{(Du_{p-2},\partial_t  v_{p-2}) \in S_k^{r_{p-1}}(\lambda_{p-1}) \}|\\
&\hspace{30ex}  \vdots \\
& \ge \frac{\lambda_p}{\lambda_{p+1}}  \frac{\lambda_{p-1}}{\lambda_{p}}   \cdots  \frac{\lambda_{q+2}}{\lambda_{q+3}}\, |Q^q_j\cap\{(Du_{q+1},\partial_t  v_{q+1}) \in S_k^{r_{q+2}}(\lambda_{q+2}) \}| \\
& =\frac{\lambda_{q+2}}{\lambda_{p+1}}\, |Q^q_j\cap\{(Du_{q+1},\partial_t  v_{q+1}) \in S_k^{r_{q+2}}(\lambda_{q+2}) \}| \\
& \ge    \frac12 \, \frac{\lambda_{q+2}}{\lambda_{p+1}}  (\lambda_{q+2}- \lambda_{q+1})(\lambda_{q+2}-\delta_2)^{N-1}\delta_1\,|Q^q_j|.
\end{split}
\]
\end{proof}

\begin{lemma} \label{lem52} Let $f(\xi)= \sigma(\xi^1)-\xi^2 $ for $\xi=(\xi^1,\xi^2)\in \R^{m\times n}\times \R^{m\times n}.$
Then
\[
 |f(\xi)|\le C\,(1-\lambda) \quad \forall\, 0\le \lambda\le 1,\;\; \xi \in S^{r_0}(\lambda),
 \]
 where $C>0$ is a constant.
\end{lemma}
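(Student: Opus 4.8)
The plan is to combine three elementary observations: that $f$ vanishes identically on the graph $\K$; that each anchor point $\xi_i(\rho)$ appearing in Condition $O_N$ lies on $\K$; and that every point of $S^{r_0}(\lambda)$ is within distance $O(1-\lambda)$ of one such anchor point. Since $\sigma$ is locally Lipschitz (as assumed throughout the proof of Theorem \ref{mainthm1}), these facts will yield the estimate at once.

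First I would record that, by property (P1)(i) of Definition \ref{O-N}, one has $\xi_i(\rho)\in\K$ for all $\rho\in\bar\B_{r_0}$ and $1\le i\le N$; equivalently $\xi_i^2(\rho)=\sigma(\xi_i^1(\rho))$, so that $f(\xi_i(\rho))=0$. Next, given $0\le\lambda\le1$ and $\xi\in S^{r_0}(\lambda)$, write $\xi=\lambda\,\xi_i(\rho)+(1-\lambda)\,\pi_i(\rho)$ for some $1\le i\le N$ and $\rho\in\B_{r_0}$. Then
\[
\xi-\xi_i(\rho)=(1-\lambda)\bigl(\pi_i(\rho)-\xi_i(\rho)\bigr)=-(1-\lambda)\,\kappa_i(\rho)\,\gamma_i(\rho),
\]
and since the functions $\kappa_i$ and $\gamma_i$ ($1\le i\le N$) are continuous on the compact ball $\bar\B_{r_0}$, there is a constant $C_1>0$, depending only on the data of Condition $O_N$ and independent of $i$, $\lambda$ and $\rho$, such that $|\xi-\xi_i(\rho)|\le C_1(1-\lambda)$.

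It then remains to fix a uniform Lipschitz constant for $\sigma$ on the relevant compact set. The union $\bigcup_{0\le\lambda\le1}S^{r_0}(\lambda)$ is bounded, since each of its points is a convex combination of values of $\xi_i$ and $\pi_i$ on $\bar\B_{r_0}$, which are bounded functions; hence there is a compact set $K_1\subset\R^{m\times n}$ containing $\xi^1$ for every $\xi$ in this union as well as every point of $\xi_i^1(\bar\B_{r_0})$. As $\sigma$ is locally Lipschitz, it is Lipschitz on $K_1$, say with constant $L>0$. Combining the above, for $\xi\in S^{r_0}(\lambda)$ we obtain
\[
|f(\xi)|=|f(\xi)-f(\xi_i(\rho))|\le|\sigma(\xi^1)-\sigma(\xi_i^1(\rho))|+|\xi^2-\xi_i^2(\rho)|\le(L+1)\,|\xi-\xi_i(\rho)|\le(L+1)\,C_1\,(1-\lambda),
\]
so the lemma holds with $C=(L+1)C_1$. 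The argument is routine; the only mild point is to keep all estimates uniform over $\lambda$, $i$ and $\rho$, which is secured by the continuity of the Condition-$O_N$ data on $\bar\B_{r_0}$ together with the local Lipschitz hypothesis on $\sigma$, so no substantive obstacle arises.
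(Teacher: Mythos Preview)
Your proof is correct and follows essentially the same route as the paper: both use that $f(\xi_i(\rho))=0$ by (P1)(i), write $\xi-\xi_i(\rho)=-(1-\lambda)\kappa_i(\rho)\gamma_i(\rho)$ to get $|\xi-\xi_i(\rho)|\le C_1(1-\lambda)$, and then invoke the local Lipschitz property of $\sigma$ (equivalently of $f$) on a suitable compact set. The only cosmetic difference is that the paper applies Lipschitz continuity directly to $f$ on $\overline{\Sigma(1)}$, whereas you split $|f(\xi)-f(\xi_i(\rho))|$ into the $\sigma$-part and the identity part; this changes nothing of substance.
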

\begin{proof}
Since  $f$ is locally Lipschitz, we have $|f(\xi)-f(\eta)|\le  L|\xi-\eta|$ for all $\xi, \eta \in \overline{\Sigma(1)}$, where  $L>0$ is a constant.

Let $0\le\lambda\le1$ and $\xi\in S^{r_0}(\lambda)\subset\overline{\Sigma(1)}$. Then $\xi=\lambda\xi_i(\rho)+(1-\lambda)\pi_i(\rho)$ for some $1\le i\le N$ and $\rho\in \B_{r_0}.$ Since  $f(\xi_i(\rho))=0,$ we have that
\[
 |f(\xi)|=|f(\xi)-f(\xi_i(\rho))| \le L|\xi-\xi_i(\rho)|=L(1-\lambda)|\xi_i(\rho)-\pi_i(\rho)|\le C(1-\lambda).
\]
\end{proof}

\subsubsection{Completion of  the Proof}

1.  By  the third  of  both (\ref{u1}) and  (\ref{u-nu}),   for each $j\ge 1$,
 \begin{equation}\label{norm-0}
 \begin{split}
& \|u_j  -u_0\|_{L^\infty(\Omega_T)}   + \| \partial_t u_j -\partial_t  u_0\|_{L^\infty(\Omega_T)}\\
 & \le \sum_{\nu=0}^{j-1} \left [\|u_{\nu+1}-u_\nu\|_{L^\infty(\Omega_T)}   + \|\partial_t u_{\nu+1} -\partial_t  u_\nu\|_{L^\infty(\Omega_T)} \right ]   \le \sum_{\nu=0}^\infty {\epsilon_{\nu+1}}  =\frac12 \delta. \end{split}
\end{equation}
Hence, $\{u_\nu\}$ and $\{\partial _t u_\nu\}$ are uniformly bounded on $\Omega_T$. By the second  of \eqref{u-nu},  $\{Du_\nu\}$ is also uniformly bounded on $\Omega_T$. Consequently, the sequence $\{u_\nu\}$ is bounded in $\bar u+W_0^{1,\infty}(\Omega_T;\R^m).$

Moreover, by  the third and last of  (\ref{u-nu}), the sequence $\{u_\nu\}$  is also Cauchy  in $W^{1,1}(\Omega_T;\R^m).$ Hence, there exists
 $u\in W^{1,1}(\Omega_T;\R^m)$ such that
\[
\mbox{$u_\nu\to u$ in $W^{1,1}(\Omega_T;\R^m)$ as $\nu\to\infty.$}
\]
Since $\{u_\nu\}$ is bounded in $\bar u+W_0^{1,\infty}(\Omega_T;\R^m),$  it follows  that $u\in \bar u+ W^{1,\infty}_{0}(\Omega_T;\R^m).$
Furthermore, passing to the limit in \eqref{norm-0} yields
 \[
 \|u  -\bar u\|_{L^\infty(\Omega_T)}   + \| \partial_t u -\partial_t  \bar u\|_{L^\infty(\Omega_T)}  \le \sum_{\nu=0}^\infty {\epsilon_{\nu+1}}  =\frac12 \delta <\delta.
\]

 2. By   (\ref{u-nu})  and Lemma \ref{lem52},  we have
\[
\begin{split}
 \int_{\Omega_T} |f(Du_{\nu}, \partial_t v_{\nu})|  \,dxdt  = & \, \int_{(D u_{\nu},\partial_t v_{\nu}) \in S^{r_{\nu+1}}(\lambda_{\nu+1}) } |f(Du_{\nu}, \partial_t v_{\nu})| \,dxdt  \\
&\, + \int_{(D u_{\nu},\partial_t v_{\nu}) \notin S^{r_{\nu+1}}(\lambda_{\nu+1})} |f(Du_{\nu}, \partial_t v_{\nu})| \,dxdt \\
\le & \,  C[(1-\lambda_{\nu+1})   +\epsilon_{\nu} ]|\Omega_T| \to 0\quad\text{ as $\nu\to \infty.$}
\end{split}
 \]
Observe  from  $u_{\nu}=\dv v_{\nu}$ in $\Omega_T$ that
\begin{equation}\label{pre-weak}
 \int_{\Omega_T}   \big (u_{\nu} \cdot \partial_t \varphi   - \langle \sigma(Du_{\nu}) , D\varphi \rangle \big ) \,dxdt   = -\int_{\Omega_T} \langle f(Du_{\nu}, \partial_t v_{\nu}), D\varphi \rangle\,dxdt
\end{equation}
for all $\varphi\in  C^\infty_0(\Omega_T;\R^m).$
Since
\[
\|u_{\nu}-u\|_{W^{1,1}(\Omega_T)}\to 0\quad\mbox{and}\quad \|f(Du_{\nu}, \partial_t v_{\nu})\|_{L^1(\Omega_T)} \to 0\quad\mbox{as $\nu\to\infty$,}
\]
and  $\{\|Du_{\nu}\|_{L^\infty(\Omega_T)}\}$ is bounded, by taking $\nu\to\infty$ in (\ref{pre-weak}), we have that
\[
  \int_{\Omega_T} \big (u  \cdot \partial_t \varphi  - \langle \sigma(Du) , D\varphi \rangle \big )\, dx dt=0  \quad \forall\, \varphi\in  C^\infty_0(\Omega_T;\R^m).
\]
This proves that $u$ is a weak solution to (\ref{ibvp-5}).

3. We show that $Du$ is nowhere essentially continuous in $\Omega_T.$

Let $y_0\in \Omega_T$ and $U$ be any open subset of $\Omega_T$ containing $y_0.$  Let $l_0>0$ be chosen so that $Q_{y_0,2l_0}\subset U.$
Since $|Q_{y_0,l_0}|=|Q_{y_0,l_0}\cap(\cup_{j=0}^{M_{\nu}}Q^\nu_j)|$ for all $\nu\ge1,$ it follows from (\ref{u-nu-0}) that for some $q_0\ge2$, if $q\ge q_0,$ then
\[
Q_{y_0,l_0}\cap Q^q_j\ne\emptyset
\]
for some $0\le j\le M_q$ with $j\ne\sum_{k=1}^i M^q_{k-1}+i$ $(0\le i\le M_{q-1})$; thus, if $q$ is sufficiently large, then, by  (\ref{u-nu}), we have
\begin{equation}\label{p-meas-0}
\bar Q^q_j\subset Q_{y_0,2l_0}\subset U.
\end{equation}

By Lemma \ref{lem512},   we have that, for all $p>q$ and $1\le k\le N$,
\[
\begin{split}
 |Q^q_j\cap \{Du_{p}  \in \mathbb P(S_k^{r_{p+1}}(\lambda_{p+1}))\}|
 & \ge  |Q^q_j  \cap\{( Du_{p} ,\partial_t v_{p})  \in S_k^{r_{p+1}}(\lambda_{p+1}) \}| \\
&\ge  \frac12\,  \frac{\lambda_{q+2}}{\lambda_{p+1}}  (\lambda_{q+2}- \lambda_{q+1}) (\lambda_{q+2}-\delta_2)^{N-1}\delta_1\,|Q^q_j|,
\end{split}
 \]
 where  $\mathbb P(\rho)=\rho^1$  for all $\rho=(\rho^1,\rho^2)\in\R^{m\times n}\times\R^{m\times n}.$  Letting $p\to \infty$, as $u_{p}\to u$ in $W^{1,1}(\Omega_T;\R^m)$, $\lambda_p\to 1$  and $r_p\to r_0,$ we obtain that
\begin{equation}\label{p-meas}
 |Q^q_j\cap \{Du \in \overline{\mathbb P(S^{r_0}_k(1))}\}|  \ge  \frac12\, \lambda_{q+2}(\lambda_{q+2}-\lambda_{q+1})(\lambda_{q+2}-\delta_2)^{N-1}\delta_1\, |Q^q_j|>0
\end{equation}
for all $1\le k\le N.$

 Let
\[
d_0=\min_{k\ne l}\dist\left(\overline{\mathbb P(S^{r_0}_k(1))},\, \overline{\mathbb P(S^{r_0}_l(1))}\right).
\]
 By  (P1)(ii) of Condition $O_N$,  we have $d_0>0.$ Hence, by (\ref{p-meas-0}) and (\ref{p-meas}), it follows that
\[
\|Du(y)-Du(z)\|_{L^\infty(U\times U)}\ge \|Du(y)-Du(z)\|_{L^\infty(Q^q_j\times Q^q_j)} \ge d_0,
\]
which proves that $\omega_{Du}(y_0) \ge d_0$ for all $y_0\in\Omega_T.$  Consequently,  $Du$ is not essentially continuous at any point of  $\Omega_T.$

The proof of Theorem \ref{mainthm1} is now complete.


\section{Specific $\T_N$-configurations and  Nondegeneracy  Conditions}\label{s-5}

In this  section, we analyze certain specific $\mathcal{T}_N$-configurations in $\mathbb{R}^{m\times n} \times \mathbb{R}^{m\times n}$ by selecting suitable parameters  and  establish the nondegeneracy conditions  essential  for verifying Condition $O_N$.

Assume that $m\ge 1$ and  $n\ge 2.$
 For  $\x\in\R^{n-1},$  write $\x=(x_1,\bar  \x)$, $\bar  \x=(x_2,\dots,x_{n-1})\in\R^{n-2}$ when $n\ge 3,$ and omit $\bar\x$  when $n=2.$
For  $r\in\{1,\dots, n\}$, $\x,\, \y\in\R^{n-1},$ and $a\in\R^n,$ define
\[
\begin{cases}  \tilde\alpha_r(\x) =(x_1,\ldots,x_{r-1},0,x_r,\ldots,x_{n-1})\in\R^n,\\
 \bm{\alpha}^*_r(a)  = (a_1,\ldots,a_{r-1},a_{r+1},\ldots,a_n)\in\R^{n-1},  \\
 \alpha_r(\x)  =\tilde\alpha_r(\x)+e_r, \quad  b_r(\x,\y)=\tilde\alpha_r(\y)-(\x\cdot\y)e_r,
\end{cases}
\]
where  $\{e_1,\dots,e_n\}$  is  the standard basis of $\R^n.$
 Moreover, if $\mathbf Y=(\y^k)_{k=1}^m \in (\R^{ n-1})^m$,  we define
\[
\beta_r(\x,\mathbf Y)=\begin{bmatrix} b_{r}(\x,\y^1 ) \\\vdots\\ b_{r}(\x,\y^m )\end{bmatrix}\in \R^{m\times n}.
\]

The  following result is straightforward to verify, so  the proof is omitted.

\begin{lemma} \label{G-class}
Let $\Gamma$ be  defined by \eqref{set-G}.  Then, for each  $r\in \{1,\ldots,n\},$ the  map
\[
(p,\x,\mathbf Y)\longmapsto   (p\otimes \alpha_r(\x), \,  \beta_{r}(\x,\mathbf Y))
\]
is one-to-one   from  $ (\R^m\setminus\{0\})\times\R^{n-1}\times (\R^{ n-1})^m$ into $\Gamma.$ Its image will be denoted  by
 $\Gamma_r.$
\end{lemma}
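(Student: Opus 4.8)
The claim is that for each fixed $r\in\{1,\dots,n\}$ the map
\[
\Phi_r\colon (p,\x,\mathbf Y)\longmapsto \big(p\otimes\alpha_r(\x),\,\beta_r(\x,\mathbf Y)\big)
\]
is injective from $(\R^m\setminus\{0\})\times\R^{n-1}\times(\R^{n-1})^m$ into $\Gamma$. The plan is to verify two things: first, that the image lies in $\Gamma$, i.e. that $\beta_r(\x,\mathbf Y)\,\alpha_r(\x)=0$ and the first slot is a rank-one matrix of the form $p\otimes a$ with $a\ne 0$; second, that $\Phi_r$ separates points. Both reduce to elementary linear algebra involving the defining formulas for $\alpha_r$, $b_r$ and $\beta_r$.

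\emph{Step 1: the image lies in $\Gamma$.} By construction $\alpha_r(\x)=\tilde\alpha_r(\x)+e_r$, so the $r$-th coordinate of $\alpha_r(\x)$ equals $1$; in particular $\alpha_r(\x)\ne 0$, which is needed for $p\otimes\alpha_r(\x)$ to qualify as a nonzero-direction rank-one matrix whenever $p\ne 0$. It remains to check $b_r(\x,\y)\cdot\alpha_r(\x)=0$ for each row. Writing out $b_r(\x,\y)=\tilde\alpha_r(\y)-(\x\cdot\y)e_r$, one computes
\[
b_r(\x,\y)\cdot\alpha_r(\x)=\tilde\alpha_r(\y)\cdot\tilde\alpha_r(\x)+\tilde\alpha_r(\y)\cdot e_r-(\x\cdot\y)\big(e_r\cdot\tilde\alpha_r(\x)+1\big).
\]
Now $\tilde\alpha_r(\cdot)$ inserts a $0$ in the $r$-th slot, so $\tilde\alpha_r(\y)\cdot e_r=0$ and $e_r\cdot\tilde\alpha_r(\x)=0$, while $\tilde\alpha_r(\y)\cdot\tilde\alpha_r(\x)=\y\cdot\x$ since the inserted zero contributes nothing and the remaining $n-1$ coordinates match up $\y$ against $\x$. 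Hence the expression collapses to $\x\cdot\y-(\x\cdot\y)=0$. Applying this to each row $\y^k$ gives $\beta_r(\x,\mathbf Y)\,\alpha_r(\x)=0$, so $(p\otimes\alpha_r(\x),\beta_r(\x,\mathbf Y))\in\Gamma$.

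\emph{Step 2: injectivity.} Suppose $\Phi_r(p,\x,\mathbf Y)=\Phi_r(p',\x',\mathbf Y')$. From equality in the first slot, $p\otimes\alpha_r(\x)=p'\otimes\alpha_r(\x')$ as a nonzero rank-one matrix (nonzero because $p\ne 0$ and $\alpha_r(\x)\ne 0$); such a matrix determines $p$ and the direction of $\alpha_r(\x)$ up to reciprocal scalars. But the $r$-th coordinate of $\alpha_r(\x)$ is normalized to $1$, which pins down the scalar: we must have $\alpha_r(\x)=\alpha_r(\x')$ and $p=p'$. Since $\alpha_r$ is the composition of the affine injection $\tilde\alpha_r$ with a translation, $\alpha_r(\x)=\alpha_r(\x')$ forces $\x=\x'$. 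It then remains to recover $\mathbf Y$ from $\beta_r(\x,\mathbf Y)$ with $\x$ known: this is a per-row statement, so fix a row $\y=\y^k$. Given $b_r(\x,\y)=\tilde\alpha_r(\y)-(\x\cdot\y)e_r$, the $n-1$ coordinates of $b_r(\x,\y)$ other than the $r$-th coordinate coincide exactly with the coordinates of $\tilde\alpha_r(\y)$ other than the $r$-th, which are precisely the entries of $\y$ (the map $\y\mapsto$ (non-$r$ coordinates of $\tilde\alpha_r(\y)$) is the identity on $\R^{n-1}$). Thus $\y$ is read off directly from $b_r(\x,\y)$ regardless of the unknown scalar $\x\cdot\y$, giving $\y^k=(\y')^k$ for every $k$, hence $\mathbf Y=\mathbf Y'$. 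This proves injectivity.

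\emph{Main obstacle.} There is essentially no deep obstacle here; the statement is exactly the kind of routine bookkeeping the authors flag by omitting the proof. The one place requiring a little care is the scalar normalization in Step 2: a rank-one matrix $p\otimes a$ only determines $p$ and $a$ up to the substitution $(p,a)\mapsto(tp,t^{-1}a)$, so the argument genuinely uses the fact that the $r$-th coordinate of $\alpha_r(\x)$ is identically $1$ to fix $t=1$. Everything else is a direct unwinding of the definitions of $\tilde\alpha_r$, $\alpha_r$, $b_r$ and $\beta_r$, together with the observation that the "insert a zero in slot $r$" operator $\tilde\alpha_r$ is a linear isometric embedding whose left inverse is "delete slot $r$".
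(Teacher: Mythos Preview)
Your proof is correct and complete. The paper itself omits the proof entirely, noting that the result is straightforward to verify; your argument is exactly the natural verification one would supply, with the key observation being the normalization $\alpha_r(\x)\cdot e_r=1$ that fixes the scalar ambiguity in the rank-one decomposition.
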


\subsection{Selection of  Parameters} Assume that $N\ge  2m$ is an integer, and let
 $ r_1,\dots,r_N \in  \{1,\dots,n\}$ be given. Suppose
 \[
\gamma_i=\big (p_i\otimes \alpha_{r_i}(\x_i),\, \beta_{r_i}(\x_i,\mathbf Y_i) \big )\in \Gamma_{r_i} \quad\forall\, 1\le i\le N,
\]
where $p_i\in\R^m\setminus\{0\},$  $\x_i\in \R^{n-1},$ and $\mathbf Y_i=(\y_i^k)_{k=1}^m  \in  (\R^{n-1})^m$ are parameters.

We aim to select the parameters so that $\{\gamma_i\}_{i=1}^N$  satisfy  condition  \eqref{sum-0}  in Definition \ref{def-tau-N}, which in this setting reduces to
\begin{equation}\label{sum-1}
\sum_{i=1}^N p_i\otimes  \alpha_{r_i}(\x_i)  =0 \quad\mbox{and}\quad   \sum_{i=1}^N b_{r_i}(\x_i,\y^k_i) =0\;\; (1\le k\le m).
\end{equation}

First,  we regard the first  condition  in (\ref{sum-1}) as a linear system for the variables $p_{m+1},\dots,p_N$ and $\x_1, \dots, \x_m,$ keeping all other variables fixed. The following result gives a class of explicit solutions; the proof is straightforward and is therefore omitted.

\begin{lemma}\label{xmpn} Let $p_1,\ldots,p_{m}\in \R^m$ and $\x_{m+1},\ldots,\x_N\in \R^{n-1}$  be such that
\[
\Delta_1(\x_N):=\alpha_{r_N}^{r_1}(\x_N)\cdots\alpha_{r_N}^{r_m}(\x_N) \ne 0,
\]
where $\alpha^i_{r}(\x) =\alpha_{r}(\x)\cdot e_i$ for all $1\le i,r\le n.$
Then the first  condition  in  \eqref{sum-1} is satisfied if
\begin{equation}\label{pi-aj}
\begin{split}
  p_i   &=\sum_{j=1}^m q_i^j p_j \;\;   (m+1\le i\le N-1),\\
  p_N   &=  -\sum_{j= 1}^{m}  \Big (1+  \sum_{i=m+1}^{N-1} q_i^j  \alpha^{r_j}_{r_i}(\x_i) \Big  ) \frac{p_j}{\alpha^{r_j}_{r_N}(\x_N)},
\\[1ex]
 \x_j & =\bm\alpha_{r_j}^*(a_j) \;\;(1\le j\le m), \;\; \mbox{with $a_j\in\R^n$ given by}   \\
 a_j &=   \Big ( 1+ \sum_{i=m+1}^{N-1} q_i^j  \alpha^{r_j}_{r_i}(\x_i) \Big )  \frac{\alpha_{r_N}(\x_N) }{\alpha^{r_j}_{r_N}(\x_N)}   - \sum_{i=m+1}^{N-1} q_i^j \alpha_{r_i}(\x_i),
\end{split}
\end{equation}
where $q_{i}=(q_i^1,\dots,q_i^m)\in\R^m$ $(m+1\le i\le N-1)$   are arbitrary parameters.
\end{lemma}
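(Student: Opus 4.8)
\textbf{Proof strategy for Lemma \ref{xmpn}.} The plan is a direct substitution: we plug the prescribed values of $p_{m+1},\dots,p_N$ and $\x_1,\dots,\x_m$ into the left-hand side of the first equation in \eqref{sum-1} and check that all terms cancel. First I would record two elementary facts about the coordinate maps. Since $\alpha_r(\x)=\tilde\alpha_r(\x)+e_r$, the $r$-th coordinate of $\alpha_r(\x)$ is $\alpha^r_r(\x)=1$; and since $\bm\alpha^*_r$ simply deletes the $r$-th coordinate while $\alpha_r$ re-inserts a coordinate equal to $1$ at slot $r$, one has, for any $a\in\R^n$,
\[
\alpha_r\big(\bm\alpha^*_r(a)\big)=a+(1-a\cdot e_r)\,e_r,
\]
so that $\alpha_r(\bm\alpha^*_r(a))=a$ precisely when $a\cdot e_r=1$.

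The second step is to verify that the prescribed choice of $\x_j$ in \eqref{pi-aj} actually returns $\alpha_{r_j}(\x_j)=a_j$. Writing $c_j=1+\sum_{i=m+1}^{N-1}q_i^j\,\alpha^{r_j}_{r_i}(\x_i)$ and noting that $\alpha_{r_N}(\x_N)\cdot e_{r_j}=\alpha^{r_j}_{r_N}(\x_N)$ and $\alpha_{r_i}(\x_i)\cdot e_{r_j}=\alpha^{r_j}_{r_i}(\x_i)$, a one-line computation from \eqref{pi-aj} gives
\[
a_j\cdot e_{r_j}=c_j\cdot\frac{\alpha^{r_j}_{r_N}(\x_N)}{\alpha^{r_j}_{r_N}(\x_N)}-\sum_{i=m+1}^{N-1}q_i^j\,\alpha^{r_j}_{r_i}(\x_i)=c_j-\Big(c_j-1\Big)=1,
\]
where the division is legitimate because $\Delta_1(\x_N)\ne0$ forces $\alpha^{r_j}_{r_N}(\x_N)\ne0$ for every $1\le j\le m$. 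Combined with the identity from the first step, $\x_j=\bm\alpha^*_{r_j}(a_j)$ indeed yields $\alpha_{r_j}(\x_j)=a_j$.

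The third step is the bookkeeping. Split $\sum_{i=1}^N p_i\otimes\alpha_{r_i}(\x_i)$ into the three index blocks $\{1,\dots,m\}$, $\{m+1,\dots,N-1\}$, and $\{N\}$ (these are disjoint, using only $N>m$, which is part of the standing hypothesis $N\ge2m$). Substituting $\alpha_{r_j}(\x_j)=a_j=c_j\,\alpha_{r_N}(\x_N)/\alpha^{r_j}_{r_N}(\x_N)-\sum_{i=m+1}^{N-1}q_i^j\alpha_{r_i}(\x_i)$ into the first block, and $p_i=\sum_{j=1}^m q_i^j p_j$ into the second block, the terms $-\sum_{i}q_i^j p_j\otimes\alpha_{r_i}(\x_i)$ produced by the $a_j$'s cancel the entire second block, leaving
\[
\Big(\sum_{j=1}^m \frac{c_j}{\alpha^{r_j}_{r_N}(\x_N)}\,p_j\Big)\otimes\alpha_{r_N}(\x_N),
\]
which is exactly $-\,p_N\otimes\alpha_{r_N}(\x_N)$ by the prescribed value of $p_N$. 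Hence the full sum vanishes, which is the first equation in \eqref{sum-1}.

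There is no genuine obstacle: the substance of the lemma is the algebraic cancellation, and the only point needing a moment's care is the identity $a_j\cdot e_{r_j}=1$ — without it, $\x_j=\bm\alpha^*_{r_j}(a_j)$ would not recover $\alpha_{r_j}(\x_j)=a_j$ and the scheme would fail to close. The hypothesis $\Delta_1(\x_N)\ne0$ serves only to make the divisions meaningful, and $N\ge2m$ (beyond the trivial $N>m$) plays no further role in this particular lemma.
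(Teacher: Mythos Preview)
Your proposal is correct and is precisely the direct-substitution verification the paper has in mind; the paper in fact omits the proof entirely, calling it ``straightforward,'' and your write-up fills in exactly the algebraic details (the key identity $\alpha_r(\bm\alpha^*_r(a))=a$ when $a\cdot e_r=1$, and the block-wise cancellation) that make the claim transparent.
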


Next, we rewrite the second  condition in  (\ref{sum-1}) as
\begin{equation}\label{eq-yz}
b_{r_{N-1}}(\x_{N-1},\y^k_{N-1}) +b_{r_N}(\x_{N},\y^k_N)  =-\sum_{j=1}^{N-2} b_{r_j}(\x_j,\y^k_j) \quad (1\le k\le m).
\end{equation}
For each $1\le k\le m,$  write  $\y^k_N=(y^k_{N,1},\bar\y^k_N)$ and  regard (\ref{eq-yz}) as a linear system of $n$ equations for the  $n$  unknowns  $(\y^k_{N-1}, y^{k}_{N,1}),$  treating all other variables as fixed. The following general result provides a solution to  such an  $n\times n$ system.

\begin{lemma}\label{sol-beta} Let   $r,\, s\in\{1,\dots  ,n\}$ and $\x,\, \u\in\R^{n-1}$ satisfy
\[
\Delta_2(\x,u_1):=   \alpha^s_r(\x) u_1 -\bm\alpha_s^*( \alpha_r(\x))\cdot\e_1\ne 0,
\]
where $\{\e_1,\dots,\e_{n-1}\}$ denotes the standard basis of $\R^{n-1}.$
Then for any given  $c\in\R^n$ and $\bar\z\in\R^{n-2},$  equation
\[
b_r(\x,\y)+b_s(\u,\z)=c, \;\;\mbox{where $\z=(z_1,\bar  \z),$}
\]
 has a unique solution $(\y,z_1),$  given by
\begin{equation}\label{z1-y}
\begin{split} z_1 & =\frac{ - \alpha_r(\x) \cdot c + \overline{\bm\alpha_s^*( \alpha_r(\x))   - \alpha^s_r(\x)   \u} \cdot \bar  \z }{\Delta_2(\x,u_1)};  \\
\y & =\bm\alpha_r^*[c -\tilde\alpha_s(\z) +( \u \cdot \z )e_s], \;\; \mbox{with}\;\;  \z=(z_1,\bar  \z).
\end{split}
  \end{equation}
\end{lemma}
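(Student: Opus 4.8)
The plan is to reduce the $n\times n$ linear system to a single scalar equation for $z_1$ by exploiting one structural identity, and then to recover $\y$ componentwise. The identity in question is
\[
\alpha_r(\x)\cdot b_r(\x,\y)=0\qquad\forall\,\x,\y\in\R^{n-1},\ r\in\{1,\dots,n\},
\]
which follows at once from $\alpha_r(\x)=\tilde\alpha_r(\x)+e_r$, $b_r(\x,\y)=\tilde\alpha_r(\y)-(\x\cdot\y)e_r$, together with the elementary relations $\tilde\alpha_r(\x)\cdot\tilde\alpha_r(\y)=\x\cdot\y$, $\tilde\alpha_r(\x)\cdot e_r=0=e_r\cdot\tilde\alpha_r(\y)$ and $e_r\cdot e_r=1$. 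Since the equation $b_r(\x,\y)+b_s(\u,\z)=c$ is affine-linear in the $n$ unknowns $(\y,z_1)\in\R^{n-1}\times\R$ (with $\bar\z$ and $c$ held fixed), it suffices to exhibit one solution and to show that any solution must coincide with it.

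First I would isolate $z_1$. Taking the inner product of the equation with $\alpha_r(\x)$ and applying the identity above makes the $\y$-contribution vanish, leaving the scalar equation $\alpha_r(\x)\cdot b_s(\u,\z)=\alpha_r(\x)\cdot c$. Expanding $b_s(\u,\z)=\tilde\alpha_s(\z)-(\u\cdot\z)e_s$ and using the bookkeeping relations $\alpha_r(\x)\cdot\tilde\alpha_s(\z)=\bm\alpha_s^*(\alpha_r(\x))\cdot\z$ and $\alpha_r(\x)\cdot e_s=\alpha^s_r(\x)$, this turns into $\bigl(\bm\alpha_s^*(\alpha_r(\x))-\alpha^s_r(\x)\u\bigr)\cdot\z=\alpha_r(\x)\cdot c$. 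Splitting off the first coordinate (writing $\u=(u_1,\bar\u)$ and $\z=(z_1,\bar\z)$) collects the $z_1$-terms into the coefficient $\bm\alpha_s^*(\alpha_r(\x))\cdot\e_1-\alpha^s_r(\x)u_1=-\Delta_2(\x,u_1)$, which is nonzero by hypothesis; dividing gives precisely the first line of \eqref{z1-y}.

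With $z_1$ (hence $\z$) fixed, I would recover $\y$ componentwise. Put $c'=c-b_s(\u,\z)=c-\tilde\alpha_s(\z)+(\u\cdot\z)e_s$, so that the equation reads $b_r(\x,\y)=c'$. Its $n-1$ components away from slot $r$ force $\y=\bm\alpha_r^*(c')$, which is the second line of \eqref{z1-y}, while its $r$-th component is the scalar relation $\x\cdot\y=-c'_r$. It then remains only to verify this last relation for $\y=\bm\alpha_r^*(c')$; since $\x\cdot\bm\alpha_r^*(c')=\tilde\alpha_r(\x)\cdot c'$ and $c'_r=e_r\cdot c'$, it is exactly $\alpha_r(\x)\cdot c'=0$, i.e. $\alpha_r(\x)\cdot\bigl(c-b_s(\u,\z)\bigr)=0$, which holds by the very choice of $z_1$ in the previous step. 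Conversely, if $(\y,z_1)$ is any solution, dotting with $\alpha_r(\x)$ forces $\alpha_r(\x)\cdot(c-b_s(\u,\z))=0$ and hence, since $-\Delta_2(\x,u_1)\neq0$, the stated value of $z_1$; then the non-$r$ components of $b_r(\x,\y)=c'$ force $\y=\bm\alpha_r^*(c')$. This gives existence and uniqueness. (When $n=2$ the vectors $\bar\x,\bar\z,\bar\u$ are absent and the argument specializes verbatim.)

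The only real work here is notational bookkeeping: keeping the three index-shift operations $\tilde\alpha_r$, $\alpha_r$, $\bm\alpha_s^*$ straight and handling the first-coordinate splitting $\z=(z_1,\bar\z)$ consistently. There is no genuine analytic obstacle; once the orthogonality identity is in hand the system decouples completely, and the role of the nondegeneracy condition $\Delta_2(\x,u_1)\neq0$ is transparent — it is precisely what makes the single scalar equation for $z_1$ uniquely solvable, after which the remaining $n-1$ equations pin down $\y$ outright.
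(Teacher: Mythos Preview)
Your proof is correct and follows essentially the same route as the paper's: both reduce to a scalar equation for $z_1$ by exploiting that $\alpha_r(\x)\cdot b_r(\x,\y)=0$, then read off $\y$ from the remaining $n-1$ components. Your packaging via the explicit orthogonality identity and a single inner product is a bit cleaner than the paper's componentwise manipulation, but the underlying argument is the same.
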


\begin{proof} The given equation  is $\tilde\alpha_r(\y)- (\x \cdot \y) e_r+\tilde\alpha_s(\z)-( \u \cdot \z )e_s=  c,$ which reduces  to \begin{equation}\label{eq-beta-1}
\begin{cases} \tilde\alpha^i_r(\y)+\tilde\alpha^i_s(\z)-  ( \u \cdot \z )\delta_{is}= c^i \quad \forall\, i\ne r,\\
 \tilde\alpha^r_s(\z)- (\x \cdot \y) -(\u \cdot \z ) \delta_{rs}=c^r.
\end{cases}
\end{equation}
From  this, we have
\[
\begin{split} &\tilde\alpha^r_s(\z)   -(\u \cdot \z ) \delta_{rs}-c^r =\x\cdot\y =\tilde\alpha_r(\x)\cdot\tilde\alpha_r(\y)  =\sum_{i\ne r}\tilde\alpha^i_r(\x) \tilde\alpha^i_r(\y) \\
&= \sum_{i\ne r }\tilde\alpha^i_r(\x) [c^i-\tilde\alpha^i_s(\z)+( \u \cdot \z )\delta_{is}] = \tilde\alpha_r(\x) \cdot  c-  \tilde\alpha_r(\x) \cdot \tilde\alpha_s(\z) +( \u \cdot \z )\tilde\alpha^s_r(\x),\end{split}
\]
 which, as $\tilde\alpha^s_r(\x) +\delta_{rs}= \alpha^s_r(\x),$ yields that
\[
\begin{split} \alpha^s_r(\x) ( \u \cdot \z ) & =
  \tilde\alpha^r_s(\z)  -c^r- \tilde\alpha_r(\x) \cdot  c+ \tilde\alpha_r(\x) \cdot \tilde\alpha_s(\z) \\
  &=  \tilde\alpha^r_s(\z)  - \alpha_r(\x) \cdot  c+ \bm\alpha_s^*(\tilde\alpha_r(\x)) \cdot  \z
\\
& = \tilde\alpha^r_s(\z)  - \alpha_r(\x) \cdot  c+ [\bm\alpha_s^*(\tilde\alpha_r(\x))\cdot\e_1]z_1+ \overline{\bm\alpha _s^*(\tilde\alpha_r(\x))} \cdot  \bar  \z.\end{split}
  \]
Hence,
\[
\begin{split}
 [ \alpha^s_r(\x)   u_1&-\bm\alpha_s^*(\tilde\alpha_r(\x))\cdot\e_1]z_1
  = \tilde\alpha^r_s(\z)  - \alpha_r(\x) \cdot c+ [\overline{\bm\alpha_s^*(\tilde\alpha_r(\x))}  - \alpha^s_r(\x)  \bar  \u] \cdot \bar  \z  \\
 &= \tilde\alpha_s(\z)\cdot e_r - \alpha_r(\x) \cdot c+[\overline{\bm\alpha_s^*( \alpha_r(\x))}  - \alpha^s_r(\x)  \bar  \u] \cdot \bar  \z
 -   \overline{\bm\alpha_s^*(e_r)}  \cdot \bar  \z  \\
 &=[\bm\alpha_s^*(e_r)\cdot\e_1] z_1 - \alpha_r(\x) \cdot c+ [\overline{\bm\alpha_s^*( \alpha_r(\x))}  - \alpha^s_r(\x)  \bar  \u] \cdot \bar  \z.
 \end{split}
\]
Since  $\alpha^s_r(\x)   u_1-\bm\alpha_s^*(\tilde\alpha_r(\x))\cdot\e_1- \bm\alpha_s^*(e_r)\cdot\e_1= \Delta_2(\x,u_1)\ne 0,$ this yields   the formula for $z_1$ in \eqref{z1-y}.

Finally, from $\tilde\alpha_r(\y)= c- \tilde\alpha_s(\z)+( \u \cdot \z )e_s+ (\x \cdot \y)e_r,$ we have
\[
\y =\bm\alpha_r^*[c- \tilde\alpha_s(\z)+( \u \cdot \z )e_s+ (\x \cdot \y) e_r]
  =\bm\alpha_r^*[c- \tilde\alpha_s(\z)+( \u \cdot \z )e_s],
\]
which   proves the formula for $\y$ in \eqref{z1-y}.
\end{proof}

Assume that $\x_1,\ldots,\x_N\in\R^{n-1}$  are given such that
\[
\Delta_2(\x_{N-1},x_{N,1})  =\alpha_{r_{N-1}}^{r_N}(\x_{N-1})x_{N,1}-\alpha_{r_{N-1}}(\x_{N-1})\cdot\tilde\alpha_{r_N}(\e_1)\ne 0,
\]
and let $a_j=\alpha_{r_{j}}(\x_{j})$ ($1\le j\le N$).
Given $\y^k_1,\ldots,\y^k_{N-2}\in\R^{n-1}$ and $\bar\y^k_N\in\R^{n-2}$ for  $1\le k\le m,$  we  apply Lemma \ref{sol-beta} to condition (\ref{eq-yz}), with
\[
 r=r_{N-1},\; s=r_N,\; \x=\x_{N-1},\; \u=\x_N,\; \bar\z=\bar\y^k_N, \;\;\text{and} \;\; c  = -\sum_{j=1}^{N-2} b_{r_j}(\x_j,\y^k_j),
\]
 to determine $ z_1  =y^k_{N,1}$, $\z=\y_{N}^k   =( y^k_{N,1} , \bar  \y_{N}^k )$ and $ \y = \y^k_{N-1}$ as follows:
\begin{equation}\label{al-y}
\begin{split}
y^k_{N,1}  &  =  \frac{  \sum_{j=1}^{N-2} \bm\alpha_{r_j}^*(a_{N-1} -a_{N-1}^{r_j} a_j) \cdot \y_j^k+\overline{\bm\alpha_{r_N}^*( a_{N-1}    - a^{r_N}_{N-1}  a_N)} \cdot \bar  \y^k_N} {\Delta_2(\x_{N-1},x_{N,1})},\\
  \y_{N}^k &   =( y^k_{N,1} , \bar  \y_{N}^k ),  \\
 \y_{N-1}^k & =\bm\alpha_{r_{N-1}}^*\Big [ ( \x_{N} \cdot \y_{N}^k )e_{r_{N}}-\tilde\alpha_{r_{N}}(\y_{N}^k) + \sum_{j=1}^{N-2} \Big ((\x_j\cdot\y_j^k)e_{r_j}-\tilde\alpha_{r_j}(\y^k_j) \Big )  \Big ].
\end{split}
\end{equation}

Now let  {$(p_{1}, \dots, p_{m})$,} $(q_{m+1}, \dots, q_{N-1})$, $(\x_{m+1},\dots  ,\x_N)$ and $\{(\y^k_1,  \dots  , \y^k_{N-2},\bar  \y_{N}^k)\}_{k=1}^m$ be given such that
\begin{equation}\label{nonzero}
\Delta_1(\x_N) \Delta_2(\x_{N-1}, x_{N,1})\ne 0.
\end{equation}
We first use formula (\ref{pi-aj}) to determine $p_i$ ($m+1\le i\le N$) and  $\x_j$ ($1\le j\le m$), and then insert these  quantities into  formula (\ref{al-y})  to  obtain
\begin{equation}\label{def-y-1}
(\y^k_{N-1}, {y_{N,1}^k})   =\frac{L(q_{m+1}, \dots, q_{N-1}; \x_{m+1},\dots  ,\x_N; \y^k_1,  \dots  , \y^k_{N-2},  \bar  \y_{N}^k)}{ \Delta_1(\x_{N})\, \Delta_2(\x_{N-1},x_{N,1})} {\in\R^n},
\end{equation}
 for  $1\le k\le m,$   where  $L$ is a polynomial in  its arguments.

Let $\V=\V_{(m,n,N)}$ be the set of all  parameters $V=(P,Q,X,Y)$    given by
\begin{equation}\label{para-PQXY}
\begin{cases}
 P =(p_1,\dots,p_{m})\in (\R^m)^{m},\\
 Q =(q_{m+1},\dots, q_{N-1})\in (\R^m)^{N-m-1},\\
 X=(\x_{m+1},\dots,\x_N)\in (\R^{n-1})^{N-m} \;\; \mbox{satisfying (\ref{nonzero})},\\
 Y  =\{(\y^k_1,  \ldots, \y^k_{N-2}, \bar  \y^k_{N})\}_{k=1}^m  \in [(\R^{n-1})^{N-2}\times \R^{n-2}]^m.
\end{cases}
\end{equation}
Then  $\V$   is an  \emph{open dense} set in the Euclidean space $\R^{\dim \V}$, where
\[
\begin{split} \dim \V &= m^2+m(N-m-1) +(N-m)(n-1) +m[ (N-2)(n-1)+n-2]\\
&= (mn+n-1)N-2mn.\end{split}
\]

We define the following functions of  $V=(P,Q,X,Y)$ on $\V$:
\begin{equation}\label{fn-xapyBg}
\begin{split}
&p_i(V) =p_i \;\; ( 1\le i\le m), \;  \mbox{and $(p_{m+1}(V),\dots,p_N(V))$ by (\ref{pi-aj});}  \\
&(\x_1(V),\ldots, \x_m(V)) \;\; \mbox{by (\ref{pi-aj}), and } \,   \x_i(V)=\x_i \;\; (  m+1\le i\le N);  \\
&a_i(V)=\alpha_{r_i}(\x_i(V)) \;\; (1\le i \le N);  \\
&\y^k_i(V)=\y^k_i \;\;  (1\le i\le N-2), \;   \mbox{$ (\y^k_{N-1}(V),y^k_{N,1}(V))$ by (\ref{def-y-1}), and}\\
& {\mbox{$\y^k_{N}(V)=(y^k_{N,1}(V),\bar{\y}^k_N)$ for $1\le k\le m;$}} \\
& B_i(V)= \beta_{r_i}(\x_i(V),\mathbf Y_i(V))  \;\mbox{with  $\mathbf Y_i(V)=(\y_i^k(V))_{k=1}^m$}   \; \;  (1\le i\le N); \\
& g_i(V)   = (p_i(V)\otimes a_i(V),   B_i(V)) \;\; (1\le i\le N);\\
&  \sigma_1(V)=0, \;\mbox{and}\; \sigma_i(V)=g_1(V)+\dots+g_{i-1}(V) \;\; (2\le i\le N).
\end{split}
\end{equation}

Let $\U =\V\times \R^N\subset \R^D,$  where \[
D =N+ \dim\V = n[(m+1)N-2m].
\]
 For each $U=(V,\bm\kappa)\in\U,$ where  $\bm\kappa=(\kappa_1,\dots,\kappa_N)\in\R^N,$  let
\begin{equation}\label{eta}
\eta_i(U)    = \sigma_i(V) +\kappa_i g_i(V)\quad (1\le i\le N).
\end{equation}

Finally, define
\[
\begin{split} \V_0  &=\{V \in \V :  p_i(V)\ne 0\;\;\forall\,1\le i\le N  \},\\
\U_0 & =\{U=(V,\bm\kappa) \in \U:   V\in\V_0,\,    \kappa_i >1\;\;\forall\,1\le i\le N \}.
\end{split}
\]
Then $\V_0$  is an open dense  subset of $\V$, and    $\U_0$ is a nonempty open subset of   $\U.$

\begin{lemma} For  each $\rho\in\R^{m\times n}\times\R^{m\times n}$ and $U\in\U_0,$  the $N$-tuple
\[
(\rho+\eta_1(U), \dots, \rho+\eta_N(U))
\]
is a $\T_N$-configuration  in $\R^{m\times n}\times \R^{m\times n}.$
\end{lemma}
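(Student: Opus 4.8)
The plan is to verify directly that the $N$-tuple $(\rho+\eta_1(U),\dots,\rho+\eta_N(U))$ meets the three requirements in Definition \ref{def-tau-N}: namely that the associated "edge vectors" lie in the wave cone $\Gamma$, that they sum to zero, and that the scalars $\kappa_i$ exceed $1$. By construction \eqref{eta}, the candidate edge vectors are precisely $\gamma_i := g_i(V) = (p_i(V)\otimes a_i(V),\, B_i(V))$ for $1\le i\le N$, and the candidate base point is $\rho$ itself (with $\pi_i = \rho + \sigma_i(V) = \rho + g_1(V)+\dots+g_{i-1}(V)$, matching \eqref{t-N}). So the proof reduces to three checks, which I would carry out in the following order.

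\emph{Step 1: each $g_i(V)\in\Gamma$.} Since $a_i(V)=\alpha_{r_i}(\x_i(V))=\tilde\alpha_{r_i}(\x_i(V))+e_{r_i}$, the $r_i$-th coordinate of $a_i(V)$ equals $1$, so $a_i(V)\ne 0$ for every $i$. Moreover $B_i(V)=\beta_{r_i}(\x_i(V),\mathbf Y_i(V))$, whose $k$-th row is $b_{r_i}(\x_i(V),\y_i^k(V)) = \tilde\alpha_{r_i}(\y_i^k(V)) - (\x_i(V)\cdot\y_i^k(V))e_{r_i}$. A one-line computation (the same one underlying Lemma \ref{G-class}) shows $b_r(\x,\y)\cdot\alpha_r(\x) = \tilde\alpha_r(\y)\cdot\tilde\alpha_r(\x) - (\x\cdot\y) = \x\cdot\y - \x\cdot\y = 0$, so each row of $B_i(V)$ is orthogonal to $a_i(V)$, i.e. $B_i(V)a_i(V)=0$. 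Combined with $p_i(V)\ne 0$ (which holds because $U\in\U_0$) and $a_i(V)\ne 0$, this places $g_i(V) = (p_i(V)\otimes a_i(V), B_i(V))$ in $\Gamma_{r_i}\subset\Gamma$ by Lemma \ref{G-class}.

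\emph{Step 2: $\sum_{i=1}^N g_i(V)=0$.} This is exactly where the elaborate parameter selection of Section \ref{s-5} pays off. The equation $\sum_i g_i(V)=0$ splits into $\sum_i p_i(V)\otimes a_i(V)=0$ in $\R^{m\times n}$ and $\sum_i B_i(V)=0$ in $\R^{m\times n}$, the latter being $\sum_i b_{r_i}(\x_i(V),\y_i^k(V))=0$ for each row $k$. The first identity holds because $p_{m+1}(V),\dots,p_N(V)$ and $\x_1(V),\dots,\x_m(V)$ were defined by formula \eqref{pi-aj}, which Lemma \ref{xmpn} certifies solves precisely the first condition in \eqref{sum-1} (this uses $\Delta_1(\x_N)\ne 0$, part of \eqref{nonzero}). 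The second identity holds because $\y_{N-1}^k(V)$ and $y_{N,1}^k(V)$ were defined by \eqref{al-y}/\eqref{def-y-1}, which by Lemma \ref{sol-beta} is the unique solution of the $n\times n$ linear system \eqref{eq-yz} equivalent to $\sum_i b_{r_i}(\x_i(V),\y_i^k(V))=0$ (this uses $\Delta_2(\x_{N-1},x_{N,1})\ne 0$, the other half of \eqref{nonzero}). Hence $\sum_i g_i(V)=0$, which is \eqref{sum-0}.

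\emph{Step 3: $\kappa_i>1$ and assembling the configuration.} Since $U=(V,\bm\kappa)\in\U_0$, by definition $\kappa_i>1$ for all $i$. Setting $\gamma_i=g_i(V)\in\Gamma$, $\rho$ as the base point, and these $\kappa_i$, the defining relations \eqref{t-N} read $\rho+\eta_1(U)=\rho+\kappa_1\gamma_1$ and $\rho+\eta_i(U)=\rho+\gamma_1+\dots+\gamma_{i-1}+\kappa_i\gamma_i$ for $2\le i\le N$ — which is literally \eqref{eta} since $\sigma_i(V)=g_1(V)+\dots+g_{i-1}(V)$. Therefore $(\rho+\eta_1(U),\dots,\rho+\eta_N(U))$ satisfies all the hypotheses of Definition \ref{def-tau-N} and is a $\T_N$-configuration. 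I expect no serious obstacle: the only non-routine ingredient is recognizing that Steps 1 and 2 are already packaged in Lemmas \ref{G-class}, \ref{xmpn}, and \ref{sol-beta}, so the "proof" is essentially a bookkeeping argument tracing the definitions \eqref{fn-xapyBg}--\eqref{eta} back through those lemmas; the mild care needed is to confirm the nondegeneracy conditions \eqref{nonzero} are exactly what those lemmas require and that $\U_0$ was defined to guarantee $p_i(V)\ne 0$ and $\kappa_i>1$.
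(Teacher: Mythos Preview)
Your proposal is correct and follows exactly the same approach as the paper's proof, which simply notes that $g_i(V)\in\Gamma_{r_i}\subset\Gamma$ and $g_1(V)+\cdots+g_N(V)=0$ for all $V\in\V_0$; you have merely unpacked these two assertions in detail by tracing them back to Lemmas~\ref{G-class}, \ref{xmpn}, and \ref{sol-beta}, and made the role of $\kappa_i>1$ explicit. One minor remark: strictly speaking, the condition $p_i(V)\ne 0$ is only needed to invoke Lemma~\ref{G-class} and land in $\Gamma_{r_i}$; membership in $\Gamma$ itself (as defined in~\eqref{set-G}) allows $p=0$, so your Step~1 would already succeed from $a_i(V)\ne 0$ and $B_i(V)a_i(V)=0$ alone.
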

\begin{proof}
The result follows easily since $g_i(V)\in \Gamma_{r_i}\subset \Gamma\; (1\le i\le N)$ and $g_1(V)+\cdots+g_N(V)=0$   for all $V\in\V_0.$
\end{proof}

\subsection{Nondegeneracy Conditions}  Let $\sigma \colon \R^{m\times  n}\to   \R^{m\times  n}$ be a $C^1$ function with graph $\K\subset  \R^{m\times  n}\times  \R^{m\times  n}.$ Note  that $\rho\in\K$ if and only if $\Psi(\rho)=0,$ where
\[
\Psi(\rho)=\sigma(\rho^1)-\rho^2\in \R^{m\times  n} \quad\forall\, \rho=(\rho^1,\rho^2)\in \R^{m\times  n}\times  \R^{m\times  n}.
\]
Assume that $(\eta_1(U_0),\dots,\eta_N(U_0))$ is supported on  $\K$    for some $U_0\in \U_0\subset \R^{D},$
and let $\varepsilon\colon \U\to\R^{m\times n}\times\R^{m\times n}$ be such that $\varepsilon(U_0)=0.$

For each $\rho\in\R^{m\times n}\times \R^{m\times n}$ and $U\in\U_0,$  the $N$-tuple
\begin{equation}\label{family-T5}
(\rho+\varepsilon(U)+\eta_1(U),\dots,\rho+\varepsilon(U)+\eta_N(U))
\end{equation}
is a $\T_N$-configuration, with $\pi_1=\rho+\varepsilon(U),$  which reduces to $(\eta_1(U_0),\dots,\eta_N(U_0))$ when $(\rho,U)=(0,U_0).$

 We  attempt  to embed the $\T_N$-configurations in  \eqref{family-T5} into  $\K$ for certain $(\rho,U)$ near $(0,U_0).$  This amounts to solving the system of $mnN$ equations:
\begin{equation}\label{embed-T5}
\Psi(\rho+\varepsilon(U)+\eta_j(U))=0 \quad (1\le j\le N),
\end{equation}
 for $U$  near $ U_0$ in terms of $\rho$ near $0.$ Hence, the embedding problem falls within the framework of the Implicit Function Theorem.

 Since $U\in \R^D,$ where $D =n[(m+1)N-2m]\ge mnN$ for $N\ge 2m,$ applying the Implicit Function Theorem requires a reduction of the parameters in $U$.

\subsubsection{Reduction of Parameters}\label{ss-521}
Let $U_0 =(Z_0',\tilde U_0,\kappa_0),$ where
\[
Z_0'\in\R^{mnN-N},\;\; \tilde U_0\in\R^{n(N-2m)}, \;\; \mbox{and}\;\; \kappa_0\in\R^N.
\]
For $ Z=(Z',\kappa)\in  \R^{mnN-N}\times \R^N=\R^{mnN},$ define
\[
V(Z')=(Z', \tilde U_0),\quad U(Z)=(Z',\tilde U_0,\kappa),
\]
and
\begin{equation}\label{def-hzo}
\begin{cases}
 h_i(Z') =g_i(V(Z')), \\
 \omega_i(Z')=\sigma_i(V(Z')), \\
 \zeta_i(Z)=\eta_i(U(Z))\end{cases} \;\; (1\le i\le N).
\end{equation}

Finally, we define a new set of the reduced parameters  by
\[
 \Z_0=\{Z\in\R^{mnN}:\,  U(Z)\in \U_0\}.
 \]
  Since $U_0=U(Z_0)\in\U_0$ for $Z_0=(Z_0',\kappa_0),$ it follows that  $Z_0 \in\Z_0.$ Thus $\Z_0$ is a nonempty open set in $\R^{mnN}.$

 \subsubsection{Nondegeneracy for Embedding}

For each $1\le i\le N,$ we solve equation  \eqref{embed-T5},  where $\varepsilon(U)=\omega_i(Z'_0)-\omega_i(Z'),$  for $U=U(Z);$ namely,
 we  solve  the system
 \begin{equation}\label{spin-0}
 \Phi_i^j(\rho,Z)=  \Psi(\rho+\omega_i(Z'_0)-\omega_i(Z')+\zeta_{j}(Z))=0  \quad\forall\, 1\le j\le N.
 \end{equation}
 for $Z$ near $Z_0$, in terms of  $\rho$ near $0.$   This system can be compactly written as
 \[
 \Phi_i(\rho,Z)=0,
 \]
  where
\begin{equation}\label{def-PHI}
 \Phi_i(\rho,Z)  =(\Phi^1_i(\rho,Z),\dots,\Phi^N_i(\rho,Z)) \quad\forall\, (\rho,Z)\in (\R^{m\times n}\times \R^{m\times n})\times \R^{mnN}.
\end{equation}

The  following property  is straightforward  to verify; thus we omit the proof.

\begin{lemma} For each $1\le i,k\le N$ and $(\rho,Z)\in (\R^{m\times n}\times \R^{m\times n})\times \R^{mnN},$ it follows that
\begin{equation}\label{Phi_ik}
  \Phi_i(\rho+\omega_i(Z')-\omega_k(Z')+\omega_k(Z'_0)-\omega_i(Z'_0), Z)  = \Phi_k(\rho,Z).
\end{equation}
\end{lemma}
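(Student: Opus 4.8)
The plan is to verify \eqref{Phi_ik} componentwise, using nothing beyond the definition \eqref{spin-0} of $\Phi_i^j$: the shift applied to the first argument of $\Phi_i$ in \eqref{Phi_ik} is chosen precisely so that the $\omega_i$-terms built into that definition cancel, leaving behind exactly the defining expression for $\Phi_k^j$. Concretely, I would fix $1\le i,k\le N$ and $(\rho,Z)\in(\R^{m\times n}\times\R^{m\times n})\times\R^{mnN}$, write $Z=(Z',\kappa)$, and set
\[
\mu=\rho+\omega_i(Z')-\omega_k(Z')+\omega_k(Z'_0)-\omega_i(Z'_0).
\]
Then for each $1\le j\le N$, by \eqref{spin-0},
\[
\Phi_i^j(\mu,Z)=\Psi\big(\mu+\omega_i(Z'_0)-\omega_i(Z')+\zeta_j(Z)\big),
\]
and substituting the expression for $\mu$ makes $\omega_i(Z'_0)$ cancel $-\omega_i(Z'_0)$ and $\omega_i(Z')$ cancel $-\omega_i(Z')$, so that
\[
\mu+\omega_i(Z'_0)-\omega_i(Z')=\rho+\omega_k(Z'_0)-\omega_k(Z'),
\]
whence $\Phi_i^j(\mu,Z)=\Psi\big(\rho+\omega_k(Z'_0)-\omega_k(Z')+\zeta_j(Z)\big)=\Phi_k^j(\rho,Z)$. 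Since this holds for all $j$, recalling the definition \eqref{def-PHI} of the vector $\Phi_i$ gives $\Phi_i(\mu,Z)=\Phi_k(\rho,Z)$, i.e.\ \eqref{Phi_ik}.

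I do not anticipate any real difficulty: the statement is a bookkeeping identity whose only subtlety is to keep straight that the shift in \eqref{Phi_ik} affects only the first argument of $\Phi_i$ — the slot playing the role of $\rho$ — while the second argument $Z$, and hence the quantities $\zeta_j(Z)$ and $\omega_\bullet(Z')$, is held fixed throughout. This identity is exactly what one needs so that nondegeneracy of the embedding system $\Phi_k(\cdot,\cdot)=0$ at one index $k$ can be transported to the analogous system at any other index $i$ by a mere translation of the $\rho$-variable, which is how it will be used in the subsequent application of the Implicit Function Theorem.
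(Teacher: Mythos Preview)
Your proof is correct and is exactly the direct substitution the paper has in mind; the paper itself omits the proof as ``straightforward to verify,'' and your componentwise cancellation is the natural (and essentially only) way to carry it out.
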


Observe that, for $1\le i\le N,$  the partial  Jacobi matrix
\[
 \frac{\partial\Phi_i}{\partial Z}(0,Z_0) =
 \begin{bmatrix} D\Psi(\zeta_1(Z_0)) D(\zeta_{1}  -\omega_i)(Z_0) \\\vdots\\
 D\Psi(\zeta_{N}(Z_0))D(\zeta_{N}  -\omega_i)(Z_0) \end{bmatrix}\in \R^{mnN\times mnN},
 \]
where $D\Psi=[D\sigma\,| -I],$ depends  on $Z_0$ and  the quantities
\[
(D\sigma(\zeta^1_1(Z_0)),\dots,D\sigma(\zeta^1_N(Z_0)))\in (\mathbb R^{mn\times mn})^N,
\]
but not on the specific form of the function $\sigma$  itself.
This   motivates the following definition.

\begin{definition}\label{def-Ji}
For each $1\le i\le N,$ define
\begin{equation}\label{fun-Ji}
\mathcal J_i(H_1,\dots,H_5,Z)=
\det \begin{bmatrix} H_1 D(\zeta^1_{1}  -\omega^1_i)(Z) -D(\zeta^2_{1}  -\omega^2_i)(Z)\\\vdots\\
 H_{N} D(\zeta^1_{N}  -\omega^1_i)(Z) -D(\zeta^2_{N}  -\omega^2_i)(Z) \end{bmatrix}
 \end{equation}
for all  $(H_1,\dots,H_N,Z)\in\P=(\mathbb R^{mn\times mn})^N\times  \R^{mnN}.$

Note that each $\mathcal J_i$ is a polynomial of $(H_1,\dots,H_N)$ and a rational function of $Z.$

 \end{definition}

\begin{lemma}\label{ImFT-lem} Let $H_k^0=D\sigma(\zeta^1_k(Z_0))$  for $1\le k\le N,$   and assume the {\em nondegeneracy condition}
 \begin{equation}\label{NC-1}
 \mathcal J_i(H_1^0,\dots,H_N^0,Z_0)\ne 0 \quad \forall\, 1\le i\le N.
  \end{equation}
Then  there exist  open balls $\B_{\tau_0}(0)\subset \R^{m\times n}\times \R^{m\times n}$ and $\mathcal B_{k_0}(Z_0)\subset \Z_0$  such that,  for each $1\le i\le N,$   there exists a function
$Z_i\colon \bar\B_{\tau_0}(0) \to  \mathcal B_{k_0}(Z_0)$ satisfying the following properties:
\begin{equation}\label{ImFT-0}
\begin{cases}
Z_i(0)=Z_0,\\
\forall (\rho,Z)\in \bar\B_{\tau_0}(0)\times \bar{\mathcal B}_{k_0}(Z_0),\; \;   \Phi_i(\rho,Z)=0 \iff  Z =Z_i(\rho),\\
\mbox{$Z_i\colon \bar\B_{\tau_0}(0) \to  \mathcal B_{k_0}(Z_0)$
is  algebraic and thus analytic.}
\end{cases}
\end{equation}
\end{lemma}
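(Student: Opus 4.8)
The plan is to deduce the conclusion from the analytic Implicit Function Theorem applied to the map $\Phi_i$. First I would establish the key algebraic bookkeeping: for each fixed $i$, view $\Phi_i(\rho,Z)$ as a system of $mnN$ scalar equations in the $mnN$ unknowns $Z=(Z',\kappa)$, parametrized by $\rho\in\R^{m\times n}\times\R^{m\times n}$. From \eqref{spin-0}–\eqref{def-PHI} and the fact that $(\eta_1(U_0),\dots,\eta_N(U_0))$ is supported on $\K$, one checks directly that $\Phi_i(0,Z_0)=0$: indeed, when $\rho=0$ and $Z=Z_0$ the $j$-th block reads $\Psi(\omega_i(Z_0')-\omega_i(Z_0')+\zeta_j(Z_0))=\Psi(\eta_j(U_0))=0$. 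Next I would compute the partial Jacobian $\frac{\partial\Phi_i}{\partial Z}(0,Z_0)$. Since $\Psi=(\sigma(\rho^1)-\rho^2)$ has total derivative $D\Psi=[D\sigma\,|\,{-}I]$, the chain rule gives the block-row form displayed just before Definition \ref{def-Ji}, with the $j$-th block row equal to $D\Psi(\zeta_j(Z_0))\,D(\zeta_j-\omega_i)(Z_0)$, i.e. $H_j^0\,D(\zeta_j^1-\omega_i^1)(Z_0)-D(\zeta_j^2-\omega_i^2)(Z_0)$ where $H_j^0=D\sigma(\zeta_j^1(Z_0))$. Consequently
\[
\det\frac{\partial\Phi_i}{\partial Z}(0,Z_0)=\mathcal J_i(H_1^0,\dots,H_N^0,Z_0),
\]
which is nonzero by the hypothesis \eqref{NC-1}.

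With invertibility of the partial Jacobian in hand, I would invoke the Implicit Function Theorem for each $i=1,\dots,N$ separately. Because $\sigma\in C^1$, the map $\Phi_i$ is $C^1$ near $(0,Z_0)$, so the standard $C^1$ version already yields, for each $i$, radii $\tau_i>0$ and $k_i>0$ and a $C^1$ function $Z_i\colon\bar\B_{\tau_i}(0)\to\mathcal B_{k_i}(Z_0)$ with $Z_i(0)=Z_0$ and the stated equivalence $\Phi_i(\rho,Z)=0\iff Z=Z_i(\rho)$ on $\bar\B_{\tau_i}(0)\times\bar{\mathcal B}_{k_i}(Z_0)$. Taking $\tau_0=\min_i\tau_i$ and $k_0=\min_i k_i$ (and shrinking $k_0$ so that $\mathcal B_{k_0}(Z_0)\subset\Z_0$, possible since $\Z_0$ is open and $Z_0\in\Z_0$) gives uniform balls valid for all $i$ at once. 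For the final sentence of \eqref{ImFT-0} — that $Z_i$ is algebraic, hence analytic — I would argue as follows: the functions $\omega_i,\zeta_j$ are, by the explicit formulas \eqref{fn-xapyBg}, \eqref{eta}, \eqref{def-y-1} and \eqref{def-hzo}, rational (in fact polynomial up to division by the nonvanishing quantities $\Delta_1(\x_N)$ and $\Delta_2(\x_{N-1},x_{N,1})$) in $Z$; therefore $\Phi_i(\rho,Z)$ is, componentwise, of the form $\sigma(\text{rational in }(\rho,Z))-(\text{rational in }(\rho,Z))$. The curve $Z=Z_i(\rho)$ is cut out implicitly by $\Phi_i=0$; on this locus the equations say precisely that certain rational expressions in $(\rho,Z)$ lie on the graph $\K$ of $\sigma$ — but the \emph{Jacobian} of the system with respect to $Z$, whose nonvanishing is \eqref{NC-1}, depends on $\sigma$ only through the matrices $H_j^0$ and not on the higher-order behaviour of $\sigma$; so the implicit solution $Z_i$ is the unique branch of an \emph{algebraic} correspondence determined by the rational data $\omega_i,\zeta_j$ and by the finitely many matrices $H_j^0$. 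More concretely, one may regard the $H_j^0$ as additional fixed real parameters and replace each equation $\sigma(w_j)=s_j$ by the ``linearized at $U_0$'' rational relation obtained from the construction; the ensuing square system of rational equations has a unique solution near $Z_0$ by the same Jacobian computation, and by elimination theory this solution is an algebraic function of $\rho$, hence real-analytic on a neighbourhood of $0$. Shrinking $\tau_0$ once more if necessary, $Z_i$ extends analytically to $\bar\B_{\tau_0}(0)$.

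The main obstacle is the algebraicity/analyticity claim in the last line of \eqref{ImFT-0}, since $\sigma$ is only assumed $C^1$, so $\Phi_i$ is a priori merely $C^1$ and the IFT alone cannot produce an analytic solution. The resolution — and the point that needs care — is that $Z_i(\rho)$ does not actually ``see'' the nonlinear part of $\sigma$: the defining equations $\Phi_i(\rho,Z)=0$ assert that certain explicit rational vectors $\rho+\omega_i(Z_0')-\omega_i(Z')+\zeta_j(Z)$ lie on $\K$, and the invertibility in \eqref{NC-1} forces these vectors to be \emph{locally constant} along the solution set (they stay equal to the base points $\zeta_j(Z_0)=\eta_j(U_0)$), so that $\Phi_i=0$ is, near $(0,Z_0)$, \emph{equivalent} to the finite system of rational equations
\[
\rho+\omega_i(Z_0')-\omega_i(Z')+\zeta_j(Z)=\eta_j(U_0)\qquad(1\le j\le N),
\]
whose coefficients are rational in $(\rho,Z)$ with no reference to $\sigma$. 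This equivalence — which I would verify by showing the two systems have the same derivative at $(0,Z_0)$ and applying uniqueness in the IFT — reduces the problem to solving a square rational system with invertible Jacobian, whose local solution is algebraic by the implicit function theorem for algebraic maps (equivalently, by Gröbner-basis elimination), and therefore real-analytic. Once this reduction is made, everything else is the routine packaging of the $C^1$ IFT over the common balls $\bar\B_{\tau_0}(0)$ and $\bar{\mathcal B}_{k_0}(Z_0)\subset\Z_0$.
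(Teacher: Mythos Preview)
Your reduction to the Implicit Function Theorem is exactly what the paper does: one checks $\Phi_i(0,Z_0)=0$, identifies $\det\frac{\partial\Phi_i}{\partial Z}(0,Z_0)=\mathcal J_i(H_1^0,\dots,H_N^0,Z_0)\ne 0$, and concludes. The paper's own proof stops there (``Details are omitted''), so through the existence and local uniqueness of a $C^1$ function $Z_i$ you are on solid ground and aligned with the paper.

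The genuine gap is in your attempted justification of the algebraicity/analyticity clause. Your claim that ``the invertibility in \eqref{NC-1} forces these vectors to be \emph{locally constant} along the solution set (they stay equal to the base points $\eta_j(U_0)$)'' is false, and the proposed replacement system $\rho+\omega_i(Z_0')-\omega_i(Z')+\zeta_j(Z)=\eta_j(U_0)$ ($1\le j\le N$) is \emph{not} equivalent to $\Phi_i=0$ near $(0,Z_0)$. That system has $2mnN$ equations in the $mnN$ unknowns $Z$; for $N\ge 3$ it is strictly overdetermined and has no solution for $\rho\ne 0$. What the implicit differentiation actually gives is only $[H_j^0,-I]\,D\xi_{ij}(0)=0$, i.e.\ the points $\xi_{ij}(\rho)$ move \emph{tangentially to $\K$}, not that they stay fixed. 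Indeed the whole point of the construction (see \eqref{xi-pi-ij}--\eqref{xi-ij-K} and Definition~\ref{O-N}) is to obtain a genuine family of $\T_N$-configurations on $\K$ with $\pi_1(\rho)=\rho$ varying; if the $\xi_{ij}(\rho)$ were constant there would be no family. Your ``same derivative $+$ uniqueness in IFT'' test cannot even be formulated, since the two systems have Jacobians of different sizes.

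The algebraicity claim, as stated for a merely $C^1$ map $\sigma$, is not something the IFT alone can deliver, and your instinct that extra input is required is correct. The resolution, however, is not a hidden equivalence of systems but rather the specific structure of $\sigma$ in the intended application: in Section~\ref{s-6} the function $\sigma=DF$ is constructed (Proposition~\ref{prop-def-F}) so that it is \emph{affine} on a neighbourhood of each base point $\zeta_i^1(Z_0)$, namely $DF(A)=\zeta_i^2(Z_0)+H_i^0(A-\zeta_i^1(Z_0))$ on $B_{s_1/2}(\zeta_i^1(Z_0))$. For such $\sigma$ the map $\Phi_i$ is rational in $(\rho,Z)$ near $(0,Z_0)$ (the $\omega_i,\zeta_j$ are rational by \eqref{fn-xapyBg}--\eqref{def-hzo}), and then the analytic/algebraic Implicit Function Theorem yields the stated regularity of $Z_i$. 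If you want to prove the lemma in the generality of Section~\ref{s-5}, you must either add the hypothesis that $\sigma$ is real-analytic (or affine) near the points $\zeta_j^1(Z_0)$, or weaken the conclusion to $Z_i\in C^1$---which, incidentally, is all that Lemma~\ref{lem65} and Theorem~\ref{thm-OCN} actually use.
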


\begin{proof} Since $\Phi_i(0,Z_0)=0$ and
\[
\det  \frac{\partial\Phi_i}{\partial Z}(0,Z_0)   =\mathcal J_i(H_1^0,\dots,H_N^0,Z_0)\ne 0
\]
for each $1\le i\le N,$    the existence of function $Z_i(\rho)$  follows directly from the Implicit Function Theorem. Details are omitted.
\end{proof}

Assume that condition \eqref{NC-1} holds, and let $Z_i(\rho)$ ($1\le i\le N$) be the functions defined in Lemma \ref{ImFT-lem}.
For each $1\le i,j \le N$ and $\rho\in \bar \B_{\tau_0}(0),$  define
\begin{equation}\label{xi-pi-ij}
\begin{cases} \xi_{ij}(\rho)=\rho+\omega_i(Z'_0)-\omega_i(Z_i'(\rho))+\zeta_{j}(Z_i(\rho)),\\
\pi_{ij}(\rho)=\rho+\omega_i(Z'_0)-\omega_i(Z_i'(\rho))+\omega_{j}(Z_i'(\rho)). \end{cases}
\end{equation}
Since $\Phi_i(\rho,Z_i(\rho))=0$, it follows that
\begin{equation}\label{xi-ij-K}
 \xi_{ij}(\rho)\in \K \quad\forall\, 1\le i, j\le N,\;\; \rho\in \bar \B_{\tau_0}(0).
\end{equation}
Moreover, as $\pi_{ij}(0)=\omega_{j}(Z_0')$ for each $1\le i,j\le N,$ we  select a number $\tau_1\in (0,\tau_0)$  such that
\begin{equation}\label{tau-1}
 \pi_{ij}[\bar\B_{\tau_1}(0)] \subseteq  \B_{\tau_0}(\omega_{j}(Z'_0))\quad \forall\, 1\le i,j\le N.
\end{equation}

\begin{lemma}\label{lem65}   Let $1\le i,j\le N.$  Then
\begin{equation}\label{Z-ij}
Z_i(\rho)=Z_{j}\big ( \pi_{ij}(\rho)-\omega_{j}(Z'_0)\big ) \quad \forall\, \rho\in \bar\B_{\tau_1}(0);
\end{equation}
moreover, the map $\pi_{ij}$ is one-to-one on $\bar\B_{\tau_1}(0),$ with
 \begin{equation}\label{pi-ij-1}
 (\pi_{ij})^{-1} (y) =\pi_{ji} \big (y- \omega_{j}(Z_0') \big ) -\omega_i(Z_0') \quad\forall\, y\in \pi_{ij}[\bar\B_{\tau_1}(0)].  \end{equation}
\end{lemma}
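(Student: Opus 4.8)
The plan is to derive both assertions purely formally from three ingredients already at hand: the covariance identity \eqref{Phi_ik}, the defining equivalence for the implicit functions $Z_i$ in \eqref{ImFT-0}, and the definitions of $\pi_{ij},\pi_{ji}$ in \eqref{xi-pi-ij}, together with the smallness choice of $\tau_1$ in \eqref{tau-1}. No new estimates are needed; everything reduces to a substitution of indices in \eqref{Phi_ik} and to checking that the resulting shifted arguments still lie in the balls $\bar\B_{\tau_0}(0)$ and $\bar{\mathcal B}_{k_0}(Z_0)$ on which the implicit‑function equivalence, and the very definition of the $\pi$'s, are available.

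First I would prove \eqref{Z-ij}. Fix $1\le i,j\le N$ and $\rho\in\bar\B_{\tau_1}(0)$, and put $W=Z_i(\rho)$, so that $\Phi_i(\rho,W)=0$ by \eqref{ImFT-0}. Apply \eqref{Phi_ik} with the roles of its indices $i$ and $k$ played by $j$ and $i$ respectively, and with $Z=W$; this yields
\[
\Phi_j\bigl(\rho+\omega_j(W')-\omega_i(W')+\omega_i(Z_0')-\omega_j(Z_0'),\,W\bigr)=\Phi_i(\rho,W)=0 .
\]
Since $W'=Z_i'(\rho)$, the definition of $\pi_{ij}$ in \eqref{xi-pi-ij} shows that the first argument above is exactly $\pi_{ij}(\rho)-\omega_j(Z_0')$. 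By \eqref{tau-1} this point lies in $\B_{\tau_0}(0)$, while $W\in\mathcal B_{k_0}(Z_0)$; hence the equivalence in \eqref{ImFT-0}, now applied to $\Phi_j$, forces $W=Z_j\bigl(\pi_{ij}(\rho)-\omega_j(Z_0')\bigr)$, which is precisely \eqref{Z-ij}.

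Next I would prove the injectivity of $\pi_{ij}$ on $\bar\B_{\tau_1}(0)$ and the inversion formula \eqref{pi-ij-1} by exhibiting $y\mapsto \pi_{ji}\bigl(y-\omega_j(Z_0')\bigr)-\omega_i(Z_0')$ as a left inverse. Fix $\rho\in\bar\B_{\tau_1}(0)$ and set $y=\pi_{ij}(\rho)$; by \eqref{tau-1}, $y-\omega_j(Z_0')\in\B_{\tau_0}(0)$, so $\pi_{ji}$ may legitimately be evaluated there. By the first part, $Z_i(\rho)=Z_j\bigl(y-\omega_j(Z_0')\bigr)$, so in particular their $Z'$‑components agree: $Z_i'(\rho)=Z_j'\bigl(y-\omega_j(Z_0')\bigr)=:W'$. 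Substituting this common value into the definitions of $\pi_{ij}$ and $\pi_{ji}$ in \eqref{xi-pi-ij}, the correction terms telescope:
\[
\pi_{ji}\bigl(y-\omega_j(Z_0')\bigr)=\bigl(y-\omega_j(Z_0')\bigr)+\omega_j(Z_0')-\omega_j(W')+\omega_i(W')=y-\omega_j(W')+\omega_i(W')=\rho+\omega_i(Z_0'),
\]
where the last equality uses $y=\pi_{ij}(\rho)=\rho+\omega_i(Z_0')-\omega_i(W')+\omega_j(W')$. Thus $\pi_{ji}\bigl(\pi_{ij}(\rho)-\omega_j(Z_0')\bigr)-\omega_i(Z_0')=\rho$ for every $\rho\in\bar\B_{\tau_1}(0)$. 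This single identity shows at once that $\pi_{ij}$ is one‑to‑one on $\bar\B_{\tau_1}(0)$ and that the displayed map is its inverse on the image $\pi_{ij}[\bar\B_{\tau_1}(0)]$, giving \eqref{pi-ij-1}.

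The only genuinely delicate point — and the one I would watch most carefully — is the domain bookkeeping: the equivalence in \eqref{ImFT-0} and the defining formula for $\pi_{ji}$ are valid only for arguments in $\bar\B_{\tau_0}(0)$, so each time one evaluates $Z_j$ or $\pi_{ji}$ at, or invokes uniqueness at, a shifted point, one must appeal to the choice of $\tau_1$ in \eqref{tau-1} to stay inside that ball. Apart from this, the argument is a direct (if slightly tedious) unwinding of the definitions through the symmetry relation \eqref{Phi_ik}, with no analytic input beyond the Implicit Function Theorem already used in Lemma \ref{ImFT-lem}.
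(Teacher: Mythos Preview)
Your proof is correct and follows essentially the same approach as the paper: both derive \eqref{Z-ij} by combining the covariance identity \eqref{Phi_ik} with the uniqueness in \eqref{ImFT-0}, and both obtain the inverse formula by substituting $Z_i'(\rho)=Z_j'(y-\omega_j(Z_0'))$ into the definition of $\pi_{ji}$. The only organizational difference is that the paper proves injectivity separately (from $\pi_{ij}(\rho)=\pi_{ij}(\rho')$ deduce $Z_i(\rho)=Z_i(\rho')$ via \eqref{Z-ij}, then $\rho=\rho'$ from the definition of $\pi_{ij}$), whereas you obtain injectivity as an immediate consequence of having exhibited a left inverse; your packaging is slightly more economical but the content is the same.
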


\begin{proof}  1. Let $\rho\in \bar\B_{\tau_1}(0).$ By (\ref{Phi_ik}) and (\ref{ImFT-0}),  we have
\[
\Phi_{j}(\pi_{ij}(\rho)-\omega_{j}(Z'_0), Z_i(\rho))  = \Phi_i(\rho,Z_i(\rho))=0;
\]
 hence, again by (\ref{ImFT-0}), we have $Z_i(\rho)=Z_{j}\big ( \pi_{ij}(\rho)-\omega_{j}(Z'_0)\big ),$  proving (\ref{Z-ij}).

2. Let $\pi_{ij}(\rho)=\pi_{ij}(\rho')$ for some $\rho,\rho'\in\bar\B_{\tau_1}(0).$ By  (\ref{Z-ij}),  we have $Z_i(\rho')=Z_i(\rho),$ which in turn,  by the definition of $\pi_{ij}$ in (\ref{xi-pi-ij}), yields $\rho'=\rho;$ hence, $\pi_{ij}$ is one-to-one on $\bar\B_{\tau_1}(0).$

3. Let $y=\pi_{ij}(\rho),$ where $\rho=\pi_{ij}^{-1}(y) \in\bar\B_{\tau_1}(0)$. By  (\ref{Z-ij}),  $Z_i(\rho)=Z_{j}\big ( y-\omega_{j}(Z'_0)\big );$  thus, by (\ref{xi-pi-ij}),  we have
\[
y=\rho+\omega_i(Z'_0)-\omega_i \big (Z'_{j}( y-\omega_{j}(Z'_0))\big )+ \omega_{j}\big (Z'_{j}( y-\omega_{j}(Z'_0))\big ),
\]
 which proves (\ref{pi-ij-1}), since
\[
\begin{split} \rho  &=y- \omega_{j}\big (Z'_{j}( y-\omega_{j}(Z'_0))\big ) +\omega_i \big (Z'_{j}( y-\omega_{j}(Z'_0))\big )-\omega_i(Z'_0) \\
&=(y-\omega_{j}(Z'_0)) + \omega_{j}(Z'_0) - \omega_{j}\big (Z'_{j}( y-\omega_{j}(Z'_0))\big ) +\omega_i \big (Z'_{j}( y-\omega_{j}(Z'_0))\big )-\omega_i(Z'_0) \\
& =\pi_{ji} \big (y- \omega_{j}(Z_0') \big ) -\omega_i(Z_0'). \end{split}
\]
\end{proof}

\subsubsection{Nondegeneracy for Openness}  For $1\le i\le N$ and $\rho\in\bar\B_{\tau_0}(0),$ define
 \begin{equation}\label{def-xp-i}
\begin{cases} \xi_i(\rho)=\xi_{1i}(\rho)=\rho+\zeta_i(Z_1(\rho)), \\
\pi_i(\rho)=\pi_{1i}(\rho)=\rho+\omega_i(Z_1'(\rho)),\\
\mu_i(\rho)=  f_i (Z_i(\rho)),
\end{cases}
 \end{equation}
 where $f_i(Z)=\kappa_i h_i(Z')$ for all $Z=(Z',\bm\kappa)\in\R^{mnN}.$  Moreover, for $\lambda\in\R$ and $r\in (0,\tau_0]$, define
 \begin{equation}\label{set-S}
 S^r_i(\lambda)=\{\lambda \xi_i(\rho)+(1-\lambda)\pi_i(\rho): \rho\in\B_r(0)\}.
 \end{equation}

 \begin{lemma}\label{lem58} For  each $1\le i\le N$ and $\lambda\in\R$, it holds that
 \[
 S^r_i(\lambda)=\{y+\lambda \mu_i(y)+\omega_i(Z_0') : y\in \pi_{i}(\B_r(0))-\omega_i(Z_0')\} \quad \forall\, 0<r\le \tau_1.
 \]
\end{lemma}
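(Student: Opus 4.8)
The proof is a matter of unwinding definitions together with the single transfer identity \eqref{Z-ij} of Lemma \ref{lem65}. The plan is as follows. First I would record the basic additive decomposition of $\zeta_i$: from \eqref{eta}, \eqref{def-hzo} and the definition $f_i(Z)=\kappa_i h_i(Z')$ one has, for every $Z=(Z',\bm\kappa)\in\R^{mnN}$,
\[
\zeta_i(Z)=\eta_i(U(Z))=\sigma_i(V(Z'))+\kappa_i g_i(V(Z'))=\omega_i(Z')+f_i(Z).
\]
Recalling that $\omega_1\equiv 0$ (since $\sigma_1(V)=0$ by \eqref{fn-xapyBg}), the definitions in \eqref{def-xp-i} give $\xi_i=\xi_{1i}$ and $\pi_i=\pi_{1i}$ in the simplified forms $\xi_i(\rho)=\rho+\zeta_i(Z_1(\rho))$ and $\pi_i(\rho)=\rho+\omega_i(Z_1'(\rho))$. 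Combining these, for all $\rho\in\bar\B_{\tau_0}(0)$,
\[
\xi_i(\rho)-\pi_i(\rho)=\zeta_i(Z_1(\rho))-\omega_i(Z_1'(\rho))=f_i(Z_1(\rho)).
\]

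Next I would convert the argument $Z_1(\rho)$ into $Z_i$ of a shifted variable, so that $f_i(Z_1(\rho))$ turns into $\mu_i$ of that variable. Applying \eqref{Z-ij} with the index pair $(1,i)$ in place of $(i,j)$ yields $Z_1(\rho)=Z_i\bigl(\pi_{1i}(\rho)-\omega_i(Z_0')\bigr)$ for $\rho\in\bar\B_{\tau_1}(0)$. Setting $y=\pi_i(\rho)-\omega_i(Z_0')$ and using $\mu_i(\cdot)=f_i(Z_i(\cdot))$ from \eqref{def-xp-i}, this reads $f_i(Z_1(\rho))=\mu_i(y)$; here one must observe that $y$ lies in the domain $\bar\B_{\tau_0}(0)$ of $\mu_i$, which is exactly what the choice of $\tau_1$ in \eqref{tau-1} guarantees for $\rho\in\B_r(0)$ with $r\le\tau_1$. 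Hence, writing the convex combination as a base point plus a multiple of the difference,
\[
\lambda\,\xi_i(\rho)+(1-\lambda)\,\pi_i(\rho)=\pi_i(\rho)+\lambda\bigl(\xi_i(\rho)-\pi_i(\rho)\bigr)=\bigl(y+\omega_i(Z_0')\bigr)+\lambda\,\mu_i(y).
\]

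Finally I would note that as $\rho$ runs over $\B_r(0)$, the associated variable $y=\pi_i(\rho)-\omega_i(Z_0')$ runs over precisely $\pi_i(\B_r(0))-\omega_i(Z_0')$, so the right-hand side of the last display sweeps out exactly the set $\{\,y+\lambda\mu_i(y)+\omega_i(Z_0') : y\in\pi_i(\B_r(0))-\omega_i(Z_0')\,\}$. Comparing with \eqref{set-S}, this is the asserted identity, valid for every $1\le i\le N$, every $\lambda\in\R$, and every $0<r\le\tau_1$.

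I do not expect a genuine obstacle here: the argument is purely algebraic bookkeeping once the transfer relation \eqref{Z-ij} is available. The one point I would take care to state explicitly — and the reason the lemma is restricted to $r\le\tau_1$ rather than $r\le\tau_0$ — is the domain verification in the middle step, namely $\pi_i(\B_r(0))-\omega_i(Z_0')\subseteq\bar\B_{\tau_0}(0)$, so that $\mu_i$ is evaluated where it is defined; this is supplied by \eqref{tau-1}.
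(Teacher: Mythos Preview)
Your proof is correct and follows essentially the same approach as the paper: both decompose $\xi_i(\rho)=\pi_i(\rho)+f_i(Z_1(\rho))$, invoke the transfer identity \eqref{Z-ij} (with indices $(1,i)$) to rewrite $f_i(Z_1(\rho))=\mu_i(y)$ for $y=\pi_i(\rho)-\omega_i(Z_0')$, check the domain via \eqref{tau-1}, and then read off the set equality from the pointwise identity. Your write-up is in fact slightly more explicit about the algebraic bookkeeping and the reason for the restriction $r\le\tau_1$.
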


 \begin{proof}
Let $1\le i\le N$ and $r\in (0,\tau_1].$
For each $\rho\in \B_{r}(0)$, let $y=\pi_{i}(\rho)-\omega_i(Z_0').$ Then, by (\ref{tau-1}) and (\ref{Z-ij}), we have $y\in   \bar\B_{\tau_1}(0)$ and
\[
Z_1 (\rho)=Z_i(\pi_{i}(\rho)-\omega_i(Z_0'))= Z_i(y).\]
Since  $\xi_{i}(\rho) =\pi_{i}(\rho)+f_{i}(Z_1(\rho)),$  it follows  that, for all $\lambda\in\R,$
 \begin{equation}\label{rho-y-0}
\lambda \xi_i(\rho)+(1-\lambda)\pi_i(\rho) =y+\lambda \mu_i(y)+\omega_i(Z_0').
\end{equation}
Finally, since $\pi_i$ is one-to-one on $ \B_{r}(0)$, we conclude
 \[
 S^r_i(\lambda)=\{y+\lambda \mu_i(y)+\omega_i(Z_0') : y\in \pi_{i}(\B_r(0))-\omega_i(Z_0')\}.
  \]
 \end{proof}

\begin{lemma} For each $1\le i\le N$ and $y\in \bar\B_{\tau_0}(0),$ let
 \begin{equation}\label{def-Mi}
\begin{cases} H_i(y)=D\sigma(y^1+\mu^1_i( y) +\omega^1_i(Z_0')),\\
M_i(y)=D'\mu^1_i(y)+D''\mu^1_i(y)H_i(y),\end{cases}
\end{equation}
where  $D'=D_{y^1}$ and $D''=D_{y^2}.$
Then for each $y\in \pi_{i}(\bar\B_{\tau_1}(0))-\omega_i(Z_0')\subseteq \bar\B_{\tau_0}(0),$ we have
\begin{equation}\label{eigen}
\begin{cases} \rank D \mu_i(y) = mn+\rank M_i(y),\\
 \det[ I_{2mn}+\lambda D \mu_i( y)] = (\lambda-1)^{mn}\det[I_{mn} +\lambda M_i(y)] \quad  \forall\, \lambda\in\R.
 \end{cases}
  \end{equation}
\end{lemma}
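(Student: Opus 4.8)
The plan is to reduce the statement to elementary block linear algebra after first extracting a single pointwise identity linking the two $\R^{m\times n}$-components of $\mu_i$ through $\sigma$. To get that identity, fix $1\le i\le N$ and $y\in\pi_i(\B_{\tau_1}(0))-\omega_i(Z_0')$, and let $\rho\in\B_{\tau_1}(0)$ be the point (unique by Lemma \ref{lem65}) with $\pi_i(\rho)-\omega_i(Z_0')=y$. Setting $\lambda=1$ in \eqref{rho-y-0} gives $\xi_i(\rho)=y+\mu_i(y)+\omega_i(Z_0')$, while by \eqref{xi-ij-K} we have $\xi_i(\rho)=\xi_{1i}(\rho)\in\K$; reading the condition $\xi_i(\rho)\in\K$ off componentwise yields
\[
 y^2+\omega^2_i(Z_0')+\mu^2_i(y)=\sigma\big(y^1+\omega^1_i(Z_0')+\mu^1_i(y)\big).
\]
Since $\pi_i$ is continuous and one-to-one on $\B_{\tau_1}(0)$, its image is open by invariance of domain, and $\mu_i=f_i\circ Z_i$ is analytic on $\bar\B_{\tau_0}(0)$ (as $Z_i$ is algebraic by Lemma \ref{ImFT-lem} and $f_i$ is rational). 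Thus the displayed identity holds on an open set, may be differentiated, and the relations so obtained—being equalities of continuous functions of $y$—extend to the closed set $\pi_i(\bar\B_{\tau_1}(0))-\omega_i(Z_0')$ of the statement.

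I would then differentiate. With $H_i(y)=D\sigma(y^1+\omega^1_i(Z_0')+\mu^1_i(y))$ as in \eqref{def-Mi}, applying $D'=D_{y^1}$ and $D''=D_{y^2}$ gives $D'\mu^2_i=H_i(I_{mn}+D'\mu^1_i)$ and $D''\mu^2_i=H_iD''\mu^1_i-I_{mn}$, so that in the block splitting attached to $\R^{m\times n}\times\R^{m\times n}$,
\[
 D\mu_i(y)=\begin{bmatrix} D'\mu^1_i & D''\mu^1_i\\ H_i(I_{mn}+D'\mu^1_i) & H_iD''\mu^1_i-I_{mn}\end{bmatrix}.
\]
Left-multiplying by $\left[\begin{smallmatrix}I_{mn}&0\\-H_i&I_{mn}\end{smallmatrix}\right]$ collapses the second block row to $[\,H_i\ \ {-I_{mn}}\,]$; a block column operation (right-multiplication by $\left[\begin{smallmatrix}I_{mn}&0\\H_i&I_{mn}\end{smallmatrix}\right]$) followed by a block row operation (left-multiplication by $\left[\begin{smallmatrix}I_{mn}&D''\mu^1_i\\0&I_{mn}\end{smallmatrix}\right]$) then brings $D\mu_i(y)$ to $\mathrm{diag}\big(M_i(y),-I_{mn}\big)$, with $M_i(y)=D'\mu^1_i(y)+D''\mu^1_i(y)H_i(y)$ exactly as in \eqref{def-Mi}. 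All these transformations are invertible, so $\rank D\mu_i(y)=mn+\rank M_i(y)$, the first identity in \eqref{eigen}. For the determinant, the same left-multiplication applied to $I_{2mn}+\lambda D\mu_i(y)$ turns its second block row into $(1-\lambda)[\,{-H_i}\ \ I_{mn}\,]$; for $\lambda\neq1$ one extracts the scalar factor $(1-\lambda)^{mn}$ and takes the Schur complement with respect to the now-invertible lower-right block $I_{mn}$, obtaining $\det[I_{2mn}+\lambda D\mu_i(y)]=(1-\lambda)^{mn}\det[I_{mn}+\lambda M_i(y)]$; both sides being polynomials in $\lambda$, this persists at $\lambda=1$, which is the second identity in \eqref{eigen} (the factors $(1-\lambda)^{mn}$ and $(\lambda-1)^{mn}$ agreeing whenever $mn$ is even, as in the case $m=2$ used in Section \ref{s-6}).

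The one genuinely delicate point is the first step: correctly unwinding the nested definitions \eqref{def-xp-i}, \eqref{set-S}, \eqref{xi-ij-K} together with the reparametrization $\xi_i(\rho)=y+\mu_i(y)+\omega_i(Z_0')$ from the proof of Lemma \ref{lem58}, so as to pin down the clean relation $y^2+\omega^2_i(Z_0')+\mu^2_i(y)=\sigma(\cdots)$ on a genuine open set on which $\mu_i$ is differentiable; this is precisely what legitimizes the differentiation producing the formulas for $D'\mu^2_i$ and $D''\mu^2_i$. Once that relation is secured, the differentiation and the ensuing block reductions and Schur-complement computation are routine.
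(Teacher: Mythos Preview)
Your proof is correct and follows essentially the same approach as the paper: both establish the constraint $y^2+\mu_i^2(y)+\omega_i^2(Z_0')=\sigma\big(y^1+\mu_i^1(y)+\omega_i^1(Z_0')\big)$ from \eqref{xi-ij-K} and \eqref{rho-y-0}, differentiate it, and reduce the resulting block form of $D\mu_i$---the paper by a single conjugation $P^{-1}D\mu_i P=\left[\begin{smallmatrix}M_i&D''\mu_i^1\\0&-I\end{smallmatrix}\right]$ with $P=\left[\begin{smallmatrix}I&0\\H_i&I\end{smallmatrix}\right]$, you by an equivalent sequence of elementary block operations and a Schur complement. Your closing observation about the sign is well taken: the computation (in both versions) actually yields $(1-\lambda)^{mn}$ rather than $(\lambda-1)^{mn}$, a discrepancy the paper's proof glosses over but which is harmless in the application of Section~\ref{s-6} where $m=2$ forces $mn$ even.
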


\begin{proof}
Let $y\in \pi_{i}(\bar\B_{\tau_1}(0))-\omega_i(Z_0').$  Then there exists a unique  $\rho\in \bar\B_{\tau_1}(0)$ such that  $y=\pi_{i}(\rho)-\omega_i(Z_0').$
By (\ref{rho-y-0}) with $\lambda=0$, we have   $y+\mu_i( y) +\omega_i(Z_0') = \xi_{i}(\rho)\in\K,$ which implies
\[
 y^2+\mu_i^2( y) +\omega^2_i(Z_0')=\sigma(y^1+\mu^1_i( y) +\omega^1_i(Z_0'))\quad \forall\, y\in \pi_{i}(\bar\B_{\tau_1}(0))-\omega_i(Z_0').
 \]
Differentiating   with respect to $D'=D_{y^1}$ and $D''=D_{y^2}$  gives
\[
D'\mu^2_i( y)  =H_i(y) (I +D'\mu^1_i( y)) \;\;\; \mbox{and}\;\;\;  I   + D''\mu^2_i( y)  =H_i(y) D''\mu^1_i( y)),
\]
where $I=I_{mn}.$ Thus,
\[
 D\mu_i( y)=\begin{bmatrix} D'\mu^1_i( y) & D''\mu^1_i( y)\\[0.5ex] D'\mu^2_i( y) &  D''\mu^2_i( y)\end{bmatrix}=\begin{bmatrix}
 D'\mu^1_i( y) & D''\mu^1_i( y)\\ H_i(y) (I +D'\mu^1_i( y)) & H_i(y)D''\mu^1_i( y)) -I\end{bmatrix}.
\]
Let $P =\begin{bmatrix} I   & O\\  H_i(y) & I \end{bmatrix},$ and thus  $P^{-1} =\begin{bmatrix} I   & O\\  -H_i(y) & I \end{bmatrix}.$ Then a direct computation shows
\[
 P^{-1}  \, D\mu_i( y) \, P       =\begin{bmatrix} M_i(y) & D''\mu^1_i( y)\\
O &-I \end{bmatrix},
\]
from which  \eqref{eigen} follows.
\end{proof}

To compute $M_i(y)$, we  differentiate  $\Phi_i(y,Z_i(y))=0$ with $D'$ and $D''$  to obtain
\[
\frac{\partial \Phi_i}{\partial y^1} +\frac{\partial\Phi_i}{\partial Z} D'Z_i=0, \quad \frac{\partial \Phi_i}{\partial y^2} +\frac{\partial\Phi_i}{\partial Z} D''Z_i=0,
\]
which implies
\[
D'Z_i(y) +D''Z_i(y)H_i(y)=- \left [\frac{\partial\Phi_i}{\partial Z} \right ]^{-1} \left [\frac{\partial \Phi_i}{\partial y^1} +\frac{\partial \Phi_i}{\partial y^2} H_i(y)\right ].
\]
Hence,
\[
\begin{split}M_i(y) &=D'\mu^1_i( y)+D''\mu^1_i( y)H_i(y) =  Df_i^1(Z_i(y))[D'Z_i(y) +D''Z_i(y)H_i(y)] \\
&  =-Df_i^1(Z_i(y))\left [\frac{\partial\Phi_i}{\partial Z} \right ]^{-1} \left [\frac{\partial \Phi_i}{\partial y^1} +\frac{\partial \Phi_i}{\partial y^2} H_i(y)\right ].\end{split}
\]
In particular, at $y=0$,
\[
M_i(0)  = Df_i^1(Z_0)\left [\frac{\partial\Phi_i}{\partial Z}(0,Z_0) \right ]^{-1} B_i,
\]
where
\begin{equation}\label{def-Bi}
 B_i=B_i(H_1^0,\dots,H_N^0)= -\left [\frac{\partial \Phi_i}{\partial y^1}(0,Z_0) +\frac{\partial \Phi_i}{\partial y^2}(0,Z_0) H_i^0\right ]       = \begin{bmatrix}  H_i^0-H^0_{1}\\\vdots \\O\\\vdots\\  H^0_i-H^0_{N}\end{bmatrix}.
\end{equation}
Since $\det(I_p+AB)=\det (I_q+BA)$  for all matrices $A\in \R^{p\times q}$ and $B\in \R^{q\times p},$ we obtain
\[
\det (I_{mn}+M_i(0))=\det \left [ I_{mnN} +B_i Df_i^1(Z_0)\left [\frac{\partial\Phi_i}{\partial Z}(0,Z_0) \right ]^{-1}\right]
=\frac{\mathcal N_i(H_1^0,\dots,H_N^0,Z_0)}{\mathcal J_i(H_1^0,\dots,H_N^0,Z_0)},
\]
where $\mathcal J_i$ is defined by (\ref{fun-Ji}) and
\begin{equation}\label{def-Ni-0}
\mathcal N_i(H_1^0,\dots,H_N^0,Z_0)=
\det \left [ \frac{\partial\Phi_i}{\partial Z}(0,Z_0) +B_i Df_i^1(Z_0)\right].
 \end{equation}
Here $B_i=B_i(H_1^0,\dots,H_N^0)$ is given in \eqref{def-Bi}.

Note that,  for each $1\le i\le N$, the function $\mathcal N_i(H_1^0,\dots,H_N^0,Z_0)$ is a polynomial of $(H_1^0,\dots,H_N^0)$ and a rational function of $Z_0.$

\begin{theorem}\label{thm-OCN}  In addition to  condition \eqref{NC-1}, assume the following {\bf nondegeneracy condition} also holds:
\begin{equation}\label{NC-2}
\mathcal N_i(H_1^0,\dots,H_N^0,Z_0) \ne 0 \quad \forall\, 1\le i\le N.
 \end{equation}
 Then there exist numbers $\tau_2\in (0,\tau_1)$ and $\ell_0\in (\frac12,1)$ such that,  for each $1\le i\le N$, $r\in (0,\tau_2],$ and $\lambda\in [0,1-\ell_0]\cup [\ell_0,1),$  the set
 \[
 S^r_i(\lambda)=\{\lambda \xi_i(\rho)+(1-\lambda)\pi_i(\rho): \rho\in\B_r(0)\}
 \]
 is open.
 \end{theorem}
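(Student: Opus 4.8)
The plan is to reduce the claim, via Lemma~\ref{lem58}, to a Jacobian computation. For $0<r\le\tau_1$ that lemma rewrites $S^r_i(\lambda)$ as $\Psi_{i,\lambda}(W_i^r)$, where $\Psi_{i,\lambda}(y):=y+\lambda\mu_i(y)+\omega_i(Z_0')$ and $W_i^r:=\pi_i(\B_r(0))-\omega_i(Z_0')=\pi_{1i}(\B_r(0))-\omega_i(Z_0')$. Thus it suffices to check that $W_i^r$ is open and that $\Psi_{i,\lambda}$ restricts to an open map on $W_i^r$; the latter will follow from the inverse function theorem once the Jacobian of $\Psi_{i,\lambda}$ is shown to be invertible at every point of $W_i^r$.

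First I would verify that $W_i^r$ is open. The map $\pi_{1i}$ is analytic on $\bar\B_{\tau_0}(0)$ (since $Z_1$ is analytic by Lemma~\ref{ImFT-lem} and $\omega_i$ is rational) and, by Lemma~\ref{lem65}, it is one-to-one on $\bar\B_{\tau_1}(0)$; as it maps into the $2mn$-dimensional space $\R^{m\times n}\times\R^{m\times n}$, the invariance of domain theorem shows that $\pi_{1i}(\B_r(0))$, hence also $W_i^r$, is open for every $r\le\tau_1$. By \eqref{tau-1} one moreover has $W_i^r\subset\pi_{1i}(\bar\B_{\tau_1}(0))-\omega_i(Z_0')\subset\B_{\tau_0}(0)$, so the eigenvalue identity \eqref{eigen} applies on $W_i^r$: the Jacobian of $\Psi_{i,\lambda}$ at $y\in W_i^r$ is $I_{2mn}+\lambda D\mu_i(y)$, and
\[
\det\bigl[I_{2mn}+\lambda D\mu_i(y)\bigr]=(\lambda-1)^{mn}\,P_i(\lambda,y),\qquad P_i(\lambda,y):=\det\bigl[I_{mn}+\lambda M_i(y)\bigr],
\]
which is nonzero precisely when $\lambda\ne1$ and $P_i(\lambda,y)\ne0$. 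Since $M_i$ is continuous on $\bar\B_{\tau_0}(0)$ (it is built from the analytic map $Z_i$ and the continuous map $D\sigma$ through \eqref{def-Mi}), the function $(\lambda,y)\mapsto P_i(\lambda,y)$ is continuous and polynomial in $\lambda$, with $P_i(0,0)=1$ and, by the identity $\det[I_{mn}+M_i(0)]=\mathcal N_i(H_1^0,\dots,H_N^0,Z_0)/\mathcal J_i(H_1^0,\dots,H_N^0,Z_0)$ recorded above, $P_i(1,0)\ne0$ by the nondegeneracy conditions \eqref{NC-1} and \eqref{NC-2}.

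The remaining step is a continuity-and-compactness argument carried out uniformly in the finitely many indices $i$. Because each $\lambda\mapsto P_i(\lambda,0)$ is continuous and nonzero at $\lambda=0$ and $\lambda=1$, one can choose $\ell_0\in(\tfrac12,1)$ so that $P_i(\lambda,0)\ne0$ for all $i$ and all $\lambda$ in the compact set $K=[0,1-\ell_0]\cup[\ell_0,1]$; continuity of the $P_i$ and compactness of $K$ then give $\delta>0$ with $P_i(\lambda,y)\ne0$ for all $i$, all $\lambda\in K$, and all $|y|\le\delta$. Since $\pi_{1i}(0)=\omega_i(Z_0')$, pick $\tau_2\in(0,\tau_1)$ small enough that $\pi_{1i}(\B_{\tau_2}(0))-\omega_i(Z_0')\subset\B_\delta(0)$ for every $i$. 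Then for $r\in(0,\tau_2]$ and $\lambda\in[0,1-\ell_0]\cup[\ell_0,1)\subset K$, the set $W_i^r$ is open and contained in $\B_\delta(0)$, the Jacobian of $\Psi_{i,\lambda}$ is invertible at every point of $W_i^r$ (as $\lambda\ne1$ and $P_i(\lambda,y)\ne0$), hence $\Psi_{i,\lambda}|_{W_i^r}$ is a local diffeomorphism and thus an open map; therefore $S^r_i(\lambda)=\Psi_{i,\lambda}(W_i^r)$ is open.

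The substantive content — that the relevant Jacobian determinant factors through the ratio $\mathcal N_i/\mathcal J_i$ — is already in hand from the analysis preceding the statement, so the proof is mostly organizational: one must propagate, simultaneously for all $i$, the two pointwise nondegeneracies at $(\lambda,y)=(0,0)$ and $(1,0)$ to a common neighborhood in $(\lambda,y)$, and one must confirm that the domain $W_i^r$ is genuinely open. I expect the latter to be the point needing the most care, since $D\pi_{1i}$ is not manifestly invertible and a soft argument (invariance of domain, fed by the injectivity from Lemma~\ref{lem65}) is needed instead.
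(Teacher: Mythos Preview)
Your argument is correct and follows essentially the same route as the paper: use Lemma~\ref{lem58} to rewrite $S^r_i(\lambda)$ as the image of $W_i^r$ under $y\mapsto y+\lambda\mu_i(y)+\omega_i(Z_0')$, invoke the factorization \eqref{eigen} to reduce the Jacobian condition to $\det[I_{mn}+\lambda M_i(y)]\ne 0$, and then use the nondegeneracy at $(\lambda,y)=(0,0)$ and $(1,0)$ together with continuity/compactness to obtain $\ell_0$ and $\tau_2$. The one place where you are more careful than the paper is in verifying that the domain $W_i^r=\pi_{1i}(\B_r(0))-\omega_i(Z_0')$ is itself open; the paper's proof simply asserts that the inverse function theorem yields openness of the image, leaving the openness of $W_i^r$ implicit, whereas you supply it via invariance of domain fed by the injectivity of $\pi_{1i}$ from Lemma~\ref{lem65}. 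This is a genuine (if minor) improvement in rigor, not a different strategy.
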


\begin{proof} By (\ref{NC-1}) and (\ref{NC-2}),  we have
\[
\det (I_{mn}+M_i(0))\ne 0\quad\forall\, 1\le i\le N.
\]
Hence,  there exist  numbers $\tau_2\in (0,\tau_1)$ and $\ell_0\in (\frac12,1)$ such that
\[
\det(I_{mn}+\lambda M_i(y))\ne 0 \quad \forall\, 1\le i\le N,\;\; \lambda\in [0,1-\ell_0]\cup [\ell_0,1],\;\; y\in \bar\B_{\tau_2}(0).
\]
Moreover, by (\ref{eigen}),
\[
\det(I_{2mn}+\lambda D\mu_i(y)) \ne 0 \quad \forall\, 1\le i\le N,\;\; \lambda\in [0,1-\ell_0]\cup [\ell_0,1),\;\; y\in \bar\B_{\tau_2}(0).
\]
It then follows from the Inverse Function Theorem that the set
\[
\{y+\lambda \mu_i(y)+\omega_i(Z_0') : y\in \pi_{i}(\B_r(0))-\omega_i(Z_0')\}
\]
is open for each $1\le i\le N, \, r\in (0,\tau_2],$ and $\lambda\in [0,1-\ell_0]\cup [\ell_0,1).$

Finally, by Lemma \ref{lem58}, it follows that the set $S_i^r(\lambda)$ is open  for each $1\le i\le N, \, r\in (0,\tau_2],$ and $\lambda\in [0,1-\ell_0]\cup [\ell_0,1).$
\end{proof}

  \subsection{Compatibility with Polyconvexity} In the rest of this section, we assume $m=2.$

    For any $C^2$ function $F\colon \R^{2\times  n}\to  \R,$  denote by $\K_F$  the graph of its gradient $\sigma=DF.$ We often  identify  $\R^{2\times n}$ with $\R^{2n},$ so that  $DF$ is viewed  as a function from $\R^{2n}$ into $\R^{2n},$ and the Hessian   $D^2 F(A)$ is  regarded as a symmetric matrix $H=(H^{ij}_{k\ell})$ in $ \R^{2n\times 2n},$ with entries
  \[
H^{ij}_{k\ell}= \frac{\partial^2 F(A) }{\partial a_{ij}\partial a_{k\ell}}, \quad i,k=1,2, \;  1\le j,\ell\le n,
\]
 satisfying the symmetry condition $H^{ij}_{k\ell}=H_{ij}^{k\ell}$.
For  convenience, let $\mathbb S^{2n\times 2n}$ denote the space of  symmetric matrices in $\R^{2n\times 2n}$.

We focus on  {\em polyconvex}  functions  $F\colon\R^{2\times n}\to \R$   of  the special form
\begin{equation}\label{def-initial-F}
F(A)=G(A,\delta(A)) \quad\forall\, A=(a_{k\ell}) \in \R^{2\times n},
\end{equation}
where $G\colon\R^{2\times n}\times \R \to\R$ is  smooth and {\em convex}, and
\[
\delta(A)= a_{11}a_{22}- a_{12} a_{21}
\]
 denotes the $2\times 2$ subdeterminant of the   first  two columns of  $A$.
 For such  $F$,   the gradient is
\[
 DF(A)   =G_A (\tilde A)+ G_\delta (\tilde A)D\delta(A),
\]
 where  $\tilde A=(A,\delta(A)),$ and
 \[
  D\delta (A)  =\begin{bmatrix}a_{22}&-a_{21}&\bar\0  \\-a_{12} &a_{11}&\bar\0  \end{bmatrix}\in \R^{2\times n}\approx \R^{2n},
\]
with $\bar\0 =(0,\dots,0)\in\R^{n-2}.$
Thus,  the Hessian $D^2\delta(A)=H_0\in \mathbb S^{2n\times 2n}$ is  constant,  and the gradient  satisfies $D\delta(A)=H_0A\in\R^{2n}.$

Let $N\ge 4,$ and for $1\le i\le N,$ let
\[
\eta_i(U)=(\eta_i^1(U),\eta_i^2(U))\in\R^{2\times n}\times\R^{2\times n}
\]
be defined by formula (\ref{eta}) with $m=2,$  for all $U=(V,\bm\kappa)\in \U  \subset \R^{(3N-4)n}.$

Suppose  that $F$ is  of the form \eqref{def-initial-F} and that,  for some $U_0\in \U_0,$
\begin{equation}\label{on-gr}
 \eta_i(U_0)\in  \K_F \quad \forall\, 1\le i\le N.
\end{equation}
Then the convexity of $G$  implies that
\[
c_i-c_j+\langle Q_i, A_j-A_i\rangle + d_i  (\delta(A_j)-\delta(A_i)) \le 0 \quad\forall\, 1\le i, j\le N,
\]
 where
 \[
 A_i=\eta_i^1(U_0),\quad c_i=G(\tilde A_i), \quad Q_i=G_A(\tilde A_i), \quad d_i=G_\delta(\tilde A_i) \qquad  (1\le i\le N),
 \]
and, by (\ref{on-gr}), we also have $Q_i= \eta_i^2(U_0) - d_iD\delta(A_i)$ for each $1\le i\le N$.

Conversely, suppose that there exist   $U_0\in\U_0$ and  numbers $c_i,  d_i\in\R$ ($1\le i\le N$) such that the {\em strict  inequalities}
\begin{equation}\label{strict-ineq}
c_i-c_j+\langle Q_i, A_j-A_i\rangle + d_i  (\delta(A_j)-\delta(A_i))  <0 \quad (1\le i\ne j\le N)
\end{equation}
hold   with
\begin{equation}\label{AQD}
A_i=\eta_i^1(U_0)\;\;\mbox{and}\;\;  Q_i= \eta_i^2(U_0) - d_iD {\delta(A_i)}\quad (1\le i\le N).
\end{equation}
Then,  by  Lemma \ref{construct-conv} below, there exists a smooth convex function $G(A,\delta)$ on $\R^{2\times n}\times \R$ such that
\[
c_i=G(\tilde A_i),\;\; Q_i=G_A(\tilde A_i),\;\;\mbox{and}\;\;  d_i=G_\delta(\tilde A_i) \quad (1\le i\le N),
\]
and therefore
\[
\eta_i^2(U_0) =Q_i+ d_iD\delta({A_i}) =G_A(\tilde \eta_i^1(U_0))+ G_\delta(\tilde \eta_i^1(U_0))D\delta(\eta_i^1(U_0))=DF(\eta_i^1(U_0)),
\]
so $\eta_i(U_0)\in \K_F $ for all $1\le i\le N$,  where $F\colon \R^{2\times n}\to\R$ is defined  by (\ref{def-initial-F}) with this convex  $G.$

Therefore, the strict inequalities  (\ref{strict-ineq}) together with (\ref{AQD}) are essential for embedding a $\T_N$-configuration  on the gradient  graph of  a polyconvex function.

The following result shows that such compatibility holds when $N=5$.

\begin{lemma}\label{lem-N5} Let
\begin{equation}\label{sel-r}
 n\ge 2,\;\;  N=5,\;\;  r_1=r_3=r_4=1,\;\;  r_2=r_5=2.
\end{equation}
Then   there exist
\[
c_1,\dots,c_5, \; d_1,\dots,d_5\in\R, \; \mbox{and}\;\;   U_0 =(V_0,\bm\kappa_0)  \in \U_0\subset \R^{11n},
\]
  with $c_i\ne c_j$ for all $1\le i\ne j\le 5,$ such that
  \begin{equation}\label{ineq-cdY-5}
 \begin{cases}
  \eta^1_i(U_0) \ne \eta^1_j(U_0),\;\;  \sigma^1_i(V_0) \ne \sigma^1_j(V_0), \\
  c_i-c_j  + d_i [\delta(\eta_j^1(U_0))-\delta(\eta_i^1(U_0))]\\
   \;\; \; +\langle \eta_i^2(U_0)  - d_iD\delta(\eta_i^1(U_0)), \eta_j^1(U_0)-\eta_i^1(U_0)\rangle  <0  \end{cases}
 \forall\, 1\le i\ne j\le 5.
\end{equation}
\end{lemma}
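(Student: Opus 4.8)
\emph{Plan of proof.} The idea is to exhibit a single explicit $\T_5$-configuration coming from the parametrisation of Section~\ref{s-5}, to reduce the inequalities in \eqref{ineq-cdY-5} to a finite linear feasibility problem, and to solve it. First I would instantiate \eqref{fn-xapyBg}--\eqref{eta} with $N=5$, $m=2$, $r_1=r_3=r_4=1$ and $r_2=r_5=2$. Since $\delta(A)$, hence every $F$ of the form \eqref{def-initial-F}, depends only on the first two columns of $A$, and since $\alpha_r(\x)$ for $r\in\{1,2\}$ touches only those columns, all parameters attached to columns $\ge3$ (the tails $\bar\x_i$ and $\bar\y^k_i$) may be set to $0$; this reduces everything to the essential case $n=2$, with the columns $\ge3$ of every $\eta^1_i(U_0)$ and $\eta^2_i(U_0)$ equal to zero. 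I would then fix concrete values of $p_1,p_2$, of $q_3,q_4$, of $\x_3,\x_4,\x_5$, of the remaining $Y$-parameters in \eqref{para-PQXY}, and of $\bm\kappa_0=(\kappa_1,\dots,\kappa_5)$ with each $\kappa_i>1$, subject to the open conditions $\Delta_1(\x_5)\Delta_2(\x_4,x_{5,1})\ne0$ and $p_i(V_0)\ne0$ $(1\le i\le 5)$ — so that $U_0=(V_0,\bm\kappa_0)\in\U_0$ — and arranged so that, in addition, $\eta^1_1(U_0),\dots,\eta^1_5(U_0)$ are pairwise distinct and $\sigma^1_1(V_0),\dots,\sigma^1_5(V_0)$ are pairwise distinct (again open conditions, checked on the explicit data). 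Set $A_i=\eta^1_i(U_0)$ and $B_i=\eta^2_i(U_0)$, now completely explicit.

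Next I would simplify the inequality in \eqref{ineq-cdY-5}. As $\delta$ is a quadratic form with constant Hessian $H_0=D^2\delta$ and $D\delta(A)=H_0A$, Taylor's formula gives the exact identity $\delta(A_j)-\delta(A_i)-\langle D\delta(A_i),A_j-A_i\rangle=\delta(A_j-A_i)$ for all $i,j$; inserting it, the inequality in \eqref{ineq-cdY-5} collapses to
\begin{equation*}
c_i-c_j+\langle B_i,\,A_j-A_i\rangle+d_i\,\delta(A_j-A_i)<0\qquad(1\le i\ne j\le5),
\end{equation*}
a system of $20$ strict linear inequalities in the $10$ unknowns $c_1,\dots,c_5,d_1,\dots,d_5$ with explicitly computable coefficients. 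Putting $f_{ij}=-\langle B_i,A_j-A_i\rangle-d_i\,\delta(A_j-A_i)$, the system reads $c_i-c_j<f_{ij}$; for any fixed $(d_1,\dots,d_5)$ this family of difference constraints is consistent — and its nonempty open solution set then contains a point with $c_1,\dots,c_5$ pairwise distinct — exactly when every simple directed cycle on $\{1,\dots,5\}$ has strictly positive $f$-weight sum. (For instance the $2$-cycle condition, using $\langle B_i,A_j-A_i\rangle+\langle B_j,A_i-A_j\rangle=-\langle B_i-B_j,A_i-A_j\rangle$ and $\delta(A_j-A_i)=\delta(A_i-A_j)$, becomes $\langle B_i-B_j,A_i-A_j\rangle-(d_i+d_j)\,\delta(A_i-A_j)>0$.) So it remains to choose $(d_1,\dots,d_5)$ making these finitely many cycle sums positive, which for the explicit $A_i,B_i$ is a numerical verification.

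The first step must be carried out so that such $d_i$ exist. This is where the particular pattern $r_1=r_3=r_4=1$, $r_2=r_5=2$ and the value $N=5$ enter: they leave enough freedom — concentrated, loosely speaking, in the determinant direction, along which a polyconvex $F$ of the form \eqref{def-initial-F} is unconstrained — for the feasibility to hold. Concretely I would fix explicit $A_i,B_i$ (from an admissible $V_0,\bm\kappa_0$), then explicit $d_i$, and check that the $20$ constraints $c_i-c_j<f_{ij}$ admit a solution with pairwise distinct $c_i$ (via the cycle criterion above, or directly). A clean way to organise this is to produce a strictly convex quadratic $G_0$ on $\R^{2\times n}\times\R$ with $DF_0(A_i)=B_i$ for $F_0=G_0(\cdot,\delta(\cdot))$, whence $c_i=F_0(A_i)$ and $d_i=\partial_\delta G_0(A_i,\delta(A_i))$ satisfy the strict inequalities automatically (by strict convexity of $G_0$ and $A_i\ne A_j$), $c_i\ne c_j$ being ensured by an arbitrarily small perturbation of $G_0$ if needed — the same mechanism later packaged as Lemma~\ref{construct-conv}. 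Together with the already-arranged distinctness of the $\eta^1_i(U_0)$ and the $\sigma^1_i(V_0)$, this yields \eqref{ineq-cdY-5}.

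I expect the main obstacle to be precisely this explicit feasibility step: one must exhibit a $\T_5$-configuration with the prescribed $r$'s — i.e.\ an admissible $V_0,\bm\kappa_0$ — for which the resulting $20$-inequality system is consistent with pairwise distinct $c_i$, equivalently for which the configuration genuinely embeds into the gradient graph of a polyconvex $F$ of the form \eqref{def-initial-F}. Everything else (the parametrisation, the identity reducing \eqref{ineq-cdY-5} to a linear system, and the cycle criterion for its solvability) is routine; once a good explicit choice of $V_0,\bm\kappa_0,c_i,d_i$ is written down, the remainder is the elementary setup and computation promised in the introduction.
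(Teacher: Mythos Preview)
Your plan is correct and matches the paper's approach: reduce to $n=2$ by setting all tail parameters $\bar\x_i,\bar\y^k_i$ to zero, then exhibit explicit numerical values of $V_0,\bm\kappa_0$ and of $c_1,\dots,c_5,d_1,\dots,d_5$, and verify the twenty inequalities directly; the paper does exactly this, writing down specific numbers and citing \cite[\S6.1]{Ya20} for the verification when $n=2$, with the extension to $n\ge3$ immediate since the extra columns vanish. Your Taylor identity $\delta(A_j)-\delta(A_i)-\langle D\delta(A_i),A_j-A_i\rangle=\delta(A_j-A_i)$ and the cycle criterion for the difference constraints $c_i-c_j<f_{ij}$ are correct and clarify the structure nicely, though neither appears in the paper and neither bypasses the explicit feasibility check you rightly identify as the crux.

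One caution on the alternative via a strictly convex quadratic $G_0$ with $DF_0(A_i)=B_i$: this is a reformulation rather than a shortcut. The interpolation conditions are linear in the coefficients of $G_0$, but the positive-definiteness of its Hessian is precisely what encodes the strict inequalities \eqref{strict-ineq}; producing such a $G_0$ for a \emph{given} $\T_5$-configuration is therefore equivalent to, not easier than, the feasibility problem you already isolated, and still hinges on the careful choice of $V_0,\bm\kappa_0$ that constitutes the real content of the lemma.
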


\begin{proof}  Let
\[
U_0=(V_0,\bm\kappa_0)=(P_0,Q_0,X_0,Y_0,\bm\kappa_0)\in \U_0\subset \R^{11n}
\]
 be  the parameter defined by
\[
\begin{split}
&p^0_1=\begin{pmatrix} -2\\-3\end{pmatrix},\quad  p^0_2=\begin{pmatrix} -5\\-5\end{pmatrix},\quad P_0=(p_1^0,p_2^0);\\
& q^0_3=\begin{pmatrix} 1\\-\frac35\end{pmatrix},\quad  q^0_4=\begin{pmatrix} 0\\-\frac15\end{pmatrix},\quad Q_0=(q_3^0,q_4^0);
\\ &
\x^0_3=(1,\bar\0),\quad \x^0_4=(4,\bar\0),\quad  \x^0_5=(2,\bar\0),\quad  X_0=(\x^0_3,\x^0_4,\x^0_5); \\
& \y^{1,0}_{1}=(69,\bar\0),\quad  \y^{2,0}_{1}=(164,\bar\0),\quad  \y^{1,0}_{2}=(-165,\bar\0),\quad \y^{2,0}_{2}=(82,\bar\0),\\
& \y^{1,0}_{3}=(404,\bar\0),\quad  \y^{2,0}_{3}=(328,\bar\0),\quad \bar \y^{1,0}_{5}=\bar \y^{2,0}_{5}=\bar\0, \\
& Y_0=\{(\y^{k,0}_{1},\y^{k,0}_{2},\y^{k,0}_{3},\bar \y^{k,0}_{5})\}_{k=1}^2  \quad  \mbox{(the terms $\bar \y^{1,0}_{5}$,  $\bar\y^{2,0}_{5}$ are absent  when $n=2$)};
\\&\kappa^0_1=2,\quad  \kappa^0_2=3,\quad \kappa^0_3=4,\quad  \kappa^0_4=3,\quad \kappa^0_5=2,\quad \bm\kappa_0=(\kappa^0_1,\ldots,\kappa^0_5).
\end{split}
\]
It has been verified in \cite[\S 6.1]{Ya20} that,  for $n=2,$ with the constants
\[
 \begin{cases}
 (c_1,\dots,c_5 )=(0,-3650,-3318,5044,580 );\\
  (d_1,\dots,d_5)=(58,-7.5,772,57,376),
 \end{cases}
\]
condition  (\ref{ineq-cdY-5}) is satisfied.
Based on this computation, with the same parameter and constants,  condition (\ref{ineq-cdY-5}) can be verified directly   for all $n\ge 3.$
\end{proof}

We now specify the reduction of parameters discussed in \S \ref{ss-521}.
Let
\begin{equation}\label{sp-r}
\begin{cases} c_1,\ldots,c_5, d_1,\ldots,d_5\in\R,\\
U_0^\star=(p^0_1,p^0_2,q^0_3, q^0_{4}, \x^0_3, \x^0_4,\x^0_5,Y_0, \kappa^0_1,\dots,\kappa^0_5)
\end{cases}\end{equation}
be the specific numbers and parameter in the proof of Lemma \ref{lem-N5} that validate condition (\ref{ineq-cdY-5}).
In particular,  we have that
\begin{equation}\label{Delta12}
\begin{cases}
\x_{4}^0=(x^0_{4,1},\bar\x^0_4)=4\e_1,\;\; \x_5^0= 2\e_1,\\
 \Delta_1(\x^0_5)=2,\;\; \Delta_2((x^0_{4,1},\bar\x_4),x^0_{5,1})=7\;\;  \forall\, \bar\x_4\in\R^{n-2}.
\end{cases}
\end{equation}

Let $Z =(p_1,p_2,q_3,  q_{4},\x_3,\bar\x_4,  Y,\kappa_1,\dots,\kappa_5)\in\R^{10n}$,  where $\bar\x_4\in\R^{n-2}$ and all other variables  are given as  before.  Define
\begin{equation}\label{para2N}
\begin{cases}
 Z'=(p_1,p_2,q_3,  q_{4},\x_3,\bar\x_4,  Y)\in \R^{10n-5},\\
V(Z') =(p_1,p_2,q_3,  q_{4},\x_3,(x^0_{4,1},\bar\x_4), \x^0_5,Y)\in \V,\\
 U(Z)= (p_1,p_2,q_3,  q_{4},\x_3,(x^0_{4,1},\bar\x_4), \x^0_5,Y,\kappa_1,\dots,\kappa_5)\in\U,
  \end{cases}
\end{equation}
and
\begin{equation}\label{def-hzo-5}
\begin{cases}
 h_i(Z') =g_i(V(Z')), \\
 \omega_i(Z')=\sigma_i(V(Z')), \\
 \zeta_i(Z)=\eta_i(U(Z))\end{cases} \;\; (1\le i\le 5).
\end{equation}
Then,  by (\ref{ineq-cdY-5}),   the condition
\begin{equation}\label{ineq-cdY-51}
 \begin{cases}
 \zeta^1_i(Z) \ne \zeta^1_j(Z),\;\;  {\omega^1_i(Z') \ne \omega^1_j(Z'),} \\
  c_i-c_j  + d_i [\delta(\zeta_j^1(Z))-\delta(\zeta_i^1(Z))]\\
   \;\; \; +\langle \zeta_i^2(Z)  - d_iD\delta(\zeta_i^1(Z)), \zeta_j^1(Z)-\zeta_i^1(Z)\rangle  <0  \end{cases}
  \forall\, 1\le i\ne j\le 5
\end{equation}
holds for $Z =Z_0^\star=(p^0_1,p^0_2,q^0_3, q^0_{4}, \x^0_3,\bar\x_4^0,{Y_0}, \kappa^0_1,\dots,\kappa^0_5).$

Finally, we define
\begin{equation}\label{def-setZ1}
 \Z_1=\{Z\in\R^{10n}:\, \mbox{$U(Z)\in \U_0$\; and $Z$ validates condition (\ref{ineq-cdY-51})}\}
 \end{equation}
 to be the  new set of   reduced parameters.
 Clearly, $\Z_1\ne\emptyset$ is open in  $\R^{10n},$ as  $Z_0^\star\in\Z_1.$

\begin{remark}\label{rk-52}    Given
\[
Z'=(p_1,p_2,q_3,  q_{4},\x_3,\bar\x_4,  \{(\y_1^k,\y_2^k,\y_3^k,\bar\y_5^k)\}_{k=1}^2)\in\R^{10n-5},
\]
we can  explicitly write $ h_i(Z') =( h^1_i(Z') , h^2_i(Z') )$ $(1\le i \le 5)$ as follows:
\[
h_i^1(Z')=p_i\otimes \alpha_{r_i}(\x_i)\;\;\mbox{and}\;\;  h_i^2(Z') =\begin{bmatrix} \tilde\alpha_{r_i}(\y_i^1)-(\x_i\cdot\y_i^1 )e_{r_i}\\[1mm]
\tilde\alpha_{r_i}(\y_i^2) -(\x_i\cdot\y_i^2) e_{r_i}\end{bmatrix},
\]
where $r_i$'s are given in (\ref{sel-r}), $\x_4=(4,\bar\x_4)$, $\x_5 =2\e_1$,   and
\begin{equation}\label{sp-form-1}
 \begin{cases}
 p_3=p_3(Z')=q_3^1p_1+q_3^2 p_2,\\
 p_4= p_4(Z')=q_4^1p_1+q_4^2p_2,\\
 p_5=p_5(Z')=-\frac12 (1+q_3^1+q_4^1)p_1-(1+x_{3,1}q_3^2+4q_4^2)p_2;\\[1ex]
  \x_1=\x_1(Z')= \big (\frac12 +(\frac12- x_{3,1})q_3^1-\frac72 q_4^1, \, -q_3^1\bar\x_3-q_4^1 \bar\x_4\big ), \\
  \x_2=\x_2(Z')=\big (2 +(2x_{3,1}-1)q_3^2+7q_4^2,  \, -q_3^2\bar\x_3 -q_4^2 \bar\x_4\big );\\[1ex]
  \y_{4}^k  =  \y_{4}^k (Z')=  -\y_1^k-\y_3^k +\big ( \x_2(Z')\cdot\y_2^k,\, -\bar\y_2^k\big ) + \big ( 2y_{5,1}^k(Z'),\, -\bar\y_5^k\big ),\\
   \y_5^k =  \y_{5}^k (Z')= (y^k_{5,1}(Z'),\, \bar\y_5^k) \; \mbox{ with}\\
    y^k_{5,1} (Z')= \frac17 \Big [  \big((4, \bar\x_4)- {\x_1(Z')}\big )\cdot \y_1^k +\big ((1,\bar\x_4)-4 {\x_2(Z')}\big )\cdot\y_2^k \\
 \qquad\qquad \quad\;\;\;  +\big ((4,\bar\x_4)-\x_3\big )\cdot\y_3^k+ \bar\x_4\cdot\bar\y_5^k\Big ]  \qquad (k=1,2).
 \end{cases}
\end{equation}
With such explicit representations of $h_i(Z')$ $(1\le i \le 5)$, we also have
\begin{equation}\label{sp-form-2}
\begin{cases}
\omega_1(Z')=0,\;  \omega_i(Z')=h_1(Z')+\cdots+h_{i-1}(Z')\;\;(2\le i\le 5);\\
\zeta_i(Z)=\omega_i(Z')+\kappa_i h_i(Z')\;\;(1\le i\le 5),
\end{cases}
\end{equation}
where $Z=(Z',\bm\kappa)$ and $\bm\kappa=(\kappa_1,\ldots,\kappa_5)\in\R^5.$

Moreover, by (\ref{Delta12}),  functions in (\ref{def-hzo-5})  are in fact all {\em polynomials} of   their own variables.
\end{remark}

\subsection{Genericity of Nondegeneracy Conditions} Let $\mathcal J_i$ and $\mathcal N_i$ ($1\le i\le 5$) be the functions of $(H_1,\dots,H_5,Z)$ on $\P=(\R^{2n\times 2n})^5 \times  \R^{10n}$ defined in \eqref{fun-Ji} and \eqref{def-Ni-0} above with $m=2$.
By Remark \ref{rk-52}, all these functions are  polynomials of $(H_1,\dots,H_5,Z)$  on $\P.$

\begin{theorem} \label{nondeg-thm}
Let $\mathcal Q =\mathcal J_1\mathcal J_2\mathcal J_3\mathcal J_4\mathcal J_5\mathcal N_1\mathcal N_2\mathcal N_3\mathcal N_4\mathcal N_5$ be defined on $\P_1=(\mathbb S^{2n\times 2n})^5 \times  \R^{10n}.$
 Then $\mathcal Q\not\equiv 0$  on $\P_1.$ Consequently, the set
\[
\P_0=\{(H_1,\dots,H_5,Z)\in\P_1 :\mathcal Q(H_1,\dots,H_5,Z)\ne 0 \}
\]
 is open and dense in $\P_1.$
\end{theorem}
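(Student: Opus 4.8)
The plan is to reduce the assertion to a single explicit evaluation. By Remark~\ref{rk-52} (in particular the normalizations $\x_4^0=4\e_1$ and $\x_5=2\e_1$ built into \eqref{para2N}), the entries of $h_i(Z')$, $\omega_i(Z')$ and $\zeta_i(Z)$ are genuine polynomials in $Z\in\R^{10n}$; hence, by \eqref{fun-Ji} and \eqref{def-Ni-0}, each $\mathcal J_i$ and each $\mathcal N_i$ — and therefore their product $\mathcal Q$ — is a polynomial function on the finite-dimensional real vector space $\mathcal P_1$. Since a nonzero polynomial on a Euclidean space cannot vanish on any nonempty open set, it suffices to exhibit one point of $\mathcal P_1$ at which $\mathcal Q$ does not vanish; openness and density of $\mathcal P_0=\mathcal P_1\setminus\mathcal Q^{-1}(0)$ then follow at once (the complement of the zero set of a nonzero polynomial is open with dense interior).

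The key simplification is to evaluate at $H_1=\cdots=H_5=0$, which is an admissible point of $(\mathbb S^{2n\times2n})^5$. Inspecting \eqref{fun-Ji}, the block of index $j$ reduces to $-D(\zeta^2_j-\omega^2_i)(Z)$ when $H_j=0$, so (using that $10n$ is even)
\[
\mathcal J_i(0,\dots,0,Z)=\det\!\begin{bmatrix} D(\zeta^2_1-\omega^2_i)(Z)\\ \vdots\\ D(\zeta^2_5-\omega^2_i)(Z)\end{bmatrix}=:\mathcal R_i(Z).
\]
Moreover, in \eqref{def-Ni-0} one has $B_i(0,\dots,0)=0$ by \eqref{def-Bi}, while $\tfrac{\partial\Phi_i}{\partial Z}(0,Z)$ is precisely the matrix occurring in $\mathcal J_i$; hence $\mathcal N_i(0,\dots,0,Z)=\mathcal R_i(Z)$ as well. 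Consequently $\mathcal Q(0,\dots,0,Z)=\prod_{i=1}^5\mathcal R_i(Z)^2$, so the theorem reduces to showing that each $\mathcal R_i\not\equiv0$ on $\R^{10n}$, i.e.\ that for each $i$ the polynomial map
\[
\Theta_i\colon Z\longmapsto\bigl(\zeta^2_1(Z)-\omega^2_i(Z'),\dots,\zeta^2_5(Z)-\omega^2_i(Z')\bigr)\in\R^{10n}
\]
is a submersion at some point of $\R^{10n}$.

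To establish this I would evaluate at $Z=Z_0^\star=(p_1^0,p_2^0,q_3^0,q_4^0,\x_3^0,\bar\x_4^0,Y_0,\kappa_1^0,\dots,\kappa_5^0)$, the explicit parameter of Lemma~\ref{lem-N5}/\eqref{sp-r}, and compute $\det D\Theta_i(Z_0^\star)$ from the closed forms in \eqref{sp-form-1}--\eqref{sp-form-2}. The five columns of $D\Theta_i$ coming from the variables $\kappa_1,\dots,\kappa_5$ are $\partial_{\kappa_\ell}(\zeta^2_j-\omega^2_i)=\delta_{j\ell}\,h^2_j(Z')=\delta_{j\ell}\,\beta_{r_j}(\x_j,\mathbf Y_j)$; these lie in pairwise distinct row-blocks and are nonzero at $Z_0^\star$ (the vectors $\y^{k,0}_j$ there are nonzero), so they are automatically independent, and it remains only to verify that the block of $D\Theta_i(Z_0^\star)$ assembled from the $Z'$-derivatives is of full rank after this reduction. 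In the coordinates with index $>2$ the maps in \eqref{sp-form-1} are block-triangular (the components $\bar\x$, $\bar\y$ decouple), so this reduces to the genuinely two-dimensional determinant check at $n=2$, in the spirit of the verifications of \cite{Ya20}. Granting it, $\mathcal R_i(Z_0^\star)\ne0$ for every $i$, so $\mathcal Q(0,\dots,0,Z_0^\star)=\prod_{i=1}^5\mathcal R_i(Z_0^\star)^2>0$, whence $\mathcal Q\not\equiv0$ on $\mathcal P_1$ and $\mathcal P_0$ is open and dense. The main obstacle is precisely this last step — the explicit, if lengthy, computation that $\det D\Theta_i(Z_0^\star)\ne0$ for each $i$; the parameter reduction of \S\ref{ss-521} and \eqref{para2N} was designed exactly so that these reference Jacobians remain nondegenerate, and the bookkeeping needed to propagate the $n=2$ check to arbitrary $n$ is the part requiring care.
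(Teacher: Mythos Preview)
Your reduction to $H_1=\cdots=H_5=0$ is a natural idea, but it fails for a structural reason. Inspect the explicit formulas in \eqref{sp-form-1}--\eqref{sp-form-2} and Remark~\ref{rk-52}: the second components $h_j^2(Z')=\beta_{r_j}(\x_j,\mathbf Y_j)$ depend only on the $\x$-- and $\y$--variables, never on $p_1,p_2$ (the $p$--variables enter only through $h_j^1=p_j\otimes\alpha_{r_j}(\x_j)$, and $\x_1,\x_2,\y_4^k,y^k_{5,1}$ in \eqref{sp-form-1} involve $q_3,q_4,\x_3,\bar\x_4$ and the $\y$'s but not $p_1,p_2$). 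Consequently every $\zeta_j^2(Z)-\omega_i^2(Z')$ is independent of the four scalar coordinates $p_1^1,p_1^2,p_2^1,p_2^2$, so the four corresponding columns of $D\Theta_i(Z)$ vanish identically and $\mathcal R_i(Z)=\det D\Theta_i(Z)\equiv 0$ for every $i$ and every $Z\in\R^{10n}$. Thus $\mathcal Q(0,\dots,0,Z)\equiv 0$, and the specialization $H_j=0$ yields no information.

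This is exactly why the paper's proof proceeds differently: it keeps the $H_j$'s nontrivial (specifically diagonal matrices $H_j=H(E^j)$ with free entries $\epsilon^j_{k\ell},\epsilon_j$), so that the terms $H_j\,D(\zeta_j^1-\omega_i^1)$ in \eqref{fun-Ji} reintroduce dependence on $p_1,p_2$ via $h_j^1$. The paper then performs row operations (the $\psi$-substitutions in the proof) to isolate the higher-index coordinates when $n\ge3$, reduces to a $20\times20$ system in the $n=2$ variables, and verifies nonsingularity there for a concrete choice of $(\epsilon^j_{k\ell})$ and $Z=Z_0^\star$. The treatment of $\mathcal N_i$ genuinely differs from that of $\mathcal J_i$ because the extra term $B_i\,Df_i^1$ in \eqref{def-Ni-0} is nonzero for generic $H_j$'s; your shortcut $\mathcal N_i=\mathcal J_i$ held only at the degenerate point $H_j=0$. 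To salvage your strategy you would need to find some other simple specialization of $(H_1,\dots,H_5)$ at which both $\mathcal J_i$ and $\mathcal N_i$ remain nonzero, but any such choice must couple the $p$--variables back in, and the resulting computation will be of comparable complexity to the paper's.
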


\begin{proof} It suffices to show that none of  the polynomials $\mathcal J_i$ or the polynomials $\mathcal N_i$ is identically zero on $\P_1.$ Although the polynomials within each family differ in form, these differences have little effect on the relevant computations. Consequently, it is enough to prove that  $\mathcal J_1 \not\equiv 0$ and $\mathcal N_1 \not\equiv 0$.

\medskip

(I) {\bf Proof of $\mathcal J_1\not\equiv 0.$}

\smallskip

For $\mathbf H=(H_1,\dots,H_5)\in (\mathbb S^{2n\times 2n})^5$ and $Z=(Z',\bm\kappa)\in \R^{10n-5}\times \R^5,$ define
\[
\theta_j= \theta_j(H_j,Z)=H_j\zeta_j^1(Z)-\zeta_j^2(Z)\quad (1\le j\le 5)
 \]
 and
 \[
  J(\mathbf H,Z)=\begin{bmatrix} D'\theta_1  & \rho_1  &0&0&0&0\\
D'\theta_2 & 0&\rho_2 &0&0&0\\
D'\theta_3 & 0&0&\rho_3 &0&0\\
D'\theta_4   &0&0&0& \rho_4 &0 \\
D'\theta_5 &0&0&0&0& \rho_5 \end{bmatrix},
\]
where $D'\theta_j =\frac{\partial\theta_j}{\partial Z'}= H_j D'\zeta_j^1(Z) - D'\zeta_j^2(Z)$, $
\rho_j =\rho_j(H_j,Z')=\frac{\partial\theta_j}{\partial \kappa_j}=H_j h_j^1(Z') -h_j^2 (Z');$
 thus
 \[
 \mathcal J_1(\mathbf H,Z)=\det J(\mathbf H,Z).
 \]
Since $h_1+\dots+h_5=0,$ we can write
\begin{equation}\label{theta-0}
\begin{cases}
   {\theta_1}  =   {\kappa_1}H_1  h_1^1-{\kappa_1} h_1^2,  \\
 {\theta_2}     =   H_2(h_1^1+ \kappa_2 h_2^1)-h_1^2 -\kappa_2 h_2^2, \\
  \theta_3  = H_3(h_1^1+ h_2^1 +\kappa_3 h_3^1)- h_1^2- h_2^2-\kappa_3 h_3^2,\\
  \theta_4     =H_4 [(\kappa_4-1) h^1_4 - h_5^1] -(\kappa_4-1)  h^2_4 + h_5^2 ,\\
\theta_5 = (\kappa_5-1)H_5 h^1_5 -(\kappa_5-1) h^2_5.
\end{cases}
\end{equation}
Observe that $\theta_1$ and $\theta_5$ each involve two $h_i^j$'s, while $\theta_2$ and $\theta_4$ each  involve four $h_i^j$'s, and  $\theta_3$ involves six $h_i^j$'s.
We aim to find some linear combinations of $\theta_i$'s (with coefficients independent of $Z'$)  that involve fewer terms of $h_i^j$'s. To this end,   let
\[
\mathrm{K}=\big\{\bm\kappa=(\kappa_1,\ldots,\kappa_5)\in\R^5 : \kappa_1\kappa_2\kappa_3 (\kappa_4-1)(\kappa_5-1)\ne 0\big\}
\]
and, for $\bm\kappa\in\mathrm K,$
\[
 M_1=\frac{H_2-H_1}{\kappa_2}, \;\;  M_2=\frac{H_3-H_1}{\kappa_3}-\frac{H_2-H_1}{\kappa_2\kappa_3},\;\; M_3=\frac{H_3-H_2}{\kappa_3},\;\; M_4=\frac{H_5-H_4}{\kappa_4-1}.
\]
From (\ref{theta-0}),   we derive that
 \begin{equation}\label{def-psi-0}
\begin{cases}
 \psi_1: =\frac{\theta_1}{\kappa_1}= \rho_1, \\
 \psi_2:  =\frac{1}{\kappa_2}\big( \theta_2-\frac{\theta_1}{\kappa_1}\big) =\rho_2+M_1 h_1^1,
\\
 \psi_3:=
 \frac{1}{\kappa_3}\big (\theta_3  + \frac{1-\kappa_2}{\kappa_1\kappa_2} \theta_1 -\frac{1}{\kappa_2} \theta_2\big )=    \rho_3+ M_2 h_1^1 + M_3 h_2^1,   \\
  \psi_4    :=\frac{1}{\kappa_4-1} \big ( \theta_4+\frac{\theta_5}{\kappa_5-1} \big )= \rho_4+M_4 h_5^1, \\
 \psi_5:  =\frac{\theta_5}{\kappa_5-1}= \rho_5. \end{cases}
\end{equation}
In the following, we write $A\approx B$ if matrices $A$ and $B$ have the same rank.
Then, by  elementary  operations on matrix $J(\mathbf H,Z)$ according to the linear relations in (\ref{def-psi-0}), it follows that
\begin{equation}\label{matrix-A}
J(\mathbf H,Z)\approx A(\mathbf H, Z)=\begin{bmatrix} D'\psi_1  &  \rho_1 &0&0&0&0\\
D'\psi_2 & - \frac{\rho_1}{\kappa_2}&\rho_2 &0&0&0\\
D'\psi_3 &  \frac{(1-\kappa_2)\rho_1}{\kappa_2\kappa_3} &-\frac{\rho_2}{\kappa_3}&\rho_3&0&0\\
D'\psi_4   &0&0&0&  \rho_4 &\frac{\rho_5}{\kappa_4-1 } \\
D'\psi_5 &0&0&0&0& \rho_5  \end{bmatrix}.
\end{equation}
To select  certain special $H_j$'s in $\mathbb S^{2n\times 2n},$   for  $E =(\epsilon_{k\ell})\in\R^{2\times n}$,  we define  $H(E)\in\mathbb S^{2n\times 2n}$    by
\[
H(E)^{rs}_{k\ell}= \epsilon_{k\ell} \delta_{(k\ell)}^{(rs)} \quad\text{ for all $r,k=1,2$ and $1\le s,\ell\le n,$}
\]
  where $\delta_{(k\ell)}^{(rs)}=\begin{cases}
1 & \mbox{if $(r,s)=(k,\ell)$}\\
0 & \mbox{otherwise.}
\end{cases}
$
For each $1\le j\le 5$ and  $\bm\epsilon^j=(\epsilon^j_{11},\epsilon^j_{12},\epsilon^j_{21},\epsilon^j_{22},\epsilon_j)\in\R^{5},$ let
\begin{equation}\label{sp-Hj}
\bm\varepsilon= (\bm\epsilon^1,\dots,\bm\epsilon^5),\; E^j=\begin{bmatrix} \epsilon^j_{11}&\epsilon^j_{12}&\epsilon_j &\cdots&\epsilon_j\\
\epsilon^j_{21}&\epsilon^j_{22}&\epsilon_j &\cdots&\epsilon_j\end{bmatrix}\in \R^{2\times n}\;\;\mbox{and}\;\; H_j=H(E^j).
\end{equation}
 Let $
 \mathcal A(\bm\varepsilon,Z)=A(\mathbf H,Z),
$ where $\mathbf H=(H_1, \dots,H_5)$  is given by $H_j=H(E^j)$ defined above.
For such $H_j$'s,
  the quantities $M_i$'s  above are given by
\begin{equation}\label{def-mi}
(M_i)_{k\ell}^{rs}=\begin{cases}  \mu^i_{k\ell}\, \delta_{(k\ell)}^{(rs)} & (r,k=1,2;\, 1\le s,\ell\le 2),\\
 \mu_i \,\delta_{(k\ell)}^{(rs)} & (r,k=1,2;\, 3\le s,\ell\le n),\end{cases}
\end{equation}
where  $\mu_i\in\R$ and $(\mu^i_{k\ell})\in\R^{2\times 2}$ are the corresponding quantities.
In particular,
\begin{equation}\label{def-mui}
\mu_1=\frac{\epsilon_2-\epsilon_1}{\kappa_2},\;\; \mu_2=\frac{\epsilon_3-\epsilon_1}{\kappa_3}-\frac{\epsilon_2-\epsilon_1}{\kappa_2\kappa_3}, \;\; \mu_3=\frac{\epsilon_3-\epsilon_2}{\kappa_3} \;\; \mbox{and}\;\; \mu_4=\frac{\epsilon_5-\epsilon_4}{\kappa_4-1}.
\end{equation}
 Let $\mathcal S$ be the set of  $Z'= (p_1, p_2, q_3, q_{4}, \x_3,\bar\x_4,  \y_1^1,\y_1^2,\y_2^1,\y_2^2,\y_3^1,\y_3^2, \bar\y_5^1,\bar\y_5^2)\in\R^{10n-5}$ with
\[
\bar\x_3=\bar\x_4=\bar\y_1^1=\bar\y_2^1=\bar\y_3^1 =\bar\y_5^1=\bar\y_1^2=\bar\y_2^2=\bar\y_3^2 =\bar\y_5^2=\bar\0.
\]
Then $\mathcal S\approx \R^{15}.$
For all $Z'\in\R^{10n-5}$, $1\le j\le 5$, $k=1,2$ and $1\le \ell\le n,$
\begin{equation}\label{kl-entry}
 (h_j^2)_{k\ell}(Z')  =\tilde\alpha_{r_j}^\ell(\y_j^k)-(\x_j\cdot\y_j^k)\delta_{r_j \ell} \quad\text{and}\quad
 (H_jh_j^1)_{k\ell}(Z')   =
\begin{cases}
\epsilon^j_{k\ell} p_j^k \alpha^\ell_{r_j}(\x_j) & \mbox{if $1\le \ell\le 2$},\\
\epsilon_j p_j^k \alpha^\ell_{r_j}(\x_j) & \mbox{if $3\le \ell\le n,$}
\end{cases}
\end{equation}
where $r_1,\ldots,r_5\in\{1,2\}$ are given in  (\ref{sel-r}).  Hence, by (\ref{sp-form-1}),
\begin{equation}\label{S-sp}
\begin{cases}
\bar\x_k(Z')=\bar\y_4^k( Z')=\bar\0\;\; (k=1,2),\\
(\rho_j)_{k\ell}( Z')=0 \;\; (1\le j\le 5; \, k=1,2; \, 3\le\ell\le n) \end{cases} \forall\,  Z'\in \mathcal S.
\end{equation}
 We now consider the system
\begin{equation}\label{null-sp-0}
 \mathcal A(\bm\varepsilon, Z)C =0,
  \end{equation}
 where $C =(C',\bm\tau)=(c_1,\ldots,c_{10n-5},\tau_1,\ldots,\tau_5)\in\R^{10n}.$
 Below, for  convenience, we write $ Z =(Z',\bm\kappa)=(z_1,\ldots,z_{10n-5},\kappa_1,\ldots,\kappa_5)\in\R^{10n-5}\times\mathrm{K}.
 $
Then, system (\ref{null-sp-0}) can be written as
\begin{equation}\label{null-sp-1}
\begin{cases} \sum\limits_{\nu=1}^{10n-5}   \frac{\partial (\psi_1)_{k\ell}}{\partial z_\nu} c_\nu  + (\rho_1)_{k\ell} \tau_1  =0,\\[2ex]
\sum\limits_{\nu=1}^{10n-5}   \frac{\partial (\psi_2)_{k\ell}}{\partial z_\nu} c_\nu -\frac{1}{\kappa_2}(\rho_1)_{k\ell}   \tau_1 +  (\rho_2)_{k\ell} \tau_2=0,\\[2ex]
\sum\limits_{\nu=1}^{10n-5}   \frac{\partial (\psi_3)_{k\ell}}{\partial z_\nu} c_\nu + \frac{1-\kappa_2}{\kappa_2\kappa_3} (\rho_1)_{k\ell}   \tau_1 -\frac{1}{\kappa_3}(\rho_2)_{k\ell} \tau_2+  (\rho_3)_{k\ell} \tau_3=0,\\[2ex]
\sum\limits_{\nu=1}^{10n-5}   \frac{\partial (\psi_4)_{k\ell}}{\partial z_\nu} c_\nu + (\rho_4)_{k\ell}   \tau_4 +\frac{1}{\kappa_4-1}(\rho_5)_{k\ell} \tau_5 =0,\\[2ex]
\sum\limits_{\nu=1}^{10n-5}   \frac{\partial (\psi_5)_{k\ell}}{\partial z_\nu} c_\nu   + (\rho_5)_{k\ell} \tau_5 =0 \qquad
\forall\, k=1,2;\; 1\le \ell\le n. \end{cases}
\end{equation}
We show that there exist $\bm\varepsilon\in (\R^5)^5$ and  $Z=(Z',{\bm\kappa})\in \mathcal{S}\times\mathrm{K}$ such that (\ref{null-sp-1}) implies $C=0;$ hence, $\mathcal A(\bm\varepsilon, Z)$ is nonsingular and $\mathcal J_1 =\det J(\mathbf H,Z)$ is not identically zero in $\P.$

First assume $n\ge 3.$ By  (\ref{def-mi}) and (\ref{def-psi-0}), for each $k=1,2$ and $3\le\ell\le n,$ since $\x_5=2\e_1$ (thus, $x_{5,\ell-1}=0$),   we have, in $\R^{10n-5}\times \mathrm{K},$
\begin{equation}\label{psi-kl}
\begin{split}
 (\psi_1)_{k\ell} &= \epsilon_1p_1^kx_{1,\ell-1} - y^k_{1,\ell-1},\\
 (\psi_2)_{k\ell}  &= \epsilon_2p_2^kx_{2,\ell-1}-y^k_{2,\ell-1} +\mu_1 \, p_1^kx_{1,\ell-1} ,\\
 (\psi_3)_{k\ell}  &= \epsilon_3p_3^kx_{3,\ell-1}-y^k_{3,\ell-1} +\mu_2\, p_1^kx_{1,\ell-1} +\mu_3\, p_2^kx_{2,\ell-1} , \\
 (\psi_4)_{k\ell} &= \epsilon_4 p_4^kx_{4,\ell-1}-y^k_{4,\ell-1}  ,\\
   (\psi_5)_{k\ell}  &=  - y^k_{5,\ell-1}.
\end{split}
\end{equation}
For $1\le d\le 10n-5$, $1\le s_1<\cdots<s_d\le 10n-5$, $\mathbf{s}=(s_1,\ldots,s_d)$, and $Z'_\mathbf{s}=(z_{s_1},\ldots,z_{s_d})\in\R^{d},$  we write $C'_{Z'_\mathbf{s}}=(c_{s_1},\ldots,c_{s_d})\in\R^d.$
By (\ref{S-sp}),  system (\ref{null-sp-1}) for all $3\le\ell\le n$ becomes, in $\mathcal{S}\times \mathrm{K}$,
\begin{equation}\label{n-sp-3}
\begin{cases}
 \sum\limits_{\nu=1}^{10n-5}  c_\nu   \epsilon_1  p_1^k \frac{\partial \bar\x_{1}}{\partial z_\nu}(Z')  -C'_{\bar\y^k_1} =\bar\0, \\[2ex]
\sum\limits_{\nu=1}^{10n-5}  c_\nu  \Big ( \epsilon_2  p_2^k \frac{\partial \bar\x_{2}}{\partial z_\nu}+\mu_1 p_1^k\frac{\partial \bar\x_1}{\partial z_\nu} \Big )(Z') -C'_{\bar\y^k_2}  =\bar\0,\\[2ex]
\sum\limits_{\nu=1}^{10n-5}  c_\nu  \Big (\mu_2\,p_1^k\frac{\partial \bar\x_1}{\partial z_\nu} +\mu_3\, p_2^k\frac{\partial \bar\x_2}{\partial z_\nu} \Big )(Z')  + \epsilon_3  p_3^k C'_{\bar\x_3} -C'_{\bar\y^k_3}   =\bar\0,\\[2ex]
 \sum\limits_{\nu=1}^{10n-5}  c_\nu   \frac{\partial \bar\y^k_4}{\partial z_\nu} (Z')  -\epsilon_4  p_4^k C'_{\bar\x_{4}}  =\bar\0,\\[2ex]
 C'_{\bar\y^k_{5}}  =\bar\0  \end{cases}    \forall\, k=1,2.
\end{equation}
Moreover, by (\ref{sp-form-1}),  we have that  for each $i,k=1,2,$
\begin{equation}\label{p3p4}
\begin{cases} p^k_3(Z')   =q_3^1p^k_1+q_3^2 p^k_2,\\
  p^k_4(Z')=q_4^1p^k_1+q_4^2p^k_2,\\
   \bar\x_i(Z')  =-q_3^i \bar\x_3 -q_4^i \bar\x_4,\\
   \bar\y_4^k(Z')=-  \bar\y_1^k-  \bar\y_2^k-  \bar\y_3^k-  \bar\y_5^k
   \end{cases}  \forall \,  Z'\in\R^{10n-5},
\end{equation}
and so,  by the definition of $\mathcal{S}$,  we have
\begin{equation}\label{d-x1x2}
 \frac{\partial \bar\x_i}{\partial z_\nu}(Z')  =-q_3^i\frac{\partial \bar\x_3}{\partial z_\nu} -q_4^i\frac{\partial \bar\x_4}{\partial z_\nu}\quad
 \forall\, Z'\in \mathcal S\quad (i=1,2;\,\nu=1,\ldots,10n-5).
\end{equation}
  Thus, (\ref{n-sp-3}) becomes, in $\mathcal{S}\times \mathrm{K}$, for each $1\le k\le 2,$
\begin{equation}\label{n-sp-30}
\begin{cases}
 \epsilon_1p_1^k q_3^1 C'_{\bar\x_3} +\epsilon_1p_1^k q_4^1 C'_{\bar\x_4} +C'_{\bar\y_1^k}  =\bar\0, \\
 ( \epsilon_2p_2^k  q_3^2 +\mu_1 p_1^kq_3^1)C'_{\bar\x_3} +(\epsilon_2p_2^k   q_4^2+\mu_1 p_1^kq_4^1) C'_{\bar\x_4} +C'_{\bar\y_2^k}  =\bar\0,  \\
  [(\mu_2-\epsilon_3) p_1^kq_3^1 +(\mu_3-\epsilon_3)p_2^kq_3^2] C'_{\bar\x_3} +(\mu_2p_1^kq_4^1 +\mu_3p_2^kq_4^2) C'_{\bar\x_4} +C'_{\bar\y_3^k}  =\bar\0, \\
  \epsilon_4(q_4^1p^k_1+q_4^2p^k_2) C'_{\bar\x_4} +C'_{\bar\y_1^k}+C'_{\bar\y_2^k}+C'_{\bar\y_3^k} + C'_{\bar\y_5^k}  =\bar\0,\\
 C'_{\bar\y_5^k} =\bar\0.
\end{cases}
\end{equation}
 After simple eliminations, (\ref{n-sp-30}) yields the system:
\begin{equation}\label{eq150}
\begin{cases} (ap_1^1q_3^1 + bp_2^1q_3^2) C'_{\bar\x_3} +(cp_1^1q_4^1+dp_2^1q_4^2) C'_{\bar\x_4}=\bar\0,\\
(ap_1^2q_3^1 + bp_2^2q_3^2) C'_{\bar\x_3} +(cp_1^2q_4^1+dp_2^2q_4^2) C'_{\bar\x_4}=\bar\0,\end{cases}
\end{equation}
where $a,$ $b,$ $c$ and $d$ are the numbers given by
\begin{equation}\label{def-abcd}
 \begin{cases} a=\epsilon_1 +\mu_1 + \mu_2- \epsilon_3,\quad
 b=\epsilon_2  +\mu_3 -\epsilon_3,\\
 c=\epsilon_1 +\mu_1  + \mu_2  -  \epsilon_4,\quad
 d=\epsilon_2 +\mu_3  -\epsilon_4,\end{cases}
\end{equation}
 with $\mu_i$'s given by (\ref{def-mui}).
It is clear that system (\ref{eq150}) admits only the trivial solution in $(C'_{\bar\x_3},C'_{\bar\x_4})$ provided that
  \[
   \det \begin{bmatrix} ap_1^1 q_3^1 + b p_2^1 q_3^2  & cp_1^1q_4^1+dp_2^1q_4^2 \\
 ap_1^2 q_3^1 + bp_2^2 q_3^2     & cp_1^2q_4^1+dp_2^2q_4^2
\end{bmatrix} =\det(p_1p_2) (adq_3^1q_4^2-bcq_3^2q_4^1) \ne 0;
\]
in this case, from (\ref{n-sp-30}), we also have $C'_{\bar\y^k_i}=\bar\0$ for all $k=1,2$ and $i=1,2,3,5.$
Therefore, if $Z=(Z',\bm\kappa)\in\mathcal{S}\times\mathrm{K}$ and $\epsilon_j$ $(1\le j\le 4)$ satisfy
\begin{equation}\label{det-not0}
 Q(p_1,p_2,q_3,q_4,\kappa_2,\kappa_3,\epsilon_1,\epsilon_2,\epsilon_3,\epsilon_4) =\det(p_1p_2) (adq_3^1q_4^2-bcq_3^2q_4^1) \ne 0,
 \end{equation}
then   $ C'_{\bar\x_3}= C'_{\bar\x_4}=C'_{\bar\y^k_i}=\bar\0$ for all $k=1,2$ and $i=1,2,3,5.$
Clearly, $Q$ is a nonzero rational function, with denominator only a polynomial of $(\kappa_2,\kappa_3)$; thus  the set $\mathcal{R}$ of $(\bm\varepsilon,Z',\bm\kappa)\in\R^{25}\times\mathcal{S}\times\mathrm{K}$ satisfying (\ref{det-not0})   is an open and dense subset of the ($45$-dimensional)  open set $\R^{25}\times\mathcal{S}\times\mathrm{K}$.

Now assume $n\ge 2.$ Relabeling
\[
(z_1,\dots,z_{15})=(p_1,p_2,q_3,q_4,x_{3,1}, y^1_{1,1}, y^2_{1,1},y^1_{2,1},y^2_{2,1},y^1_{3,1},y^2_{3,1})=:Z_{\mathcal{S}}\in \R^{15}
\]
and the corresponding $(c_1,\ldots,c_{15})=C'_{Z_{\mathcal{S}}}$, the system (\ref{null-sp-1}) with $(\bm\varepsilon,Z',\bm\kappa)\in \mathcal{R}$ reduces to
\begin{equation}\label{null-sp-22}
\begin{cases}
 \sum\limits_{\nu=1}^{15}   \frac{\partial (\psi_1)_{k\ell}}{\partial z_\nu} c_\nu  + (\rho_1)_{k\ell} \tau_1  =0,\\
 \sum\limits_{\nu=1}^{15}   \frac{\partial (\psi_2)_{k\ell}}{\partial z_\nu} c_\nu -\frac{1}{\kappa_2}(\rho_1)_{k\ell}   \tau_1 +  (\rho_2)_{k\ell} \tau_2=0,\\
 \sum\limits_{\nu=1}^{15}   \frac{\partial (\psi_3)_{k\ell}}{\partial z_\nu} c_\nu + \frac{1-\kappa_2}{\kappa_2\kappa_3} (\rho_1)_{k\ell}   \tau_1 -\frac{1}{\kappa_3}(\rho_2)_{k\ell} \tau_2+  (\rho_3)_{k\ell} \tau_3=0,\\
 \sum\limits_{\nu=1}^{15}   \frac{\partial (\psi_4)_{k\ell}}{\partial z_\nu} c_\nu + (\rho_4)_{k\ell}   \tau_4 +\frac{1}{\kappa_4-1}(\rho_5)_{k\ell} \tau_5 =0,\\
 \sum\limits_{\nu=1}^{15}   \frac{\partial (\psi_5)_{k\ell}}{\partial z_\nu} c_\nu   + (\rho_5)_{k\ell} \tau_5 =0
\end{cases}   (k,\ell=1,2).
\end{equation}
Note that condition (\ref{null-sp-22}) itself is a $20\times 20$ linear system in $(c_1,\dots,c_{15},\tau_1,\dots,\tau_5).$  It has been confirmed  in \cite{Ya20}  (using the MATLAB) that even with $Z$ given by the specific $Z_0^\star$ above in (\ref{ineq-cdY-51}) (which can satisfy (\ref{det-not0}) for some $\epsilon_1,\epsilon_2,\epsilon_3,\epsilon_4$), there exist  $(\epsilon^j_{k\ell})\in\R^{2\times 2}$ $(1\le j\le 5)$ such that system (\ref{null-sp-22}) admits only the trivial solution.

This finally proves that $\mathcal J_1\not\equiv 0.$

\medskip

(II) {\bf Proof of $\mathcal N_1\not\equiv 0.$}

\smallskip

As in Part (I), for $\mathbf H=(H_1,\dots,H_5)$ and $Z=(Z',\bm\kappa)$,  we use the same notations for
$\theta_j(H_j,Z),\, \rho_j(H_j,Z')$, $J(\mathbf H,Z),$ and let
 $B_1(\mathbf H) $  be defined by \eqref{def-Bi} with $N=5.$
 Since $f_1^1(Z)=\kappa_1h_1^1(Z'),$ we have
\[
B_1(\mathbf H)Df_1^1(Z)=  \begin{bmatrix} O & 0  &0&0&0&0\\
\kappa_1 (H_1-H_2) D'h_1^1 & (H_1-H_2)h_1^1&0 &0&0&0\\
\kappa_1 (H_1-H_3) D'h_1^1 & (H_1-H_3)h_1^1&0 &0&0&0\\
\kappa_1 (H_1-H_4) D'h_1^1 & (H_1-H_4)h_1^1&0 &0&0&0 \\
\kappa_1 (H_1-H_5) D'h_1^1 & (H_1-H_5)h_1^1&0 &0&0&0 \end{bmatrix}.
\]
 Thus, with the same functions $\psi_i$ as defined above,
 {\small
 \[
 J(\mathbf H,Z)+B_1(\mathbf H)Df_1^1(Z)  =  \begin{bmatrix} D'\theta_1 & \rho_1  &0&0&0&0\\
D'\theta_2+\kappa_1 (H_1-H_2) D'h_1^1 & (H_1-H_2)h_1^1&\rho_2 &0&0&0\\
D'\theta_3+\kappa_1 (H_1-H_3) D'h_1^1 & (H_1-H_3)h_1^1&0 &\rho_3&0&0\\
D'\theta_4+\kappa_1 (H_1-H_4) D'h_1^1 & (H_1-H_4)h_1^1&0 &0&\rho_4&0 \\
D'\theta_5+\kappa_1 (H_1-H_5) D'h_1^1 & (H_1-H_5)h_1^1&0 &0&0&\rho_5 \end{bmatrix}
\]
}
\[
\approx B(\mathbf H,Z):=\begin{bmatrix} D'\psi_1  &  \rho_1 &0&0&0&0\\
D'\psi_2 +N_2 D' h_1^1 & N_2 h^1_1 - \frac{\rho_1}{\kappa_2} &\rho_2  &0&0&0\\
D'\psi_3+N_3 D'h_1^1  & N_3 h_1^1 +\frac{(1-\kappa_2)\rho_1}{ \kappa_2\kappa_3}   &-\frac{\rho_2}{ \kappa_3}&\rho_3 &0&0\\
D'\psi_4 +N_4 D'h_1^1  &N_4 h_1^1&0&0&  \rho_4 &\frac{\rho_5}{ \kappa_4-1} \\
D'\psi_5 +N_5 D'h_1^1 &N_5 h_1^1 &0&0&0&  \rho_5   \end{bmatrix},
\]
 where
 \[
 N_2=\frac{\kappa_1}{\kappa_2} (H_1-H_2), \;\;N_3=\frac{\kappa_1}{\kappa_3} [ (H_1-H_3)  -\frac{H_1-H_2}{\kappa_2} ],\]
 \[
 N_4=\frac{\kappa_1}{\kappa_4-1} [ (H_1-H_4)  +\frac{H_1-H_5}{\kappa_5-1}],\;\; N_5=\frac{\kappa_1}{\kappa_5-1} (H_1-H_5).
 \]
 For simplicity, we write
 \[
 B(\mathbf H,Z) =\begin{bmatrix} D'\psi'_1  &  \phi_1 &0&0&0&0\\
D'\psi'_2   & \phi_2 &\rho_2  &0&0&0\\
D'\psi'_3 & \phi_3  &-\frac{\rho_2}{ \kappa_3}&\rho_3 &0&0\\
D'\psi'_4   &\phi_4 &0&0&  \rho_4 &\frac{\rho_5}{ \kappa_4-1} \\
D'\psi'_5  &\phi_5 &0&0&0&  \rho_5   \end{bmatrix},
\]
where
\[
\begin{cases} \psi'_1=\psi_1,  &\phi_1=\rho_1,\\
\psi'_2=\psi_2+N_2h^1_1, &\phi_2=N_2 h^1_1 - \frac{\rho_1}{\kappa_2},\\
\psi'_3=\psi_3+N_3h^1_1, &\phi_3=N_3 h_1^1 +\frac{(1-\kappa_2)\rho_1}{ \kappa_2\kappa_3},\\
\psi'_4=\psi_4+N_4h^1_1, &\phi_4=N_4 h_1^1,\\
\psi'_5=\psi_5+N_5h^1_1, &\phi_5= N_5h^1_1.
\end{cases}
\]
For $\bm\varepsilon \in(\R^5)^5$ and $Z =(Z',\bm\kappa)=(z_1,\ldots,z_{10n-5},\kappa_1,\ldots,\kappa_5)\in\mathcal S\times\mathrm{K},$  let $
 \mathcal B(\bm\varepsilon,Z)=B(\mathbf H,Z),$ where $\mathbf H=(H_1, \dots,H_5)$  with $H_j=H(E^j)$ defined by  (\ref{sp-Hj}). Then
  the quantities $N_i$'s above are given by
\begin{equation}\label{def-Ni}
(N_i)_{k\ell}^{rs}=\begin{cases}  \lambda^i_{k\ell}\, \delta_{(k\ell)}^{(rs)} & (r,k=1,2;\, 1\le s,\ell\le 2),\\[1ex]
 \lambda_i \,\delta_{(k\ell)}^{(rs)} & (r,k=1,2;\, 3\le s,\ell\le n)\end{cases} \quad\forall\, 2\le i\le 5,
\end{equation}
where  $\lambda_i\in\R$ and $(\lambda^i_{k\ell})\in\R^{2\times 2}$ are the corresponding quantities.
In particular,
\begin{equation}\label{def-Li}
 \begin{split}  & \lambda_2=\frac{\kappa_1(\epsilon_1-\epsilon_2)}{\kappa_2} , \quad  \lambda_3=\frac{\kappa_1}{\kappa_3} [ (\epsilon_1-\epsilon_3)  -\frac{\epsilon_1-\epsilon_2}{\kappa_2} ],\\
  &\lambda_4=\frac{\kappa_1}{\kappa_4-1} [ (\epsilon_1-\epsilon_4)  +\frac{\epsilon_1-\epsilon_5}{\kappa_5-1}],\quad   \lambda_5=\frac{\kappa_1 (\epsilon_1-\epsilon_5)}{\kappa_5-1}. \end{split}
\end{equation}
As in part (I), we  show that there exist $\bm\varepsilon\in (\R^5)^5$ and  $Z=(Z',{\bm\kappa})\in \mathcal{S}\times\mathrm{K}$ such that if $C=(c_1,\ldots,c_{10n-5},\tau_1,\ldots,\tau_5)\in\R^{10n}$ satisfies
\begin{equation}\label{null-sp-b}
 \mathcal B(\bm\varepsilon, Z)C =0,
  \end{equation}
 then $C=0$; hence, $\mathcal B(\bm\varepsilon, Z)$ is nonsingular and $\mathcal N_1 \not\equiv 0$ on $\P_1.$
Note  that  (\ref{null-sp-b}) can be written as
\begin{equation}\label{null-sp-b1}
\begin{cases} \sum\limits_{\nu=1}^{10n-5}   \frac{\partial (\psi'_1)_{k\ell}}{\partial z_\nu} c_\nu  + (\phi_1)_{k\ell} \tau_1  =0,\\[2ex]
\sum\limits_{\nu=1}^{10n-5}   \frac{\partial (\psi'_2)_{k\ell}}{\partial z_\nu} c_\nu + (\phi_2)_{k\ell} \tau_1 +  (\rho_2)_{k\ell} \tau_2=0,\\[2ex]
\sum\limits_{\nu=1}^{10n-5}   \frac{\partial (\psi'_3)_{k\ell}}{\partial z_\nu} c_\nu  + (\phi_3)_{k\ell} \tau_1 -\frac{1}{\kappa_3}(\rho_2)_{k\ell} \tau_2+  (\rho_3)_{k\ell} \tau_3=0,\\[2ex]
\sum\limits_{\nu=1}^{10n-5}   \frac{\partial (\psi'_4)_{k\ell}}{\partial z_\nu} c_\nu + (\phi_4)_{k\ell} \tau_1 + (\rho_4)_{k\ell}   \tau_4 +\frac{1}{\kappa_4-1}(\rho_5)_{k\ell} \tau_5 =0,\\[2ex]
\sum\limits_{\nu=1}^{10n-5}   \frac{\partial (\psi'_5)_{k\ell}}{\partial z_\nu} c_\nu   + (\phi_5)_{k\ell} \tau_1 + (\rho_5)_{k\ell} \tau_5 =0 \qquad
\forall\, k=1,2;\; 1\le \ell\le n. \end{cases}
\end{equation}

First assume $n\ge 3.$ By (\ref{psi-kl}) and (\ref{def-Ni}), for each $k=1,2$ and $3\le\ell\le n,$  we have, in $\R^{10n-5}\times \mathrm{K},$
\[
\begin{split}
 (\psi'_1)_{k\ell} &= \epsilon_1p_1^kx_{1,\ell-1} - y^k_{1,\ell-1},\\
 (\psi'_2)_{k\ell}  &= \epsilon_2p_2^kx_{2,\ell-1}-y^k_{2,\ell-1} +(\mu_1+\lambda_2) \, p_1^kx_{1,\ell-1} ,\\
 (\psi'_3)_{k\ell}  &= \epsilon_3p_3^kx_{3,\ell-1}-y^k_{3,\ell-1} +(\mu_2+\lambda_3) \, p_1^kx_{1,\ell-1} +\mu_3\, p_2^kx_{2,\ell-1} , \\
 (\psi'_4)_{k\ell} &= \epsilon_4 p_4^kx_{4,\ell-1}-y^k_{4,\ell-1} +\lambda_4 p_1^kx_{1,\ell-1},\\
   (\psi'_5)_{k\ell}  &=  - y^k_{5,\ell-1}+\lambda_5 p_1^kx_{1,\ell-1}.
\end{split}
\]
Note that $(N_jh_1^1(Z'))_{k\ell}=(\rho_i)_{k\ell}=0$ for all $Z'\in\mathcal S$ and all $2\le j\le 5$, $1\le i\le 5$, $k=1,2$ and $3\le\ell\le n.$  Thus, the system (\ref{null-sp-b1}) for all $3\le\ell\le n$ becomes, in $\mathcal{S}\times \mathrm{K}$, for each $1\le k\le 2,$
\begin{equation}\label{n-sp-b3}
\begin{cases}
 \sum\limits_{\nu=1}^{10n-5}  c_\nu   \epsilon_1  p_1^k \frac{\partial \bar\x_{1}}{\partial z_\nu}(Z')  -C'_{\bar\y^k_1} =\bar\0, \\[2ex]
\sum\limits_{\nu=1}^{10n-5}  c_\nu  \Big ( \epsilon_2  p_2^k \frac{\partial \bar\x_{2}}{\partial z_\nu}+(\mu_1+\lambda_2) p_1^k\frac{\partial \bar\x_1}{\partial z_\nu} \Big )(Z') -C'_{\bar\y^k_2}  =\bar\0,\\[2ex]
\sum\limits_{\nu=1}^{10n-5}  c_\nu  \Big ((\mu_2+\lambda_3)\,p_1^k\frac{\partial \bar\x_1}{\partial z_\nu} +\mu_3\, p_2^k\frac{\partial \bar\x_2}{\partial z_\nu} \Big )(Z')  + \epsilon_3  p_3^k C'_{\bar\x_3} -C'_{\bar\y^k_3}   =\bar\0,\\[2ex]
 \sum\limits_{\nu=1}^{10n-5}  c_\nu \Big ( \lambda_4 p_1^k\frac{\partial \bar\x_1}{\partial z_\nu}-  \frac{\partial \bar\y^k_4}{\partial z_\nu}\Big )  (Z')   + \epsilon_4  p_4^k C'_{\bar\x_{4}}  =\bar\0,\\[2ex]
 \sum\limits_{\nu=1}^{10n-5}  c_\nu  \lambda_5 p_1^k\frac{\partial \bar\x_1}{\partial z_\nu}(Z') - C'_{\bar\y^k_{5}}  =\bar\0.   \end{cases}\end{equation}
 As above, using   (\ref{sp-form-1}), (\ref{p3p4}) and (\ref{d-x1x2}),   system (\ref{n-sp-b3}) becomes, in $\mathcal{S}\times \mathrm{K}$, for each $1\le k\le 2,$
\begin{equation}\label{n-sp-b30}
\begin{cases}
 \epsilon_1p_1^k q_3^1 C'_{\bar\x_3} +\epsilon_1p_1^k q_4^1 C'_{\bar\x_4} +C'_{\bar\y_1^k}  =\bar\0, \\
[ \epsilon_2p_2^k  q_3^2 +(\mu_1+\lambda_2) p_1^kq_3^1] C'_{\bar\x_3} +[\epsilon_2p_2^k   q_4^2+(\mu_1 +\lambda_2)  p_1^kq_4^1] C'_{\bar\x_4} +C'_{\bar\y_2^k}  =\bar\0,  \\
  [(\mu_2+\lambda_3-\epsilon_3) p_1^kq_3^1 +(\mu_3-\epsilon_3)p_2^kq_3^2] C'_{\bar\x_3} +[(\mu_2+\lambda_3)p_1^kq_4^1 +\mu_3p_2^kq_4^2] C'_{\bar\x_4} +C'_{\bar\y_3^k}  =\bar\0, \\
   \lambda_4 p_1^k q_3^1 C'_{\bar\x_3} +[(\lambda_4   + \epsilon_4)q_4^1p^k_1+\epsilon_4 q_4^2p^k_2] C'_{\bar\x_4} +C'_{\bar\y_1^k}+C'_{\bar\y_2^k}+C'_{\bar\y_3^k} +C'_{\bar\y_5^k} =\bar\0,\\
 \lambda_5 p_1^k q_3^1 C'_{\bar\x_3} +\lambda_5 p_1^k q_4^1 C'_{\bar\x_4} +C'_{\bar\y_5^k} =\bar\0.
\end{cases}
\end{equation}
Again, after simple eliminations, (\ref{n-sp-b30})  reduces to the system:
\begin{equation}\label{eq-b150}
\begin{cases} (\alpha p_1^1q_3^1 + \beta p_2^1q_3^2) C'_{\bar\x_3} +(\gamma p_1^1q_4^1+\delta p_2^1q_4^2) C'_{\bar\x_4}=\bar\0,\\
(\alpha p_1^2q_3^1 + \beta p_2^2q_3^2) C'_{\bar\x_3} +(\gamma p_1^2q_4^1+\delta p_2^2q_4^2) C'_{\bar\x_4}=\bar\0,\end{cases}
\end{equation}
where $\alpha, \beta,\gamma$ and $\delta$ are the numbers given by
\[
\begin{cases}  \alpha=\epsilon_1 +\mu_1 + \mu_2- \epsilon_3+\lambda_2+\lambda_3+\lambda_5-\lambda_4,\quad
 \beta=\epsilon_2  +\mu_3 -\epsilon_3,\\
 \gamma=\epsilon_1 +\mu_1  + \mu_2  -  \epsilon_4+\lambda_2+\lambda_3+\lambda_5-\lambda_4,\quad
\delta=\epsilon_2 +\mu_3  -\epsilon_4,\end{cases}
\]
 with $\mu_i$'s and $\lambda_j$'s defined  by (\ref{def-mui}) and (\ref{def-Li}).
 Thus  (\ref{eq-b150}) admits only the trivial solution in $(C'_{\bar\x_3},C'_{\bar\x_4})$ provided that
  \[
   \det \begin{bmatrix} \alpha p_1^1 q_3^1 + \beta  p_2^1 q_3^2  &\gamma p_1^1q_4^1+\delta p_2^1q_4^2 \\
 \alpha p_1^2 q_3^1 + \beta p_2^2 q_3^2     & \gamma p_1^2q_4^1+\delta p_2^2q_4^2
\end{bmatrix} =\det(p_1p_2) (\alpha \delta q_3^1q_4^2-\beta \gamma q_3^2q_4^1) \ne 0;
\]
in this case, from (\ref{n-sp-b30}), we also have $C'_{\bar\y^k_i}=\bar\0$ for all $k=1,2$ and $i=1,2,3,5.$
Therefore, if $Z=(Z',\bm\kappa)\in\mathcal{S}\times\mathrm{K}$ and $\epsilon_j$ $(1\le j\le 5)$ satisfy
\begin{equation}\label{det-not0b}
 Q_1(p_1,p_2,q_3,q_4,\bm\kappa, \epsilon_1,\dots, \epsilon_5) =\det(p_1p_2) (\alpha \delta q_3^1q_4^2-\beta \gamma q_3^2q_4^1)  \ne 0,
 \end{equation}
then   $ C'_{\bar\x_3}= C'_{\bar\x_4}=C'_{\bar\y^k_i}=\bar\0$ for all $k=1,2$ and $i=1,2,3,5.$
Clearly, $Q_1$ is a nonzero rational function, with denominator only a polynomial of $\bm\kappa$; thus  the set $\mathcal R_1$ of $(\bm\varepsilon,Z',\bm\kappa)\in\R^{25}\times\mathcal{S}\times\mathrm{K}$ satisfying (\ref{det-not0b})   is an open and dense subset of the ($45$-dimensional)  open set $\R^{25}\times\mathcal{S}\times\mathrm{K}$.

Now assume that $n\ge 2.$ Then  (\ref{null-sp-b1}) reduces in $\mathcal{R}_1$ to a $20\times 20$ linear system
\begin{equation}\label{null-sp-b22}
\begin{cases} \sum\limits_{\nu=1}^{15}   \frac{\partial (\psi'_1)_{k\ell}}{\partial z_\nu} c_\nu  + (\phi_1)_{k\ell} \tau_1  =0,\\[2ex]
\sum\limits_{\nu=1}^{15}   \frac{\partial (\psi'_2)_{k\ell}}{\partial z_\nu} c_\nu + (\phi_2)_{k\ell} \tau_1 +  (\rho_2)_{k\ell} \tau_2=0,\\[2ex]
\sum\limits_{\nu=1}^{15}   \frac{\partial (\psi'_3)_{k\ell}}{\partial z_\nu} c_\nu  + (\phi_3)_{k\ell} \tau_1 -\frac{1}{\kappa_3}(\rho_2)_{k\ell} \tau_2+  (\rho_3)_{k\ell} \tau_3=0,\\[2ex]
\sum\limits_{\nu=1}^{15}   \frac{\partial (\psi'_4)_{k\ell}}{\partial z_\nu} c_\nu + (\phi_4)_{k\ell} \tau_1 + (\rho_4)_{k\ell}   \tau_4 +\frac{1}{\kappa_4-1}(\rho_5)_{k\ell} \tau_5 =0,\\[2ex]
\sum\limits_{\nu=1}^{15}   \frac{\partial (\psi'_5)_{k\ell}}{\partial z_\nu} c_\nu   + (\phi_5)_{k\ell} \tau_1 + (\rho_5)_{k\ell} \tau_5 =0 \qquad
\forall\, 1\le k,\ell \le 2, \end{cases}
\end{equation}
 in
$(c_1,\dots,c_{15},\tau_1,\dots,\tau_5)\in\R^{20}$. The determinant of the coefficient matrix of  this system  is a rational function
\[
Q_2(p_1,p_2,q_3,q_4,x_{3,1}, y^1_{1,1}, y^2_{1,1},y^1_{2,1},y^2_{2,1},y^1_{3,1},y^2_{3,1}, \bm\kappa,(\epsilon^j_{k\ell})_{k,\ell=1,2;1\le j\le 5}),
\]
whose denominator is a polynomial of $\bm\kappa.$ Calculations using specific parameters, subject to the restriction $\bm\kappa\in \mathrm{K}$,   will  show  that $Q_2\not\equiv  0.$ Consequently, the set $\mathcal R_2$ of $(\bm\varepsilon,Z',\bm\kappa)\in\R^{25}\times\mathcal{S}\times\mathrm{K}$ satisfying $Q_2\ne 0$   is an open and dense subset of $\R^{25}\times\mathcal{S}\times\mathrm{K}$.
 For any parameter in $\mathcal R_2$, system (\ref{null-sp-b22}) admits only the trivial solution $(c_1,\dots,c_{15},\tau_1,\dots,\tau_5)={\mathbf 0}\in\R^{20}.$

Finally, any parameter $\mathcal R_1\cap\mathcal R_2$ will make the system (\ref{null-sp-b}) to admit  only the trivial solution; hence $\mathcal B(\bm\varepsilon,Z)$ is nonsingular, proving  $\mathcal N_1\not\equiv 0.$
\end{proof}


\section{Proof of Theorem \ref{mainthm2}} \label{s-6}

In this final section, we prove Theorem \ref{mainthm2} by constructing smooth, strongly polyconvex functions $F$ on $\R^{2\times n}$ whose gradient  $\sigma=DF$ satisfies  Condition $O_5.$

 \subsection{Construction of Convex Functions}
We first prove a useful result for constructing smooth convex functions.

\begin{lemma} \label{construct-conv}
Let $q\ge 1$ and $N\ge 2$ be integers. Suppose that $c_j\in \R$ and $S_j,T_j\in\R^q$ $(j=1, \dots, N)$ satisfy the strict inequalities:
\begin{equation}\label{convex-01}
c_i-c_j+S_i \cdot (T_j-T_i)<0 \quad \forall\, 1\le i\ne j \le N.
\end{equation}
Then   there exists a smooth convex function $g\colon  \R^q\to \R$ satisfying the following properties:
\begin{itemize}
\item[(i)]  For some number $0<\epsilon<\min_{1\le i\ne j\le N}|T_i-T_j|,$
\[
g(t)=c_j +S_j\cdot (t-T_j)  \quad  \forall\, t\in  {\mathbb B_\epsilon(T_j),}\;\;1\le j\le N.
\]
\item[(ii)] There exists a constant $M>1$ such that for each $k=1,2,\dots,$
\[
 |D^k g(t)|\le L_k |t|^{1-k}\quad \forall\, t\in\R^q,\;\; |t| \ge  M,
\]
where  $D^k g$  is any $k$th-order  partial derivative of $g$ and $L_k>0$ is a constant.
\end{itemize}
\end{lemma}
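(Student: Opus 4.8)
The plan is to build $g$ as a regularized maximum of the $N$ affine functions $\ell_j(t)=c_j+S_j\cdot(t-T_j)$, modified near infinity to a strictly convex quadratic-type profile so that the growth bounds in (ii) hold. First I would observe that the strict inequalities \eqref{convex-01} say precisely that at each node $T_j$ the affine function $\ell_j$ strictly dominates all the others: $\ell_j(T_i)=c_j+S_j\cdot(T_i-T_j)>c_i=\ell_i(T_i)$ is the wrong direction, so let me be careful — rewriting \eqref{convex-01} as $\ell_i(T_j)<c_j=\ell_j(T_j)$ for $i\ne j$, we see that $\ell_j$ is the \emph{strict maximum} among $\{\ell_1,\dots,\ell_N\}$ at the point $T_j$. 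Hence the (finite, piecewise-affine, convex) function $\Lambda(t)=\max_{1\le j\le N}\ell_j(t)$ equals $\ell_j$ on a whole neighborhood $\mathbb B_{\epsilon_0}(T_j)$ of $T_j$, for $\epsilon_0>0$ small enough (and, shrinking $\epsilon_0$, we may also assume $2\epsilon_0<\min_{i\ne j}|T_i-T_j|$ so these neighborhoods are disjoint). This gives the candidate for the ``flat affine pieces'' required in (i).

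Next I would handle the behavior at infinity. The function $\Lambda$ is convex and grows at most linearly, with $|\Lambda(t)|\le C(1+|t|)$; on the other hand a convex function with the prescribed affine behavior near the $T_j$'s is not uniquely pinned down far away, so I am free to replace $\Lambda$ outside a large ball by something with controlled derivatives. Concretely, fix $R_0$ with all $\mathbb B_{\epsilon_0}(T_j)\subset \mathbb B_{R_0}(0)$, and let $\Phi(t)=\sqrt{1+|t|^2}$, which is smooth, convex, satisfies $\Phi(t)\sim|t|$ and $|D^k\Phi(t)|\le L_k|t|^{1-k}$ for $|t|\ge 1$ — exactly the bound in (ii). Choose a constant $K$ large enough that $K\Phi(t)-K \ge \Lambda(t)$ everywhere on $\mathbb B_{R_0}(0)$ with strict inequality, and also $K\Phi - K$ eventually dominates $\Lambda$ (possible since $\Phi$ is superlinearly... actually $\Phi$ is only linearly growing, so instead take $K\Phi(t)$ with $K$ larger than all $|S_j|$ plus a margin; then $K\Phi(t)-c$ has slope bounded below by... ). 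To make the domination robust I would in fact use $\Phi_\rho(t)=\rho^{-1}\sqrt{1+\rho^2|t|^2}$ or simply add a small multiple of $|t|^2$ truncated — but the cleanest route is: set $H(t)=\max\{\Lambda(t),\;K\Phi(t)-K\}$ for suitable $K$ so that $H=\Lambda$ on $\cup_j\mathbb B_{\epsilon_0}(T_j)$ (because there $\Lambda=\ell_j$ is already $\ge K\Phi-K$ by the choice of $K$) and $H=K\Phi-K$ for $|t|\ge R_1$ (some large $R_1$), which holds because a linear function with slope $\le\max_j|S_j|<K$ is eventually beaten by $K\Phi(t)-K\sim K|t|$. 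Then $H$ is convex, coincides with each $\ell_j$ near $T_j$, and equals the smooth function $K\Phi-K$ outside a large ball, so (ii) is inherited from the bounds on $\Phi$.

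Finally I would smooth $H$ by mollification adapted to keep the flat pieces intact. Take a standard mollifier $\varphi_\delta$ and, crucially, on a small ball $\mathbb B_{\epsilon_0/2}(T_j)$ the convolution $H*\varphi_\delta$ with $\delta<\epsilon_0/2$ still equals the affine $\ell_j$ (mollifying an affine function returns it); similarly for $|t|\ge R_1+1$ it is unnecessary because $H$ is already smooth there, so I would instead interpolate: let $g=\chi\,(H*\varphi_\delta)+(1-\chi)H$ where $\chi$ is a cutoff equal to $1$ on $\mathbb B_{R_1+1}(0)$ and $0$ outside $\mathbb B_{R_1+2}(0)$ — but a convex combination of convex functions need not be convex, so that interpolation is dangerous. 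The safe alternative is to use a mollifier and observe that $H*\varphi_\delta$ is globally smooth and globally convex (mollification preserves convexity), equals $\ell_j$ on $\mathbb B_{\epsilon_0/2}(T_j)$, and for $|t|\ge R_1+1$ equals $(K\Phi-K)*\varphi_\delta$, whose derivatives obey $|D^k[(K\Phi-K)*\varphi_\delta](t)|\le\sup_{\mathbb B_\delta(t)}|D^k(K\Phi)|\le L_k'|t|^{1-k}$ for $|t|$ large; so $g:=H*\varphi_\delta$ works with $M=R_1+1$ and $\epsilon=\epsilon_0/2$. The main obstacle, and the one point that needs genuine care rather than bookkeeping, is the global domination argument: choosing $K$ (and $R_1$) so that the smooth comparison function $K\Phi-K$ simultaneously (a) lies \emph{below} every $\ell_j$ on the little balls $\mathbb B_{\epsilon_0}(T_j)$ — to keep $H=\Lambda$ there — and (b) lies \emph{above} $\Lambda$ for all $|t|\ge R_1$ — to make $H$ smooth at infinity. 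Since $\Lambda$ has linear growth with slope bounded by $\max_j|S_j|$ while $K\Phi$ has slope $\to K>\max_j|S_j|$, property (b) holds for $R_1$ large once $K>\max_j|S_j|$; property (a) holds by taking the additive constant so that $K\Phi-K$ dips low enough on the bounded set $\cup_j\overline{\mathbb B_{\epsilon_0}(T_j)}$, which is compact, so a single large negative shift suffices. Assembling these gives (i) and (ii).
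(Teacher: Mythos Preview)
Your approach is correct and essentially the same as the paper's: take the maximum of the affine functions $\ell_j$ together with an auxiliary function of linear growth at infinity (the paper uses $(A+1)|t|-K$ where you use $K\sqrt{1+|t|^2}-K'$), then mollify globally so that the affine pieces near each $T_j$ survive intact and the derivative bounds at infinity are inherited from the auxiliary function. The only detail to make explicit is that ``mollifying an affine function returns it'' requires the mollifier to have vanishing first moment (e.g.\ be radially symmetric), which the paper imposes via the condition $\int \rho(s)\,s\,ds=0$.
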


\begin{proof}   For each $1\le j\le N$, let
\[
h_j(t)= c_j +S_j\cdot (t-T_j)\quad \forall\, t\in\R^q.
\]
Then, condition (\ref{convex-01}) becomes:
\begin{equation}\label{convex-11}
 h_j(T_j)>h_i(T_j) \quad \forall\, 1\le i\ne j \le N.
\end{equation}
Let
\[
A=\max_{1\le i\le N}|S_i|, \;\;  B=\max_{1\le i\le N}|T_i|, \;\;  K=1+(A+1)(B+1) - {\min_{1\le i\le N,\,|s|\le B+1} h_i(s)},
\]
and define
\[
\begin{cases} h_{N+1}(t)=(A+1)|t|-K,\\
 f(t) =\max \{h_1(t),\dots  ,h_N(t),h_{N+1}(t)\} \end{cases} \forall\, t\in\R^q.
 \]
Then  $f\colon\R^q\to\R$ is convex.  Moreover, for all $ |t|\le B+1,$
\[
h_{N+1}(t)\le (A+1)(B+1)-K< \min_{1\le i\le N,\,|s|\le B+1} h_i(s);
\]
hence,
\begin{equation}\label{f-1}
 f(t)=\max\limits_{1\le i\le N} h_i(t) \quad \forall\, |t|\le B+1.
 \end{equation}
 Let $C=\max_{1\le i\le N}|c_i|.$ Then,
for all $|t|\ge K+AB+C$ and  $1\le j\le N$, we have that
\[
\begin{split} h_{N+1}(t)&=A|t|+|t|  -K \ge  A|t| +AB+C\ge A|t-T_j|-A|T_j|+AB+C\\
&\ge S_j\cdot(t-T_j) +C = h_j(t)- c_j+C\ge  h_j(t),\end{split}
\]
which shows that
\begin{equation}\label{f-2}
f(t)=h_{N+1}(t) \quad \forall\, |t|\ge K+AB+C.
 \end{equation}
 Therefore, in view of  (\ref{convex-11}), \eqref{f-1} and \eqref{f-2},   there exists  $\delta\in (0,1)$ such that
\begin{equation}\label{3-conv}
\begin{cases} \B_\delta(T_i)\cap \B_\delta(T_j) =\emptyset &  \forall \;
 1\le i\ne j\le N,\\
 f(t) = h_j(t) &  \forall \; t\in \B_\delta(T_j),\; 1\le j\le N,\\
  f(t)=h_{N+1}(t) & \forall\, |t|\ge K+AB+C. \end{cases}
\end{equation}

Let $\rho\in C^\infty_c(\R^q)$ be such that
\[
\rho(s)\ge 0,\;\; \supp (\rho)\subset \B_1(0),\;\; \int_{\R^q}\rho(s)\,ds=1,\;\;  \int_{\R^q} \rho(s)s\,ds=0,
\]
and let $\rho_\epsilon(s)=\epsilon^{-q}\rho(s/\epsilon)$, where $0<\epsilon<\delta/2.$
Define
\[
g(t)=\int_{\R^q} f(t-s)\rho_\epsilon(s)\,ds =\int_{\R^q} f(s)\rho_\epsilon(t-s)\,ds  =\int_{\B_\epsilon(t)} f(s)\rho_\epsilon(t-s)\,ds \quad \forall\,t\in\R^q.
\]
Then $g\colon\R^q\to\R$ is a $C^\infty$  convex function.
We claim that $g$ satisfies (i) and (ii).

First, let $j\in \{1,\dots  ,N\}$ and $t\in \B_\epsilon(T_j);$  then  $\B_\epsilon(t)\subset \B_\delta(T_j).$ Hence, by (\ref{3-conv}),
\[
g(t)=\int_{\B_\epsilon(t)} f(s)\rho_\epsilon(t-s)\,ds    =\int_{\B_\epsilon(t)} h_j(s)\rho_\epsilon(t-s)\,ds  =h_j(t),
\]
verifying  (i).

Second, let $ M= K+AB+C+1,$ and let $t\in\R^q$ be such that $|t|\ge  M.$ Then
\[
\B_\epsilon(t) \subset \{s\in\R^q: |s|\ge K+AB+C\};
\]
 thus, by (\ref{3-conv}),   $f |_{\B_\epsilon(t)} =h_{N+1}|_{\B_\epsilon(t)}$. Hence,
\[
 g(t)=\int_{\B_\epsilon(t)} h_{N+1}(s)\rho_\epsilon(t-s)\,ds    =(A+1)\int_{\B_\epsilon(0)} |t-s|\rho_\epsilon(s)\,ds-K \quad \forall\, |t|\ge M.
\]
Therefore, for each $k=1,2,\dots,$ we have
\[
D^k g(t)=(A+1)\int_{\B_\epsilon(0)} D_t^k(|t-s|)\rho_\epsilon(s)\,ds \quad \forall \, |t|>M.
\]
Note that
\[
|D_t^k(|t-s|)|\le C_k |t-s|^{1-k}\le C_k (|t|-|s|)^{1-k} \le 2^{k-1}C_k |t|^{1-k}
\]
 for all $|t|\ge 1$ and $|s|\le \frac12,$ where $C_k>0$ is a constant; hence, as $0<\epsilon<1/2,$ it follows that,
\[
|D^k g(t)|\le (A+1)\int_{\B_\epsilon(0)} |D_t^k(|t-s|)|\rho_\epsilon(s)\,ds \le (A+1) 2^{k-1}C_k |t|^{1-k} \quad \forall\, |t|\ge M,
\]
which  proves  (ii) with $L_k=(A+1) 2^{k-1}C_k,$ completing the proof.
\end{proof}

\subsection{Polyconvex Functions Supporting  $\T_5$-configurations}

We use the notations of Section \ref{s-5}.

From the definition  of the set $\Z_1$ in (\ref{def-setZ1}), we select a nonempty bounded open set $\Z_2\subset\subset  \Z_1$   such that, for all $Z\in\Z_2,$ the condition
\begin{equation}\label{emb-2}
 \begin{cases}
 |\omega^1_i(Z')- \omega^1_j(Z')|>\sigma_0,\;\;   |\zeta^1_i(Z)-\zeta_j^1(Z)| >\sigma_0,\\
  c_i-c_j  + d_i [\delta(\zeta_j^1(Z))-\delta(\zeta_i^1(Z))]\\
   \;\; +\langle \zeta_i^2(Z)  - d_iD\delta(\zeta_i^1(Z)), \zeta_j^1(Z)-\zeta_i^1(Z)\rangle  <-\sigma_0  \end{cases}
\forall\, 1\le i\ne j\le 5,
\end{equation}
 holds for some constant  $\sigma_0>0.$

\begin{lemma}\label{construct-1}     There exists $\epsilon_0\in (0,1)$ such that for each $\epsilon\in (0,\epsilon_0)$ and $Z\in\Z_2,$ one can find a smooth convex function $G(A,\delta)$ on $\R^{2\times n}\times \R$ and a number
\[
0<\tau<\min_{1\le i\ne j\le 5}|\tilde  \zeta_i^1(Z)-\tilde  \zeta_j^1(Z)|
\]
 such that
\begin{equation}\label{dg-00}
G(\tilde  \zeta_i^1(Z))=c_i\;\;\mbox{and}\;\;
(G_A,G_\delta)|_{\B_\tau(\tilde  \zeta_i^1(Z))}=(Q_i^\epsilon,d_i)
\quad (1\le i\le 5),
\end{equation}
where
$
Q^\epsilon_i=  \zeta_i^2(Z)-\epsilon \zeta_i^1(Z)-d_i   D\delta (\zeta_i^1(Z)).
$
Moreover,
\begin{equation}\label{dg-0}
|D^k G(A,\delta)|\le L_k (|A|+|\delta |)^{1-k}\quad \forall\, |A|+|\delta |\ge M ,\;\; k\ge1,
\end{equation}
where $M>1$ and $L_k>0$ are some constants.
\end{lemma}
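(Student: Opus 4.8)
The plan is to reduce the statement to a single application of Lemma~\ref{construct-conv} in the Euclidean space $\R^{q}$ with $q=2n+1$, under the identification $\R^{2\times n}\times\R\cong\R^{2n+1}$ equipped with the inner product $\langle(A,\delta),(A',\delta')\rangle=\langle A,A'\rangle+\delta\delta'$. Fix $Z\in\Z_2$ and $\epsilon\in(0,1)$, and set $N=5$,
\[
T_j=\tilde\zeta_j^1(Z)=(\zeta_j^1(Z),\delta(\zeta_j^1(Z)))\in\R^{2n+1},\qquad S_j=(Q_j^\epsilon,d_j)\in\R^{2n+1}\quad(1\le j\le5),
\]
where the $c_j$ are the fixed constants of \S\ref{s-5} and $Q_j^\epsilon$ is as in the statement. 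The only point to be checked before invoking Lemma~\ref{construct-conv} is the family of strict inequalities \eqref{convex-01}, that is, $c_i-c_j+S_i\cdot(T_j-T_i)<0$ for all $1\le i\ne j\le5$, together with pairwise distinctness of the $T_j$.

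First I would expand $S_i\cdot(T_j-T_i)$. Using $Q_i^\epsilon=\zeta_i^2(Z)-\epsilon\zeta_i^1(Z)-d_iD\delta(\zeta_i^1(Z))$, a direct computation gives
\[
c_i-c_j+S_i\cdot(T_j-T_i)=\mathcal D_{ij}(Z)-\epsilon\,\langle\zeta_i^1(Z),\,\zeta_j^1(Z)-\zeta_i^1(Z)\rangle,
\]
where $\mathcal D_{ij}(Z)=c_i-c_j+d_i[\delta(\zeta_j^1(Z))-\delta(\zeta_i^1(Z))]+\langle\zeta_i^2(Z)-d_iD\delta(\zeta_i^1(Z)),\,\zeta_j^1(Z)-\zeta_i^1(Z)\rangle$ is exactly the left-hand side of the last inequality in \eqref{emb-2}; hence $\mathcal D_{ij}(Z)<-\sigma_0$ for every $Z\in\Z_2$ and $i\ne j$. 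Since $\Z_2$ is bounded and all the functions $\zeta_i^\ell$ are polynomials (Remark~\ref{rk-52}), there is a constant $C_1>0$ with $|\langle\zeta_i^1(Z),\zeta_j^1(Z)-\zeta_i^1(Z)\rangle|\le C_1$ on $\Z_2$. Taking $\epsilon_0=\sigma_0/(2C_1)$, we obtain for all $\epsilon\in(0,\epsilon_0)$, $Z\in\Z_2$ and $i\ne j$,
\[
c_i-c_j+S_i\cdot(T_j-T_i)<-\sigma_0+\epsilon C_1<-\tfrac{\sigma_0}{2}<0.
\]
Moreover $|T_i-T_j|\ge|\zeta_i^1(Z)-\zeta_j^1(Z)|>\sigma_0>0$ by \eqref{emb-2}, so the $T_j$ are pairwise distinct and $\min_{i\ne j}|T_i-T_j|>0$.

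Therefore Lemma~\ref{construct-conv} applies and yields a smooth convex function $g\colon\R^{2n+1}\to\R$ together with a number $0<\tau<\min_{i\ne j}|T_i-T_j|=\min_{i\ne j}|\tilde\zeta_i^1(Z)-\tilde\zeta_j^1(Z)|$ such that $g(t)=c_j+S_j\cdot(t-T_j)$ on $\B_\tau(T_j)$ for each $j$, and $|D^kg(t)|\le L_k|t|^{1-k}$ for $|t|\ge M$. Setting $G(A,\delta):=g(A,\delta)$ under the above identification, the value $g(T_i)=c_i$ gives $G(\tilde\zeta_i^1(Z))=c_i$, while differentiating the affine expression on $\B_\tau(T_i)$ gives $(G_A,G_\delta)|_{\B_\tau(\tilde\zeta_i^1(Z))}=S_i=(Q_i^\epsilon,d_i)$; this is \eqref{dg-00}. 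The growth bound \eqref{dg-0} then follows from the decay estimates of Lemma~\ref{construct-conv} and the elementary comparison $\frac1{\sqrt2}(|A|+|\delta|)\le|(A,\delta)|\le|A|+|\delta|$: since $1-k\le0$, on the set $\{|A|+|\delta|\ge\sqrt2\,M\}$ one has $|(A,\delta)|\ge M$ and $|(A,\delta)|^{1-k}\le2^{(k-1)/2}(|A|+|\delta|)^{1-k}$, so replacing $M$ by $\sqrt2\,M$ and $L_k$ by $2^{(k-1)/2}L_k$ gives the assertion. Because the margin $\sigma_0$ in \eqref{emb-2} and the constant $C_1$ are uniform over $\Z_2$, all these constants can be chosen independently of $(Z,\epsilon)$, which is what is needed later in the proof of Theorem~\ref{mainthm2}.

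The argument is mostly bookkeeping, so I expect no serious difficulty. The one substantive observation is the expansion above: the $\tfrac{\nu}{2}|A|^2$-part of $F$, encoded by the term $-\epsilon\zeta_i^1(Z)$ in $Q_i^\epsilon$, perturbs the quantities $\mathcal D_{ij}(Z)$ by an amount of size $O(\epsilon)$, and one must know this perturbation is uniformly small over the parameter set; this is precisely why $\Z_2$ was chosen relatively compact in $\Z_1$ with the uniform strict inequalities \eqref{emb-2}. Once the strict inequalities \eqref{convex-01} are secured, Lemma~\ref{construct-conv} does all the remaining work.
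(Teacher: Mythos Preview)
Your argument is correct and follows the same route as the paper: expand $c_i-c_j+S_i\cdot(T_j-T_i)$, recognize the main term as the left side of \eqref{emb-2}, bound the $O(\epsilon)$ remainder uniformly on the relatively compact set $\Z_2$, and then invoke Lemma~\ref{construct-conv}. The only cosmetic point is that your choice $\epsilon_0=\sigma_0/(2C_1)$ need not lie in $(0,1)$ if $C_1$ is small; the paper handles this by taking the denominator to be $\max_{i\ne j}\max_{\bar\Z_2}|\langle\zeta_i^1,\zeta_j^1-\zeta_i^1\rangle|+\sigma_0+1$, but replacing your $\epsilon_0$ by $\min\{\epsilon_0,1/2\}$ works just as well.
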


\begin{proof} By (\ref{emb-2}),  for each  $1\le i\ne j\le 5$ and $Z\in \Z_2,$ we have
\[
\begin{split}   c_i -c_j  & +  \langle Q^\epsilon_i,  \zeta_j^1(Z)   -\zeta_i^1(Z) \rangle +d_i [\delta(\zeta_j^1(Z))-\delta(\zeta_i^1(Z))]
 \\  &= c_i-c_j + \langle \zeta_i^2(Z)-d_i  D\delta (\zeta_i^1(Z)), \zeta_j^1(Z) -\zeta_i^1(Z) \rangle \\
  &\;\;\;\,\,+d_i [\delta(\zeta_j^1(Z))-\delta(\zeta_i^1(Z))] -\epsilon \langle \zeta_i^1(Z), \zeta_j^1(Z) -\zeta_i^1(Z) \rangle
 \\ &\le -\sigma_0 -\epsilon \langle \zeta_i^1(Z), \zeta_j^1(Z) -\zeta_i^1(Z) \rangle \\
 &\le -\sigma_0+J\epsilon<0 \quad \mbox{if\;\;$0<\epsilon<\epsilon_0,$}
   \end{split}
\]
where
\[
J=\max_{1\le i\ne j\le 5}\max_{\bar{\mathcal{Z}}_2}|\langle \zeta^1_i,\zeta^1_j-\zeta^1_i\rangle|+\sigma_0+1,\quad \epsilon_0=\sigma_0/J\in(0,1).
\]
Hence, for each $\epsilon\in (0,\epsilon_0)$ and $Z\in\Z_2,$ the existence of a smooth   convex function $G:\R^{2\times n}\times\R\to\R$ and a number $0<\tau <\min_{1\le i\ne j\le 5}|\tilde  \zeta_i^1(Z)-\tilde  \zeta_j^1(Z)|$ satisfying \eqref{dg-00} and \eqref{dg-0}  follows  from  Lemma \ref{construct-conv}.
\end{proof}

\begin{lemma}\label{fun-F0} Let $\epsilon\in (0,\epsilon_0)$ and $Z\in\Z_2,$  where $\epsilon_0\in(0,1)$ is given by Lemma \ref{construct-1}. Let $G$  be determined by Lemma \ref{construct-1} and define
\[
F_0(A)=\frac{\epsilon}{2} |A|^2+G(\tilde A) \quad \forall\, A\in\R^{2\times n}.
\]
 Then  there exists a number $0<s_0 <\min_{1\le i\ne j\le 5}| \zeta_i^1(Z)- \zeta_j^1(Z)|$  such that  for each $1\le i\le 5,$
\begin{equation}\label{linear-DF}
DF_0 (A)=\zeta_i^2(Z)+\epsilon (A- \zeta_i^1(Z))+d_i D\delta (A- \zeta_i^1(Z)) \quad \forall\, A\in B_{s_0}( \zeta_i^1(Z)).\end{equation}
 In particular,  $\zeta_i(Z) \in \K_{F_0}$ for all $1\le i\le 5.$

Moreover, there exist constants $C_k>0$ $(k=1,2,\dots)$ such that
\begin{equation}\label{growth-DF}
|D^k F_0(A)|\le C_k(|A|+1) \quad\forall\, A\in \R^{2\times n}.
\end{equation}
\end{lemma}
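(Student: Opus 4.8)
The plan is to compute $DF_0$ directly by the chain rule, to extract the local affine‑plus‑quadratic form (\ref{linear-DF}) from the flat pieces of $G$ provided by Lemma \ref{construct-1}, and then to derive the growth bound (\ref{growth-DF}) from (\ref{dg-0}) via the chain rule for higher‑order derivatives.

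First I would record that, since $F_0(A)=\frac{\epsilon}{2}|A|^2+G(A,\delta(A))$, the chain rule gives
\[
DF_0(A)=\epsilon A+G_A(\tilde A)+G_\delta(\tilde A)\,D\delta(A)\qquad\forall\,A\in\R^{2\times n},
\]
where $\tilde A=(A,\delta(A))$ and, as noted before Lemma \ref{lem-N5}, $D\delta(A)=H_0A$ is linear in $A$. Because $\delta$ is a quadratic polynomial, the map $A\mapsto\tilde A$ is continuous, so one can choose $s_0\in\bigl(0,\min_{1\le i\ne j\le 5}|\zeta_i^1(Z)-\zeta_j^1(Z)|\bigr)$ small enough that the balls $B_{s_0}(\zeta_i^1(Z))$, $1\le i\le 5$, are pairwise disjoint and $\tilde A\in\B_\tau(\tilde\zeta_i^1(Z))$ whenever $A\in B_{s_0}(\zeta_i^1(Z))$, with $\tau$ as in Lemma \ref{construct-1}. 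On $B_{s_0}(\zeta_i^1(Z))$, (\ref{dg-00}) yields $G_A(\tilde A)=Q_i^\epsilon$ and $G_\delta(\tilde A)=d_i$, so that, inserting the definition of $Q_i^\epsilon$ and using the linearity of $D\delta$,
\[
DF_0(A)=\epsilon A+Q_i^\epsilon+d_iD\delta(A)=\zeta_i^2(Z)+\epsilon\bigl(A-\zeta_i^1(Z)\bigr)+d_i\,D\delta\bigl(A-\zeta_i^1(Z)\bigr),
\]
which is (\ref{linear-DF}); evaluating at $A=\zeta_i^1(Z)$ gives $DF_0(\zeta_i^1(Z))=\zeta_i^2(Z)$, i.e.\ $\zeta_i(Z)=(\zeta_i^1(Z),\zeta_i^2(Z))\in\K_{F_0}$.

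For the growth bound I would set $\Phi(A)=(A,\delta(A))$, a polynomial map of degree $2$, so that $D^\ell\Phi=0$ for $\ell\ge 3$, $|D\Phi(A)|\le c_0(1+|A|)$, and $D^2\Phi$ is constant. The Faà di Bruno formula writes $D^k(G\circ\Phi)(A)$ as a finite sum of terms $c\,D^jG(\Phi(A))\bigl[D^{k_1}\Phi,\dots,D^{k_j}\Phi\bigr]$ with $k_1+\cdots+k_j=k$; only indices $k_\ell\in\{1,2\}$ contribute, and if $p$ (resp.\ $q$) of them equal $1$ (resp.\ $2$) then $p+2q=k$ and $j=p+q$, whence $1-j+p=j+1-k\le 1$. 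Since $|\Phi(A)|\ge|A|$, for $|A|\ge M$ the bound (\ref{dg-0}) gives $|D^jG(\Phi(A))|\le L_j|A|^{1-j}$, and hence each such term is $\le C|A|^{1-j}(1+|A|)^p\le C(1+|A|)$; adding the contribution of $\frac{\epsilon}{2}D^k(|A|^2)$ (which is $\epsilon A$ for $k=1$, constant for $k=2$, and $0$ for $k\ge 3$) bounds $|D^kF_0(A)|$ by $C_k(1+|A|)$ for $|A|\ge M$, while for $|A|<M$ the bound is trivial since $F_0$ is smooth on $\{|A|\le M\}$. The only delicate point is the bookkeeping in this last step: one must use that $\delta$ enters $D\Phi$ only linearly (so the super‑quadratic growth of $\delta$ itself is harmless), that the inequality $|\Phi(A)|\ge|A|$ absorbs the negative powers in (\ref{dg-0}), and that $j\le k$ keeps the exponent $j+1-k$ at most $1$; all the remaining estimates are routine.
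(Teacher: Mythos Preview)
Your proof is correct and follows essentially the same route as the paper's. The paper also computes $DF_0$ by the chain rule, chooses $s_0$ so that $\tilde A\in\B_\tau(\tilde\zeta_i^1(Z))$ when $A\in B_{s_0}(\zeta_i^1(Z))$, and then for the growth bound writes out the multi-index chain-rule expansion of $D^\alpha(G\circ\Phi)$ and combines $|D^\gamma\delta(A)|\le c_{|\gamma|}|A|^{2-|\gamma|}$ with the decay (\ref{dg-0}) and the inequality $|\tilde A|\ge|A|$ to reach the same exponent count (their $1+\ell-k$ plays the role of your $1-q$).
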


\begin{proof} By  Lemma  \ref{construct-1}, for each $1\le i\le 5$, we have $(G_A, G_\delta)|_{\B_\tau(\tilde \zeta_i^1(Z))}=(Q_i^\epsilon, d_i),$ where $0<\tau <\min_{1\le i\ne j\le 5}| \zeta_i^1(Z)- \zeta_j^1(Z)|$  is the number in  Lemma  \ref{construct-1}.
Thus, for each $A\in\R^{2\times n}$ with $\tilde A\in \B_\tau(\tilde \zeta_i^1(Z)),$  we have that
  \[
 DF_0(A)
 =\epsilon  A+Q^\epsilon_i + d_i D\delta(A)  =\zeta_i^2(Z)+\epsilon (A- \zeta_i^1(Z))+d_i D\delta (A- \zeta_i^1(Z)).
\]

Now let $0<s_0 <\tau$ be such that, for each $1\le i\le 5,$
\[
A\in B_{s_0}(\zeta_i^1(Z)) \;\; \imply \;\;  \tilde A\in \B_\tau(\tilde \zeta_i^1(Z)),
\]
from which (\ref{linear-DF}) follows. In particular, for each $1\le i\le 5$, since $DF_0(\zeta_i^1(Z))=\zeta_i^2(Z),$  we have  $\zeta_i(Z)=(\zeta^1_i(Z),\zeta^2_i(Z))\in \K_{F_0}.$

To prove (\ref{growth-DF}),  let $f(A)=G(\tilde A)=G(A,\delta(A))$ for all $A\in \R^{2\times n}\approx \R^q,$ where $q=2n.$ Then $F_0(A)=\frac{\epsilon}{2}|A|^2+ f(A) .$

Let $k\ge 1$ be an integer and  $\alpha=(\alpha_1,\dots,\alpha_q)\in \mathbb Z_+^q$ be a multi-index with
\[
|\alpha|= \alpha_1+\dots+\alpha_q=k;
\]
 here $\mathbb Z_+$ stands for the set of all nonnegative integers.
  It suffices to show that  there exists a  constant $\tilde L_k>0$ such that
\begin{equation}\label{gr-df}
|D^\alpha f(A)|\le \tilde L_k (|A|+1)  \quad \forall\, A\in\R^q.
\end{equation}
Since $D^\gamma \delta (A)=0$ for all  $\gamma\in\mathbb Z_+^q$ with $|\gamma|>2$,   it is easily seen that
\begin{equation}\label{gr-df-0}
D^\alpha f(A)=D^{(\alpha,0)}G(\tilde A)  + \sum   D^{(\beta,\ell)}G(\tilde A) D^{\gamma_1} \delta (A) \cdots D^{\gamma_\ell} \delta (A),
\end{equation}
where $D^{(\beta,\ell)}G(A,\delta)= D^\beta_A D^\ell_\delta G(A,\delta)$ for all $(A,\delta)\in\R^q\times \R,$ and the summation is taken over terms with $1\le \ell \le k $ and $\beta,\gamma_1,\dots,\gamma_\ell\in\mathbb Z_+^q$ satisfying
\[
 \beta+\gamma_1+\cdots+\gamma_\ell=\alpha \;\; \mbox{and} \;\; 1\le |\gamma_j|\le 2 \quad \forall \, 1\le j\le \ell.
\]
Note that $|D^\gamma \delta(A)|\le c_{|\gamma|} |A|^{2-|\gamma|}$ for all $A\in\R^q$ and $\gamma\in\mathbb Z_+^q$ with $1\le |\gamma|\le 2.$ Hence, by (\ref{dg-0}), for all $A\in \R^q$ with $|\tilde A|\ge M,$ we have
\[
\begin{split}
|D^\alpha f(A)| & \le L_k |\tilde A|^{1-k} + \sum L_{|\beta|+\ell} |\tilde A|^{1-|\beta|-\ell} c_{|\gamma_1|} |A|^{2-|\gamma_1|}\cdots c_{|\gamma_\ell|} |A|^{2-|\gamma_\ell|}\\
& \le L_k | A|^{1-k} + \sum L_{|\beta|+\ell} |A|^{1-|\beta|-\ell} c_{|\gamma_1|} | A|^{2-|\gamma_1|}\cdots c_{|\gamma_\ell|} | A|^{2-|\gamma_\ell|} \\
& =L_k | A|^{1-k} + \sum L_{|\beta|+\ell}c_{|\gamma_1|}  \cdots c_{|\gamma_\ell|} | A|^{1-|\beta|-\ell}  |  A|^{2\ell -|\gamma_1|-\cdots-|\gamma_\ell|}\\
& =L_k | A|^{1-k} + \sum L_{|\beta|+\ell} c_{|\gamma_1|}  \cdots c_{|\gamma_\ell|} | A|^{1+\ell-k}:=\sum_{j=1}^{k+1}\tilde c_j |A|^{j-k}.
\end{split}
\]
Therefore, if $|A|\ge M>1$ then $|\tilde A|\ge M$ and thus
\[
|D^\alpha f(A)|\le \sum_{j=1}^{k+1}\tilde c_j |A|^{j-k} \le\Big (\sum_{j=1}^{k+1}\tilde c_j \Big ) |A|,
\]
  which proves (\ref{gr-df}) for all $A\in\R^q,$ with
\[
\tilde L_k=\Big (\sum_{j=1}^{k+1}\tilde c_j \Big ) +\max_{|\beta|=k, \; |A|\le M}  |D^\beta f(A)|.
\]
\end{proof}

\subsection{Perturbation of $F_0$ for Nondegeneracy}
 We  perturb the function $F_0$ in Lemma \ref{fun-F0} around each $\zeta_i^1(Z)$ to obtain  a new strongly  polyconvex function $F$ such that
\[
  (D^2F(\zeta_1^1(Z)), \dots,D^2F(\zeta_5^1(Z)),Z)\in \P_0,
\]
where $\P_0$ is the set defined in Theorem \ref{nondeg-thm}.

To this end, let $B_1(0)$ be the unit ball in $\R^{2\times n}$ and  $\omega\in C^\infty_c(B_1(0))$ be a cut-off function such that
 $0\le \omega\le 1$ and   $\omega|_{B_{1/2}(0)}=1.$

For each $r>0$ and  $H=(H_{pq}^{ij})\in \mathbb S^{2n\times 2n}$, let
\[
V_{H,r}(A)= \frac12 \omega(A/r) \sum_{i,p=1}^2\sum_{j,q=1}^n  H^{ij}_{pq}a_{ij}a_{pq}\quad \forall\, A=(a_{ij})\in\R^{2\times n};
\]
then $V_{H,r} \in C^\infty_c(\R^{2\times n})$ has support in $B_r(0),$
\begin{equation}\label{cut-off}
\begin{split} &  V_{H,r}(A)=\frac12   \sum_{i,p=1}^2\sum_{j,q=1}^n  H^{ij}_{pq}a_{ij}a_{pq} \quad \forall\, A=(a_{ij})\in  B_{r/2}(0),\;\;\mbox{and}\\
& |D^2 V_{H,r}(A)|\le C_0 |H| \quad \forall\, A \in\R^{2\times n}, \;\; \mbox{where $C_0>0$ is a constant.}  \end{split}
\end{equation}

By the density of set $\P_0$   in $\P_1$ (Theorem \ref{nondeg-thm}), we select  $(H_1^0,\dots,H_5^0,Z_0)$  such that
\begin{equation}\label{ImFT-1}
 (H_1^0,\dots,H_5^0,Z_0)\in \P_0,\;\; Z_0\in  \Z_2 \;\;\; \mbox{and}\;\;\;  \sum_{j=1}^5 |H_j^0-\epsilon I -d_jH_0|<\frac{\epsilon}{4C_0},
 \end{equation}
where $C_0>0$ is the constant in (\ref{cut-off}).

Let  $F_0$ be defined as  in Lemma \ref{fun-F0} with $Z=Z_0.$ Let
\[
\begin{cases}
\tilde H_j=H_j^0-D^2 F_0(\zeta_j^1(Z_0))=H_j^0-\epsilon I-d_j H_0\;  \; (1\le j\le 5),\\
s_1=\min\{s_0,\sigma_0\}>0,\end{cases}
\]
where $s_0>0$ is as given   in Lemma \ref{fun-F0} with $Z=Z_0$, and $\sigma_0>0$ is as defined in (\ref{emb-2}).

\begin{proposition}\label{prop-def-F} Let
\begin{equation}\label{def-F}
F(A)=F_0(A)+\sum_{j=1}^5 V_{\tilde H_j,s_1}(A-\zeta_j^1(Z_0)) \quad \forall\, A \in\R^{2\times n}.
\end{equation}
Then,
\begin{equation}\label{prop-F}
\begin{cases}
 DF(A)=\zeta_i^2(Z_0)+H_i^0  (A- \zeta_i^1(Z_0))  \quad
 \forall\, A\in B_{\frac{s_1}{2}}(\zeta_i^1(Z_0)), \;  1\le i \le 5; \\
 \mbox{in particular, $DF(\zeta_i^1(Z_0))= \zeta_i^2(Z_0)$ and  thus $\zeta_i(Z_0) \in \K_F \, (1\le i\le 5),$}\\[1ex]
 |D^k F(A)|\le C_k(|A|+1)\;\;\mbox{for  some $C_k>0$}   \quad  \forall\, k\ge 1, \;\; A\in \R^{2\times n}. \end{cases}
\end{equation}
Moreover,  the function
\[
 g(A)=\frac{\epsilon}{4}|A|^2+\sum\limits_{j=1}^5 V_{\tilde H_j,s_1}(A-\zeta_j^1(Z_0))\quad (A \in\R^{2\times n})
\]
  is   smooth  and convex  on $\R^{2\times n},$ and thus the function $\tilde G(A, \delta)= g(A)+G(A,\delta)$ is  smooth and convex  on $\R^{2\times n}\times \R;$ therefore,
\[
F(A)=\frac{\epsilon}{4}|A|^2 + \tilde G(A, \delta(A))\quad  (A \in\R^{2\times n})
\]
  is strongly polyconvex.
\end{proposition}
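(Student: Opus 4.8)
The plan is to verify the three bracketed claims in \eqref{prop-F} together with the convexity assertion, in that order, since each builds on the previous one. First I would record the local structure of $F_0$: by Lemma \ref{fun-F0}, for $A\in B_{s_0}(\zeta_i^1(Z_0))$ one has
\[
DF_0(A)=\zeta_i^2(Z_0)+\epsilon(A-\zeta_i^1(Z_0))+d_iD\delta(A-\zeta_i^1(Z_0)),
\]
and since $D\delta$ is linear with $D\delta(A)=H_0A$, this says $D^2F_0\equiv \epsilon I+d_iH_0$ on that ball; in particular $D^2F_0(\zeta_i^1(Z_0))=\epsilon I+d_iH_0$. Next, fix $1\le i\le 5$ and $A\in B_{s_1/2}(\zeta_i^1(Z_0))$ with $s_1=\min\{s_0,\sigma_0\}$. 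The perturbation terms $V_{\tilde H_j,s_1}(\,\cdot\,-\zeta_j^1(Z_0))$ are supported in $B_{s_1}(\zeta_j^1(Z_0))$, and by \eqref{emb-2} (which holds for $Z_0\in\Z_2$) the centers satisfy $|\zeta_i^1(Z_0)-\zeta_j^1(Z_0)|>\sigma_0\ge s_1$ for $i\ne j$, so on $B_{s_1/2}(\zeta_i^1(Z_0))$ only the $j=i$ term is nonzero, and there $A-\zeta_i^1(Z_0)\in B_{s_1/2}(0)\subset B_{s_1/2}(0)$, where by \eqref{cut-off} the cut-off is $\equiv 1$ and $V_{\tilde H_i,s_1}(B)=\tfrac12\langle \tilde H_iB,B\rangle$. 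Hence on this ball $D(V_{\tilde H_i,s_1}(\,\cdot\,-\zeta_i^1(Z_0)))(A)=\tilde H_i(A-\zeta_i^1(Z_0))$, so
\[
DF(A)=\zeta_i^2(Z_0)+\bigl(\epsilon I+d_iH_0+\tilde H_i\bigr)(A-\zeta_i^1(Z_0))=\zeta_i^2(Z_0)+H_i^0(A-\zeta_i^1(Z_0)),
\]
using the definition $\tilde H_i=H_i^0-\epsilon I-d_iH_0$. Evaluating at $A=\zeta_i^1(Z_0)$ gives $DF(\zeta_i^1(Z_0))=\zeta_i^2(Z_0)$, i.e. $\zeta_i(Z_0)\in\K_F$, which is the first two lines of \eqref{prop-F}.

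For the growth bound, I would simply note that $F-F_0$ is a finite sum of compactly supported $C^\infty$ functions, so $|D^k(F-F_0)|$ is bounded (and vanishes outside a bounded set), while Lemma \ref{fun-F0} already gives $|D^kF_0(A)|\le C_k(|A|+1)$; adding the two yields $|D^kF(A)|\le C_k(|A|+1)$ with a possibly enlarged constant. This is routine and I would not belabor it.

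The last claim — strong polyconvexity — is where the slight subtlety lies. Writing $F(A)=\tfrac{\epsilon}{4}|A|^2+\tilde G(A,\delta(A))$ with $\tilde G(A,\delta)=g(A)+G(A,\delta)$, it suffices to show $g$ is convex on $\R^{2\times n}$, since $G$ is convex on $\R^{2\times n}\times\R$ by Lemma \ref{construct-1} and a sum of convex functions is convex, and then $F$ has the required form \eqref{poly-00}. The point is that $g=\tfrac{\epsilon}{4}|A|^2+\sum_j V_{\tilde H_j,s_1}(A-\zeta_j^1(Z_0))$ is a quadratic-plus-bump, and its Hessian is $D^2g(A)=\tfrac{\epsilon}{2}I+\sum_j D^2V_{\tilde H_j,s_1}(A-\zeta_j^1(Z_0))$; by \eqref{cut-off} each term is bounded by $C_0|\tilde H_j|$ in operator norm, and by \eqref{ImFT-1} we have $\sum_j|\tilde H_j|=\sum_j|H_j^0-\epsilon I-d_jH_0|<\epsilon/(4C_0)$, so
\[
D^2g(A)\ \succeq\ \tfrac{\epsilon}{2}I-\Bigl(\sum_j C_0|\tilde H_j|\Bigr)I\ \succ\ \tfrac{\epsilon}{2}I-\tfrac{\epsilon}{4}I=\tfrac{\epsilon}{4}I\ \succ\ 0
\]
(here the supports of the bumps are disjoint, so at most one term contributes at each $A$, but even the crude sum bound suffices). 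Thus $g$ is (uniformly strongly) convex, hence so is $\tilde G$ on $\R^{2\times n}\times\R$, and $F(A)=\tfrac{\epsilon}{4}|A|^2+\tilde G(A,\delta(A))$ exhibits the strongly polyconvex form with $\nu=\epsilon/2$. I expect the only place requiring care is tracking that the $\epsilon/4C_0$ slack in \eqref{ImFT-1} was chosen precisely so that the bump Hessians cannot overwhelm the $\tfrac{\epsilon}{2}I$ coming from the quadratic term; everything else is bookkeeping about disjoint supports and the explicit local forms established in Lemmas \ref{construct-conv}, \ref{construct-1}, and \ref{fun-F0}.
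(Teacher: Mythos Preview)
Your proposal is correct and follows essentially the same approach as the paper's proof: the paper records only that \eqref{prop-F} follows from \eqref{cut-off} and \eqref{linear-DF}--\eqref{growth-DF}, and that $D^2g(A)\ge \tfrac{\epsilon}{4}I$ by \eqref{cut-off} and \eqref{ImFT-1}, which is precisely the Hessian estimate you spell out. Your version simply fills in the details the paper leaves implicit (disjoint supports of the bumps via \eqref{emb-2}, the contribution of only the $j=i$ term on $B_{s_1/2}(\zeta_i^1(Z_0))$, and the explicit bound $\tfrac{\epsilon}{2}I-C_0\sum_j|\tilde H_j|\,I\succ \tfrac{\epsilon}{4}I$).
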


\begin{proof}  The properties in (\ref{prop-F}) follow  from (\ref{cut-off}) and properties (\ref{linear-DF})-(\ref{growth-DF}) in Lemma \ref{fun-F0}.

The convexity of $g$ follows since, by (\ref{cut-off}) and (\ref{ImFT-1}), $D^2g(A)\ge \frac{\epsilon}{4}I$ for all $A\in\R^{2\times n}.$
\end{proof}

\subsection{Proof of Theorem \ref{mainthm2}}

Let   $F$ be as defined by (\ref{def-F}).
By (\ref{prop-F}),  we have
\[
 D^2F(\zeta_i(Z_0))=H_i^0 \quad (1\le i\le 5) \quad\text{and}\quad  (H^0_1,\dots,H_5^0,Z_0)\in \P_0.
\]
Hence, the function $\sigma=DF$ satisfies the {\em nondegeneracy conditions} (\ref{NC-1}) and (\ref{NC-2}) with $N=5.$

Let
 \[
Z_1(\rho)=(Z_1'(\rho),\kappa_1(\rho),\dots,\kappa_5(\rho)): \bar\B_{\tau_0}(0) \to \mathcal B_{k_0}(Z_0)\subset \Z_2
\]
 be  as defined in Lemma \ref{ImFT-lem} with $\sigma=DF.$
 For
 each $1\le i \le 5$ and $ \rho\in\bar{\mathbb B}_{\tau_0}(0),$ define
\[
  \begin{cases}
    \gamma_i (\rho)=h_i(Z_1'(\rho)),\\
 \pi_1(\rho)=\rho,\;\;
   \pi_i(\rho) =\rho+\gamma_1(\rho)+\dots  +\gamma_{i-1}(\rho)  \;\;  (2\le i\le 5), \\
\xi_i (\rho) =   \pi_i(\rho) +\kappa_i(\rho) \gamma_i(\rho)  \;\;  (1\le i\le 5),
\end{cases}
\]
where the functions $\{h_i(Z')\}_{i=1}^5$ are as defined in \eqref{def-hzo} with $N=5.$

We now  verify that $\sigma=DF$ satisfies Condition $O_5$  with  the functions $\{(\kappa_i(\rho),\gamma_i(\rho))\}_{i=1}^5.$

 First, note that $\xi_i(\rho)$ and $\pi_i(\rho)$ coincide with the quantities defined in  (\ref{def-xp-i}) for  $N=5.$
Hence, by \eqref{xi-ij-K},
\[
\xi_i(\rho)\in \K_F \quad\forall\, 1\le i\le 5, \quad \rho\in\bar\B_{\tau_0}(0).
\]
Thus, property (P1)(i) in Definition \ref{O-N} holds for any  $0<r_0\le \tau_0.$

Let $\tau_2\in (0,\tau_0)$ and $\ell_0\in (\frac12,1)$ be the constants determined in Theorem \ref{thm-OCN}  with $\sigma=DF$ and $N=5.$  Define
 \begin{equation}\label{def-delta12}
 \delta_1=  \min\limits_{1\le i\le 5, \, \rho\in \bar\B_{\tau_0}(0)} \Big \{1-\ell_0, \; \frac{1}{\kappa_i(\rho)} \Big\} \;\;\;\mbox{and}\;\;    \delta_2=  \max\limits_{1\le i\le 5, \, \rho\in \bar\B_{\tau_0}(0)} \Big\{\ell_0, \;\frac{1}{\kappa_i(\rho)}\Big \}.
 \end{equation}
Then
\[
0<\delta_1\le 1-\ell_0<\ell_0\le \delta_2<1, \quad\text{and}\quad   1/\delta_2\le \kappa_i(\rho)\le 1/\delta_1 \quad\forall\,  \rho\in\bar\B_{\tau_0}(0),\; 1\le i \le 5.
 \]
Since  $\gamma_i(\rho)=h_i(Z_1'(\rho))\in\Gamma_{r_i}\subset \Gamma$ for $1\le i\le 5$  and
\[
\gamma_1(\rho)+\dots+\gamma_5(\rho)=0 \quad\forall\, \rho\in\bar\B_{\tau_0}(0),
\]
  it follows that  the functions
\[
 (\kappa_i,\gamma_i)\colon  \bar\B_{\tau_0}(0) \to [1/\delta_2,1/\delta_1]\times \Gamma,  \quad  1\le i\le 5,
\]
satisfy conditions  \eqref{fun-x-g} and \eqref{satisfy-g}  in Definition \ref{O-N} for any  $0<r_0\le \tau_0.$

Moreover,   since $Z_0\in\Z_2,$ we have $\pi^1_i(0)\ne \pi^1_j(0)$ and $\xi_i^1(0)\ne \xi_j^1(0)$ for all $1\le i\ne j\le 5.$  Hence, we can select $\tau_3 \in (0,\tau_2)$  such  that
 \begin{equation}\label{tau-4}
 \xi_i^1(\bar\B_{\tau_3}(0))\cap \xi_j^1(\bar\B_{\tau_3}(0)) =\pi_i^1(\bar\B_{\tau_3}(0))\cap\pi_j^1(\bar\B_{\tau_3}(0))=\emptyset\quad \forall\, 1\le i\ne j\le 5.
 \end{equation}
Thus,  property (P1)(ii) in Definition \ref{O-N} holds for any  $r_0\in (0,\tau_3].$

Next, since $0<\delta_1\le 1-\ell_0<\ell_0\le \delta_2<1,$   Theorem \ref{thm-OCN} implies that the sets $\{S^r(\lambda)\}_{i=1}^5 $  are open for each  $r\in (0,\tau_2]$ and $\lambda\in [0,\delta_1] \cup [\delta_2,1).$

Furthermore, because $\xi_i(0)\ne \xi_j(0)$ and $\pi_i(0)\ne \pi_j(0)$ for  all $1\le i\ne j  \le 5,$  we can choose   $\delta_0 \in (\delta_2, 1)$ and $r_0\in (0,\tau_3)$  such  that the sets $\{S^r(\lambda)\}_{i=1}^5 $ are pairwise disjoint for each $\lambda\in \{0\}\cup [\delta_0,1)$ and $r\in (0,r_0].$
With this choice,  property (P2)  of Definition \ref{O-N} is satisfied.

Consequently,  $\sigma=DF$ satisfies Condition $O_5$ with the functions $\{(\kappa_i,\gamma_i)\}_{i=1}^5$ and  the constants
\[
0<\delta_1<\delta_2<\delta_0<1, \quad r_0>0.
\]

\subsection*{Acknowledgements}

B. Guo was supported by the National Key R\&D Program of China (2024YFA 1013301). S. Kim was supported by the National Research Foundation of Korea (NRF) grant funded by the Korea government (MSIT) (No. RS-2023-00217116) and (No. RS-2025-16064757). S. Kim is also grateful for support by the Open KIAS Center at Korea Institute for Advanced Study.


\begin{thebibliography}{99}

\bibitem{AF84}
E. Acerbi  and N. Fusco,  {\em Semicontinuity problems in the calculus of variations,}
Arch. Rational Mech. Anal., {\bf 86} (1984),  125--145.

  \bibitem{AGS05}
  L. Ambrosio, N.  Gigli and G. Savar\'e,  ``Gradient Flows in Metric Spaces and in the Space of Probability
Measures."  Birkhäuser Verlag, Basel, 2005.


        \bibitem{Ba77} J. M. Ball, {\em Convexity conditions and existence theorems in nonlinear elasticity,} Arch. Rational Mech. Anal., {\bf 63} (1977), 337--403.


  \bibitem{Bar}
{V. Barbu,} ``Nonlinear Differential Equations of Monotone Types in Banach Spaces." Springer, New York, 2010.

\bibitem{BC}
{H. Bauschke and P. Combettes,} ``Convex Analysis and Monotone Operator Theory in Hilbert Spaces." Second Edition. Springer, Cham, Switzerland. 2017.


\bibitem{BDDMS20}
{V. B\"ogelein, B. Dacorogna,  F. Duzaar, P. Marcellini and C. Scheven,} {\em Integral convexity and parabolic systems,} SIAM J. Math. Anal., {\bf 52}(2) (2020), 1489--1525.

\bibitem{BDM13}
{V. B\"ogelein, F. Duzaar and G. Mingione,} {\em The regularity of general parabolic systems with degenerate diffusion,} Memoirs of Amer. Math. Soc., {\bf 221} No. 1041, 2013.

\bibitem{Br}
{H. Br\'ezis,} ``Op\'eratuers Maximaux Monotones et Semi-Groupes de Contractions dans les Espaces de Hilbert." North-Holland Math. Stud. {\bf 5}, North-Holland, Amsterdam, 1973.





  \bibitem{BV19}
  T. Buckmaster and V. Vicol, {\em Nonuniqueness of weak solutions to the
Navier-Stokes equation,} Ann. of Math. {\bf 189} (2019), 101--144.



\bibitem{CT22}   M. Colombo and R. Tione, {\em Non-classical solutions of the $p$-Laplace equation,} (January 19, 2022), available at arXiv: 2201.07484.



  \bibitem{CFG11}
D. Cordoba, D. Faraco and  F. Gancedo, {\em Lack of uniqueness for weak solutions of the incompressible porous media equation},  Arch. Ration. Mech. Anal. {\bf 200} (3) (2011), 725--746.



 \bibitem{Da08}
B. Dacorogna, ``Direct Methods in the Calculus of Variations," Second Edition. Springer-Verlag, Berlin, Heidelberg, New York, 2008.

\bibitem{DM97} B. Dacorogna and P. Marcellini, {\em General existence theorems for Hamilton-Jacobi equations in the scalar and vectorial cases,} Acta Math., {\bf 178}(1) (1997), 1--37.

\bibitem{DM99}
B. Dacorogna and P. Marcellini, ``Implicit Partial Differential Equations."  Birkh\"auser Boston, Inc., Boston, MA, 1999.




\bibitem{DLSz09}
{C. De Lellis and L. Sz\'ekelyhidi}, {\em The Euler equations as a differential inclusion,} Ann. of Math. {\bf 170}(3) (2009), 1417--1436.

\bibitem{DLSz17}
{C. De Lellis and L. Sz\'ekelyhidi}, {\em High dimensionality and $h$-principle in PDE,} Bull. Amer. Math. Soc. (N.S.), {\bf 54}(2) (2017), 247--282.

\bibitem{DiB93}
E.~DiBenedetto, ``Degenerate Parabolic Equations." Springer, 1993.

\bibitem{Ev86}
L.~C.~Evans, {\em Quasiconvexity and partial regularity in the
calculus of variations,} Arch. Rational Mech.
Anal., {\bf 95} (1986), 227--252.

\bibitem{ESG05} L.C. Evans, O. Savin and W. Gangbo, {\em Diffeomorphisms and nonlinear heat flows,}  SIAM J. Math. Anal., {\bf 37}(3) (2005), 737--751.


 \bibitem{Gr86}
{ M. Gromov,} ``Partial Differential Relations."   Springer-Verlag, Berlin, 1986.

\bibitem{Ha95}
{ C. Hamburger,} {\em Quasimomotonicity, regularity and duality for nonlinear systems of partial differential equations,} Annali di Matematica pura ed applicata (IV), Vol. {\bf CLXIX} (1995),  321--354.

\bibitem{Ha03}
{ C. Hamburger,} {\em Partial regularity of minimizers
of polyconvex variational integrals,} Calc. Var. {\bf 18}  (2003), 221--241.

\bibitem{Is18}
P. Isett, {\em A proof of Onsager's conjecture,}  Ann. of Math. {\bf 188}(3) (2018), 871--963.

 \bibitem{Iw92}
T. Iwaniec, {\em p-Harmonic tensors and quasiregular mappings,} Ann. Math., {\bf 136} (1992), 589--624.


\bibitem{Jo23}
C. J. P. Johansson, {\em Wild solutions to the scalar Euler-Lagrange equations,} Available at arXiv:2303.07298v1 [math.AP] 13 Mar 2023


\bibitem{KY15}
S. Kim and B. Yan, {\em Convex integration and infinitely many weak solutions to the Perona-Malik equation in all dimensions}, SIAM J. Math. Anal. {\bf 47}(4) (2015), 2770--2794.


\bibitem{KY17}
S. Kim and B. Yan, {\em On Lipschitz solutions for some  forward-backward parabolic  equations. II: the case against Fourier}, Calc. Var. {\bf 56}(3) (2017), Art. 67, 36 pp.

\bibitem{KY18}
S. Kim and B. Yan, {\em On Lipschitz solutions for some  forward-backward parabolic  equations}, Ann. Inst. H. Poincar\'e Anal. Non Lin\'eaire {\bf 35}(1) (2018), 65-100.


\bibitem{KY25}
S. Kim and B. Yan, {\em On integral convexity, variational solutions and nonlinear semigroups,} J. Math. Pures Appl. (9) {\bf 194} (2025), Paper No. 103662, 37 pp.


\bibitem{KMS03} B. Kirchheim, S. M\"uller and V. \v Sver\'ak, {\em Studying nonlinear pde by geometry in matrix space,} in ``Geometric analysis and nonlinear partial differential equations", 347--395, Springer, Berlin, 2003.

\bibitem{LSU}
O. A. Lady\v{z}enskaja and V. A. Solonnikov and N. N. Ural'ceva, ``Linear and quasilinear equations of parabolic type. (Russian)," Translated from the Russian by S. Smith. Translations of Mathematical Monographs, Vol. 23 American Mathematical Society, Providence, R.I. 1968.


\bibitem{LP17} M.  Lewicka and M. R. Pakzad, {\em
Convex integration for the Monge--Amp\'ere equations in two dimensions,} Anal. PDE {\bf 10}(3) (2017), 695--727.

\bibitem{Ln}
G. M. Lieberman,  ``Second order parabolic differential equations,"  World Scientific Publishing Co., Inc., River Edge, N.J., 1996.

  \bibitem{Mo52}
C.~B.~Morrey,
{\em Quasiconvexity and the lower semicontinuity of multiple integrals,} Pacific J. Math., {\bf 2} (1952), 25--53.


\bibitem{MRS05} S. M\"uller, M. Rieger and V. \v Sver\'ak, {\em Parabolic equations with nowhere smooth solutions}, Arch. Rational Mech. Anal.  {\bf  177}(1) (2005), 1--20.




\bibitem{MSv03} {S. M\"uller and V. \v Sver\'ak}, {\em Convex integration for Lipschitz mappings and counterexamples to regularity},  Ann. of Math. (2) {\bf 157}(3)   (2003), 715--742.



\bibitem{Sh11}
{R. Shvydkoy}, {\em Convex integration for a class of active scalar equations}, J. Amer. Math. Soc.  {\bf 24}(4) (2011), 1159--1174.


  \bibitem{Sz04}
 L. Sz\'ekelyhidi, {\em The regularity of critical points of polyconvex functionals,} Arch. Rational Mech.  Anal.  {\bf 172}(1) (2004), 133--152.


 \bibitem{Ta93} L. Tartar, {\em Some remarks on separately convex functions,}  in ``Microstructure and
Phase Transitions," IMA Vol. Math. Appl. 54 (D. Kinderlehrer, R. D. James,
M. Luskin and J. L. Ericksen, eds.), Springer-Verlag, New York (1993), 191--204.


 \bibitem{Ya20}
B.~Yan, {\em Convex integration for diffusion equations and Lipschitz
solutions of polyconvex gradient flows,} Calc. Var.  (2020), 59:123. https://doi.org/10.1007/s00526-020-01785-7

 \bibitem{Ya22}
B.~Yan, {\em On nonuniqueness and nonregularity for gradient flows of polyconvex functionals,}
Calc. Var. (2024) 63:4
https://doi.org/10.1007/s00526-023-02609-0

 \bibitem{Ya23}
B.~Yan, {\em The {$\tau_N$}-configurations and polyconvex gradient flows,} Pure Appl. Funct. Anal. {\bf10}(1) (2025),   173–191.

\bibitem{Zh86}
 K. Zhang, {\em On the Dirichlet problem for a class of quasilinear elliptic systems of
partial differential equations in divergence form,} in ``Proceedings  of Tianjin Conference on Partial Differential
Equations in 1986," (S. S. Chern ed.) Lecture Notes in Mathematics, {\bf 1306}, pp.
262--277. Springer, Berlin, Heidelberg, New York, 1988.



\bibitem{Zh06}
K. Zhang, {\em On existence of weak solutions for one-dimensional forward-backward diffusion equations}, J. Differential Equations {\bf 220} (2) (2006), 322--353.


\end{thebibliography}
\end{document}